\numberwithin{equation}{section}
\newtheorem{thm}{\indent Theorem}[section]
\newtheorem{cor}[thm]{\indent Corollary}
\newtheorem{lem}[thm]{\indent Lemma}
\newtheorem{prop}[thm]{\indent Proposition}
\newtheorem{dfn}{{\indent\bf Definition}}[section]
\newtheorem{rmk}{{\indent\bf Remark}}[section]
\newcommand{\ol}{\overline}
\newcommand{\ul}{\underline}
\newcommand{\ttiny}{\fontsize{5pt}{\baselineskip}\selectfont}
\newcommand{\strl}[2]{\stackrel{\mbox{\ttiny $#1$}}{#2}}
\newcommand{\td}{\tilde}
\newcommand{\fr}{\frac}
\newcommand{\edd}{\end{document}}
\newcommand{\be}{\begin{equation}}
\newcommand{\ee}{\end{equation}}
\newcommand{\lagl}{\langle}
\newcommand{\ragl}{\rangle}
\newcommand{\lmx}{\left(\begin{matrix}}
\newcommand{\rmx}{\end{matrix}\right)}
\newcommand{\ldt}{\left|\begin{matrix}}
\newcommand{\rdt}{\end{matrix}\right|}
\newcommand{\ric}{{\rm Ric}}
\newcommand{\tr}{{\rm tr\,}}
\newcommand{\vfi}{\varphi}
\newcommand{\veps}{\varepsilon}
\newcommand{\bbr}{{\mathbb R}}
\newcommand{\ba}{\begin{array}}
\newcommand{\ea}{\end{array}}
\newcommand{\nnm}{\nonumber}
\newcommand{\beal}{\begin{align}}
\newcommand{\eal}{\end{align}}
\newcommand{\bea}{\begin{eqnarray}}
\newcommand{\eea}{\end{eqnarray}}
\newcommand{\pp}[2]{\fr{\partial #1}{\partial #2}}
\newcommand{\dd}[2]{\fr{d #1}{d #2}}
\newcommand{\sch}{\mathring{h}} 
\begin{document}

\title[Blow-up of the conformal mean curvature flow]{The blow-up of the conformal mean curvature flow}

\author[X. X. Li]{Xingxiao Li$^*$}

\author[D. Zhang]{Di Zhang}

\dedicatory{}

\subjclass[2000]{ 
Primary 53A30; Secondary 53B25. }
%
\keywords{ 
conformal mean curvature flow, conformal external force, blow-up of the curvature, round point}
\thanks{Research supported by
National Natural Science Foundation of China (No. 11671121, No. 11171091 and No. 11371018).}
\address{
School of Mathematics and Information Sciences
\endgraf Henan Normal University \endgraf Xinxiang 453007, Henan, P.R. China
}
\email{xxl$@$henannu.edu.cn; zhangdi5727@163.com}


\begin{abstract}
In this paper, we introduce and study the conformal mean curvature flow of submanifolds of higher codimension in the Euclidean space $\bbr^n$. This kind of flow is a special case of a general modified mean curvature flow which is of various origination. As the main result, we prove a blow-up theorem concluding that, under the conformal mean curvature flow in $\bbr^n$, the maximum of the square norm of the second fundamental form of any compact submanifold tends to infinity in finite time. Furthermore, by using the idea of Andrews and Baker for studying the mean curvature flow of submanifolds in the Euclidean space, we also derive some more evolution formulas and inequalities which we believe to be useful in our further study of conformal mean curvature flow. Presently, these computations together with our main theorem are applied to provide a direct proof of a convergence theorem concluding that the external conformal forced mean curvature flow of a compact submanifold in $\bbr^n$ with the same pinched condition as Andrews-Baker's will be convergent to a round point in finite time.
\end{abstract}

\maketitle

\tableofcontents

\section{Introduction}

As is known, the mean curvature flow (MCF) was proposed in 1956 by W. Mullins to describe the formation of grain boundaries in annealing metals. Brakke (\cite{bra}) introduced the motion of submanifolds by MCF in arbitrary codimension and constructed a generalized varifold solution for all time. Since then there have been fruitful interesting results on MCF up to now, in particular, for hypersurfaces in Euclidean space. For example, Huisken (\cite{hui84}) showed that any compact and uniformly convex hypersurface in the Euclidean space is convergent under the MCF to a round point in a finite time and, in the case of higher codimension, Andrews and Baker proved (\cite{a-b} or \cite{b}; see Theorem \ref{thmab} below) that if the initial submanifold is compact and its second fundamental form satisfies a suitable pinching condition, then the corresponding MCF in the Euclidean space must be convergent to a round point in finite time. The latter theorem was
later generalized to MCFs in both spherical and hyperbolic space forms, see Baker (\cite{b}) and Liu-Xu-Ye-Zhao (\cite{l--z11} and \cite{l--z}). For other progresses on the MCFs, we refer the readers to the references \cite{andr}, \cite{l--z12}, \cite{smo11} and \cite{w} etc.

In this paper we aim to study some more general flow that, in a direction, generalizes the usual MCF. The motivation of our consideration is as follows:

Let $M$ be a compact manifold of dimension $m$, and $(N,\ol g)$ a Riemannian manifold of dimension $n:=m+p$ with $p\geq 1$. Denote
by ${\mathcal F}(M,N)$ the set of all smooth immersions of $M$ into $N$. For a given $F_0\in {\mathcal F}(M,N)$, one may consider the following modified mean curvature flow with an external force $\ol W\in\Gamma(TN)$:
\be\label{0}
\begin{cases}\pp{F}{t}=a(F,t)H_F+\phi(t)\ol W\circ F,\\
F(\cdot,0)\equiv F_0
\end{cases}
\ee
where $F_t:=F(\cdot,t)\in {\mathcal F}(M,N)$, $a\in C^\infty(N\times[0,T_0))$ is a positive smooth function for some large $T_0>0$, $H:=H_F$ is the mean curvature of $F_t:M\to N$, and $\phi\in C^\infty[0,T_0)$.

\begin{rmk} Since the tangential component $(\ol W\circ F)^\top$ of $\ol W\circ F$ does not essentially affect the behavior of the evolving of submanifolds, the curvature flow \eqref{0} can be equivalent to the following flow of the normal version:
\be\label{1}
\begin{cases}
\pp{F}{t}=a(F,t)H_F+\phi(t)(\ol W\circ F)^\bot,\\
F(\cdot,0)\equiv F_0.
\end{cases}
\ee
\end{rmk}

Note that special cases of \eqref{0} or \eqref{1} are well-known,  among which we list a few:

(1) The most important case is the mean curvature flow which corresponds to
$a\equiv 1$, and $\phi\equiv 0$ or $\ol W\equiv 0$:
\be\label{mcf}
\pp{F}{t}=H_F,\quad
F(\cdot,0)\equiv F_0.
\ee

(2) The externally forced mean curvature flow ($a\equiv 1$). This case has also been studied by many authors in recent years from different point of views. For example, the mean curvature flow with density (see \cite{b-r10} and \cite{b-r14} for Gauss mean curvature flow in real space forms);  that in the Euclidean space $\bbr^n$ with the external force in the direction of position vector (\cite{g-l-w}, \cite{s-s}); some more general flows in $\bbr^n$ when $\phi\equiv 1$ and $\ol W=\bar\nabla\psi$ for certain smooth functions $\psi\in C^\infty(\bbr^n)$ (\cite{l-j07}, \cite{l-j08} and \cite{l-s}), which are related to the study of the Ginzburg-Landau vortex (\cite{j-x03} and \cite{j-l06}).

(3) Let $\rho>0$ be a smooth function on $N$ and $\td g=\rho^2\ol g$. Then we can consider the MCF in the new Riemannian manifold $(N,\td g)$:
\be\label{2}
\pp{}{t}F=\td H_F
\ee
where $\td H_F$ is the mean curvature of the same immersion $F_t:M\to N$ but with respect to the conformal metric $\td g$. By a direct computation, one easily find that in terms of the mean curvature $H$ of the original submanifolds of $(N,\ol g)$, \eqref{2} is changed into
\be\label{3}
\pp{F}{t}=\rho^{-2}(F)\left(H_F+m(\bar\nabla_{\ol g}\log\rho)^\bot\circ F\right).
\ee
This is of special significance because, given an arbitrarily Riemannian manifolds $(N,\td g)$, the MCF in $(N,\td g)$ may be alternately studied by choosing a possibly simpler or standard metric $\ol g$ in the conformal class, or vice versa. Take, say, $N=\bbr^n$.

Presently, we are mainly interested in a special case of \eqref{0} or \eqref{1} when $\phi\equiv0$ or $\ol W\equiv 0$, that is, we are to consider the following flow of submanifolds:
\be\label{4}
\begin{cases}
\pp{F}{t}=a(F,t)H_F,\\
F(\cdot,0)\equiv F_0
\end{cases}
\ee
where $a_t=a(\cdot,t)$ is a fixed family of positive smooth functions on $N$, and $F_0:M\to N$ is a given immersion. By using the known trick of De Turck, it is not hard to show that \eqref{4} has a short-time existence of solution for each $F_0$ (see Theorem \ref{exiuni} in section \ref{s2}). Apparently, a flow of the form \eqref{4} can be viewed as the flow of conformal maps driven by the normal tension: the normal part of the tension field $\tau$ of the comformal map $F_t:(M,a^{-2}(F(\cdot,t),t)g)\to (N,\ol g)$ is exactly $a(F,t)H_F$, where $g=F^*\ol g$ is the induced metric via $F$.

On the other hand, if $\ol W$ is a conformal vector field on $(N,\ol g)$ with the one-parametric transformations $-\bar\vfi_s$, then we are able to prove (see Theorem \ref{thm3.1} in Section\ref{s3}) that, up to some diffeomorphisms on $M$, the following mean curvature flow
\be\label{5}
\pp{F}{t}=H_F+\phi(t)\ol W\circ F,\quad F(\cdot,0)=F_0(\cdot)
\ee
or
\be\label{6}
\pp{F}{t}=H_F+\phi(t)(\ol W\circ F)^\bot,\quad F(\cdot,0)=F_0(\cdot)
\ee
with an external force $\ol W$ is equivalent to a special kind of flow in the form
\be\label{3-1}
\pp{F}{t}=\rho^2(F,t)H_F+m\bar\vfi_{\bar t*}(\bar\nabla\log\rho)^\bot
\ee
where $\rho=\rho(p,t)$ is given by $(\bar\vfi_{\bar t})^*\ol g=\rho^2\ol g$ and $\bar t=\int^t_0\phi(s)ds$. In particular, if $\rho$ only depends on the parameter $t$, then \eqref{3-1} will assume a special case of the form \eqref{4}.

Note that, when the external force $\ol W$ is a closed conformal vector field and $\phi\equiv 1$, the corresponding flow \eqref{6} has been systematically studied in \cite{mntl} and, very recently, the general MCF solitons in the presence of conformal vector fields are studied in \cite{a--r}. However, we also know from \cite{mntl} that the existence of a closed conformal vector field $\ol W$ is rather restrictive for the ambient Riemannian manifold $(N,\ol g)$.

The above discussion naturally leads to the following definition:
\begin{dfn}
The modified mean curvature flows \eqref{4} is called the {\em conformal mean curvature flow (CMCF)}.
\end{dfn}

To study the conformal mean curvature flow, it seems natural for us to first consider the simplest but the most important case that the target $(N,\ol g)$ is taken to be the Euclidean space $\bbr^n$ with $\ol g$ the standard flat metric, where we shall use $(y^A)$ to denote the standard orthogonal coordinates. Thus a map $F\in{\mathcal F}(M):\equiv {\mathcal F}(M,\bbr^n)$ can be expressed by its component functions $F^1,\cdots, F^n$, that is, $F=(F^A):=(F^1,\cdots,F^n)$.

In this paper, therefore, we mainly study the CMCF \eqref{4} with $N\equiv\bbr^n$ and prove a blow-up theorem as follows:
\begin{thm}[see Theorem \ref{thm5.1}]\label{main1}
Let $M$ be a compact manifold of dimension $m\geq 2$ and $a\in C^\infty(\bbr^n\times[0,T_0))$ be a positive function. Then for any given $F_0\in{\mathcal F}(M)$, there exists a maximal and finite $T>0$ such that the CMCF \eqref{4} has a unique maximal solution $F:M\times[0,T)\to\bbr^n$ which blows up at the time $T$ in the sense that $\lim\limits_{t\to T}\max_M|h|^2=+\infty$, where $h\equiv h_t$ is the second fundamental form of the immersion $F_t:M\to\bbr^n$.
\end{thm}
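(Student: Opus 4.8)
The plan is to combine the short-time existence result (Theorem \ref{exiuni}) with a standard blow-up criterion for the second fundamental form and an a priori finite-time argument showing the flow cannot exist forever. First, by Theorem \ref{exiuni} there is a maximal time $T\in(0,+\infty]$ and a unique smooth solution $F:M\times[0,T)\to\bbr^n$ with $F(\cdot,0)=F_0$. The first genuine step is to establish the blow-up criterion: if $T<+\infty$, then necessarily $\limsup_{t\to T}\max_M|h|^2=+\infty$. Equivalently, if $\sup_{[0,T)}\max_M|h|^2<+\infty$, the solution extends past $T$, contradicting maximality. I would obtain this by deriving evolution equations for $g$, $h$, and their higher covariant derivatives under \eqref{4}; since $a$ and all its derivatives are smooth and bounded on the compact set swept out by $F$ up to time $T$ (one must first check that $F(M\times[0,T))$ stays in a compact subset of $\bbr^n$ — this follows because $|\partial_t F|=|a\,H_F|\le C|h|$ is bounded, so $F$ extends continuously to $t=T$), a bound on $|h|$ together with Shi-type interpolation estimates yields uniform bounds on all $|\nabla^k h|$, hence on all derivatives of $g$ and $F$, allowing a standard continuation argument. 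This part is a routine (if lengthy) adaptation of Huisken's and Andrews--Baker's derivative estimates, with the extra factor $a(F,t)$ treated as a tame coefficient.

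The second, and real, step is to prove $T<+\infty$ unconditionally, i.e. that the CMCF of a compact $M$ in $\bbr^n$ must become singular in finite time regardless of pinching. The natural device is to track the evolution of a scalar quantity that is forced to reach a boundary of its range. The classical choice for MCF is the function $\langle F, F\rangle = |F|^2$ (or the smallest enclosing ball): under \eqref{mcf} one has $\partial_t |F|^2 = 2\langle F, H_F\rangle = -2m + \Delta_g |F|^2$, and the maximum principle applied to $\max_M|F|^2$ forces a finite extinction-type bound $\max_M|F|^2(t)\le \max_M|F|^2(0)-2mt$ (using that the pointwise term at a spatial max of $|F|^2$ is $\le -2m$), which is impossible for $t$ large. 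Under the conformal modification \eqref{4} the computation gives, at a point realizing $\max_M|F|^2(t)$, where $\nabla|F|^2=0$ and $\Delta_g|F|^2\le 0$,
\be
\tfrac{d}{dt}\max_M|F|^2 \;\le\; 2\langle F, a(F,t)H_F\rangle \;=\; a(F,t)\big(\Delta_g|F|^2 - 2m\big) \;\le\; -2m\,a(F,t),
\ee
because $\langle F, H_F\rangle = \tfrac12\Delta_g|F|^2 - m$ still holds in $\bbr^n$ (the mean curvature vector satisfies $\Delta_g F = H_F$). Since on the relevant compact region $a$ is bounded below by a positive constant $a_0>0$, we get $\max_M|F|^2(t)\le \max_M|F_0|^2 - 2m a_0 t$, forcing $T\le \max_M|F_0|^2/(2m a_0)<+\infty$; one should first confine attention to the region where the flow lives to legitimately extract $a_0$, which is consistent with the containment already noted. (One might need to iterate: fix $R$ with $F_0(M)\subset B_R$, let $a_0=\min_{\ol{B_R}\times[0,T_0)}a>0$; since $|F|^2$ is nonincreasing at its max, $F(M\times[0,T))\subset \ol{B_R}$, so the bound is self-consistent.)

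Combining the two steps finishes the proof: $T<+\infty$ by the barrier argument, and then the blow-up criterion forces $\lim_{t\to T}\max_M|h|^2=+\infty$ (upgrading $\limsup$ to $\lim$ via the same continuation estimates applied on any time interval on which $|h|$ stays bounded). Uniqueness and smoothness of the maximal solution are inherited from Theorem \ref{exiuni}. The main obstacle is the derivative-estimate machinery underlying the blow-up criterion — one must verify that the interpolation and Shi-type bounds of Andrews--Baker go through with the non-constant conformal factor $a(F,t)$, keeping careful track that all terms involving derivatives of $a$ are controlled once $F$ ranges in a fixed compact set and $|h|$ is bounded. The finite-time barrier argument, by contrast, is short and robust; the hypothesis $m\ge 2$ is not needed for it but is presumably used elsewhere (e.g. for the pinching/convergence companion theorem), and the argument above works verbatim for $m\ge 1$.
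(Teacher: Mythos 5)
Your proposal follows essentially the same route as the paper: short-time existence from Theorem \ref{exiuni}, confinement of the flow to a fixed ball via a maximum-principle bound on $|F|^2$, a finite-time barrier from that same quantity (the paper's Lemma \ref{lem5.2} and the lemma immediately following it), and a continuation argument built on higher-derivative estimates for $h$ and $F$ in the spirit of Andrews--Baker (the paper's Propositions \ref{prop5.7}--\ref{prop5.11} and the final convergence-to-limit-immersion argument). The one logical slip is in the first step of your blow-up criterion: you justify compactness of $F(M\times[0,T))$ by saying that $|\partial_t F|=a|H_F|\le C|h|$ is bounded, but this already presupposes a uniform bound on $a\circ F$, which is exactly what compactness of the image is needed to produce; the argument as stated is circular. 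The correct justification is the maximum-principle bound on $|F|^2$ that you give later in parentheses (it uses only $a>0$, not a uniform bound on $a$), and the paper proves this first, as Lemma \ref{lem5.2}, precisely to avoid the circularity. With that step moved to the front, your outline matches the paper's proof.
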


We also follow the idea of \cite{a-b} to derive some general formulas and inequalities for the flow \eqref{4} in case that $N=\bbr^n$, which we put in the appendix of this paper. We reasonably believe that these computations will be useful in the further study of \eqref{4} in the Euclidean space. Currently, as a direct application, we alternatively give a direct proof of the following theorem (in Section \ref{s7}) which generalizes one of the main theorems of \cite{b-r14}:

\begin{thm}[see Theorem \ref{thm3.2}]\label{main2}
Let $M$ be as in Theorem \ref{main1}. Suppose that the initial immersion $F_0\in{\mathcal F}(M)$ satisfies the following two conditions:

(1) the mean curvature $H$ does not vanish everywhere;

(2) the square of the norm of the second fundamental form $|h|^2\leq c|H|^2$ for some constant $c$ satisfying
\be\label{c}c\leq\fr4{3m}\text{ in case }2\leq m\leq 4;\quad c\leq\fr1{m-1}\text{ in case } m\geq 5.\ee
Then the mean curvature flow \eqref{5} or \eqref{6} with an external conformal force has a unique smooth solution $F:M\times[0,T)\to\bbr^{m+p}$ on a finite maximal time interval, and $F_t(M)$ converges uniformly to a round point in $\bbr^{m+p}$.
\end{thm}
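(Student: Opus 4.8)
The plan is to remove the external force by a time-dependent \mo transformation of $\bbr^{m+p}$, turning \eqref{5} (equivalently \eqref{6}) into a conformal mean curvature flow of the form \eqref{4}, to recognize that CMCF as a time-reparametrization of the ordinary mean curvature flow \eqref{mcf}, and then to transport the Andrews--Baker convergence theorem (Theorem \ref{thmab}) through the correspondence, using Theorem \ref{main1} to pin down the finiteness of the maximal time.

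First I would invoke Theorem \ref{exiuni} to obtain a unique smooth solution $F$ of \eqref{5} on a maximal interval $[0,T)$. Let $\bar\vfi_s$ be the one-parameter group of conformal diffeomorphisms of $\bbr^{m+p}$ attached to $\ol W$ as in Theorem \ref{thm3.1}, write $\bar\vfi_{\bar t}^{\,*}\ol g=\rho^2\ol g$ with $\bar t=\bar t(t)=\int_0^t\phi(s)\,ds$, and set $G_t:=\bar\vfi_{\bar t}^{-1}\circ F_t$, so that $F_t(M)=\bar\vfi_{\bar t}\!\left(G_t(M)\right)$. By Theorem \ref{thm3.1}, $G_t$ solves \eqref{3-1} modulo a time-dependent reparametrization of $M$ which we normalize to be the identity at $t=0$; in particular $G_0=F_0$, so conditions (1) and (2) hold for $G_0$. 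When $\rho$ depends only on $t$ (for example when $\ol W$ is homothetic) the correction term $m\bar\vfi_{\bar t*}(\bar\nabla\log\rho)^\bot$ in \eqref{3-1} vanishes and \eqref{3-1} becomes the CMCF \eqref{4} with $a\equiv a(t):=\rho^2(t)>0$; the monotone time change $s(t):=\int_0^t a(\tau)\,d\tau$ then converts $\partial_tG=a(t)H_G$ into the genuine mean curvature flow $\partial_sG=H_G$, with the same image and the same second fundamental form up to this change of variable.

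Since the reparametrized flow $\partial_sG=H_G$ starts from the compact immersion $G_0=F_0$ obeying $|h_0|^2\le c|H_0|^2$ with $c$ as in \eqref{c} and $H_0\not\equiv0$, Theorem \ref{thmab} gives that it exists on a finite $s$-interval and $G_s(M)$ converges uniformly to a round point $q_0\in\bbr^{m+p}$. Applying Theorem \ref{main1} to the CMCF $\partial_tG=a(t)H_G$ shows, consistently, that $T<\infty$ and $\max_M|h_t|^2\to+\infty$ as $t\to T$. Finally $\bar t(t)\to\bar t(T)<\infty$, so $\bar\vfi_{\bar t(t)}\to\bar\vfi_{\bar t(T)}=:\Phi$; in the homothetic case $\Phi$ is an affine similarity of $\bbr^{m+p}$, hence a global diffeomorphism sending round $m$-spheres to round $m$-spheres, so $F_t(M)=\bar\vfi_{\bar t(t)}\!\left(G_t(M)\right)$ converges uniformly to the round point $\Phi(q_0)$, which is the assertion.

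The genuinely hard part is the non-homothetic case: then $\rho=\rho(p,t)$ depends on the point, \eqref{3-1} carries the normal term $m\bar\vfi_{\bar t*}(\bar\nabla\log\rho)^\bot$ and is no longer a reparametrized mean curvature flow, while $\Phi$ now has a pole. I would work directly on \eqref{3-1} using the Andrews--Baker-type evolution equations and differential inequalities of the Appendix: re-run the tensor maximum principle to see that the pinching $|h|^2\le c|H|^2$ is preserved and $H$ stays away from $0$, invoke a suitable version of Theorem \ref{main1} for finite-time blow-up, and perform a parabolic rescaling at the singular time, in which the extra term $m\bar\vfi_{\bar t*}(\bar\nabla\log\rho)^\bot$, being uniformly bounded as $G_t(M)$ contracts, drops out in the limit, so that the blow-up limit is a round $m$-sphere; an avoidance-type argument shows the limit point stays off the pole of $\Phi$, and transporting back by $\Phi$ finishes the proof. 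The main obstacle throughout is to keep that lower-order term under control in the reaction terms of the pinching inequality and in the rescaling argument.
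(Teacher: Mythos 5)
Your first two paragraphs reproduce the paper's argument for the case where $\rho$ is independent of the point, and that part is fine (modulo the observation, used implicitly, that the similarity factor $\alpha^2(t)$ stays bounded above and below, which the paper gets from Lemma \ref{lem5.2}). However, the crucial idea you are missing is that \emph{there is no ``non-homothetic case.''} The hypothesis of Theorem \ref{thm3.1}, and of Theorem \ref{main2}, is that $\bar\vfi_{s}$ is a one-parameter family of diffeomorphisms of the \emph{whole} of $N=\bbr^{m+p}$; since $m\geq 2$ the ambient dimension $n=m+p$ is at least $3$, and Liouville's theorem then forces every conformal diffeomorphism of $\bbr^n$ onto itself to be an affine similarity $\bar\vfi_s(y)=\bar y_0(s)+\alpha(s)A(s)(y-y_0(s))$ with $A(s)\in O(n)$. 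Consequently $\rho=|\alpha(\bar t(t))|$ depends on $t$ alone, the term $m\bar\vfi_{\bar t*}(\bar\nabla\log\rho)^\bot$ in \eqref{3-1} drops out automatically, and $\Phi:=\bar\vfi_{\bar t(T)}$ has no pole. This is exactly the step the paper takes at the start of its proof of Theorem \ref{thm3.2}, and it is what collapses the entire argument to the easy route you described.

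Your third paragraph therefore addresses a situation that cannot occur under the stated hypotheses; moreover, as written it is a programmatic sketch rather than a proof. Re-deriving preservation of the pinching $|h|^2\le c|H|^2$ for \eqref{3-1} with a nontrivial normal forcing term, proving a blow-up/rescaling theorem for that modified flow, and showing the contracting submanifold avoids the pole of an inversion $\Phi$ are all genuinely nontrivial and none of them is carried out. You should replace that discussion by the Liouville argument; once you have it, the chain (Theorem \ref{thm3.1}) $\to$ (\eqref{3-1} with $a=\alpha^2(t)$) $\to$ (time change to \eqref{mcf}) $\to$ (Theorem \ref{thmab}) $\to$ (push forward by the bounded similarity $\bar\vfi_{\bar t(t)}$) is complete, and Theorem \ref{main1} is not actually needed here except to reconfirm finiteness of $T$.
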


We should remark that, to our point of view, the main theorem of \cite{a-b} is among the first and the most important generalizations of the famous convergence theorem for convex hypersurfaces (\cite{hui84}) by Huisken to the case of higher codimension. Here we would like to restate the theorem of Andrews and Bakes as follows:

\begin{thm}\label{thmab}
Let $M$ be as in Theorem \ref{main1}. If for the initial immersion $F_0\in{\mathcal F}(M)$, the mean curvature $H\neq 0$ everywhere and $|h|^2\leq c|H|^2$ for some constant $c$ satisfies \eqref{c}, then the mean curvature flow \eqref{mcf} has a unique smooth solution on a finite maximal time interval, and it converges uniformly to a round point.
\end{thm}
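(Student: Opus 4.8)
The plan is to follow Huisken's scheme for convex hypersurfaces, carried out in higher codimension as in \cite{a-b}. First, short-time existence and the evolution equations: for a compact initial immersion \eqref{mcf} is strictly parabolic after the De Turck trick (compare Theorem \ref{exiuni}), so a unique smooth solution exists on a maximal interval $[0,T)$. I would then record the MCF evolution equations for the induced metric, for $H$ and for $h$, in particular the Simons-type reaction identities
\be\label{react}
\pp{}{t}|h|^2=\Delta|h|^2-2|\nabla h|^2+2R_1+2R_2,\qquad
\pp{}{t}|H|^2=\Delta|H|^2-2|\nabla H|^2+2\widetilde R_1,
\ee
where $R_1,\widetilde R_1$ are the quartic-in-$h$ contractions already present for hypersurfaces and $R_2$ is the extra term built from the normal curvature $R^\bot$ (equivalently, from the commutators of the shape operators); in codimension one $R_2\equiv0$ and one recovers Huisken's formulas.

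The heart of the argument is to show that the closed pinching condition $|h|^2\le c|H|^2$ with $H\ne0$ is preserved by the flow whenever $c$ satisfies \eqref{c}. Setting $Q=c|H|^2-|h|^2$ and using \eqref{react} one obtains $\partial_tQ\ge\Delta Q+(\text{gradient terms})+2(c\widetilde R_1-R_1)-2R_2$, so the task reduces to the pointwise algebraic inequality $c\widetilde R_1-R_1-R_2\ge0$ on the locus $|h|^2=c|H|^2$. This I would prove by splitting $h=\tfrac1mH\,g+\sch$, estimating $|R_1-c\widetilde R_1|$ and $|R_2|$ in terms of $|\sch|^2$, $|H|^2$ and $|\nabla H|^2$, and optimizing over symmetric bilinear forms valued in the normal bundle (a Cauchy--Schwarz / eigenvalue argument); the constant $\tfrac4{3m}$ is exactly what the $R_2$-estimate forces. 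Since $Q$ is a scalar the ordinary parabolic maximum principle applies, and $H\ne0$ is preserved at the same time, because $|h|^2\le c|H|^2<\tfrac1{m-1}|H|^2$ prevents $|H|^2$ from vanishing while $|h|^2>0$, with a separate (Hamilton--Harnack type) argument ruling out $H\equiv0$; in particular $|H|^2$ stays uniformly comparable to $|h|^2$, which is what allows dividing by powers of $|H|$ later. That the maximal time $T$ is finite and $\max_M|h|^2\to\infty$ as $t\to T$ follows at once from Theorem \ref{main1} with $a\equiv1$ (alternatively, from comparison with an enclosing shrinking round sphere).

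Next I would improve the pinching and establish a gradient estimate. Following Huisken, study $f_\sigma=(|h|^2-\tfrac1m|H|^2)|H|^{-2(1-\sigma)}$ for small $\sigma>0$, derive its evolution inequality, and absorb the bad terms by a Stampacchia iteration / $L^p$-estimate on the evolving submanifold using a Sobolev inequality together with the preserved pinching --- this is the step that requires $c\le\tfrac1{m-1}$ (automatically satisfied when $c\le\tfrac4{3m}$, i.e. for $2\le m\le4$), so that the two regimes in \eqref{c} are exactly the cone-preservation bound and the roundness bound. The output is $|h|^2-\tfrac1m|H|^2\le C|H|^{2-\delta}$ for some $\delta>0$, i.e. the traceless part $\sch$ becomes negligible in a scale-invariant sense, together with $|\nabla h|^2\le\eta|H|^4+C_\eta$ for every $\eta>0$ and, by bootstrapping, bounds on all higher covariant derivatives of $h$ relative to powers of $|H|$.

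Finally, rescaling and identification of the limit: normalizing the flow by $\widetilde F=\psi(t)(F-p_0)$ with $\psi(t)\to\infty$ chosen to keep, say, the area fixed, the estimates above force $\sch\to0$, $\nabla h\to0$ and all higher derivatives bounded along the rescaled flow; hence the rescaled immersions converge in $C^\infty$ to a compact totally umbilic submanifold of $\bbr^{m+p}$, which is a round $m$-sphere lying in an affine $(m+1)$-plane. Undoing the rescaling shows that $F_t(M)$ contracts to the single point $p_0$ with asymptotically spherical shape, which is the asserted convergence to a round point. The main obstacle is the preservation-of-pinching step together with its higher-codimension counterpart in the roundness estimate: the normal-curvature reaction term $R_2$ makes the sharp algebra for $R_1,\widetilde R_1,R_2$ on normal-bundle-valued symmetric $2$-tensors delicate, and it is precisely this that pins down the constants in \eqref{c}; a secondary subtlety is the codimension reduction, i.e.\ showing that the limiting umbilic submanifold has first normal space of rank one, which must be extracted alongside the improving-pinching estimates.
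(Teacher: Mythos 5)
Your outline reproduces the Andrews--Baker scheme exactly as it appears both in \cite{a-b} and in this paper's independent reproof of the conformal generalization (Theorem \ref{thm7.1} together with Lemmas \ref{lem6.1}--\ref{lem6.4}, Proposition \ref{prop6.5} and Corollary \ref{cor6.11} in the appendix), which specializes to Theorem \ref{thmab} on setting $a\equiv 1$: preservation of the pinching cone via the maximum principle and the quartic reaction-term inequality $R_1-cR_2\le0$, improvement of pinching through $f_\sigma$ and a Stampacchia iteration, the gradient estimate $|\nabla H|^2\le\eta|H|^4+C_\eta$, contraction to a point via Chen's sectional curvature estimate and Bonnet's theorem, and finally a normalized rescaling converging to a compact totally umbilic submanifold, hence a round sphere by the Codazzi theorem. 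Your notation differs superficially from the paper's (here $R_1$ already absorbs the normal curvature term $|R^\bot|^2$ and $R_2=|\langle H,h\rangle|^2$ is the reaction term of $|H|^2$, rather than the split into $R_1,\widetilde R_1,R_2$ you use), but the content is the same.
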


Obviously, Theorem \ref{main2} generalizes Theorem \ref{thmab} in a way, which we have shown to be a direct corollary of Theorem \ref{thm3.1} and Theorem \ref{thmab} together with a well-known Liouville theorem for conformal transformations on the Euclidean space (see Section \ref{s3}).

\section{Short time existence of the conformal mean curvature flow}\label{s2}

Same as the standard mean curvature flow, Equation \eqref{4} is generally a degenerate parabolic partial equation. To be able to use the standard theory of parabolic equations, it is convenient to appeal the De Turck trick (see, for example,\cite{dtu}, \cite{b} etc).

First we give some preparation on notations. For any given Riemannian metric $g$ on $M$, denote by $\nabla^g$ the Levi-Civita connection of $g$. By fixing arbitrarily a Riemannian metric $\td g$ we define a metric-dependent vector field $W=W(g)$ on $M$ such that $W(g)=\tr_g(\nabla^g-\nabla^{\td g})$. In particular, $W$ restricts to an immersion-dependent vector field $W(F)\equiv W(g_F)$ for $F\in {\mathcal F}(M,N)$ where $g:=g_F$ is the induced metric via $F$ of the metric $\ol g$ on the target $N$.

Now, similar to the case of mean curvature flow,  we introduce the following {\em De Turck mean curvature flow}:
\be\label{dtmcf}
\pp{\hat F}{t}=a(\hat F,t)(H(\hat F)+\hat F_*W(\hat F)).
\ee
Under local coordinate systems $(x^i)$ on $M$ and $(y^A)$ on $N$, respectively, we write
\begin{align*}
e_i=&\pp{}{x^i},\quad F_i\equiv F_{t*}(e_i)=\sum_iF^A_i\pp{}{y^A},\\
W(g)=&W^ke_k=g^{ij}(\Gamma^k_{ij}-\td\Gamma^k_{ij})e_k,
\end{align*}
where $\Gamma^k_{ij}$ and $\td\Gamma^k_{ij}$ are the Christoffel symbols for $g$ and $\td g$, respectively.
Then the two flows \eqref{4} and \eqref{dtmcf} have the following local representations respectively:
\be\label{cfmcf1}
\pp{F^A}{t}=a(F,t)g_F^{ij}F^A_{,ij}
\ee
\be\label{dtmcf1}
\pp{\hat F^A}{t}=a(\hat F,t)(g_{\hat F} ^{ij}\hat F^A_{,ij}+\hat F^A_kW^k(\hat F))\equiv a(\hat F,t)g_{\hat F} ^{ij}\hat F^A_{;ij},
\ee
where the subscript ``$,$'' denotes the covariant derivatives w.r.t the time-dependent metric $g_F$ and $g_{\hat F}$ accordingly, while the subscript $``;$'' denotes the covariant derivative w.r.t the fixed metric $\td g$. From \eqref{dtmcf1} it is clearly seen that \eqref{dtmcf} is a (nondegenerate) parabolic equation and thus it has a short-time existent solution $\hat F=\hat F(x,t)$, $t\in [0,T)$, according to the standard theory of parabolic equations. So we have a well-defined time-dependent vector field $W=W(\hat F)$ on $M$ for all $t\in [0,T)$.

Now we recall a known existence result as follows:

\begin{lem}[see for example \cite{c-k}, p.82, Lemma 3.15]\label{tmdptfl} If $\{X_t:0 < t < T \leq\infty\}$ is a continuous time-dependent
family of vector fields on a compact manifold $M$, then there exists uniquely a one-parameter
family of diffeomorphisms
$$\{\vfi_t:M\to M; 0\leq t < T \leq\infty\}$$
defined by $\vfi:M\times[0,T)\to M$
on the same time interval such that
$$\pp{\vfi_t}{t} (x)\equiv\vfi_*\left(\pp{}{t}\right) = X_t(\vfi_t(x)),\quad
\vfi_0(x) = x.
$$
for all $x\in M$ and $t\in [0, T)$.
\end{lem}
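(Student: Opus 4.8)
The plan is to deduce the assertion from the classical existence, uniqueness and smooth-dependence theory for ordinary differential equations, the compactness of $M$ being needed only to upgrade a local solution to one defined on all of $[0,T)$. Fixing $x\in M$, I would regard $t\mapsto\varphi_t(x)$ as the solution of the non-autonomous initial value problem $\dot c(t)=X_t(c(t))$, $c(0)=x$. Written out in any coordinate chart this is a first-order system whose right-hand side is continuous in $t$ and smooth (hence locally Lipschitz, locally uniformly in $t$) in the space variables, so the Picard--Lindel\"of theorem gives a unique maximal solution on some relatively open interval $[0,\tau(x))\subseteq[0,T)$, and the theorem on differentiable dependence on initial data shows that $(x,t)\mapsto\varphi_t(x)$ is smooth in $x$ on the open set $\Omega=\{(x,t):0\le t<\tau(x)\}\subseteq M\times[0,T)$.

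The main (and essentially only nonroutine) point is to show $\tau(x)=T$ for every $x$. If instead $b:=\tau(x)<T$, then the integral curve $s\mapsto\varphi_s(x)$ stays in the compact manifold $M$ for $s\in[0,b)$ while $X$ is bounded on the compact set $M\times[0,b]$; hence $s\mapsto\varphi_s(x)$ is uniformly Lipschitz on $[0,b)$, extends continuously to $s=b$, and the initial value problem may be restarted at $(\varphi_b(x),b)$, contradicting maximality of $b$. (Equivalently one works on the augmented manifold $M\times[0,T)$ with the vector field $(q,s)\mapsto X_s(q)\oplus\partial_s$ and applies the standard escape lemma.) Without the compactness of $M$ this is exactly where things could go wrong: the integral curves might leave every coordinate chart, or run off to infinity, before reaching time $T$. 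Granting the claim, $\varphi:M\times[0,T)\to M$ is a well-defined smooth map with $\varphi_0=\id_M$ and $\partial_t\varphi_t(x)=X_t(\varphi_t(x))$ for all $x$ and all $t\in[0,T)$.

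It remains to see that each $\varphi_t$ is a diffeomorphism and that the family is unique. For this I would introduce the two-parameter flow $\varphi_{s,r}:M\to M$ sending $x$ to the value at time $s$ of the integral curve that equals $x$ at time $r$; the same compactness argument makes $\varphi_{s,r}$ well-defined and smooth for all $r,s\in[0,T)$, with $\varphi_{s,s}=\id_M$ and $\varphi_t=\varphi_{t,0}$. Uniqueness of integral curves yields the cocycle identity $\varphi_{s,r}\circ\varphi_{r,u}=\varphi_{s,u}$, and specialising $(s,r,u)$ to $(t,0,t)$ and to $(0,t,0)$ shows that $\varphi_{t,0}$ and $\varphi_{0,t}$ are mutually inverse smooth maps; hence $\varphi_t$ is a diffeomorphism of $M$. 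Finally, if $\{\psi_t\}$ is any other family with the stated properties, then for fixed $x$ both $t\mapsto\varphi_t(x)$ and $t\mapsto\psi_t(x)$ solve the same initial value problem, so they coincide by the uniqueness part of Picard--Lindel\"of; thus $\psi=\varphi$.
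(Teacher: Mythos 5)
Your proposal is correct and is precisely the standard argument: the paper does not actually prove this lemma but simply cites it to Chow--Knopf (\cite{c-k}, Lemma 3.15), and the proof given there proceeds exactly as you describe — reduce to Picard--Lindel\"of for the non-autonomous system, use compactness of $M$ together with boundedness of $X$ on $M\times[0,b]$ to rule out finite escape time, and invert via the two-parameter flow and cocycle identity. One small remark: when you pass to the backward flows $\varphi_{0,t}$ to produce the inverse, you should note that the same compactness/boundedness argument applies to the time-reversed system, so those maps are also globally defined on $[0,T)$; this is implicit in your write-up but worth making explicit. You also correctly read ``continuous time-dependent family of vector fields'' in the standard way (each $X_t$ smooth in the space variable, jointly continuous in $t$), which is exactly the regularity under which Picard--Lindel\"of gives both local existence and uniqueness.
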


Substituting $X$ with $-a(\hat F,t)W(\hat F)$ we obtain a family of diffeomorphisms $\vfi_t\equiv\vfi(\cdot,t)$, $0\leq t<T$, on $M$. Write $\td x^i_t=\vfi^i_t(x)$ ($0\leq t<T$). Then $(\td x^i_t)$ is a family of local coordinate systems with the parameter $t\in [0,T)$. Define a family of immersions $F_t(x):=\hat F(\vfi_t(x),t)$ and it is easy to see that $g(\hat F_t)=g(F_t)$.

Given a Riemannian metric $g$ on $M$ and an $F\in\mathcal{F}(M,N)$, we always use $\lagl\cdot,\cdot\ragl_g$ to denote the induced inner product on the vector bundle $T^r_s(M)\otimes F^*TN$ by the metrics $g$ and $\ol g$, where $T^r_s(M)$ is the $(r,s)$-tensor bundle on $M$. In particular, we shall omit the subscript $g$ in $\lagl\cdot,\cdot\ragl_g$ when $g$ is the induced metric by $F$. From this we compute
\begin{align*}
\pp{F}{t}\equiv& F_*\left(\pp{}{t}\right)=\left.\hat F_*\left(\pp{}{t}\right)\right|_{(\vfi(x,t),t)}+(\hat F_t)_*\circ\vfi_*\left(\pp{}{t}\right)\\
=&a(\hat F(\vfi(x,t),t),t)g_{\hat F} ^{ij}\hat F_{;ij}-a(\hat F(\vfi(x,t),t),t) (\hat F_t)_*(W(\hat F(\vfi(x,t),t)))\\
=&a(\hat F(\vfi(x,t),t),t)g_{\hat F} ^{ij}(\hat F_{;ij}-\hat F_k(\hat\Gamma^k_{ij}-\td\Gamma^k_{ij}))\\
=&a(\hat F(\vfi(x,t),t),t)g_{\hat F} ^{ij} \hat F_{,ij}\\
=&a(F(x,t),t)g_F^{ij}F_{,ij}=a(F,t)H(F).
\end{align*}
This shows that $F(x,t)$ is a solution of the flow \eqref{4}.

Conversely, for a given solution $F=F(x,t)$ of the flow \eqref{4}, we can similarly find another time-dependent vector field $\hat W(F)$ with the corresponding one-parameter transformations $\hat\vfi_t$. Then we obtain a family of immersions $\hat F(x,t)=F(\hat\vfi(x,t),t)$ which solve the De Turck mean curvature flow \eqref{dtmcf}.

The above argument gives the following existence and uniqueness theorem:

\begin{thm}\label{exiuni}
For any $F_0\in {\mathcal F}(M,N)$, there exists a maximal $T$: $0<T\leq +\infty$ with a unique smooth solution $F:M\times[0,T)\to N$ to the CMCF \eqref{4}.
\end{thm}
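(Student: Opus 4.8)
The plan is to establish short-time existence and uniqueness for the CMCF \eqref{4} by transferring the problem to the De Turck mean curvature flow \eqref{dtmcf}, which is strictly parabolic, and then converting back by composing with a family of diffeomorphisms. Concretely, I would proceed as follows. First, I would verify carefully that the local expression \eqref{dtmcf1} of the De Turck flow is a (nonlinear, but nondegenerate) parabolic system for the vector-valued function $\hat F=(\hat F^A)$: the principal symbol is $a(\hat F,t)g_{\hat F}^{ij}\xi_i\xi_j$ times the identity on $\bbr^n$, which is positive definite since $a>0$ and $g_{\hat F}$ is Riemannian. By the standard theory of quasilinear parabolic systems (or the inverse function theorem / implicit function theorem argument in Hölder or Sobolev spaces, as in the references \cite{c-k}, \cite{b}), there exists a maximal $T\in(0,+\infty]$ and a unique smooth solution $\hat F:M\times[0,T)\to N$ with $\hat F(\cdot,0)=F_0$.

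Given $\hat F$, the time-dependent vector field $-a(\hat F,t)W(\hat F)$ is smooth and hence continuous on $M\times[0,T)$, so Lemma \ref{tmdptfl} produces a unique one-parameter family of diffeomorphisms $\vfi_t:M\to M$ with $\vfi_0=\id$ and $\partial_t\vfi_t=-a(\hat F,t)W(\hat F)\circ\vfi_t$, defined on the same interval $[0,T)$. Setting $F_t:=\hat F_t\circ\vfi_t$, the chain rule computation already displayed in the excerpt shows $\partial_t F=a(F,t)H(F)$, while $F(\cdot,0)=\hat F_0\circ\vfi_0=F_0$; thus $F$ is a smooth solution of \eqref{4} on $[0,T)$. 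For uniqueness in the reverse direction, given any smooth solution $F$ of \eqref{4}, I would run the De Turck trick backwards: solve the ODE for the vector field $\hat W(F)$ to get diffeomorphisms $\hat\vfi_t$, form $\hat F_t:=F_t\circ\hat\vfi_t$, check it solves \eqref{dtmcf} with the same initial data, and invoke uniqueness for \eqref{dtmcf}. This gives a bijection between solutions of the two flows and hence uniqueness (and maximality) for \eqref{4}.

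The main obstacle, and the point that needs the most care, is the reverse direction — showing that an arbitrary solution of \eqref{4} arises from a solution of the De Turck flow — because the De Turck vector field $\hat W(F)=-W(g_{F_t})$ depends on the evolving induced metric, so the ODE defining $\hat\vfi_t$ and the PDE for $\hat F_t$ are coupled, and one must check consistency (that $g_{\hat F_t}=g_{F_t}\circ\hat\vfi_t$ in the appropriate sense, so that the vector field used to define $\hat\vfi_t$ is indeed the one making $\hat F_t$ solve \eqref{dtmcf}). One handles this exactly as in the forward direction, using that pulling back by a diffeomorphism intertwines the Levi-Civita connections and hence the vector fields $W$; the computation is the mirror image of the displayed one. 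A secondary, purely technical point is the regularity class: one first obtains $\hat F$ in a finite-regularity space and then bootstraps via parabolic Schauder estimates to conclude $\hat F\in C^\infty$, after which $\vfi_t$ and $F_t$ are automatically smooth. None of these steps presents a genuine difficulty beyond what is standard for mean curvature flow; the only nontrivial conceptual ingredient is the De Turck decoupling, which the excerpt has already carried out in the forward direction.
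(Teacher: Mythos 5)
Your proposal follows exactly the paper's argument: apply the De Turck trick to replace \eqref{4} by the strictly parabolic system \eqref{dtmcf}, obtain short-time existence and uniqueness there, pull back by the flow of $-a(\hat F,t)W(\hat F)$ via Lemma \ref{tmdptfl} to produce a solution of \eqref{4}, and invert the construction for uniqueness. Your added remarks about the coupled ODE/PDE consistency in the reverse direction and the parabolic bootstrap to $C^\infty$ are sound and make explicit points the paper leaves implicit, but the route is the same.
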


\section{The mean curvature flow with a conformal external force}\label{s3}

In this section, we aim to deal with the modified mean curvature flow \eqref{5} or \eqref{6} with a conformal external force. Then, as a direct application, we shall give a convergence theorem for the mean curvature flow with a conformal external force in the Euclidean space (see Theorem \ref{thm3.2} below).

For convenience, define
\be\bar t\equiv \bar t(t):=\int^t_0\phi(s)ds.\ee

Firstly, we are to prove the following theorem:

\begin{thm}\label{thm3.1} Let $\ol W$ be a conformal vector field on the Riemannian manifold $(N,\ol g)$ and $\bar\vfi:N\times(-\veps,\veps)\to N$ be the one-parameter
family of diffeomorphisms induced by $-\ol W$. Suppose that $F:M\times [0,T)\to N$ is a solution of \eqref{5} or \eqref{6} for some $T>0$, and $\bar t(t)\in (-\veps,\veps)$ for all $t\in [0,T)$. Then, up to a diffeomorphism on $M$, the map $\hat F:M\times [0,T)\to N$ defined by
\be\label{3.2}
\hat F(p,t):=\bar\vfi(F(p,t),\bar t(t)),\quad \forall (p,t)\in M\times [0,T)
\ee
is a solution of the curvature flow \eqref{3-1} with an external force, where $\rho>0$ is given by $\bar\vfi^*_{\bar t}\ol g=\rho^2\ol g$. In particular, $\hat F$ and $F$ have the same initial submanifold $\hat F_0=F_0$ and, for each moment of time $t$, the images of the corresponding immersions $F$ and $\hat F$ are globally conformal to each other.
\end{thm}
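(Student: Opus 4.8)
The plan is to compute $\partial\hat F/\partial t$ directly from the definition \eqref{3.2} using the chain rule, and to recognize the resulting expression as the right-hand side of \eqref{3-1} after absorbing a tangential term by a time-dependent reparametrization of $M$. First I would differentiate $\hat F(p,t)=\bar\vfi(F(p,t),\bar t(t))$ in $t$, getting two contributions: one from the $M$-variable, namely $(\bar\vfi_{\bar t})_*\big(\partial F/\partial t\big)$, and one from the flow-parameter, namely $\bar t'(t)\cdot\big(\partial\bar\vfi/\partial s\big)\big|_{(F,\bar t)}=\phi(t)\cdot\big(-\ol W\big)\big|_{\bar\vfi(F,\bar t)}$, since $\bar\vfi$ is the flow of $-\ol W$ and $\bar t'(t)=\phi(t)$. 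Substituting $\partial F/\partial t=H_F+\phi(t)(\ol W\circ F)^\bot$ (treating the normal version \eqref{6}; the tangential version \eqref{5} differs only by a tangential term, handled at the end), the $\phi(t)\ol W$ terms should cancel once I use that $\bar\vfi_{\bar t}$ maps $\ol W$ at $F$ to $\ol W$ at $\hat F$ — this is exactly the statement that $\ol W$ is the infinitesimal generator of $\bar\vfi$, so its flow preserves $\ol W$. What remains is $(\bar\vfi_{\bar t})_*(H_F)$.

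The next step is to convert $(\bar\vfi_{\bar t})_*(H_F)$, the pushforward of the mean curvature vector of $F_t$ under the conformal diffeomorphism $\bar\vfi_{\bar t}$, into the mean curvature vector of $\hat F_t$ with respect to $\ol g$, with the conformal correction term. Here I would invoke the standard conformal transformation law for the mean curvature: since $\bar\vfi_{\bar t}$ is conformal with $\bar\vfi_{\bar t}^*\ol g=\rho^2\ol g$, the mean curvature vector of $\hat F_t=\bar\vfi_{\bar t}\circ F_t$ relates to that of $F_t$ by $H_{\hat F}=\rho^{-2}\big[(\bar\vfi_{\bar t})_*H_F + m\,(\bar\vfi_{\bar t})_*(\bar\nabla\log\rho)^\bot\big]$ — essentially the same computation that produced \eqref{3} from \eqref{2}, applied to the conformal change induced by $\bar\vfi_{\bar t}$ rather than a fixed $\rho$. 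Rearranging gives $(\bar\vfi_{\bar t})_*H_F=\rho^2 H_{\hat F}-m\,(\bar\vfi_{\bar t})_*(\bar\nabla\log\rho)^\bot$, so that $\partial\hat F/\partial t=\rho^2 H_{\hat F}+m\,\bar\vfi_{\bar t*}(\bar\nabla\log\rho)^\bot$ up to sign conventions on the gradient term, which is precisely \eqref{3-1}. One must be careful that $\rho$ here is evaluated along the moving submanifold and that $(\cdot)^\bot$ is taken in the appropriate normal bundle (that of $F_t$ before pushforward, equivalently that of $\hat F_t$ after); I would track this bookkeeping explicitly.

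For the tangential version \eqref{5}, the extra tangential term $\phi(t)(\ol W\circ F)^\top$ pushes forward to a tangential vector field along $\hat F_t$; combined with any tangential discrepancy, this contributes only a term of the form $\hat F_{t*}(\text{vector field on }M)$, which — exactly as in the De Turck argument of Section \ref{s2} and via Lemma \ref{tmdptfl} — can be removed by composing with a time-dependent family of diffeomorphisms of $M$, justifying the phrase "up to a diffeomorphism on $M$." The final assertions are then immediate: at $t=0$ we have $\bar t(0)=0$ so $\bar\vfi_{\bar t(0)}=\id_N$ and hence $\hat F_0=F_0$; and for each $t$, $\hat F_t=\bar\vfi_{\bar t}\circ F_t$ shows $\hat F_t(M)=\bar\vfi_{\bar t}(F_t(M))$ is the image of $F_t(M)$ under a conformal diffeomorphism of $(N,\ol g)$, hence globally conformal to it. I expect the main obstacle to be the careful verification of the conformal transformation law for the mean curvature vector under a (non-constant, point-dependent) conformal diffeomorphism — getting the $\rho^{-2}$ factor, the factor $m$, and the normal projection all correct, and checking that differentiating through the $s$-dependence of $\bar\vfi$ genuinely reproduces $-\ol W$ at the image point rather than at the source point — together with the routine but fiddly separation of tangential from normal components so that Lemma \ref{tmdptfl} applies cleanly.
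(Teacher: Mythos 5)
Your outline follows the paper's proof of Theorem~\ref{thm3.1} step by step: differentiate $\hat F$ by the chain rule, use that the flow of $-\ol W$ pushes $\ol W$ forward to itself, apply the conformal transformation law for the mean curvature vector under the conformal diffeomorphism $\bar\vfi_{\bar t}$, and deal with the tangential residue via the ``up to a diffeomorphism on $M$'' clause. So this is essentially the paper's argument, with the conformal change of $H$ left as a black box that the paper proves explicitly via the Koszul formula.

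One substantive slip: in the normal case \eqref{6}, the $\phi(t)\ol W$-contributions do \emph{not} cancel completely. One has
$-\phi(t)\ol W(\hat F)+\phi(t)\bar\vfi_{\bar t*}\big(\ol W^\bot(F)\big)=-\phi(t)\ol W(\hat F)+\phi(t)\ol W^\bot(\hat F)=-\phi(t)\ol W^\top(\hat F)$,
a nonzero tangential residue; the paper keeps this term and only discards it at the very end by taking the normal component, which is what the ``up to a diffeomorphism'' clause licenses. Since you already invoke Lemma~\ref{tmdptfl} to absorb tangential terms for \eqref{5}, the same remark applies here, but you should state it rather than claim cancellation. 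Your hedge on the sign of the $m\,\bar\vfi_{\bar t*}(\bar\nabla\log\rho)^\bot$ term is precisely the thing the paper pins down: the explicit computation gives $\hat h(X,Y)=\bar\vfi_{\bar t*}(h(X,Y))-\lagl X,Y\ragl\bar\vfi_{\bar t*}(\bar\nabla\log\rho)^\bot$, hence $\bar\vfi_{\bar t*}(H)=\rho^2\hat H+m\,\bar\vfi_{\bar t*}(\bar\nabla\log\rho)^\bot$, which is the opposite sign to the transformation law you wrote but exactly what \eqref{3-1} requires.
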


\begin{proof} We prove this theorem by direct computations as follows (taking \eqref{6} as the example): For any $p\in M$ and $t\in [0,T)$, we have
\be\label{barfi}
\pp{\bar\vfi(F(p,t),s)}{s}=-\ol W(\bar\vfi(F(p,t),s)).
\ee
Therefore
\begin{align}
\pp{\hat F(p,t)}{t}=&\pp{\bar\vfi(F(p,t),\bar t(t))}{\bar t}\dd{\bar t}{t}+\bar\vfi_{\bar t*F(p,t)}\left(\pp{F(p,t)}{t}\right)\nnm\\
=&-\phi(t)\ol W(\bar\vfi(F(p,t),\bar t(t)))+\bar\vfi_{\bar t*F(p,t)}(H+\phi(t)\ol W^\bot)\nnm\\
=&-\phi(t)\ol W(\hat F(p,t))+\phi(t)\bar\vfi_{\bar t*F(p,t)}(\ol W^\bot(F(p,t)))+\bar\vfi_{\bar t*F(p,t)}(H).\label{eq3.2}
\end{align}
Moreover, by \eqref{barfi}, we also find
\begin{align}
\bar\vfi_{\bar t*F(p,t)}(\ol W(F(p,t)))=&-\bar\vfi_{\bar t*F(p,t)}\left(\pp{}{s}\Big|_{s=0}\bar\vfi(F(p,t),s)\right)=-\pp{}{s}\Big|_{s=0}\bar\vfi(F(p,t),\bar t(t)+s)\nnm\\
=&\ol W(\bar\vfi(F(p,t),\bar t(t)))=\ol W(\hat F(p,t)).\label{3.3}
\end{align}
Since $\bar\vfi_{\bar t}$ is conformal on $N$, it sends a normal (resp. tangent) vector to a normal (resp. tangent) vector. It then follows from \eqref{3.3} that
$$
\bar\vfi_{\bar t*F(p,t)}(\ol W^\top(F(p,t)))=\ol W^\top(\hat F(p,t)),\quad \bar\vfi_{\bar t*F(p,t)}(\ol W^\bot(F(p,t)))=\ol W^\bot(\hat F(p,t)).
$$
Inserting the second formula into \eqref{eq3.2}, we obtain that
\begin{align}
\pp{\hat F(p,t)}{t}=&-\phi(t)\ol W(\hat F(p,t))+\phi(t)\ol W^\bot(\hat F(p,t))+\bar\vfi_{\bar t*F(p,t)}(H)\nnm\\
=&-\phi(t)\ol W^\top(\hat F(p,t))+\bar\vfi_{\bar t*F(p,t)}(H).\label{eq3.3}
\end{align}

On the other hand, by fixing a $t\in [0,T)$ we get a fixed $\bar t=\bar t(t)$. Denote by $h^{\bar\vfi_{\bar t}}$ the second fundamental form of $\bar\vfi_{\bar t}$ as a smooth map. Then
\be\label{eq3.4}
\bar{D}_{\bar X}\bar\vfi_{\bar t*}(\bar Y)=\bar\vfi_{\bar t*}(\bar{D}_{\bar X}\bar Y)+h^{\bar\vfi_{\bar t}}(\bar X,\bar Y),\quad \forall \bar X,\bar Y\in \Gamma(TN).
\ee
Since $(\bar\vfi_{\bar t})^*_{q}\bar{g}_{\bar\vfi_{\bar t}(q)}=\rho^{2}\bar{g}_q$ for any $q\in N$, we claim that
\be\label{eq3.5}
h^{\bar\vfi_{\bar t}}(\bar X,\bar Y)=\bar X(\log\rho)\bar\vfi_{\bar t*}(\bar Y)+\bar Y(\log\rho)\bar\vfi_{\bar t*}(\bar X)-\lagl\bar X,\bar Y\ragl\bar\vfi_{\bar t*}(\bar\nabla\log\rho).
\ee

In fact, by the Koszul formula for Riemannian connections, for an arbitrary $\bar Z\in\Gamma(TN)$,
\begin{align*}
2\lagl\bar{D}_{\bar X}&\bar\vfi_{\bar t*}(\bar Y),\bar\vfi_{\bar t*}(\bar Z)\ragl
=\bar\vfi_{\bar t*}(\bar X)\lagl\bar\vfi_{\bar t*}(\bar Y),\bar\vfi_{\bar t*}(\bar Z)\ragl +\bar\vfi_{\bar t*}(\bar Y)\lagl\bar\vfi_{\bar t*}(\bar Z),\bar\vfi_{\bar t*}(\bar X)\ragl \\
&-\bar\vfi_{\bar t*}(\bar Z)\lagl\bar\vfi_{\bar t*}(\bar X),\bar\vfi_{\bar t*}(\bar Y)\ragl
+\lagl[\bar\vfi_{\bar t*}(\bar X),\bar\vfi_{\bar t*}(\bar Y)],\bar\vfi_{\bar t*}(\bar Z)\ragl \\
&+\lagl[\bar\vfi_{\bar t*}(\bar Z),\bar\vfi_{\bar t*}(\bar X)],\bar\vfi_{\bar t*}(\bar Y)\ragl -\lagl[\bar\vfi_{\bar t*}(\bar Y),\bar\vfi_{\bar t*}(\bar Z)],\bar\vfi_{\bar t*}(\bar X)\ragl \\
=&\bar\vfi_{\bar t*}(\bar X)(\rho^2\lagl \bar Y,\bar Z\ragl )+\bar\vfi_{\bar t*}(\bar Y)(\rho^2\lagl \bar Z,\bar X\ragl )-\bar\vfi_{\bar t*}(\bar Z)(\rho^2\lagl \bar X,\bar Y\ragl )\\
&+\rho^2(\lagl[\bar X,\bar Y],\bar Z\ragl +\lagl[\bar Z,\bar X],\bar Y\ragl -\lagl[\bar Y,\bar Z],\bar X\ragl )\\
=&2\rho(\bar X(\rho)\lagl \bar Y,\bar Z\ragl
+\bar Y(\rho)\lagl \bar Z,\bar X\ragl -\bar Z(\rho)\lagl \bar X,\bar Y\ragl )\\
&+\rho^2(\bar X\lagl \bar Y,\bar Z\ragl +\bar Y\lagl \bar Z,\bar X\ragl
-\bar Z\lagl \bar X,\bar Y\ragl \\
&+\lagl[\bar X,\bar Y],\bar Z\ragl +\lagl[\bar Z,\bar X],\bar Y\ragl -\lagl[\bar Y,\bar Z],\bar X\ragl )\\
=&2\big(\bar X(\log\rho)\lagl\bar\vfi_{\bar t*}(\bar Y),\bar\vfi_{\bar t*}(\bar Z)\ragl +\bar Y(\log\rho)\lagl\bar\vfi_{\bar t*}(\bar X),\bar\vfi_{\bar t*}(\bar Z)\ragl \\
&-\lagl \bar X,\bar Y\ragl \lagl\bar\vfi_{\bar t*}(\bar\nabla\log\rho),\bar\vfi_{\bar t*}(\bar Z)\ragl \big)
+2\rho^2\lagl\bar{D}_{\bar X}\bar Y,\bar Z\ragl \\
=&2\big(\bar X(\log\rho)\lagl\bar\vfi_{\bar t*}(\bar Y),\bar\vfi_{\bar t*}(\bar Z)\ragl +\bar Y(\log\rho)\lagl\bar\vfi_{\bar t*}(\bar X),\bar\vfi_{\bar t*}(\bar Z)\ragl \\
&-\lagl \bar X,\bar Y\ragl \lagl\bar\vfi_{\bar t*}(\bar\nabla\log\rho),\bar\vfi_{\bar t*}(\bar Z)\ragl +\lagl\bar\vfi_{\bar t*}(\bar{D}_{\bar X}\bar Y),\bar\vfi_{\bar t*}(\bar Z)\ragl \big)\\
=&2\lagl \bar X(\log\rho)\bar\vfi_{\bar t*}(\bar Y)+\bar Y(\log\rho)\bar\vfi_{\bar t*}(\bar X)\\
&-\lagl \bar X,\bar Y\ragl \bar\vfi_{\bar t*}(\bar\nabla\log\rho)+\bar\vfi_{\bar t*}(\bar{D}_{\bar X}\bar Y),\bar\vfi_{\bar t*}(\bar Z)\ragl .
\end{align*}
So it holds that
$$
\bar{D}_{\bar X}\bar\vfi_{\bar t*}(\bar Y)=\bar\vfi_{\bar t*}(\bar{D}_{\bar X}\bar Y)+\bar X(\log\rho)\bar\vfi_{\bar t*}(\bar Y)+\bar Y(\log\rho)\bar\vfi_{\bar t*}(\bar X)-\lagl\bar X,\bar Y\ragl\bar\vfi_{\bar t*}(\bar\nabla\log\rho),
$$
which with \eqref{eq3.4} proves the desired formula \eqref{eq3.5}.

Now for any $X,Y\in\Gamma(M)$, by letting $\bar X=F_{t*}(X)$ and $\bar Y=F_{t*}(Y)$, we get
\begin{align*}
\bar{D}_{X}\hat F_{*}(Y)
=&\bar{D}_{X}\bar\vfi_{\bar t*}(F_{*}(Y))
=\bar\vfi_{\bar t*}(\bar{D}_{X}F_{*}(Y))+X(\log\rho)\bar\vfi_{\bar t*}(F_{*}(Y))\\
&+Y(\log\rho)\bar\vfi_{\bar t*}(F_{*}(X))-\lagl F_{*}(X),F_{*}(Y)\ragl\bar\vfi_{\bar t*}\big(\bar\nabla\log\rho\big)\\
=&\bar\vfi_{\bar t*}\big(F_{*}(D_{X}Y)+h(X,Y)\big)+X(\log\rho)\hat F_{*}(Y)\\
&+Y(\log\rho)\hat F_{*}(X)-\lagl X,Y\ragl \big(\hat F_{*}(\nabla\log\rho)+\bar\vfi_{\bar t*}(\bar\nabla\log\rho)^\bot\big)
\end{align*}
where $h$ is the second fundamental form of $F_t$. Consequently, if $\hat D$ is the Levi-Civita connection of the induced metric $\hat g\equiv (\hat F_t)^*\ol g$ on $M$, then the second fundamental forms $\hat h$ of the immersions $\hat F_t$ is given by
\begin{align}
\hat h(X,Y)=&\bar{D}_{X}\hat F_{*}(Y)-\hat F_{*}(\hat{D}_{X}Y)\nnm\\
=&\bar\vfi_{\bar t*}\big(F_{*}(D_{X}Y)+h(X,Y)\big)+X(\log\rho)\hat F_{*}(Y)+Y(\log\rho)\hat F_{*}(X)\nnm\\
&-\lagl X,Y\ragl \hat F_{*}(\nabla\log\rho)-\hat F_{*}(\hat{D}_{X}Y)-\lagl X,Y\ragl \bar\vfi_{\bar t*}(\bar\nabla\log\rho)^\bot\nnm\\
=&\bar\vfi_{\bar t*}(h(X,Y))+\bar\vfi_{\bar t*}\big(F_{*}(D_{X}Y)\big)-\hat F_{*}(\hat{D}_{X}Y)\nnm\\
&+X(\log\rho)\hat F_{*}(Y)+Y(\log\rho)\hat F_{*}(X)\nnm\\
&-\lagl X,Y\ragl \hat F_{*}(\nabla\log\rho)-\lagl X,Y\ragl \bar\vfi_{\bar t*}(\bar\nabla\log\rho)^\bot \nnm\\
=&\bar\vfi_{\bar t*}(h(X,Y))-\lagl X,Y\ragl \bar\vfi_{\bar t*}(\bar\nabla\log\rho)^\bot\nnm\\
&+\hat F_{*}\big(D_{X}Y-\hat{D}_{X}Y+X(\log\rho)Y+Y(\log\rho)X -\lagl X,Y\ragl \nabla\log\rho\big)\nnm\\
=&\bar\vfi_{\bar t*}(h(X,Y))-\lagl X,Y\ragl \bar\vfi_{\bar t*}(\bar\nabla\log\rho)^\bot.
\end{align}
Thus the mean curvature $\hat H$ of $\hat F_t$ and the curvature $H$ of $F_t$ are related by
\begin{align*}
\hat H=&\tr_{\hat g}\hat h=\rho^{-2}\tr_g\big(\bar\vfi_{\bar t*}(h)-g\bar\vfi_{\bar t*}(\bar\nabla\log\rho)^\bot\big)\\
=&\rho^{-2}(\bar\vfi_{\bar t*}(\tr_gh)-m\bar\vfi_{\bar t*}(\bar\nabla\log\rho)^\bot)\\
=&\rho^{-2}(\bar\vfi_{\bar t*}(H)-m\bar\vfi_{\bar t*}(\bar\nabla\log\rho)^\bot).
\end{align*}
It then follows from \eqref{eq3.3} that
\begin{align*}
\pp{\hat F}{t}=&-\phi(t)\ol W^\top(\hat F(p,t))+\rho^2\hat H+m\bar\vfi_{\bar t*}(\bar\nabla\log\rho)^\bot,\\
&\left(\pp{\hat F}{t}\right)^\bot=\rho^2\hat H+m\bar\vfi_{\bar t*}(\bar\nabla\log\rho)^\bot.
\end{align*}
Then Theorem \ref{thm3.1} is proved.
\end{proof}

Next, as a direct application of Theorem \ref{thm3.1} and Theorem \ref{thmab}, we are able to prove the following theorem, generalizing one of the main results in \cite{b-r14}:

\begin{thm}\label{thm3.2}
Let $M$ and the initial immersion $F_0\in{\mathcal F}(M)$ be as in Theorem \ref{main2}. Then the mean curvature flow \eqref{5} or \eqref{6} with an external conformal force $\ol W$
has a unique smooth solution $F:M\times[0,T)\to\bbr^{m+p}$ on a finite maximal time interval $[0,T)$, and $F_t(M)$ converges uniformly to a round point in $\bbr^{m+p}$ as $t\to T$.
\end{thm}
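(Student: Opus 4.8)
The plan is to deduce Theorem \ref{thm3.2} by transporting the convergence statement of Andrews--Baker (Theorem \ref{thmab}) through the correspondence established in Theorem \ref{thm3.1}. The first step is to recall the \emph{Liouville theorem for conformal transformations on $\bbr^n$}: when $n=m+p\geq 3$, every conformal vector field $\ol W$ on $\bbr^n$ is a linear combination of translations, rotations, dilations and inversions (special conformal transformations), and in every case the one-parameter family $\bar\vfi_s$ it generates consists of M\"obius transformations. The crucial point I would extract is that for each fixed $s$ the conformal factor $\rho=\rho(\cdot,s)$ defined by $\bar\vfi_s^*\bar g=\rho^2\bar g$ is \emph{constant on $\bbr^n$} precisely when $\bar\vfi_s$ is a homothety (translation $\times$ rotation $\times$ dilation); for a genuine inversion $\rho$ is non-constant, so one must handle the general case. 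I would therefore split into two cases according to whether $\ol W$ contains an inversion component, or — more economically — observe that the statement we want (convergence to a \emph{round point}, i.e.\ to a point with asymptotically spherical rescalings) is invariant under any M\"obius transformation applied to the whole flow. This invariance is the conceptual heart of the argument.

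Concretely, I would argue as follows. Suppose $F:M\times[0,T)\to\bbr^n$ solves \eqref{5} or \eqref{6} with conformal external force $\ol W$, with $[0,T)$ maximal. Let $\bar\vfi_s$ be the flow of $-\ol W$, set $\bar t(t)=\int_0^t\phi(s)\,ds$, and define $\hat F(p,t)=\bar\vfi(F(p,t),\bar t(t))$ as in \eqref{3.2}. By Theorem \ref{thm3.1}, up to a time-dependent diffeomorphism of $M$, $\hat F$ solves the CMCF-type flow \eqref{3-1}: $\pp{\hat F}{t}=\rho^2(\hat F,t)\hat H+m\bar\vfi_{\bar t*}(\bar\nabla\log\rho)^\bot$. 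Now I would apply a \emph{reparametrization of time}: introduce a new time variable $\tau=\tau(t)$ with $d\tau/dt=\rho^2$ evaluated appropriately, so that the leading term becomes the bare mean curvature $\hat H$. The residual term $m\bar\vfi_{\bar t*}(\bar\nabla\log\rho)^\bot$ must then be recognized as itself a conformal external force of the \emph{same type} (it is $m(\bar\nabla\log\tilde\rho)^\bot$ for the pushed-forward conformal structure), so that one is not actually reducing to pure MCF but to another instance of the same problem — unless $\rho$ is constant in space, in which case that term vanishes outright and $\hat F$ (after time-reparametrization) solves exactly the mean curvature flow \eqref{mcf}. For $\ol W$ with no inversion part, $\rho=\rho(t)$ depends only on time, the correction term drops, and $\hat F$ literally solves \eqref{mcf} up to reparametrization; the pinching hypothesis \eqref{c} is conformally invariant in the sense that $|\hat h|^2\leq c|\hat H|^2$ is equivalent to $|h|^2\leq c|H|^2$ (the traceless-second-fundamental-form ratio is unchanged by the homothety $\bar\vfi_{\bar t}$), so Theorem \ref{thmab} applies to $\hat F$ and gives convergence of $\hat F_t(M)$ to a round point; pulling back by the M\"obius map $\bar\vfi_{\bar t(t)}^{-1}$, which is smooth up to the terminal time and sends round points to round points, yields that $F_t(M)$ converges to a round point. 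Uniqueness and maximality of $[0,T)$ follow from Theorem \ref{exiuni} together with the fact that $t\mapsto\bar t(t)$ is a diffeomorphism onto its image.

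For the general $\ol W$ containing an inversion, the cleanest route is to \emph{not} reduce to \eqref{mcf} but instead to invoke the full strength of Theorem \ref{thm3.1}: one shows that $\bar\vfi_{\bar t}$ can be composed with an ambient M\"obius transformation so as to kill the inversion component, i.e.\ one chooses coordinates in which $F_0(M)$ stays in a bounded region avoiding the pole of the inversion for all $t\in[0,T)$ (possible since $M$ is compact and $\bar t(t)$ ranges over a compact subinterval of $(-\veps,\veps)$), and on that region the inversion is an honest diffeomorphism with smooth, \emph{bounded} conformal factor $\rho$. Then the convergence argument is run as above but keeping track of $\rho$; the key observation is that the pinching ratio and the ``round point'' conclusion are M\"obius-invariant, so whatever Andrews--Baker proves for a homothety-equivalent flow transfers back. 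I expect the \textbf{main obstacle} to be precisely this bookkeeping: verifying that the extra term $m\bar\vfi_{\bar t*}(\bar\nabla\log\rho)^\bot$ together with the time-dependent factor $\rho^2$ does not destroy either the short-time/maximal-time structure or the conformal invariance of the pinching cone \eqref{c}, and confirming that ``convergence to a round point'' is genuinely preserved under applying a fixed M\"obius transformation that is regular near the shrinking region. If desired one can sidestep this entirely by the remark already made in the paper: Theorem \ref{thm3.2} is stated to follow from Theorem \ref{thm3.1}, Theorem \ref{thmab} and the Liouville theorem, so the proof can simply be: by Liouville, $\bar\vfi_{\bar t}$ is M\"obius; by Theorem \ref{thm3.1}, $F$ is M\"obius-equivalent (for each $t$, up to a diffeomorphism of $M$ and a time change) to a solution of \eqref{mcf} on a finite maximal interval; the pinching \eqref{c} is preserved; Theorem \ref{thmab} gives convergence of the equivalent flow to a round point; and since M\"obius transformations carry round points to round points, $F_t(M)\to$ a round point as $t\to T$.
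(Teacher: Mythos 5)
Your overall strategy — transport Andrews--Baker's convergence theorem through the $\bar\vfi$-conjugation of Theorem \ref{thm3.1} — is the paper's approach, and your first case (no inversion, $\rho=\rho(t)$, reparametrize time, apply Theorem \ref{thmab}, conjugate back) is essentially the paper's proof verbatim. But the case split you set up, and your treatment of the ``inversion'' case, contain a genuine confusion that the paper avoids.

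The hypothesis of Theorem \ref{thm3.1} is that $\bar\vfi:N\times(-\veps,\veps)\to N$ is a one-parameter family of diffeomorphisms of all of $N=\bbr^n$. The Liouville theorem the paper cites is precisely the statement that a conformal diffeomorphism of the \emph{entire} Euclidean space $\bbr^n$ (not merely of an open subset, not of the conformal compactification) must be a similarity $y\mapsto \bar y_0+\alpha A(y-y_0)$. Special conformal (inversion) generators do not produce globally-defined flows on $\bbr^n$ — they blow up in finite time — so they are excluded at the outset. The paper reads off from this that $\rho=|\alpha(t)|$ is spatially constant, so the correction term $m\bar\vfi_{\bar t*}(\bar\nabla\log\rho)^\bot$ vanishes identically and the reduction to \eqref{4}, and then to \eqref{mcf} by time reparametrization, is immediate. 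Your ``inversion case'' therefore simply does not occur under the stated hypotheses; the case split is spurious.

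Moreover, your proposed handling of the spurious case contains a false claim: the pinching cone $|h|^2\leq c|H|^2$ is \emph{not} invariant under a general M\"obius transformation. From the formula derived in the proof of Theorem \ref{thm3.1} one finds that the traceless part obeys $\hat{\sch}=\bar\vfi_{\bar t*}(\sch)$ while $\hat H=\rho^{-2}\bar\vfi_{\bar t*}\big(H-m(\bar\nabla\log\rho)^\bot\big)$, so
$\dfrac{|\hat{\sch}|^2}{|\hat H|^2}=\dfrac{|\sch|^2}{|H-m(\bar\nabla\log\rho)^\bot|^2}$,
which differs from $|\sch|^2/|H|^2$ whenever $(\bar\nabla\log\rho)^\bot\neq 0$. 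Only when $\rho$ is spatially constant — i.e.\ only for similarities — is the pinching ratio preserved, which is exactly what Liouville on all of $\bbr^n$ guarantees. Your closing ``sidestep'' paragraph has the right skeleton, but its phrase ``$F$ is M\"obius-equivalent to a solution of \eqref{mcf}'' and ``the pinching \eqref{c} is preserved'' should read ``similarity-equivalent,'' and the similarity restriction is precisely what the Liouville step is there to supply.
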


\begin{proof}
First of all, we make use of a theorem of Liouville (see \cite{m}, Appendix 6; also \cite{har}) to obtain that,  for any conformal vector field $\ol W$ on the Euclidean space $\bbr^n$, the conformal transformations $\bar\vfi_s$ ($s\in\bbr$) induced by $-\ol W$ on the total of $\bbr^n$ must be of the form
$$
\bar\vfi_s(y)=\bar y_0(s)+\alpha(s)A(s)(y-y_0(s)),\quad 0\neq\alpha(s)\in\bbr,\ y_0,\bar y_0\in\bbr^n,\  A(s)\in O(n,\bbr).
$$
It then follows that, in this case, the solution $\hat F$ in \eqref{3.2} takes the form
$$
\hat F(p,t)=\bar\vfi(F(p,t),\bar t(t))=\bar y_0(\bar t(t))+\alpha(\bar t(t))A(\bar t(t))(F(p,t)-y_0(\bar t(t))),
$$
giving that {\em $\hat F_t(M)$ is convergent to a round point if and only if $F_t(M)$ is}. Specifically we have
\be\label{7.1}
\hat g=\alpha^2(t)g,\quad\hat h=\alpha(t)Ah,\quad \hat H=\alpha^{-1}(t)AH,
\ee
where $\alpha(t)=\alpha(\bar t(t))$. So {\em for all $t$, $|\hat h|^2\leq c|\hat H|^2$ for a constant $c>0$ if and only if $|h|^2\leq c|H|^2$ for the same $c$}.

Furthermore, the function $\rho$ in the equivalent equation \eqref{3-1} (Theorem \ref{thm3.1}) is exactly $|\alpha(t)|$, which depends only on the parameter $t$. So \eqref{3-1} becomes to \eqref{4} with $a=\alpha^2(t)$.

By making a transformation of time $t\to\td t$ by $\td t:=\int^t_0\alpha^2(\tau)d\tau$, the flow \eqref{4} changes into the standard mean curvature flow \eqref{mcf} with the new time parameter $\td t$. Note that it must hold that $0<\ul a\leq\alpha^2(t)\leq\ol a<+\infty$ (see Lemma \ref{lem5.2}). Then the conclusion of Theorem \ref{thm3.2} comes directly from Theorem \ref{thmab}.
\end{proof}

\section{Some basic evolution formulas}\label{s4}

From this section on, we shall take the ambient space $N$ to be the Euclidean space $\bbr^n$ with the standard flat metric $\ol g$ and the standard coordinates $(y^A)$. For the reader's convenience and the need of the main argument later, we derive in this section the basic evolution formulas for the induced metric $g$, the second fundamental form $h$, the mean curvature $H$, and so on.

For a given $T$: $0<T\leq+\infty$ and a given smooth map $F:M\times[0,T)\to\bbr^n$ satisfying $F_t\in{\mathcal F}(M)$, the pull-back bundle $F^*T\bbr^n\to M\times[0,T)$ decomposes into two orthogonal subbundles: the tangential part ${\mathcal T}=F_{t*}(TM)$ and the normal part ${\mathcal N}=T^\bot_{F_t}M$. The former defines via $F_{t*}$ a ``horizontal distribution'' $\mathcal H$ on $M\times[0,T)$ which can also be defined as (see \cite{a-b} or \cite{b}) $:{\mathcal H}=\{u\in T(M\times[0,T));\ dt(u)=0\}$. Then, according to \cite{a-b}, there are connections $\nabla$ on $\mathcal H$ and $\nabla^\bot$ on $\mathcal N$, respectively, naturally induced by projections from the pull-back connection $\nabla^{F^*T\bbr^n}$. In particular, these two connections are both compatible to the relevant bundle metrics.

Fix a local coordinate system $x^i$ on $M$ and let $\{e_\alpha\}$ be an orthonormal normal frame field of $F(\cdot,t)$. Denote $e_i=\pp{}{x^i}$, $g_{ij}=\lagl F_*(e_i),F_*(e_j)\ragl$, $(g^{ij})=(g_{ij})^{-1}$, and
\begin{align*}
&\ol\nabla_{e_j}(F_*e_i)=\Gamma^k_{ij}(F_*e_k) +h^\alpha_{ij}e_\alpha,\\
\nabla_te_i:=\nabla&_{\pp{}{t}}e_i=\Gamma^j_{it}e_j,\quad \nabla^\bot_te_\alpha:=\nabla^\bot_{\pp{}{t}}e_\alpha =\Gamma^\beta_{\alpha t}e_\beta.
\end{align*}
Then we have
\begin{align}
\ol\nabla_t(F_*e_i)=&\ol\nabla_{e_i}(F_*\pp{}{t})+
F_*([\pp{}{t},e_i])=\ol\nabla_{e_i}(aH) =a_iH+a\ol\nabla_{e_i}H\nnm\\
=&-aA_H(e_i)+a_iH+a\nabla^\bot_{e_i}H,\label{4.1}
\end{align}
implying
\be
F_*(\nabla_t e_i)=(\ol\nabla_t(F_*e_i))^\top =-aA_H(e_i)=-aH^\alpha h^\alpha_{ik}g^{kj}F_*(e_j),\label{eit},
\ee
or, equivalently,
\be\label{gamat}
\Gamma^j_{it}=-aH^\alpha h^\alpha_{ik}g^{kj}.
\ee
Also, by \eqref{4.1},
\begin{align}
\ol\nabla_t e_\alpha=&\lagl\ol\nabla_t e_\alpha,F_*(e_i)\ragl g^{ij} F_*(e_j)+\lagl\ol\nabla_t e_\alpha,e_\beta\ragl e_\beta\nnm\\
=&-\lagl e_\alpha,\left(\ol\nabla_t F_*(e_i)\right)^\bot\ragl g^{ij} F_*(e_j)+\Gamma^\beta_{\alpha t}e_\beta\nnm\\
=&-\lagl e_\alpha,a_iH+a\nabla^\bot_{e_i}H\ragl g^{ij} F_*(e_j)+\Gamma^\beta_{\alpha t}e_\beta\nnm\\
=&-H^\alpha(\nabla a)-a g^{ij}H^\alpha_{,i} F_*(e_j)+\Gamma^\beta_{\alpha t}e_\beta.\label{ealpt}
\end{align}
Moreover, it is easy to see that
\be\label{evogij}
\pp{}{t}g_{ij}=-2a\lagl H,h_{ij}\ragl=-2aH^\beta h^\beta_{ij},
\quad \pp{}{t}g^{ij}=2aH^\beta h^\beta_{kl}g^{ik}g^{lj}.
\ee
To obtain the involution of the second fundamental form $h$, we first find
\begin{align*}\nabla^\bot_t h_{ij}=&\left(\pp{}{t}F_{,ij}\right)^\bot =\left(\pp{}{t}F_{ij}-\Gamma^k_{ij}\pp{}{t}F_k\right)^\bot\\
=&\left(\left(\pp{F}{t}\right)_{ij} -\Gamma^k_{ij}\left(\pp{F}{t}\right)_k\right)^\bot =\left(\left(\pp{F}{t}\right)_{,ij}\right)^\bot\\
=&\left((aH)_{,ij}\right)^\bot =a\left(H_{,ij}-h(A_H(e_i),e_j)\right) +a_{,ij}H+a_iH_{,j}+a_jH_{,i} \\
=&a\left(H_{,ij}-g^{kl}h_{kj}\lagl h_{il},H\ragl\right)+a_{,ij}H+a_iH_{,j}+a_jH_{,i}.
\end{align*}
Then by definition
\begin{align}
\nabla_t h_{ij}=&\nabla^\bot_t(h_{ij})-h_{kj}\Gamma^k_{it} -h_{ik}\Gamma^k_{jt}\nnm\\
=&a(H_{,ij}+H^\beta h^\beta_{jk}h_{il}g^{kl})+a_{,ij}H+a_iH_{,j}+a_jH_{,i}\label{a4.6}
\end{align}
where the formula \eqref{gamat} is used. Note that \eqref{a4.6} can also be obtained by the time-like Codazzi equation given in (18) of \cite{b}. Since
\begin{align}
h_{ij,kl}=&h_{kl,ij} +\big((h^\beta_{kl}h^\beta_{pj} -h^\beta_{kj}h^\beta_{pl})h_{mi} +(h^\beta_{il}h^\beta_{pj} -h^\beta_{ij}h^\beta_{pl})h_{km}\nnm
\\ &-h^\beta_{ki}h^\beta_{lp}h_{jm} +h^\beta_{ki}h^\beta_{jp}h_{lm}\big)g^{pm},
\end{align}
implying
\begin{align}
\Delta h_{ij}=&H_{,ij} +H^\beta h^\beta_{jk}h_{il}g^{kl}\nnm\\
&+\big(2h^\beta_{ki}h^\beta_{jp}h_{lm} -h^\beta_{kj}h^\beta_{pl}h_{mi}
-h^\beta_{ij}h^\beta_{pl}h_{km} -h^\beta_{ki}h^\beta_{lp}h_{jm} \big)g^{kl}g^{pm},
\end{align}
it follows that
\begin{align}
\nabla_t h_{ij}=&a\Delta h_{ij}+a_{,ij}H +a_iH_{,j}+a_jH_{,i} \nnm\\
&+a\big(h^\beta_{kj}h^\beta_{pl}h_{mi}+h^\beta_{ij}h^\beta_{pl}h_{km} +h^\beta_{ki}h^\beta_{lp}h_{jm} -2h^\beta_{ki}h^\beta_{jp}h_{lm}\big)g^{kl}g^{pm},\label{ht1}
\end{align}
implying
\be\nabla^\bot_tH=a(\Delta H+H^\beta h^\beta_{kl}h_{ij}g^{ik}g^{jl})+(\Delta a) H+2a_i H_{,j}g^{ij}.
\ee

From now on, we shall follow the convention of Hamilton (\cite{ha82}) and Huisken (\cite{hui84}) using $S*T$ to denote any linear combination of tensors formed by
contractions, w.r.t. the induced metric $g$, of some given tensors $S$ and $T$. Moreover, to simplify matters, we shall always write $h^2$, $h^3$, $(\nabla h)^2$, $(\nabla h)^3$ and so on for $h*h$, $h*h*h$, $\nabla h*\nabla h$, $\nabla h*\nabla h*\nabla h$ and so on, accordingly. Thus by \eqref{ht1} it holds that
\be\label{ht2}
\nabla_th_{ij}=a\Delta h_{ij}+(\nabla^2a*h+\nabla a*\nabla h +a*h^3)_{ij}.
\ee
Furthermore, because
\begin{align*}
\Delta|h|^2=&2\sum g^{ik}g^{jl}h^\alpha_{kl}\Delta h^\alpha_{ij}+2|\nabla h|^2\\
=&2\sum g^{ik}g^{jl}h^\alpha_{kl} H^\alpha_{,ij} +2\sum g^{ik}g^{jl}h^\alpha_{kl}\big(h^\alpha_{mi}(h^\beta_{rs}h^\beta_{pj} -h^\beta_{rj}h^\beta_{ps}) +h^\alpha_{rm}(h^\beta_{is}h^\beta_{pj} -h^\beta_{ij}h^\beta_{ps})\nnm\\ &-h^\beta_{ri}(h^\beta_{sp}h^\alpha_{jm} -h^\beta_{jp}h^\alpha_{sm})\big)g^{rs}g^{pm}+2|\nabla h|^2\\
=&2\sum g^{ik}g^{jl}h^\alpha_{kl} H^\alpha_{,ij} +2H^\beta h^\beta_{pj}h^\alpha_{kl}h^\alpha_{mi}g^{ik}g^{jl}g^{pm} +2|\nabla h|^2-2R_1
\end{align*}
where
\begin{align*}
R_1:=&\sum_{\alpha,\beta}\left(\sum_{i,j,k,l} g^{ik}g^{jl}h^\alpha_{ij}h^\beta_{kl}\right)^2 \\ &+\sum g^{ij}g^{kp}g^{lq}g^{rs}(h^\beta_{ik}h^\alpha_{pr} -h^\alpha_{ik}h^\beta_{rp})(h^\beta_{jl}h^\alpha_{qs} -h^\alpha_{jl}h^\beta_{qs})\\
=&\sum_{\alpha,\beta}\lagl h^\alpha,h^\beta\ragl^2 +|\strl{\bot}{R}|^2= h^4
\end{align*}
with $|\strl{\bot}{R}|$ being the norm of the normal curvature operator,
it follows that
\begin{align}
\pp{}{t}|h|^2=&a\Delta|h|^2-2a|\nabla h|^2 +2aR_1+2\sum_{\alpha,i,j,k,l}g^{ik}g^{jl}h^\alpha_{ij}(H^\alpha a_{,kl} +a_kH^\alpha_{,l}+a_lH^\alpha_{,k})\nnm\\
=&a\Delta|h|^2-2a|\nabla h|^2+2a R_1 +2\sum_{\alpha,i,j,k,l}g^{ik}g^{jl}H^\alpha h^\alpha_{ij}a_{,kl}
+4\sum_{\alpha,i,j,k,l}g^{ik}g^{jl}h^\alpha_{ij}a_kH^\alpha_{,l}\nnm\\
=&a\Delta|h|^2-2a|\nabla h|^2+\nabla^2a*h^2+\nabla a*h*\nabla h+a*h^4,\\
\pp{}{t}|H|^2=&a\Delta|H|^2 -2a|\nabla H|^2+2a R_2 +2|H|^2\Delta a+2\lagl\nabla a,\nabla |H|^2\ragl\nnm\\
=&a\Delta|H|^2-2a|\nabla H|^2+\nabla^2a*h^2+\nabla a*h*\nabla h+a*h^4.
\end{align}
with the notation
\be\label{4.13}
R_2:=\sum_{i,j,k,l}\sum_{\alpha,\beta}g^{ik}g^{jl} H^\alpha h^\alpha_{ij}H^\beta h^\beta_{kl}=|\lagl H,h\ragl|^2\leq|H|^2|h|^2.
\ee

\section{Higher derivative estimates and the blow-up theorem}

This section is the main part of the present paper and is devoted to prove the following main theorem:

\begin{thm}\label{thm5.1}
Under the assumption of Theorem \ref{main1}, the conformal mean curvature flow \eqref{4} has a unique solution on a finite maximal time interval $0\leq t<T<+\infty$. Moreover, $\max_{M}|h|^2\to\infty$ as $t\to T$.
\end{thm}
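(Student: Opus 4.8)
The plan is to establish two separate assertions: first, that the maximal time $T$ of existence is finite, and second, that $\max_M|h|^2\to\infty$ as $t\to T$. For the finiteness of $T$, I would exploit the fact that $a$ is a \emph{positive} smooth function on $\bbr^n\times[0,T_0)$ together with the compactness of the evolving submanifold. Along the flow, the submanifolds $F_t(M)$ stay in a compact region of $\bbr^n$ only if $T$ is finite, so one should argue by a containment/comparison argument: from \eqref{evogij} the induced volume form evolves by $\pp{}{t}d\mu=-am|H|^2\,d\mu\le 0$, so the total volume is nonincreasing, and more importantly one should track a suitable function of the position vector $F$ (for instance $|F|^2$ or the support function relative to a fixed point). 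Using $\pp{F}{t}=a(F,t)H_F$ and the standard identity $\Delta_g|F|^2=2m+2\lagl F,H\ragl$ one computes $\left(\pp{}{t}-a\Delta\right)|F|^2=-2ma$, and since $a$ is bounded below by a positive constant on any compact time interval where $F_t(M)$ stays bounded, the maximum principle forces $\max_M|F|^2$ to decrease at a definite rate; combined with the non-negativity of $|F|^2$ this yields a finite upper bound for $T$, exactly as in the classical MCF case. (One must be slightly careful because $a$ depends on $F$, hence on the solution; the fix is a bootstrap: as long as the solution exists on $[0,t]$ the submanifolds are contained in a fixed ball, so $a$ is pinched between two positive constants there, which is Lemma~\ref{lem5.2} referenced in the excerpt.)

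For the blow-up assertion, I would argue by contradiction: suppose $T<\infty$ but $\limsup_{t\to T}\max_M|h|^2=C_0<\infty$. The strategy is the standard one of deriving uniform bounds on all covariant derivatives $|\nabla^k h|^2$ and then extending the solution past $T$, contradicting maximality. The key inputs are the evolution equations derived in Section~\ref{s4}: from \eqref{ht2}, $\nabla_t h = a\Delta h + \nabla^2 a * h + \nabla a * \nabla h + a*h^3$, and one derives analogously (by commuting $\nabla$ past the evolution, as in Hamilton's and Huisken's interpolation scheme) evolution inequalities of the schematic form
\be
\left(\pp{}{t}-a\Delta\right)|\nabla^k h|^2 \le -2a|\nabla^{k+1}h|^2 + C\sum_{\substack{i+j+l=k\\ }}|\nabla^i h|\,|\nabla^j h|\,|\nabla^l h|\,|\nabla^k h| + (\text{terms involving }\nabla^{\le k+2}a).
\ee
Since $a\in C^\infty(\bbr^n\times[0,T_0))$ and $F_t(M)$ stays in a fixed compact set (by the first part, together with the assumed bound on $|h|^2$, which bounds the curvature and hence prevents the submanifold from escaping to infinity), \emph{all} spatial derivatives of $a$ along the flow are uniformly bounded on $[0,T)$. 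Then an induction on $k$ using the maximum principle and the absorbing term $-2a|\nabla^{k+1}h|^2$ — together with the interpolation inequalities of Hamilton for tensors on compact manifolds — gives $\sup_{M\times[0,T)}|\nabla^k h|^2 \le C_k<\infty$ for every $k$.

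With all derivatives of $h$ bounded, one controls the metrics: \eqref{evogij} gives $\left|\pp{}{t}g_{ij}\right| = 2a|H^\beta h^\beta_{ij}|\le C$, so the metrics $g_t$ are uniformly equivalent and converge in $C^\infty$ as $t\to T$ to a smooth limiting metric $g_T$ on $M$; similarly the immersions $F_t$ converge in $C^\infty$ to a smooth limiting immersion $F_T$ (one integrates $\pp{F}{t}=aH_F$, whose right side is uniformly bounded with all derivatives). Then $F_T$ is a smooth immersion of the compact manifold $M$, and by the short-time existence Theorem~\ref{exiuni} applied with initial data $F_T$ one extends the solution to $[0,T+\eps)$ for some $\eps>0$, contradicting the maximality of $T$. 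Hence $\max_M|h|^2$ must be unbounded as $t\to T$; a standard ODE comparison using the evolution of $\max_M|h|^2$ (from the equation for $\pp{}{t}|h|^2$ in Section~\ref{s4}, whose reaction term is at worst $C|h|^4$ after bounding the $a$-derivative terms, but here with the lower bound $a\ge\ul a>0$ one actually needs only the weaker statement that the quantity cannot stay bounded) upgrades this to $\lim_{t\to T}\max_M|h|^2=+\infty$.

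The main obstacle I anticipate is the dependence of $a$ on the position $F$, which couples the geometry of the evolving submanifold to the coefficient of the parabolic operator. Everywhere above I have used that $a$ and its derivatives are uniformly controlled along the flow, and this is legitimate \emph{only after} one knows $F_t(M)$ remains in a fixed compact subset of $\bbr^n\times[0,T)$. Establishing that containment — showing the flow cannot run off to infinity or to the boundary $T_0$ of the domain of $a$ before blow-up of $|h|^2$ — is the delicate point, and it is presumably what Lemma~\ref{lem5.2} (cited but not displayed in the excerpt) provides; I would prove it by the support-function maximum-principle argument sketched above, giving both a uniform ball containing all $F_t(M)$ and the two-sided positive bound $0<\ul a\le a\le\ol a$ on the relevant region.
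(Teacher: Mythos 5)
Your proposal follows essentially the same route as the paper: boundedness of $|F|^2$ via the maximum principle (the paper's Lemma \ref{lem5.2}) gives both the finiteness of $T$ and uniform two-sided control on $a$ and its ambient derivatives, and the blow-up is then proved by contradiction, bounding all $|\nabla^l h|^2$ and extending $F_t$ in $C^\infty$ past $T$ via short-time existence. Be aware, though, that the intrinsic covariant derivatives $\nabla^l(a\circ F)$ are not bounded merely because $F_t(M)$ stays in a compact set --- they feed back on the derivatives of $h$ through the chain rule (Lemma \ref{lem nabla a}), so the paper bounds $\nabla^l a$ and $\nabla^l h$ \emph{in tandem} by induction, using a Shi-type time-weighted quantity $G_l=t^l|\nabla^l h|^2+\frac l{\ul a}t^{l-1}|\nabla^{l-1}h|^2$, rather than first bounding all $a$-derivatives and then all $h$-derivatives as your write-up seems to suggest.
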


First of all, note that the existence and the uniqueness of the maximal solution are given by Theorem \ref{exiuni}. Thus we need to prove the finiteness of the maximal time interval $[0,T)$ and the blow-up of $|h|^2$.

For any $\bbr^n$-valued map $F$, we denote by $|F|^2$ the square norm of the position vector $F$.

\begin{lem}\label{lem5.2}
The function $|F|^2$ is bounded on $M\times[0,T)$. In particular, the image $F(M\times[0,T))$ of $F$ is included in a bounded domain of $\bbr^{m+p}$ where restrictions of the function $a$ and all of its derivatives on the ambient $\bbr^n$ are bounded, that is, $|\ol\nabla^i a|^2\leq\ol A_i$ for some constant $\ol A_i$, $i=0,1,2,\cdots$.
\end{lem}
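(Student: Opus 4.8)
The plan is to show that the squared distance function $|F|^2$ satisfies a parabolic differential inequality with no zeroth-order source term, so that the maximum principle forces it to stay bounded by its initial value. First I would compute the evolution of $|F|^2$ under the flow \eqref{4}: using $\pp{F}{t}=a(F,t)H_F$ and the fact that $\Delta F = H_F$ (where $\Delta$ is the Laplace--Beltrami operator of the time-dependent induced metric $g_F$, acting componentwise on the $\bbr^n$-valued map $F$), one gets
\be\label{evoF2}
\pp{}{t}|F|^2 = 2\lagl F, a H_F\ragl = 2a\lagl F, \Delta F\ragl = a\Delta|F|^2 - 2a|\nabla F|^2 = a\Delta|F|^2 - 2am,
\ee
since $|\nabla F|^2 = g^{ij}\lagl F_i,F_j\ragl = g^{ij}g_{ij} = m$. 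Here the key point is that the inhomogeneous term $-2am$ is strictly negative because $a>0$, so the right-hand side is $\leq a\Delta|F|^2$.

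Next I would apply the parabolic maximum principle to \eqref{evoF2} on the compact manifold $M$: since $\pp{}{t}|F|^2 \leq a\Delta|F|^2$ with $a$ a positive function and $M$ compact, the spatial maximum $\max_M |F|^2$ is nonincreasing in $t$, whence $\max_{M\times[0,T)}|F|^2 \leq \max_M |F_0|^2 =: C_0 < \infty$. Consequently the image $F(M\times[0,T))$ lies inside the closed ball $\ol B_{\sqrt{C_0}}(0)\subset\bbr^n$, which is a compact subset of $\bbr^n$.

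Finally, since $a\in C^\infty(\bbr^n\times[0,T_0))$ and $T<T_0$ (indeed $T\le T_0$, and on any closed subinterval one argues on $[0,T']$ with $T'<T$, or simply notes $a$ and its $\bbr^n$-derivatives are continuous on the compact set $\ol B_{\sqrt{C_0}}(0)\times[0,T]$ once finiteness of $T$ is known; alternatively one restricts to $[0,T_0/2]$ etc.), the restriction of $a$ and of each ambient derivative $\ol\nabla^i a$ to the compact set $\ol B_{\sqrt{C_0}}(0)\times[0,\tau]$ is bounded for every $\tau<T_0$; calling the bound $\ol A_i$ gives $|\ol\nabla^i a|^2\le \ol A_i$ along the flow. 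The main obstacle here is essentially just bookkeeping about which time interval to work on: the finiteness of $T$ has not yet been established at this point in the paper, so the cleanest formulation is to fix any $\tau<T_0$, note $[0,T)\cap[0,\tau]$ is where we argue, and observe that once $T<+\infty$ is proved later (Theorem \ref{thm5.1}) the bounds hold on all of $M\times[0,T)$; none of the subsequent estimates need more than the qualitative statement that $a$ and its derivatives are bounded along the flow.
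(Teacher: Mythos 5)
Your proof is correct and follows essentially the same route as the paper's: both derive $\pp{}{t}|F|^2 = a\Delta|F|^2 - 2ma$ from $\Delta F = H_F$ and $|\nabla F|^2 = m$, observe that positivity of $a$ makes the inhomogeneous term nonpositive, and invoke the parabolic maximum principle on the compact manifold to bound $|F|^2$ by its initial maximum. Your closing remarks on the choice of time interval are a reasonable piece of bookkeeping that the paper glosses over, but they do not constitute a different method.
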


\begin{proof}
It is easily found that $\pp{}{t}|F|^2=a\Delta|F|^2-2ma<a\Delta|F|^2$. It then follows from the maximum value principle that $|F|^2$ is bounded from above by the maximal value of it on the initial submanifold $F_0$.
\end{proof}
\begin{lem}
The maximal time of existence $T$ is finite.
\end{lem}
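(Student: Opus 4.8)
The plan is to combine the confinement of the image of $F$ to a fixed bounded region, already furnished by Lemma \ref{lem5.2}, with the elementary evolution equation of the squared position vector $|F|^2$ and a scalar comparison argument, to produce a quantitative upper bound for $T$.

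First I would recall, exactly as in the proof of Lemma \ref{lem5.2}, that along the flow \eqref{4}
$$\pp{}{t}|F|^2=2a\lagl F,H\ragl=a\Delta|F|^2-2ma,$$
using $\Delta F=H$ (the Laplace--Beltrami operator of the induced metric applied to the position map is the mean curvature vector) and $|\nabla F|^2=g^{ij}g_{ij}=m$. By Lemma \ref{lem5.2} the image $F(M\times[0,T))$ is contained in a fixed bounded domain $\Omega\subset\bbr^n$ on which $a$ and its derivatives are bounded and, being positive, $a$ admits a positive lower bound there; write $0<\underline a\le a\le\overline a<+\infty$ on $\overline{\Omega}\times[0,T)$.

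Next I would compare $u:=|F|^2$ with the function $v(t):=\max_M|F_0|^2-2m\,\underline a\,t$, which is affine in $t$. Since $\pp{v}{t}=-2m\underline a$ while $\pp{u}{t}=a\Delta u-2ma$ with $a\ge\underline a$, the difference $w:=u-v$ satisfies $\pp{w}{t}=a\Delta w-2m(a-\underline a)\le a\Delta w$ on the compact manifold $M$, and $w\le 0$ at $t=0$. The parabolic maximum principle (applicable because $a>0$) then yields $w\le 0$ for all $t$, i.e.
$$|F(\cdot,t)|^2\le \max_M|F_0|^2-2m\,\underline a\,t,\qquad t\in[0,T).$$
Since the left-hand side is nonnegative, every $t\in[0,T)$ satisfies $t\le \max_M|F_0|^2/(2m\underline a)$, and hence $T\le \max_M|F_0|^2/(2m\underline a)<+\infty$.

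I do not expect a genuine obstacle: this is the standard fact that a compact submanifold is swept out of every ball in bounded time, adjusted for the conformal weight $a$. The only point requiring care is the uniform positive lower bound $\underline a$ for $a$ over the relevant space-time region, which is precisely what the boundedness of the image in Lemma \ref{lem5.2}, together with the positivity of $a$, is meant to supply; alternatively one may run Hamilton's differentiation trick on $\rho(t):=\max_M|F(\cdot,t)|^2$ to obtain $\rho'(t)\le -2m\underline a$ directly and then integrate. The finiteness of $T$ obtained here is what subsequently feeds the higher-derivative estimates of this section and the blow-up conclusion of Theorem \ref{thm5.1}.
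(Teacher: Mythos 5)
Your argument is essentially identical to the paper's: both start from $\partial_t|F|^2 = a\Delta|F|^2 - 2ma$, invoke the uniform positive lower bound $\underline a$ furnished by Lemma \ref{lem5.2}, apply the maximum principle to $|F|^2 + 2m\underline a\,t$ (your $w = u - v$ is just the negative of this minus the constant $\max_M|F_0|^2$), and conclude $T \le \max_M|F_0|^2 / (2m\underline a)$. The proposal is correct and takes the same route.
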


\begin{proof}
Once again we use $\pp{}{t}|F|^2=a\Delta|F|^2-2ma$. By the previous lemma, it holds that $\ul a\leq a\leq \ol a$ for some $\ul a,\ol a>0$. So we have $\pp{}{t}|F|^2\leq a\Delta|F|^2-2m\ul a$ which gives that $$\pp{}{t}(|F|^2+2m\ul at)-a\Delta(|F|^2+2m\ul at)\leq 0.$$
Then the maximum value principle shows that, for any $t\in[0,T)$,
$$2m\ul at\leq|F|^2+2m\ul at\leq \max_M|F_0|^2.$$
Letting $t\to T$ we have that
$$T\leq \fr1{2m\ul a}\max_M|F_0|^2.$$
\end{proof}

Next we are to prove the blow-up part of Theorem \ref{thm5.1}. Before doing this, we have to give some estimates for the higher order derivatives of the second fundamental form and then those of the solution $F$ itself. But these estimates rely on higher order derivatives $\nabla^i a$ of the composed function $a\equiv a\circ F$.

To proceed, we need the following identities which are derived in \cite{a-b} (see also \cite{b}):
 \begin{align}
R(e_i,e_j,e_k,e_l)=&\lagl h_{il},h_{jk}\ragl-\lagl h_{ik},h_{jl}\ragl= h^2,\label{rijkl}\\
R^\bot(e_\alpha,e_\beta,e_i,e_j)=&g(A_\alpha(e_j),A_\beta(e_i))-g(A_\alpha(e_i),A_\beta(e_j))=h^2,\label{rabij}\\
R(\partial_t,e_i,e_j,e_k)=&\lagl \nabla^\bot_{e_k} F_t,h_{ij}\ragl-\lagl \nabla^\bot_{e_j} F_t,h_{ik}\ragl\label{rtijk1}\\
=& \nabla a*h^2+a*h*\nabla h,\label{rtijk2}\\
R^\bot(\partial_t,e_i,e_\alpha,e_\beta)=&\lagl \nabla^\bot_{A_\alpha(e_i)} F_t,e_\beta\ragl -\lagl \nabla^\bot_{A_\beta(e_i)} F_t,e_\alpha\ragl\label{rtiab1}\\
=&\nabla a*h^2+a*h*\nabla h.\label{rtiab2}
\end{align}

The following Young's inequality is frequently used in our estimation later:

\begin{lem}[Young's inequality]\label{young}
Let $a$ and $b$ be two nonnegative real numbers and $p$ and $q$ be positive real numbers such that $1/p +1/q = 1$. Then
$$ab\leq \veps^p{\frac { a^{p}}{p}}+\frac1{\veps^q}{\frac {b^{q}}{q}},\quad\forall\veps>0.$$
The equality holds if and only if $\veps^{p+q}a^p =b^q$. In particular, we have the following so-called Peter-Paul inequality:
$$2ab\leq\veps a^2+\fr1\veps b^2$$
for any $\veps>0$.
\end{lem}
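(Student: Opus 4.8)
The plan is to reduce the $\veps$-weighted inequality to the classical (unweighted) Young inequality $AB\le A^p/p+B^q/q$ by absorbing $\veps$ into the two factors, and then to establish the classical form from the strict concavity of $\log$ (equivalently, strict convexity of $\exp$), reading off the equality case from strictness.

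First I would dispose of the degenerate cases. If $a=0$ or $b=0$, the asserted inequality reads $0\le(\text{something nonnegative})$ and holds, with equality exactly when the right-hand side vanishes; since $p,q>0$, this is exactly the condition $\veps^{p+q}a^p=b^q$ in these cases. So assume $a,b>0$, and set $A:=\veps a>0$, $B:=b/\veps>0$. Then $AB=ab$, while $\veps^p a^p/p=A^p/p$ and $b^q/(q\veps^q)=B^q/q$, so the claimed inequality is precisely $AB\le A^p/p+B^q/q$; its equality case $A^p=B^q$ translates back to $\veps^p a^p=\veps^{-q}b^q$, i.e. $\veps^{p+q}a^p=b^q$, as stated.

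Next I would prove $AB\le A^p/p+B^q/q$ for $A,B>0$ with $1/p+1/q=1$. Since $1/p,1/q\in(0,1)$ and $\log$ is strictly concave on $(0,\infty)$, concavity applied to the convex combination $\tfrac1p A^p+\tfrac1q B^q$ gives $\log\!\big(\tfrac1p A^p+\tfrac1q B^q\big)\ge \tfrac1p\log(A^p)+\tfrac1q\log(B^q)=\log A+\log B=\log(AB)$; monotonicity of $\log$ then yields the inequality, and strict concavity forces equality exactly when $A^p=B^q$. (Alternatively one may write $A=e^{x/p}$, $B=e^{y/q}$ and invoke convexity of $\exp$; the two arguments are equivalent.)

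Finally, for the Peter--Paul inequality I would specialize to $p=q=2$: the main estimate becomes $ab\le \tfrac12\veps^2a^2+\tfrac1{2\veps^2}b^2$, and multiplying by $2$ and then renaming the positive parameter $\veps^2$ as $\veps$ (both range over all of $(0,\infty)$) gives $2ab\le\veps a^2+\tfrac1\veps b^2$ for every $\veps>0$. This is a completely standard result with no real obstacle; the only point that warrants a line of care is tracking the equality condition through the substitution $A=\veps a$, $B=b/\veps$, which I have indicated above.
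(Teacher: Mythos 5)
Your proof is correct and complete. Note, however, that the paper does not actually prove Lemma~\ref{young}: it is stated there without proof as a standard, classical fact (Young's inequality with a tunable parameter, and its Peter--Paul specialization) to be quoted in later estimates. So there is no ``paper's proof'' to compare against. Your argument --- reducing the $\veps$-weighted form to the unweighted form via the substitution $A=\veps a$, $B=b/\veps$, proving the unweighted form from the strict concavity of $\log$, and reading the equality case from strictness --- is one of the standard derivations and is carried out correctly, including the careful translation of the equality condition $A^p=B^q$ back to $\veps^{p+q}a^p=b^q$ and the handling of the degenerate cases $a=0$ or $b=0$. The specialization $p=q=2$ followed by the relabeling $\veps^2\mapsto\veps$ is likewise correct and gives exactly the Peter--Paul form stated.
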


\begin{lem}\label{lem nabla a} Let $a\equiv a\circ F$. Then
for any $l\geq 0$ it holds that
\begin{align}
\nabla^{l+2}a=&\sum_{p=1}^{l+2}\sum_{r_1+\cdots+r_p =l-p+2}\lagl\ol\nabla^p a,\nabla^{r_1+1}F\otimes\cdots\otimes\nabla^{r_p+1}F\ragl\label{nabla a1}\\
=&\sum_Aa_A\nabla^{l+2}F^A+\sum_{p=2}^{l+2}\sum_{r_1+\cdots+r_p =l-p+2}\lagl\ol\nabla^p a,\nabla^{r_1+1}F\otimes\cdots\otimes\nabla^{r_p+1}F\ragl\label{nabla a2}
\end{align}
and, for $k\geq 0$,
\begin{align}
\nabla^{l+2}F=&\nabla^l h+\sum_{\iota=0}^{k-1}(*^{2(k-\iota)}_{2\iota+1}h)^iF_*(e_i) +\sum_{\iota=0}^{k-1}(*^{2(k-\iota)+1}_{2\iota}h)^\alpha e_\alpha,\quad \text{if }l=2k;\label{nabla f1}\\
\nabla^{l+2}F=&\nabla^l h+\sum_{\iota=0}^k(*^{2(k-\iota+1)}_{2\iota}h)^iF_*(e_i) +\sum_{\iota=0}^{k-1}(*^{2(k-\iota)+1}_{2\iota+1}h)^\alpha e_\alpha,\quad \text{if }l=2k+1\label{nabla f2}
\end{align}
where, for integers $p\geq 1$ and $q\geq 0$,
$$
*^0_qh=1,\quad*^p_qh=\sum_{r_1+\cdots+r_p=q}\nabla^{r_1}h*\cdots*\nabla^{r_p}h.
$$
\end{lem}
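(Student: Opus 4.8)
The plan is to prove the two groups of identities independently, each by induction on $l$; note that \eqref{nabla a2} is merely \eqref{nabla a1} with the $p=1$ summand split off, using $\lagl\ol\nabla a,\nabla^{l+2}F\ragl=\sum_A a_A\nabla^{l+2}F^A$ with $a_A=\partial a/\partial y^A$. The only geometric inputs needed are the Gauss and Weingarten formulas already written down in Section \ref{s4}: differentiating the tangential field $F_*(e_i)$ produces the normal term $h_{ij}$ (so in particular $\nabla^2F=h$), differentiating a unit normal $e_\alpha$ produces the tangential term $-A_\alpha(e_i)$, and --- since the connection on $F^*T\bbr^n$ is the pull-back of the flat connection on $\bbr^n$ and all bundle metrics involved are parallel --- the covariant derivative $\nabla_k$ of the pull-back of an ambient tensor $\ol\nabla^pa$ is $\ol\nabla^{p+1}a$ contracted against $F_*(e_k)$. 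After that, the argument is just the Leibniz rule.

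For \eqref{nabla a1}, the base case $l=0$ is the Hessian identity $\nabla^2a=\lagl\ol\nabla^2a,\nabla F\otimes\nabla F\ragl+\lagl\ol\nabla a,\nabla^2F\ragl$, which is immediate from the chain rule. For the inductive step I would apply $\nabla$ to the right-hand side of \eqref{nabla a1}: by the Leibniz rule each summand splits into a term where the derivative hits $\ol\nabla^pa$, giving $\lagl\ol\nabla^{p+1}a,F_*(e_k)\otimes\nabla^{r_1+1}F\otimes\cdots\otimes\nabla^{r_p+1}F\ragl$ --- a $(p+1)$-factor term whose exponents $0,r_1,\dots,r_p$ still sum to $l-p+2=(l+1)-(p+1)+2$ --- and $p$ terms where it hits some $\nabla^{r_j+1}F$, producing $\nabla^{r_j+2}F=\nabla^{(r_j+1)+1}F$ and raising the exponent sum to $l-p+3=(l+1)-p+2$. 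Summing over all $p$ and all partitions reproduces \eqref{nabla a1} with $l$ replaced by $l+1$.

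For \eqref{nabla f1}--\eqref{nabla f2}, the base case is $l=0$, i.e. $\nabla^2F=h$, which is \eqref{nabla f1} with $k=0$ (the two correction sums being empty). The inductive step takes $l$ to $l+1$, thereby switching its parity; I would apply one more covariant derivative to the claimed expansion of $\nabla^{l+2}F$ and follow the three kinds of terms. (a) Differentiating the leading normal term $\nabla^lh$ yields, via Weingarten, $\nabla^{l+1}h$ (the new leading term) together with a tangential correction with coefficient $h*\nabla^lh$. (b) Differentiating a tangential term $(*^p_qh)^iF_*(e_i)$ raises its coefficient to $*^p_{q+1}h$ (still tangential) and, through the normal part $h_{ki}$ of $\ol\nabla_k(F_*(e_i))$, contributes a normal term $(*^{p+1}_qh)^\alpha e_\alpha$. (c) Symmetrically, differentiating a normal term $(*^p_qh)^\alpha e_\alpha$ raises its coefficient to $*^p_{q+1}h$ and, through $-A_\alpha(e_k)$, contributes a tangential term $(*^{p+1}_qh)^iF_*(e_i)$. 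In each case the total ``weight'' (number of $h$-factors plus total order of derivatives distributed on them) increases by one, tangential corrections always keep an even number of $h$-factors and normal corrections an odd number, and the relabelings $p\leftrightarrow 2(k-\iota)$ or $2(k-\iota)+1$, $q\leftrightarrow 2\iota$ or $2\iota+1$ are the ones that exhibit the output as the sums of \eqref{nabla f1}/\eqref{nabla f2} at level $l+1$.

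The only genuine obstacle is this last step of bookkeeping: verifying that, after the parity switch, the terms produced by (a)--(c) recombine exactly into the claimed sums with the stated ranges of $\iota$, and that the degenerate conventions ($*^0_0h=1$, empty sums at the endpoints) behave correctly. Because the $*$-notation absorbs all numerical coefficients, once (a)--(c) are written out in detail the matching of indices is purely mechanical; I would therefore record (a)--(c) carefully and then conclude by reindexing.
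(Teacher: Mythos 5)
Your proposal is correct and takes essentially the same route as the paper: induction on $l$ via the Leibniz rule for \eqref{nabla a1}, and a parity-alternating induction for \eqref{nabla f1}--\eqref{nabla f2} driven by one more covariant derivative, tracking normal-to-tangential and tangential-to-normal contributions through the Gauss--Weingarten equations exactly as in your items (a)--(c). The paper simply carries out the index bookkeeping you defer, alternately deducing \eqref{nabla f2} at level $k-1$ from \eqref{nabla f1} at $k-1$ and then \eqref{nabla f1} at $k$ from \eqref{nabla f2} at $k-1$.
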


\begin{proof} First we prove \eqref{nabla a1}.
For $l=0$, we have
\begin{align*}
(\nabla^2a)_{ij}=&\sum_{A,B} (\ol\nabla^2a)_{AB}F^A_iF^B_j+\sum_A(\ol\nabla a)_AF^A_{,ij}\\
=&\sum_{p=1}^{2}\sum_{r_1+\cdots+r_p =-p+2}\lagl\ol\nabla^p a,
\nabla^{r_1+1}F\otimes\cdots\otimes\nabla^{r_p+1}F\ragl_{ij}.
\end{align*}
Suppose the formula is true for $l-1\geq 0$, that is
$$
\nabla^{l+1}a=\sum_{p=1}^{l+1}\sum_{r_1+\cdots+r_p =l-p+1}\lagl\ol\nabla^p a,\nabla^{r_1+1}F\otimes\cdots\otimes\nabla^{r_p+1}F\ragl.
$$
then for $l\geq 1$
\begin{align*}
\nabla^{l+2}a=&\sum_{p=1}^{l+1}\sum_{r_1+\cdots+r_p =l-p+1}\nabla\lagl\ol\nabla^p a,\nabla^{r_1+1}F\otimes\cdots\otimes\nabla^{r_p+1}F\ragl\\
=&\sum_{p=1}^{l+1}\sum_{r_1+\cdots+r_p =l-p+1}\lagl\nabla\ol\nabla^p a,\nabla^{r_1+1} F\otimes\cdots\otimes\nabla^{r_p+1}F\ragl \\
&+\sum_{p=1}^{l+1}\sum_{r_1+\cdots+r_p =l-p+1}\lagl\ol\nabla^p a,\nabla(\nabla^{r_1+1}F\otimes\cdots\otimes\nabla^{r_p+1}F)\ragl\\
=&\sum_{p=1}^{l+2}\sum_{r_1+\cdots+r_p =l-p+2}\lagl\ol\nabla^p a,\nabla^{r_1+1}F\otimes\cdots\otimes\nabla^{r_p+1}F\ragl\label{nabla a1}.
\end{align*}
Thus formula \eqref{nabla a1} holds for any $l\geq 0$.

Now we use induction again to prove formulas \eqref{nabla f1} and \eqref{nabla f2}. For $l=0$, we have
$$
(\nabla^2 F)_{ij}=(\nabla_{e_j} F_*)(e_i)=h_{ij}.
$$
Suppose \eqref{nabla f1} holds for $k-1\geq 0$, that is, for $l=2(k-1)\geq 0$,
$$
\nabla^{2(k-1)+2}F=\nabla^{2(k-1)} h+\sum_{\iota=0}^{k-1-1}(*^{2(k-1-\iota)}_{2\iota+1}h)^iF_*(e_i) +\sum_{\iota=0}^{k-1-1}(*^{2(k-1-\iota)+1}_{2\iota}h)^\alpha e_\alpha.
$$
Then for $l=2(k-1)+1$ we find
\begin{align*}
\nabla^{2(k-1)+1+2}F=&\ol\nabla\nabla^{2(k-1)} h+\sum_{\iota=0}^{k-2}\nabla(*^{2(k-\iota-1)}_{2\iota+1}h)^iF_*(e_i)
+\sum_{\iota=0}^{k-2}(*^{2(k-\iota-1)}_{2\iota+1}h)^i(\ol\nabla F_*(e_i) )^\bot\\
&+\sum_{\iota=0}^{k-2}\nabla(*^{2(k-\iota)-1}_{2\iota}h)^\alpha e_\alpha +\sum_{\iota=0}^{k-2}(*^{2(k-\iota)-1}_{2\iota}h)^\alpha(\ol\nabla e_\alpha)^\top\\
=&\nabla^{2(k-1)+1} h+(\nabla^{2(k-1)} h*h)^iF_*(e_i)+\sum_{\iota=0}^{k-2}(*^{2(k-\iota-1)}_{2\iota+2}h)^iF_*(e_i) \\
&+\sum_{\iota=0}^{k-2}(*^{2(k-\iota-1)}_{2\iota+1}h*h)^\alpha e_\alpha+\sum_{\iota=0}^{k-2}(*^{2(k-\iota)-1}_{2\iota+1}h)^\alpha e_\alpha  \\
&+\sum_{\iota=0}^{k-2}(*^{2(k-\iota)-1}_{2\iota}h*h)^i F_*(e_i)\\
=&\nabla^{2(k-1)+1} h+\sum_{\iota=0}^{k-1}(*^{2(k-1-\iota+1)}_{2\iota}h)^iF_*(e_i) +\sum_{\iota=0}^{k-1-1}(*^{2(k-1-\iota)+1}_{2\iota+1}h)^\alpha e_\alpha.
\end{align*}
So \eqref{nabla f2} holds for $k-1\geq 0$, from which it follows that
\begin{align*}
\nabla^{2k+2}F=&\ol\nabla\nabla^{2(k-1)+1} h+\sum_{\iota=0}^{k-1}\nabla(*^{2(k-\iota)}_{2\iota}h)^iF_*(e_i)
+\sum_{\iota=0}^{k-1}(*^{2(k-\iota)}_{2\iota}h)^i(\ol\nabla F_*(e_i) )^\bot\\
&+\sum_{\iota=0}^{k-2}\nabla(*^{2(k-\iota)-1}_{2\iota+1}h)^\alpha e_\alpha +\sum_{\iota=0}^{k-2}(*^{2(k-\iota)-1}_{2\iota+1}h)^\alpha(\ol\nabla e_\alpha)^\top\\
=&\nabla^{2(k-1)+2} h+(\nabla^{2(k-1)+1} h*h)^iF_*(e_i)+\sum_{\iota=0}^{k-1}(*^{2(k-\iota)}_{2\iota+1}h)^iF_*(e_i) \\
&+\sum_{\iota=0}^{k-1}(*^{2(k-\iota)}_{2\iota}h*h)^\alpha e_\alpha+\sum_{\iota=0}^{k-2}(*^{2(k-\iota)-1}_{2\iota+2}h)^\alpha e_\alpha +\sum_{\iota=0}^{k-2}(*^{2(k-\iota)-1}_{2\iota+1}h*h)^i F_*(e_i)\\
=&\nabla^{2k} h+\sum_{\iota=0}^{k-1}(*^{2(k-\iota)}_{2\iota+1}h)^iF_*(e_i) +\sum_{\iota=0}^{k-1}(*^{2(k-\iota)+1}_{2\iota}h)^\alpha e_\alpha.
\end{align*}
Therefore \eqref{nabla f1} holds for all $k\geq 1$. By the principle of induction, both \eqref{nabla f1} and \eqref{nabla f2} are proved.
\end{proof}

\begin{prop}
The evolution of the $l$-th covariant derivative of $h$ is of the form
\be
\nabla_t\nabla^lh=a\Delta\nabla^lh+\sum_{r_0+r_1=l+2,r_0\geq 1}\nabla^{r_0}a*\nabla^{r_1}h +\sum_{r_0+r_1+r_2+r_3=l}\nabla^{r_0}a*\nabla^{r_1}h*\nabla^{r_2}h*\nabla^{r_3}h.
\ee
\end{prop}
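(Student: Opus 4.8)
The plan is to argue by induction on $l\ge 0$. The base case $l=0$ is nothing but the already-established formula \eqref{ht2}: at $l=0$ the sum $\sum_{r_0+r_1=2,\,r_0\ge 1}\nabla^{r_0}a*\nabla^{r_1}h$ collapses to the two terms $\nabla^2a*h$ and $\nabla a*\nabla h$, while $\sum_{r_0+r_1+r_2+r_3=0}\nabla^{r_0}a*\nabla^{r_1}h*\nabla^{r_2}h*\nabla^{r_3}h$ collapses to $a*h^3$, so the asserted identity reduces exactly to \eqref{ht2}. So the whole content is in the inductive step.

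For the step, assuming the formula at level $l-1$, I would apply the spatial covariant derivative $\nabla$ to both sides and then commute $\nabla$ past $\nabla_t$ on the left, writing $\nabla_t\nabla^l h=\nabla\bigl(\nabla_t\nabla^{l-1}h\bigr)-[\nabla,\nabla_t]\nabla^{l-1}h$. The commutator $[\nabla,\nabla_t]$ acting on a section of $(T^*M)^{\otimes r}\otimes\mathcal N$ is, slot by slot, a contraction of the mixed curvatures $R(\partial_t,e_i,\cdot,\cdot)$ and $R^\bot(\partial_t,e_i,\cdot,\cdot)$ against that section; by \eqref{rtijk2} and \eqref{rtiab2} these curvatures have the shape $\nabla a*h^2+a*h*\nabla h$, so $[\nabla,\nabla_t]\nabla^{l-1}h$ is a linear combination of $\nabla a*h*h*\nabla^{l-1}h$ and $a*h*\nabla h*\nabla^{l-1}h$, both of which sit inside the cubic sum $\sum_{r_0+r_1+r_2+r_3=l}\nabla^{r_0}a*\nabla^{r_1}h*\nabla^{r_2}h*\nabla^{r_3}h$ (with indices $(1,0,0,l-1)$ and $(0,0,1,l-1)$).

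It then remains to expand $\nabla\bigl(\nabla_t\nabla^{l-1}h\bigr)$ by feeding in the inductive formula and using the Leibniz rule. Differentiating the second and third sums of the level-$(l-1)$ formula term by term merely raises one of the orders $r_i$ by one, which lands each resulting term in the corresponding sum at level $l$. The only nonroutine piece is $\nabla\bigl(a\,\Delta\nabla^{l-1}h\bigr)=\nabla a*\Delta\nabla^{l-1}h+a\,\nabla\Delta\nabla^{l-1}h$: the first summand is $\nabla a*\nabla^{l+1}h$, which is a second-sum term with $(r_0,r_1)=(1,l+1)$, and for the second I would commute $\nabla$ with $\Delta$ to get $\nabla\Delta\nabla^{l-1}h=\Delta\nabla^l h+R*\nabla^l h+\nabla R*\nabla^{l-1}h$, where $R$ denotes the intrinsic or normal curvature, equal to $h^2$ by \eqref{rijkl} and \eqref{rabij}; hence $a\,\nabla\Delta\nabla^{l-1}h=a\,\Delta\nabla^l h+a*h*h*\nabla^l h+a*h*\nabla h*\nabla^{l-1}h$, with the two error terms again lying in the level-$l$ cubic sum. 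Collecting the principal term $a\,\Delta\nabla^l h$, all the second-sum contributions, and all the cubic contributions from the Leibniz expansion and from the two commutators gives the claimed identity.

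The part demanding care — really the only obstacle — is the bookkeeping in the $*$-calculus: one must check that none of the commutator terms (from $[\nabla,\nabla_t]$ or from $[\nabla,\Delta]$) escapes the prescribed homogeneity. This works out precisely because every ambient, intrinsic, and normal curvature tensor that appears is itself of the form $h^2$ (respectively $\nabla a*h^2+a*h*\nabla h$ in the $t$-direction), so each commutation step trades a derivative for two factors of $h$ (or one $a$-derivative plus factors of $h$) without ever producing a term outside the stated two sums.
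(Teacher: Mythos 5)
Your proposal is correct and follows essentially the same route as the paper: induction on $l$ with base case \eqref{ht2}, the time-like Ricci identity together with \eqref{rtijk2}--\eqref{rtiab2} to control $[\nabla,\nabla_t]\nabla^{l-1}h$, and the commutation $\nabla(\Delta S)=\Delta(\nabla S)+\nabla S*h^2+S*h*\nabla h$ (derived in the paper for a general normal-bundle-valued tensor $S$, then applied with $S=\nabla^{l-1}h$) to handle $\nabla(a\Delta\nabla^{l-1}h)$. The bookkeeping you outline for which $*$-sum each term lands in matches the paper's computation line by line.
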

\begin{proof}
We prove this proposition by induction. When $l=0$, it is easy to see from \eqref{ht2} that
\begin{align*}
\nabla_th^\alpha_{ij}=& a\Delta h^\alpha_{ij}+(\nabla^2a*h+\nabla a*\nabla h+ah^3)^\alpha_{ij}.
\end{align*}

Now suppose the conclusion holds for $l-1\geq 0$. Then by the time-like Ricci identity, \eqref{rtijk2} and \eqref{rtiab2}, we find
\begin{align*}
(\nabla_t\nabla^lh)^\alpha_{i_1\cdots i_{l+2}}=&(\nabla\nabla^lh)^\alpha_{i_1\cdots i_{l+2}t}
=(\nabla\nabla^lh)^\alpha_{i_1\cdots i_{l+1}ti_{l+2}} +\sum_i(\nabla^{l-1}h)^\alpha_{ii_2\cdots i_{l+1}}R_{i_1ii_{l+2}t}\\ &+\cdots+\sum_i(\nabla^{l-1}h)^\alpha_{i_1i_2\cdots i_li}R_{i_{l+1}ii_{l+2}t} -\sum_\beta(\nabla^{l-1}h)^\beta_{i_1i_2\cdots i_{l+1}}R_{\beta\alpha i_{l+2}t}\\
=&\nabla_{i_{l+2}}(\nabla_t\nabla^{l-1}h)^\alpha_{i_1\cdots i_{l+1}}+\sum_{r_0+r_1+r_2+r_3=l} (\nabla^{r_0}a*\nabla^{r_1}h*\nabla^{r_2}h*\nabla^{r_3}h)^\alpha_{i_1\cdots i_{l+2}}\\
=&\nabla_{i_{l+2}}\Big(a\Delta\nabla^{l-1}h+\sum_{r_0+r_1=l+1,r_0\geq 1}\nabla^{r_0}a*\nabla^{r_1}h\\
&+\sum_{r_0+r_1+r_2+r_3=l-1} \nabla^{r_0}a*\nabla^{r_1}h*\nabla^{r_2}h*\nabla^{r_3}h\Big)^\alpha_{i_1\cdots i_{l+1}}\\
&+\sum_{r_0+r_1+r_2+r_3=l} (\nabla^{r_0}a*\nabla^{r_1}h*\nabla^{r_2}h*\nabla^{r_3}h)^\alpha_{i_1\cdots i_{l+2}}\\
=&\nabla_{i_{l+2}}(a\Delta\nabla^{l-1}h)^\alpha_{i_1\cdots i_{l+1}}+\sum_{r_0+r_1=l+2,r_0\geq 1}(\nabla^{r_0}a*\nabla^{r_1}h)^\alpha_{i_1\cdots i_{l+2}}\\
&+\sum_{r_0+r_1+r_2+r_3=l} (\nabla^{r_0}a*\nabla^{r_1}h*\nabla^{r_2}h*\nabla^{r_3}h)^\alpha_{i_1\cdots i_{l+2}}.
\end{align*}

On the other hand, for any $S\in\Gamma(\otimes^r{\mathcal H^*}\otimes{\mathcal N})$, we have the following formula of commuting the Laplacian and gradient:
\begin{align*}
\nabla_k(\Delta S)^\alpha_{i_1\cdots i_r}=&S^\alpha_{i_1\cdots i_rjjk} =S^\alpha_{i_1\cdots i_rjkj}+S^\alpha_{ii_2\cdots i_rj}R^i_{i_1jk} +\cdots +S^\alpha_{i_1\cdots i_ri}R^i_{jjk}-S^\beta_{i_1\cdots i_rj}R^\alpha_{\beta jk}\\
=&\left(S^\alpha_{i_1\cdots i_rkj}+S^\alpha_{ii_2\cdots i_r}R^i_{i_1jk} +\cdots +S^\alpha_{i_1\cdots i_{r-1}i}R^i_{i_rjk}-S^\beta_{i_1\cdots i_r}R^\alpha_{\beta jk}\right)_{,j}\\
&+(\nabla S*h^2)^\alpha_{i_1\cdots i_rk}\\
=&S^\alpha_{i_1\cdots i_rkjj}+\nabla_j(S*h^2)^\alpha_{i_1\cdots i_rkj}+(\nabla S*h^2)^\alpha_{i_1\cdots i_rk}\\
=& \Delta(\nabla_k S)^\alpha_{i_1\cdots i_r}+(\nabla S*h^2+S*h*\nabla h )^\alpha_{i_1\cdots i_rk}.
\end{align*}
In particular, inserting $S=\nabla^{l-1}h$ we have
\begin{align*}
\nabla_{i_{l+2}}(a\Delta\nabla^{l-1}h)^\alpha_{i_1\cdots i_{l+1}}
=&a_{i_{l+2}} (\Delta\nabla^{l-1}h)^\alpha_{i_1\cdots i_{l+1}}+ a\Delta(\nabla^lh)^\alpha_{i_1\cdots i_{l+2}}\\
&+a(\nabla^lh*h^2+\nabla^{l-1}h*h*\nabla h)^\alpha_{i_1\cdots i_{l+2}}.
\end{align*}
Thus we finally obtain
\begin{align}\label{evohiderh}
\nabla_t\nabla^lh=&a\Delta\nabla^lh+\sum_{r_0+r_1=l+2,r_0\geq 1}\nabla^{r_0}a*\nabla^{r_1}h\nnm\\
&+\sum_{r_0+r_1+r_2+r_3=l} \nabla^{r_0}a*\nabla^{r_1}h*\nabla^{r_2}h*\nabla^{r_3}h.
\end{align}
\end{proof}

\begin{cor}\label{evonhiderh}
The evolution of $|\nabla^lh|^2$ is of the form
\begin{align*}
\pp{}{t}|\nabla^lh|^2=&a\Delta|\nabla^lh|^2-2a|\nabla^{l+1}h|^2+\sum_{r_0+r_1=l+2,r_0\geq 1}\nabla^{r_0}a*\nabla^{r_1}h*\nabla^lh\\ &+\sum_{r_0+r_1+r_2+r_3=l}\nabla^{r_0}a*\nabla^{r_1}h*\nabla^{r_2}h*\nabla^{r_3}h*\nabla^lh.
\end{align*}
\end{cor}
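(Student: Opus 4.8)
The plan is to read off the evolution of $|\nabla^l h|^2$ directly from the evolution formula \eqref{evohiderh} for $\nabla^l h$ just proved, in the same spirit as the computation of $\pp{}{t}|h|^2$ in Section \ref{s4} (which is the case $l=0$). The first point to record is that the connection $\nabla$ on $\mathcal H$ and the connection $\nabla^\bot$ on $\mathcal N$ are the tangential, respectively normal, parts of the pull-back connection $\nabla^{F^*T\bbr^n}$, hence both are compatible with the induced bundle metrics in \emph{every} direction of $M\times[0,T)$, $\partial_t$ included; the same is then true of the induced connection on $\otimes^{l+2}{\mathcal H}^*\otimes{\mathcal N}$, the bundle to which $\nabla^l h$ belongs. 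Therefore
\be
\pp{}{t}|\nabla^l h|^2=2\lagl\nabla_t\nabla^l h,\nabla^l h\ragl,
\ee
with no additional terms arising from the variation of the induced metric. (Equivalently, had one written everything out in a local frame, the terms produced by $\pp{}{t}g^{ij}=2aH^\beta h^\beta_{kl}g^{ik}g^{lj}$ would be of schematic type $a*h*h*\nabla^l h*\nabla^l h$, which already belongs to the last sum of the asserted identity, with $r_0=r_1=r_2=0$ and $r_3=l$.)

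Next I would substitute \eqref{evohiderh} into the right-hand side above. Its three groups of terms contribute, in order, $2a\lagl\Delta\nabla^l h,\nabla^l h\ragl$, then $\sum_{r_0+r_1=l+2,\,r_0\geq 1}\nabla^{r_0}a*\nabla^{r_1}h*\nabla^l h$, and finally $\sum_{r_0+r_1+r_2+r_3=l}\nabla^{r_0}a*\nabla^{r_1}h*\nabla^{r_2}h*\nabla^{r_3}h*\nabla^l h$: pairing any tensor of the form $\nabla^{r_0}a*\cdots$ with $\nabla^l h$ through $\lagl\cdot,\cdot\ragl$ is just one further $g$-contraction, so it stays in the same $*$-class (the numerical factor $2$ being absorbed into the ``linear combination''). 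For the Laplacian term I would invoke the Bochner-type identity $\Delta|\nabla^l h|^2=2\lagl\Delta\nabla^l h,\nabla^l h\ragl+2|\nabla^{l+1}h|^2$ — itself an immediate consequence of the metric compatibility of $\nabla$ and $\nabla^\bot$ — so that $2a\lagl\Delta\nabla^l h,\nabla^l h\ragl=a\Delta|\nabla^l h|^2-2a|\nabla^{l+1}h|^2$. Assembling the four pieces gives precisely the claimed formula.

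There is no genuine obstacle here: the real work — the time-like Ricci identity together with the commutation of $\Delta$ and $\nabla$ — was already carried out in the proof of the preceding proposition. The only matters needing a little attention are the metric-compatibility observation that rules out spurious metric-variation terms (or, alternatively, the remark that any such terms are already of the advertised type), and the routine bookkeeping that contracting against $\nabla^l h$ does not enlarge the $*$-classes.
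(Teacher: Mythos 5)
Your proof is correct and takes exactly the route the paper leaves implicit: the paper states this as an immediate corollary of the evolution formula \eqref{evohiderh} without writing out a proof, and the two ingredients you supply — metric compatibility of the spacetime connection (so that $\nabla_t g=0$ and no extra metric-variation terms arise, a fact one can also confirm directly from \eqref{gamat} and \eqref{evogij}) together with the Bochner-type identity $\Delta|\nabla^l h|^2=2\lagl\Delta\nabla^l h,\nabla^l h\ragl+2|\nabla^{l+1}h|^2$ — are precisely what is needed, with the remaining pairings staying inside the advertised $*$-classes. The check at $l=0$ against the explicit formula for $\pp{}{t}|h|^2$ in Section~\ref{s4} confirms the bookkeeping.
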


Now we are to make estimations first for all the higher order derivatives of the second fundamental form $h$, and then for those of the $\bbr^n$-valued function $F$.

\begin{prop}\label{prop5.7}
Suppose that the conformal mean curvature flow \eqref{4} has a solution on a time interval $t\in[0,\tau]$. If $|h|^2$ is bounded on $[0,\tau]$, say, $|h|^2\leq C^0_0$, then for each $l\geq 1$, it holds that $|\nabla^lh|^2\leq C^0_l(1+1/t^l)$ for all $t\in (0,\tau]$, where $C^0_l$ is a constant that only depends on $m,l,C^0_0$ and the bounds of $\ol\nabla^ka$, $k=0,1,\cdots,l+2$.
\end{prop}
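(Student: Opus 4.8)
The plan is to induct on $l$, converting the tensor evolution equations of Corollary \ref{evonhiderh} into a single scalar parabolic differential inequality for a Bernstein-type test function and invoking the maximum principle. By Lemma \ref{lem5.2}, along the flow the composed function $a=a\circ F$ and all of its derivatives stay bounded, say $|\ol\nabla^k a|\leq A_k$ and $0<\ul a\leq a\leq\ol a$; I use these bounds, the Peter-Paul inequality (Lemma \ref{young}), and the $*$-notation freely.

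First I reduce the evolution equations. By Corollary \ref{evonhiderh},
$$
\pp{}{t}|\nabla^kh|^2=a\Delta|\nabla^kh|^2-2a|\nabla^{k+1}h|^2+\mathcal R_k,
$$
where $\mathcal R_k$ is a sum of terms $\nabla^{r_0}a*\nabla^{r_1}h*\nabla^kh$ with $r_0+r_1=k+2,\ r_0\geq1$, and $\nabla^{r_0}a*\nabla^{r_1}h*\nabla^{r_2}h*\nabla^{r_3}h*\nabla^kh$ with $r_0+r_1+r_2+r_3=k$. Exactly one term of $\mathcal R_k$, namely $\nabla a*\nabla^{k+1}h*\nabla^kh$, carries $k+1$ derivatives on a factor; Peter-Paul absorbs it into half of $-2a|\nabla^{k+1}h|^2$, leaving $\leq C|\nabla^kh|^2$. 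In every remaining term each factor $\nabla^{r_i}h$ has $r_i\leq k$: a factor with $r_i=k$ is paired with the outer $\nabla^kh$ and, using $|h|^2\leq C^0_0$, estimated by $C|\nabla^kh|^2$; factors with $r_i<k$ are handled by finitely many applications of Peter-Paul, producing $\tfrac12|\nabla^kh|^2$ together with contractions of $\nabla^jh$'s all of order $j<k$ and of total order $\leq k$. Thus, for $0\leq k\leq l$,
$$
\pp{}{t}|\nabla^kh|^2\leq a\Delta|\nabla^kh|^2-a|\nabla^{k+1}h|^2+B_k|\nabla^kh|^2+\mathcal O_k,
$$
where $B_k$ is a constant of the asserted type and $\mathcal O_k$ is the error just described; $\mathcal O_0\leq B_0$, and once the proposition is known for orders $<l$ the inductive hypothesis gives $\mathcal O_k\leq B_k(1+1/t^k)$ for $1\leq k\leq l$.

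Now the induction. The case $l=0$ is the hypothesis. Fix $l\geq1$, assume the proposition for orders $0,\dots,l-1$, and set
$$
\Phi:=\sum_{k=0}^{l}\beta_k\,t^k\,|\nabla^kh|^2,\qquad \beta_l:=1,\ \ \beta_{-1}:=0,
$$
with $\beta_{l-1},\dots,\beta_0>0$ to be chosen. Since each $t^k$ is a function of $t$ only, $a\Delta$ passes through it; summing the reduced inequality over $k$ (the contribution $-\beta_{k-1}a\,t^{k-1}|\nabla^kh|^2$ comes from the good term $-a|\nabla^{k+1}h|^2$ at level $k-1$, and $-\beta_l a\,t^l|\nabla^{l+1}h|^2\leq0$ is discarded) yields on $M\times(0,\tau]$
$$
\pp{}{t}\Phi\leq a\Delta\Phi+\sum_{k=1}^{l}\big(\beta_k k-\beta_{k-1}a\big)t^{k-1}|\nabla^kh|^2+\sum_{k=0}^{l}\beta_k B_k\,t^k|\nabla^kh|^2+\sum_{k=0}^{l}\beta_k t^k\mathcal O_k.
$$
On $(0,\tau]$ one has $\beta_k B_k t^k\leq\beta_k B_k\tau\,t^{k-1}$ for $k\geq1$, so the total coefficient of $t^{k-1}|\nabla^kh|^2$ ($1\leq k\leq l$) is at most $\beta_k(k+B_k\tau)-\beta_{k-1}\ul a$; choosing $\beta_{k-1}\geq\beta_k(k+B_k\tau)/\ul a$ recursively downward from $\beta_l=1$ makes all of these nonpositive. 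The surviving terms are bounded on $(0,\tau]$: $\beta_0 B_0|h|^2\leq\beta_0 B_0 C^0_0$, and $\beta_k t^k\mathcal O_k\leq\beta_k B_k(t^k+1)$ for every $k$ (note $t^k\cdot t^{-k}=1$). Hence $\pp{}{t}\Phi\leq a\Delta\Phi+C^*$ on $M\times(0,\tau]$ for a constant $C^*$ of the asserted type. As $\Phi$ is continuous up to $t=0$ with $\Phi(\cdot,0)=\beta_0|h_0|^2\leq\beta_0 C^0_0$, the maximum principle (applied to $\Phi-C^*t$) gives $\Phi\leq\beta_0 C^0_0+C^*\tau=:C^0_l$. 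In particular $t^l|\nabla^lh|^2\leq\Phi\leq C^0_l$, i.e. $|\nabla^lh|^2\leq C^0_l(1+1/t^l)$, which closes the induction.

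The main obstacle is the bookkeeping of $\mathcal R_k$: after the single unavoidable Peter-Paul absorption into $-2a|\nabla^{k+1}h|^2$, one must verify that every remaining reaction term is genuinely controlled using only the bounds on $a$ and its derivatives, the bound $|h|^2\leq C^0_0$, and the inductive control of the lower-order $\nabla^jh$. Once that is in place, the telescoping choice of the weights $\beta_k$ is automatic — the key point being that the dangerous singular term $t^{k-1}|\nabla^kh|^2$ produced by differentiating $t^k$ carries exactly the power of $t$ matching the good term $-a|\nabla^{k+1}h|^2$ one level below, so it can be swallowed level by level all the way down to $|h|^2$, which is bounded. (As written $C^0_l$ depends on $\tau$ as well; in the intended application $\tau$ is bounded by the finite maximal time $T$, and a routine sliding-time-window argument removes even this dependence.)
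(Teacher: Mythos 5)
Your proof is correct, and it takes a genuinely different (and in one respect more careful) route than the paper. The paper uses a two-term Bernstein function $G_l=t^l|\nabla^lh|^2+\tfrac{l}{\ul a}\,t^{l-1}|\nabla^{l-1}h|^2$, splits into the cases $|\nabla^lh|^2\lessgtr 1$, and patches together estimates on short, sliding time windows; you instead use the full telescoping sum $\Phi=\sum_{k=0}^l\beta_k t^k|\nabla^kh|^2$ and a single application of the maximum principle on $(0,\tau]$. The key structural advantage of your cascade is worth emphasizing: in $\partial_t G_l$ the term $\tfrac{l(l-1)}{\ul a}\,t^{l-2}|\nabla^{l-1}h|^2$ (coming from $\partial_t t^{l-1}$) is, by the inductive hypothesis $|\nabla^{l-1}h|^2\le C^0_{l-1}(1+t^{-(l-1)})$, only $O(1/t)$ as $t\to0^+$ when $l\ge2$, so it cannot be dominated by a constant $c_4$; the paper's two-term $G_l$ has nothing at the $t^{l-2}$ level to absorb it. Your $\Phi$ handles this automatically because differentiating each $t^k$ drops the power by exactly one, matching the good dissipative term $-a\beta_{k-1}t^{k-1}|\nabla^kh|^2$ produced one level below, and this telescoping continues all the way down to $|h|^2$, which is bounded by hypothesis. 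Your reduction of Corollary \ref{evonhiderh} via one Peter--Paul absorption of $\nabla a*\nabla^{k+1}h*\nabla^kh$ into half of $-2a|\nabla^{k+1}h|^2$, the verification via Lemma \ref{lem nabla a} that no other $\nabla^{k+1}h$ appears, and the downward recursive choice of the weights $\beta_k$ are all sound. As you note, your $C^0_l$ carries a dependence on $\tau$; the paper's sliding-window argument replaces this by dependence on the finite maximal time $T$, which is the quantity actually used downstream, so the discrepancy with the phrasing of the statement is common to both treatments and harmless.
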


\begin{proof} The idea of proving Lemma \ref{prop5.7} comes from \cite{a-b}.
For each $l\geq 1$ define
$$G_l=t^l|\nabla^l h|^2+\fr l{\ul a}t^{l-1}|\nabla^{l-1}h|^2,$$
where $\ul a$ is a positive lower bound of $a$.

We shall prove the lemma by induction on $l$. For the case $l=1$, we first use the assumption $|h|^2\leq C^0_0$ and Lemma \ref{lem nabla a} to deduce that
\be
|\nabla^2 a|^2\leq C_1,\quad
|\nabla^3 a|^2\leq C_2|\nabla h|^2+C_3
\ee
where $C_1, C_2$ and $C_3$ are dependent only on $C^0_0$, the dimension $m$ and the bounds of $\ol\nabla a, \ol\nabla^2a, \ol\nabla^3 a$. Then by Corollary \ref{evonhiderh}
\begin{align*}
\pp{}{t}G_1=&a\Delta G_1+|\nabla h|^2+t\left(-2a|\nabla^2 h|^2 +\nabla^3a*h*\nabla h+\nabla^2a*(\nabla h)^2\right.\\
&\left.+\nabla a*\nabla h*\nabla^2 h +\nabla a*h^3*\nabla h+a*h^2*(\nabla h)^2\right)\\
&+\fr1{\ul a}\left(-2a|\nabla h|^2+\nabla^2a*h^2 +\nabla a*h*\nabla h+a*h^4\right).
\end{align*}
So at points where $|\nabla h|^2\leq 1$, since
$$|\nabla^3 a|^2\leq C_2|\nabla h|^2+C_3\leq C,\text{ and }t<T<+\infty,$$
it holds by Young's inequality that
\be
\pp{}{t}G_1\leq a\Delta G_1+t(-2a+\veps)|\nabla^2 h|^2+c_2\leq a\Delta G_1+c_2
\ee
for $\veps$ small enough, where $c_2$ is a constant that only depends on $m,C^0_0$ and the bounds of $a$, $\ol\nabla a$, $\ol\nabla^2 a$ and $\ol\nabla^3 a$;
while at points where $|\nabla h|^2\geq 1$, since
$$|\nabla^3 a|^2\leq C_2|\nabla h|^2+C_3\leq (C|\nabla h|)^2,$$
it also holds by Young's inequality that
\be\label{pt g1}
\pp{}{t}G_1\leq a\Delta G_1+(-\fr a{\ul a}+\veps+tc_1)|\nabla h|^2+t(-2a+\veps)|\nabla^2 h|^2+c_2 \leq a\Delta G_1+c_2
\ee
for some positive $\veps\leq\min\{\fr12, 2\ul a\}$ and $t\leq\fr{1-\veps}{c_1}$, where $c_1$ and $c_2$ are constants that only depend on $m,C^0_0$ and the bounds of $a$, $\ol\nabla a$, $\ol\nabla^2 a$ and $\ol\nabla^3 a$. So we always have \eqref{pt g1}, if $t\leq\fr{1-\veps}{c_1}$. Therefore it follows from the maximal value principle that
$$t|\nabla h|^2\leq G_1\leq \ul a^{-1}C^0_0+c_2t$$
or $$|\nabla h|^2\leq\fr1t G_1\leq \fr{C^0_0}{\ul at}+c_2\leq C^0_1(1+\fr1t)$$
on the interval $(0,\fr{1-\veps}{c_1}]$. When $t>\fr{1-\veps}{c_1}$, we can consider the internal $(t-\fr{1-\veps}{2c_1},t+\fr{1-\veps}{2c_1}]$ of length $\fr{1-\veps}{c_1}$ on which similar argument can give a similar estimation for $G_1$. Since $\veps$ can be chosen fixed, we can cover $(\fr{1-\veps}{c_1}-\delta,\tau]$, $0<\delta\leq\fr{1-\veps}{2c_1}$, with a family of such intervals of a fixed length. Due to the finiteness of $T$ and the fact that $\tau<T$, this consideration will directly lead to the conclusion for $l=1$.

Now we suppose the conclusion is true for less than or equal to $l-1\geq 1$. Then $|h|^2,\cdots |\nabla^{l-1}h|^2$ are all bounded from above. By using this and Lemma \ref{lem nabla a}, we get
\be
\max\{a,|\nabla a|^2,\cdots,|\nabla^{l+1} a|^2\}\leq C_1,\quad
|\nabla^{l+2} a|^2\leq C_2|\nabla^l h|^2+C_3
\ee
where $C_1, C_2$ and $C_3$ are dependent only on $C^0_0$, the dimension $m$ and the bounds of $a,\ol\nabla a,\cdots, \ol\nabla^{l+2} a$. Once more we use Corollary \ref{evonhiderh} to find
\begin{align*}
\pp{}{t}G_l=&a\Delta G_l+lt^{l-1}|\nabla^l h|^2+t^l\big(-2a|\nabla^{l+1} h|^2\\
&+\sum_{r_0+r_1=l+2,r_0\geq 1}\nabla^{r_0}a*\nabla^{r_1}h*\nabla^lh\\ &+\sum_{r_0+r_1+r_2+r_3=l}\nabla^{r_0}a*\nabla^{r_1}h* \nabla^{r_2}h*\nabla^{r_3}h*\nabla^lh\big)\\
&+\fr{l(l-1)}{\ul a}t^{l-2}|\nabla^{l-1}h|^2 +\fr l{\ul a}t^{l-1}\big(-2a|\nabla^l h|^2\\
&+\sum_{r_0+r_1=l+1,r_0\geq 1}\nabla^{r_0}a*\nabla^{r_1}h*\nabla^{l-1}h \\ &+\sum_{r_0+r_1+r_2+r_3=l-1}\nabla^{r_0}a*\nabla^{r_1}h* \nabla^{r_2}h*\nabla^{r_3}h*\nabla^{l-1}h\big).
\end{align*}
As in the case $l=1$ we can use Young's inequality to obtain that
\be
\pp{}{t}G_l\leq a\Delta G_l+t^l(-2a+\veps )|\nabla^{l+1} h|^2+c_4
\leq a\Delta G_l+c_4
\ee
at points where $|\nabla^lh|^2\leq 1$, and
\be
\pp{}{t}G_l\leq a\Delta G_l+lt^{l-1}\big(-\fr a{\ul a}+\veps +tc_3\big)|\nabla^l h|^2+t^l(-2a+\veps )|\nabla^{l+1} h|^2+c_4
\leq a\Delta G_l+c_4
\ee
at points where $|\nabla^lh|^2\geq 1$, for some positive $\veps\leq \min{\fr12, 2\ul a}$ and $t\leq\fr{1-\veps}{c_3}$, where $c_3$ and $c_4$ are constants that only depend on $m,C^0_0$ and the bounds of $a, \ol\nabla a,\cdots,\ol\nabla^{l+2} a$. Thus the maximal value principle gives that
$$t^l|\nabla^l h|^2\leq G_l\leq c_4t, \text{\ or\ } |\nabla^l h|^2\leq \fr{c_4}{t^{l-1}}\leq C^0_l\Big(1+\fr1{t^{l-1}}\Big)$$
on the interval $(0,\fr{1-\veps}{c_3}]$. When $t>\fr{1-\veps}{c_3}$, we fix a small $\veps>0$ and consider a family of intervals of fixed length no more than $\fr{1-\veps}{c_3}$ similarly as in the case of $l=1$ to reach the conclusion for $l$.
\end{proof}

From Lemma \ref{lem nabla a} and Lemma \ref{prop5.7} we have directly the following corollary:

\begin{cor}\label{cor5.8} If $\max |h|^2<+\infty$, then there exist constants $C'(l)$, $C''(l)$ and $C'''(l)$ such that
\be\label{5.19}
|\nabla^lh|^2\leq C'(l),\quad |\nabla^lF|^2\leq C''(l)\quad \text{and thus} \quad |\nabla^l a|^2\leq C'''(l),\quad l\geq 0.
\ee
\end{cor}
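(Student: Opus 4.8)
The plan is a three-step bootstrap, exploiting that the three quantities are tied together by the structural identities of Lemma~\ref{lem nabla a}: first bound $|\nabla^l h|^2$, then deduce a bound for $|\nabla^l F|^2$, and from that a bound for $|\nabla^l a|^2$. The low-order cases can be dispatched at once: $|h|^2\le C'(0)$ is the hypothesis; $|F|^2$ is bounded by Lemma~\ref{lem5.2}; $|\nabla^0 a|^2=|a\circ F|^2\le\ol A_0$ and $|\nabla a|^2=|(\ol\nabla a)^\top|^2\le|\ol\nabla a|^2\le\ol A_1$, again by Lemma~\ref{lem5.2}; and $|\nabla F|^2=g^{ij}g_{ij}=m$. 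So it remains to treat $l\ge 1$ for $h$ and $l\ge 2$ for $F$ and $a$.

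First I would bound the higher covariant derivatives of $h$. Since $\sup_{M\times[0,T)}|h|^2<+\infty$, Proposition~\ref{prop5.7} applies on every interval $[0,\tau]$ with $\tau<T$ and gives $|\nabla^l h|^2\le C^0_l(1+t^{-l})$ on $(0,\tau]$ with $C^0_l$ independent of $\tau$ (it depends only on $m,l$, the fixed bound on $|h|^2$ and the bounds on $\ol\nabla^k a$ over the bounded region of Lemma~\ref{lem5.2}); hence this holds on all of $(0,T)$. This estimate degenerates as $t\to 0^+$, so to reach a genuine constant I would combine it with a short-time estimate near $t=0$: $M$ is compact and $F_0$ is smooth, so $\sup_M|\nabla^l h_0|^2<+\infty$, and the smooth solution provided by Theorem~\ref{exiuni} stays $C^\infty$-bounded on some $[0,\delta]$, say $|\nabla^l h|^2\le B_l$ there. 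Taking $C'(l):=\max\{B_l,\,C^0_l(1+\delta^{-l})\}$ then yields $|\nabla^l h|^2\le C'(l)$ on all of $[0,T)$ for every $l\ge 1$.

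Next, feeding this into Lemma~\ref{lem nabla a}: for $l\ge 2$, formulas \eqref{nabla f1}--\eqref{nabla f2} express $\nabla^l F$ as $\nabla^{l-2}h$ plus a finite sum of terms $(*^p_q h)\,F_*(e_i)$ and $(*^p_q h)\,e_\alpha$ in which each factor $*^p_q h$ is a product of covariant derivatives of $h$ of order at most $l-2$. Since $\{e_\alpha\}$ is orthonormal and $|F_*(e_i)|^2=g_{ii}$ stays under control along the flow --- here one uses $T<+\infty$ together with $\partial_t g_{ij}=-2aH^\beta h^\beta_{ij}$ and the boundedness of $|h|^2$ to keep $g$ and $g^{-1}$ comparable to the initial metric --- the previous step bounds each such term, so $|\nabla^l F|^2\le C''(l)$. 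Plugging this into \eqref{nabla a1}, which writes $\nabla^l a$ as a finite sum of contractions $\lagl\ol\nabla^p a,\,\nabla^{r_1+1}F\otimes\cdots\otimes\nabla^{r_p+1}F\ragl$ with every $r_j+1\le l$ and $p\le l$, and using $|\ol\nabla^p a|^2\le\ol A_p$ from Lemma~\ref{lem5.2}, we get $|\nabla^l a|^2\le C'''(l)$. A straightforward induction on $l$ makes the dependence of the constants on $m$, $C^0_0$ and the $\ol A_k$ explicit.

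The argument is essentially bookkeeping --- which is why the corollary is ``direct'' --- so there is no serious obstacle; the two points worth a second look are that the estimate of Proposition~\ref{prop5.7} is only time-weighted, so the regularity of the initial immersion must be invoked to upgrade it to a uniform bound near $t=0$, and that turning the symbolic identities of Lemma~\ref{lem nabla a} into honest pointwise norm bounds requires the induced metric and its inverse to stay comparable to the initial one, which is automatic because $T$ is finite and $|h|^2$ is bounded.
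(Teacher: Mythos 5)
Your argument is correct and follows the intended route of the paper (combining Proposition \ref{prop5.7} with Lemma \ref{lem nabla a}); in particular you are right that the time-weighted estimate $|\nabla^lh|^2\le C^0_l(1+t^{-l})$ from Proposition \ref{prop5.7} degenerates at $t=0$ and must be patched up with the smoothness of the solution and the compactness of $M\times[0,\delta]$ on a short initial interval. The only superfluous step is the metric-comparability observation: since all norms in the formulas of Lemma \ref{lem nabla a} are taken with the time-dependent induced metric $g$, a term such as $(*^p_qh)^iF_*(e_i)$ has squared $\bbr^n$-norm exactly $|*^p_qh|^2_g$, so the factors $F_*(e_i)$ and $e_\alpha$ contribute nothing extra and no comparison of $g$ with the initial metric is needed here --- that comparison is only required later, in Proposition \ref{prop5.11}, when passing to the fixed background metric $\td g$.
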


In order to prove Theorem \ref{thm5.1}, we follow \cite{b} to fix a smooth metric $\td g$ on $M$ with the Levi-Civita connection $\td\nabla$, which can trivially extend to a time-independent metric on $M\times [0,T)$, still denoted by $\td g$. Then we need to use corresponding induced connections, also denoted by $\td\nabla$, on the relevant bundles on $M\times [0,T)$ for some computations. For example, it is easy to find that $\td\nabla_t g=-2a\lagl H,h\ragl$ by which we have
$$
\left|\pp{}{t}\left(\fr{g(v,v)}{\td g(v,v)}\right)\right|=\left|\fr{(\td\nabla_t g)(v,v)}{g(v,v)}\fr{g(v,v)}{\td g(v,v)}\right| \leq 2a|H||h|_g\fr{g(v,v)}{\td g(v,v)},\quad\forall\,v\in TM,
$$
giving that (\cite{b})
\be\label{32}
\fr1c\td g\leq g\leq c\td g
\ee
for some constant $c>0$. Thus any estimation of a length function with respect to the metric $\td g$ is equivalent to that with respect to the metric $g$.

Now let $\td  T=\td\nabla-\nabla$ be the difference of the two connections. Then $\td  T\in\Gamma(\mathcal{H}^*\otimes \mathcal{H}^*\otimes \mathcal{H})$. Moreover, for any section $S$ of a bundle, constructed from the induced bundle $F^*T\bbr^{m+p}$, the horizontal distribution ${\mathcal H}\subset T(M\times [0,T))$ and the normal subbundle ${\mathcal N}\subset F^*T\bbr^{m+p}$, we have that $\td\nabla S-\nabla S=S*\td  T$.

\begin{lem}\label{lem5.9} It holds that
\be
\td\nabla^{l+1} F=\nabla^{l+1} F+\sum_{q=1}^l\sum_{p=1}^{l+1-q}(*^p_{l-p-q+1}\td  T)*\nabla^q F,\quad l\geq 0.\label{tdnabla f1}
\ee
where, for integers $p\geq 1$ and $q\geq 0$,
$$
*^p_q\td  T=\sum_{r_1+\cdots+r_p=q}\td\nabla^{r_1}\td  T*\cdots*\td\nabla^{r_p}\td  T.
$$
\end{lem}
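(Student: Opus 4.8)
The plan is to proceed by induction on $l$, in the same manner as the proof of \eqref{nabla f1}--\eqref{nabla f2} in Lemma \ref{lem nabla a}. The base case $l=0$ is immediate: the first derivative $\nabla^{1}F=F_{*}=\td\nabla^{1}F$ does not depend on the choice of connection on $M$, and the double sum on the right-hand side of \eqref{tdnabla f1} is empty, so the identity reduces to $\td\nabla F=\nabla F$.

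For the inductive step I would assume \eqref{tdnabla f1} with $l$ replaced by $l-1\geq 0$, namely
$$
\td\nabla^{l}F=\nabla^{l}F+\sum_{q=1}^{l-1}\sum_{p=1}^{l-q}(*^{p}_{l-p-q}\td  T)*\nabla^{q}F,
$$
and apply $\td\nabla$ to both sides, using two elementary rules. First, the conversion rule $\td\nabla S-\nabla S=S*\td  T$ recalled just before the statement gives $\td\nabla(\nabla^{q}F)=\nabla^{q+1}F+(*^{1}_{0}\td  T)*\nabla^{q}F$; moreover, since $\nabla g=0$ and hence $\td\nabla g=g*\td  T$, every metric contraction implicit in a $*$-product is either inert or contributes a further factor $\td  T=\td\nabla^{0}\td  T$ of derivative order zero. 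Second, the Leibniz rule applied to $*^{p}_{q}\td  T=\sum_{r_{1}+\cdots+r_{p}=q}\td\nabla^{r_{1}}\td  T*\cdots*\td\nabla^{r_{p}}\td  T$ gives $\td\nabla(*^{p}_{q}\td  T)=*^{p}_{q+1}\td  T$. Consequently $\td\nabla(\nabla^{l}F)=\nabla^{l+1}F+(*^{1}_{0}\td  T)*\nabla^{l}F$, and the derivative of a generic correction term $(*^{p}_{l-p-q}\td  T)*\nabla^{q}F$ breaks schematically into the three pieces
$$
(*^{p}_{l-p-q+1}\td  T)*\nabla^{q}F,\qquad (*^{p}_{l-p-q}\td  T)*\nabla^{q+1}F,\qquad (*^{p+1}_{l-p-q}\td  T)*\nabla^{q}F,
$$
each of total order $l+1$ and each matching the claimed template $(*^{p}_{(l+1)-p-q}\td  T)*\nabla^{q}F$.

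What remains is bookkeeping: one re-indexes the three families (the substitutions being $q\mapsto q$, $q\mapsto q+1$ and $p\mapsto p+1$, respectively) and checks that their admissible index ranges, together with the $\nabla^{l}F*\td  T$ term, combine to give precisely $1\leq q\leq l$ and $1\leq p\leq l+1-q$ in the level-$l$ formula. Concretely, for $2\leq q\leq l-1$ the shifted second family already covers $1\leq p\leq l+1-q$; for $q=1$ the first and third families cover $\{1,\dots,l-1\}$ and $\{2,\dots,l\}$, whose union is the required range; and for $q=l$ only $p=1$ occurs, supplied by the $\nabla^{l}F*\td  T$ term. I expect this index matching to be the only real obstacle -- it is routine, but must be done with care to confirm that no term with $q>l$ or with negative derivative budget $l+1-p-q<0$ is produced and that nothing in the target sum is left out; the structural part of the argument is forced once the two rules above are in hand.
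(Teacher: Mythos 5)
Your proposal is correct and follows essentially the same path as the paper: induction on $l$, applying $\td\nabla$ to the inductive hypothesis, using $\td\nabla S=\nabla S+S*\td T$ and the Leibniz rule to split each correction term into the three schematic pieces, and then re-indexing to match the claimed ranges; the only cosmetic difference is that the paper writes $\td\nabla=\nabla+\td T$ and then re-expresses $\nabla(*^p_q\td T)$ as $(\td\nabla-\td T)(*^p_q\td T)$, while you apply $\td\nabla$ directly, but both produce the same three families of terms. Your index bookkeeping at the end (the cases $q=l$, $q=1$, and $2\le q\le l-1$) is exactly the step the paper carries out in detail, and your case analysis is correct.
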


\begin{proof} We prove \eqref{tdnabla f1} by induction on $l$. If $l=0$, then both sides of \eqref{tdnabla f1} are equal to $dF$. So \eqref{tdnabla f1} holds for $l=0$.

Suppose that \eqref{tdnabla f1} holds for $l=k$. Then when $l=k+1$, we compute
\begin{align*}
\td\nabla^{k+1+1} F=&\td\nabla(\td\nabla^{k+1}F)=(\nabla+\td T)\left(\nabla^{k+1}F +\sum_{q=1}^{k}\sum_{p=1}^{k+1-q}(*^p_{k-p-q+1}\td  T)*\nabla^qF\right)\\
=&(\nabla+\td  T)\nabla^{k+1}F +(\nabla+\td  T)\sum_{q=1}^k\sum_{p=1}^{k+1-q} (*^p_{k-p-q+1}\td  T)*\nabla^qF \\
=&\nabla^{k+2}F+\td  T\nabla^{k+1}F +\nabla\sum_{q=1}^k\sum_{p=1}^{k+1-q} (*^p_{k-p-q+1}\td  T)*\nabla^qF \\
&+\td T\sum_{q=1}^k\sum_{p=1}^{k+1-q} (*^p_{k-p-q+1}\td  T)*\nabla^qF \\
=&\nabla^{k+2}F+\td  T\nabla^{k+1}F +\sum_{q=1}^k\sum_{p=1}^{k+1-q} (\td\nabla-\td T)(*^p_{k-p-q+1}\td  T)*\nabla^qF \\
& +\sum_{q=1}^k\sum_{p=1}^{k+1-q}(*^p_{k-p-q+1}\td  T)*\nabla^{q+1}F +\td T\sum_{q=1}^k\sum_{p=1}^{k+1-q} (*^p_{k-p-q+1}\td  T)*\nabla^qF \\
=&\nabla^{k+1+1}F+\td  T\nabla^{k+1}F +\sum_{q=1}^k\sum_{p=1}^{k+1-q} \Big((*^p_{k+1-p-q+1}\td  T)*\nabla^qF\\
& +(*^p_{k-p-q+1}\td  T)*\nabla^{q+1}F +(*^p_{k-p-q+1}\td  T*\td  T)*\nabla^qF\Big)\\
=&\nabla^{k+1+1}F+\sum_{q=1}^{k+1}\sum_{p=1}^{k+2-q} (*^p_{k+1-p-q+1}\td  T)*\nabla^qF -\sum_{q=1}^k(*^{k+2-q}_0\td  T)*\nabla^qF\\
&+\sum_{q=1}^k\sum_{p=1}^{k+1-q} (*^p_{k-p-q+1}\td  T)*\nabla^{q+1}F \\
&+\sum_{q=1}^k(*^{k+1-q}_0\td  T*\td T)*\nabla^qF +\sum_{q=1}^k\sum_{p=1}^{k-q}(*^p_{k-p-q+1}\td  T*\td T)*\nabla^qF\\
=&\nabla^{k+1+1}F +\sum_{q=1}^{k+1}\sum_{p=1}^{k+2-q} (*^p_{k+1-p-q+1}\td  T)*\nabla^qF +\sum_{q=2}^{k+1}\sum_{p=1}^{k+2-q}(*^p_{k+1-p-q+1}\td  T)*\nabla^qF\\
&+\sum_{q=1}^k\sum_{p=1}^{k-q}(*^p_{k-p-q+1}\td  T*\td T)*\nabla^qF \\
=&\nabla^{k+1+1}F +\sum_{q=1}^{k+1}\sum_{p=1}^{k+1+1-q} (*^p_{k+1-p-q+1}\td  T)*\nabla^qF.
\end{align*}
Therefore, \eqref{tdnabla f1} holds for $l=k+1$ and thus holds for all $l\geq 0$.
\end{proof}

\begin{rmk} In \cite{b}, the author gives a different expression for $\td\nabla^l F$ as follows:
\begin{align}
\td\nabla^l F=&F_*\td\nabla^{l-2}\td  T+F_*\left(\sum_{i_0+2i_1+\cdots+(l-2)i_{l-3}=l-1} \td  T^{i_0}*(\td\nabla \td  T)^{i_1}* \cdots *(\td\nabla^{l-3} \td  T)^{i_{l-3}}\right)\nnm\\
&+(\iota+F_*)*\sum^{l-1}_{j=1}\left(\sum_{\sum(k+1)i_k=l-1-j}\prod^{l-2-j}_{k=0}(\td\nabla^k\td  T)^{i_k}\right) *\left(\sum_{\sum(k+1)i_k=j}\prod^{j-1}_{k=0}(\nabla^kh)^{i_k}\right),
\end{align}
where $\iota:{\mathcal N}\hookrightarrow F^*T\bbr^{m+p}$ is the inclusion map.
\end{rmk}

\begin{lem}\label{lem5.10}
If $|h|^2$ is bounded, then for each $l\geq 0$, there is a constant $C(l)$, such that $|\td\nabla^l\td  T|^2\leq C(l)$.
\end{lem}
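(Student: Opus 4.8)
The plan is to prove the bounds $|\td\nabla^l\td T|^2\le C(l)$ by induction on $l$, exploiting the fact that $\td T=\td\nabla-\nabla$ is the difference of two connections, so it can be written entirely in terms of the two metrics $g$ and $\td g$ together with the embedding data of $F$. The base case $l=0$: since $\td T^k_{ij}=\tfrac12 g^{kl}(\td\nabla_i g_{jl}+\td\nabla_j g_{il}-\td\nabla_l g_{ij})$ (the standard formula for the difference of Levi-Civita connections), and since the metric equivalence \eqref{32} gives $\tfrac1c\td g\le g\le c\td g$, it suffices to bound $\td\nabla g$ with respect to $\td g$. But $g_{ij}=\lagl F_i,F_j\ragl$, hence $\td\nabla g=F*\td\nabla^2 F$ (schematically), and combining Lemma \ref{lem5.9} with Corollary \ref{cor5.8} bounds $\nabla^2 F$, and inductively $\td\nabla^2 F$, in terms of the already-established bounds on $\nabla^l F$ and lower-order $\td\nabla^j\td T$. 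So the argument is genuinely a simultaneous induction: at step $l$ I assume $|\td\nabla^j\td T|^2\le C(j)$ for all $j<l$.

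The inductive step proceeds as follows. First I would observe that $\td\nabla g = g*\td T$ (this is just compatibility: $0=\nabla g$, so $\td\nabla g = (\td\nabla-\nabla)g = g*\td T$, up to the obvious index bookkeeping), and more generally that $\td\nabla^{j+1} g$ is a sum of terms of the form $(*^p_{j+1-p}\td T)* g$ for $1\le p\le j+1$ — this is a routine induction on $j$ using $\td\nabla(S*\td T)=(\td\nabla S)*\td T + S*\td\nabla\td T$ and $\td\nabla g = g*\td T$. On the other hand, $\td T$ itself is $g^{-1}*\td\nabla g$, so $\td\nabla^l\td T$ is a sum of contractions of $g^{-1}$, its $\td\nabla$-derivatives (which are again controlled by $\td\nabla^j g$ and $\td T$, since $\td\nabla(g^{-1})=-g^{-1}*g^{-1}*\td\nabla g$), and $\td\nabla^{j}g$ for $j\le l+1$. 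The upshot is that $\td\nabla^l\td T$ is expressible as a universal polynomial expression in $g$, $g^{-1}$, the lower-order $\td\nabla^j\td T$ with $j\le l-1$, and the single new ingredient $\td\nabla^{l+1}g$; hence it remains only to bound $\td\nabla^{l+1}g$.

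For that last bound I would use $g_{ij}=\lagl F_i,F_j\ragl$ once more: applying $\td\nabla^{l+1}$ and expanding, $\td\nabla^{l+1}g$ is a sum over $a+b=l+1$ of terms $\lagl\td\nabla^{a+1}F,\td\nabla^{b+1}F\ragl$ (here I use that the ambient connection on $\bbr^n$ is flat, so all derivatives of $F$ reduce to $\td\nabla$-derivatives of the sections $F_i$). By Lemma \ref{lem5.9} each $\td\nabla^{k+1}F$ equals $\nabla^{k+1}F$ plus terms of the form $(*^p_{\cdot}\td T)*\nabla^q F$ with $q\le k$ and with $\td T$-factors of total derivative-order at most $k-1\le l-1$; by the inductive hypothesis those $\td T$-factors are bounded, and by Corollary \ref{cor5.8} all the $\nabla^q F$ are bounded, so $|\td\nabla^{k+1}F|$ is bounded for every $k\le l$. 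Combined with the metric equivalence \eqref{32} to convert between $g$- and $\td g$-norms, this bounds $|\td\nabla^{l+1}g|$, closes the induction, and yields the constants $C(l)$ depending only on $m$, $l$, the bound $C^0_0$ on $|h|^2$, and the fixed metric $\td g$. The main obstacle is purely bookkeeping: one must set up the schematic notation carefully enough that the claim "$\td\nabla^l\td T$ is a universal expression in $g$, $g^{-1}$, $\{\td\nabla^j\td T\}_{j<l}$ and $\td\nabla^{l+1}g$" is transparently correct, and then verify that the derivative-orders of the $\td T$-factors appearing in Lemma \ref{lem5.9}'s expansion of $\td\nabla^{k+1}F$ never exceed $l-1$ when $k\le l$ — which is exactly the content of the sum $\sum_{q=1}^{l}\sum_{p=1}^{l+1-q}(*^p_{l-p-q+1}\td T)$, since there $l-p-q+1\le l-1$.
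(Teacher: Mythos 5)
Your approach is circular and cannot close, even at the base case. At $l=0$ you propose to bound $\td T$ via $\td\nabla g$, and $\td\nabla g$ via $\td\nabla^2 F$; but Lemma \ref{lem5.9} applied with $l=1$ gives $\td\nabla^2 F=\nabla^2F+\td T*\nabla F=h+\td T*dF$, so any estimate for $\td\nabla^2F$ already requires a bound on $\td T$ itself --- the very quantity you are trying to produce, not a lower-order one. This is not an accident of your decomposition: the compatibility identity $\td\nabla g = g*\td T$ (which you correctly note) and the Koszul-type formula $\td T^k_{ij}=-\fr12 g^{kl}(\td\nabla_i g_{jl}+\td\nabla_j g_{il}-\td\nabla_l g_{ij})$ are equivalent, and substituting one into the other reproduces $\td T=\td T$ identically, so no chain built from them can yield a bound. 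The same circle recurs at every $l$: in the expansion $\td\nabla^{l+1}g=\sum_{a+b=l+1}\lagl\td\nabla^{a+1}F,\td\nabla^{b+1}F\ragl$ the index $a+1$ runs up to $l+2$ (not $l+1$, as your ``$k\le l$'' tacitly assumes), and Lemma \ref{lem5.9} for $\td\nabla^{l+2}F$ produces the term $\td\nabla^{l}\td T*dF$, so $\td\nabla^{l+1}g$ once again contains the unknown $\td\nabla^l\td T$. There is no way to solve for it because the coefficient is identically one.

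The deeper obstruction is that $\td T=\td\nabla-\nabla$ encodes first coordinate derivatives of $g$ (equivalently, the Christoffel symbols of $g$ relative to the fixed gauge $\td g$), and these are simply not controlled by the covariant data $|h|^2,|\nabla^jF|^2$ at a single time slice. The paper's proof is essentially different: it is parabolic, not spatial. It first establishes, by induction, the evolution formula \eqref{tdnablat t} for $\td\nabla_t\td\nabla^l\td T$, whose right-hand side is a polynomial in $\nabla^ja$, $\nabla^jh$ and only \emph{strictly lower}-order derivatives of $\td T$ (the induction closes here because $\pp{}{t}\Gamma^k_{ij}$ is a genuine tensor built from $\nabla(aH^\alpha h^\alpha_{ij})$, introducing no new $\td T$); combined with $\nabla_tS=\td\nabla_tS+ah^2*S$, Corollary \ref{cor5.8}, and Young's inequality, this gives $\pp{}{t}|\td\nabla^l\td T|^2\le C\bigl(1+|\td\nabla^l\td T|^2\bigr)$, and a Gr\"onwall argument over the finite interval $[0,T)$, together with the initial bound at $t=0$ from compactness of $M$, yields $|\td\nabla^l\td T|^2\le C(l)$. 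Any correct proof has to use both the finiteness of $T$ and the boundedness of $\td T$ at $t=0$; a purely spatial argument cannot.
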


\begin{proof}  We shall need the following formula:
\begin{align}
\td\nabla_t\td\nabla^{l} \td  T=&\sum_{r_0+r_1+r_2=l+1}\nabla^{r_0}a*\nabla^{r_1}h*\nabla^{r_2}h\nnm\\
&+\sum_{p,q\geq1;2\leq p+q\leq l+1}(*^p_{l+1-p-q}\td  T)*\sum_{r_0+r_1+r_2=q}\nabla^{r_0}a*\nabla^{r_1}h*\nabla^{r_2}h,\quad l\geq 0.\label{tdnablat t}
\end{align}

First we consider $l=0$. By the definition of the connection $\td\nabla$ on $M\times [0,T)$, it is easily seen that $\td\nabla_t\td T^k_{ij}=\pp{}{t}\td  T^k_{ij}=-\pp{}{t}\Gamma^k_{ij}$. Since for each fixed $t\in [0,T)$, $\pp{}{t}\Gamma^k_{ij}$ is a $(1,2)$-tensor on $M\times \{t\}$ w.r.t. the indices $i,j,k$, we can find by using normal coordinates $(x^i)$ and \eqref{evogij} that, on $M\times \{t\}$,
\begin{align}
\td\nabla_t\td\nabla^0\td T^k_{ij}=&-\pp{}{t}\Gamma^k_{ij}=-\fr12g^{kl}\left(\pp{}{x^j}\pp{g_{il}}{t}+\pp{}{x^i}\pp{g_{jl}}{t}-\pp{}{x^l}\pp{g_{ij}}{t}\right)\nnm\\
=&g^{kl}\left(\nabla_{\pp{}{x^j}}(aH^\alpha h^\alpha_{il}) +\nabla_{\pp{}{x^i}}(aH^\alpha h^\alpha_{jl})-\nabla_{\pp{}{x^l}}(aH^\alpha h^\alpha_{ij})\right)\nnm\\
=&\nabla a*h^2+a*h*\nabla h \label{nabla0t}\\
=&\sum_{r_0+r_1+r_2=0+1}\nabla^{r_0}a*\nabla^{r_1}h*\nabla^{r_2}h\nnm\\
&+\sum_{p,q\geq1;2\leq p+q\leq 0+1}(*^p_{0+1-p-q}\td  T)*\sum_{r_0+r_1+r_2=q}\nabla^{r_0}a*\nabla^{r_1}h*\nabla^{r_2}h,\nnm
\end{align}
that is, \eqref{tdnablat t} holds for $l=0$.

Suppose that \eqref{tdnablat t} holds for $l=k$. Then when $l=k+1$, we compute
\begin{align*}
\td\nabla_t\td\nabla^{k+1} \td  T=&\td\nabla(\td\nabla_t\td\nabla^k\td  T)=(\nabla+\td  T)\left(\sum_{r_0+r_1+r_2=k+1}\nabla^{r_0}a*\nabla^{r_1}h*\nabla^{r_2}h\right)\nnm\\
&+(\nabla+\td  T)\sum_{p,q\geq1;2\leq p+q\leq k+1}(*^p_{k+1-p-q}\td  T)*\sum_{r_0+r_1+r_2=q}\nabla^{r_0}a*\nabla^{r_1}h*\nabla^{r_2}h\\
=&\sum_{r_0+r_1+r_2=k+1}\nabla(\nabla^{r_0}a*\nabla^{r_1}h*\nabla^{r_2}h)+\td  T\sum_{r_0+r_1+r_2=k+1}\nabla^{r_0}a*\nabla^{r_1}h*\nabla^{r_2}h\nnm\\
&+\sum_{p,q\geq1;2\leq p+q\leq k+1}\nabla(*^p_{k+1-p-q}\td  T)*\sum_{r_0+r_1+r_2=q}\nabla^{r_0}a*\nabla^{r_1}h*\nabla^{r_2}h\\
&+\sum_{p,q\geq1;2\leq p+q\leq k+1}(*^p_{k+1-p-q}\td  T)*\sum_{r_0+r_1+r_2=q}\nabla(\nabla^{r_0}a*\nabla^{r_1}h*\nabla^{r_2}h) \\
&+\td T\sum_{p,q\geq1;2\leq p+q\leq k+1}(*^p_{k+1-p-q}\td  T)*\sum_{r_0+r_1+r_2=q}\nabla^{r_0}a*\nabla^{r_1}h*\nabla^{r_2}h\\
=&\sum_{r_0+r_1+r_2=k+2}\nabla^{r_0}a*\nabla^{r_1}h*\nabla^{r_2}h+\td  T\sum_{r_0+r_1+r_2=k+1}\nabla^{r_0}a*\nabla^{r_1}h*\nabla^{r_2}h\nnm\\
&+\sum_{p,q\geq1;2\leq p+q\leq k+1}(\td\nabla-\td T)(*^p_{k+1-p-q}\td  T)*\sum_{r_0+r_1+r_2=q}\nabla^{r_0}a*\nabla^{r_1}h*\nabla^{r_2}h\\
&+\sum_{p,q\geq1;2\leq p+q\leq k+1}(*^p_{k+1-p-q}\td  T)*\sum_{r_0+r_1+r_2=q+1}\nabla^{r_0}a*\nabla^{r_1}h*\nabla^{r_2}h \\
&+\sum_{p,q\geq1;2\leq p+q\leq k+1}(*^{p+1}_{k+1-p-q}\td  T)*\sum_{r_0+r_1+r_2=q}\nabla^{r_0}a*\nabla^{r_1}h*\nabla^{r_2}h\\
=&\sum_{r_0+r_1+r_2=k+2}\nabla^{r_0}a*\nabla^{r_1}h*\nabla^{r_2}h+\td  T\sum_{r_0+r_1+r_2=k+1}\nabla^{r_0}a*\nabla^{r_1}h*\nabla^{r_2}h\nnm\\
&+\sum_{p,q\geq1;2\leq p+q\leq k+1}\td\nabla(*^p_{k+1-p-q}\td  T)*\sum_{r_0+r_1+r_2=q}\nabla^{r_0}a*\nabla^{r_1}h*\nabla^{r_2}h\\
&-\sum_{p,q\geq1;2\leq p+q\leq k+1}\td T(*^p_{k+1-p-q}\td  T)*\sum_{r_0+r_1+r_2=q}\nabla^{r_0}a*\nabla^{r_1}h*\nabla^{r_2}h\\
&+\sum_{p,q\geq1;2\leq p+q\leq k+1}(*^p_{k+1-p-q}\td  T)*\sum_{r_0+r_1+r_2=q+1}\nabla^{r_0}a*\nabla^{r_1}h*\nabla^{r_2}h \\
&+\sum_{p,q\geq1;2\leq p+q\leq k+1}(*^{p+1}_{k+1-p-q}\td  T)*\sum_{r_0+r_1+r_2=q}\nabla^{r_0}a*\nabla^{r_1}h*\nabla^{r_2}h\\
=&\sum_{r_0+r_1+r_2=k+2}\nabla^{r_0}a*\nabla^{r_1}h*\nabla^{r_2}h+(*^1_0\,\td  T)*\sum_{r_0+r_1+r_2=k+1}\nabla^{r_0}a*\nabla^{r_1}h*\nabla^{r_2}h\nnm\\
&+\sum_{p,q\geq1;2\leq p+q\leq k+1}(*^p_{k+2-p-q}\td  T)*\sum_{r_0+r_1+r_2=q}\nabla^{r_0}a*\nabla^{r_1}h*\nabla^{r_2}h\\
&+\sum_{p\geq1,\td q\geq 2;3\leq p+\td q\leq k+2}(*^p_{k+2-p-\td q}\td  T)*\sum_{r_0+r_1+r_2=\td q}\nabla^{r_0}a*\nabla^{r_1}h*\nabla^{r_2}h\\
&+\sum_{\td p\geq 2,q\geq1;3\leq \td p+q\leq k+2}(*^{\td p}_{k+2-\td p-q}\td  T)*\sum_{r_0+r_1+r_2=q}\nabla^{r_0}a*\nabla^{r_1}h*\nabla^{r_2}h\\
=&\sum_{r_0+r_1+r_2=k+1+1}\nabla^{r_0}a*\nabla^{r_1}h*\nabla^{r_2}h\nnm\\
&+\sum_{p,q\geq1;2\leq p+q\leq k+1+1}(*^p_{k+1+1-p-q}\td  T)*\sum_{r_0+r_1+r_2=q}\nabla^{r_0}a*\nabla^{r_1}h*\nabla^{r_2}h.
\end{align*}
Therefore, \eqref{tdnablat t} holds for $l=k+1$ and thus for all $l\geq 0$ by induction.

Now we use \eqref{tdnablat t} to complete the proof of Lemma \ref{lem5.10}, using once more the induction.

Note that by Corollary \ref{cor5.8}, if $|h|^2$ is uniformly bounded from above for $t\in [0,T)$, then all of $\nabla^lh$, $\nabla^la$ and $\nabla^{l+1}F$ are also uniformly bounded for $l\geq 0$. On the other hand, by \eqref{gamat}, we know that $\nabla_t$ and $\td\nabla_t$ are related by
\be\label{ad5.25}
\nabla_tS=\td\nabla_tS +ah^2*S
\ee
for any bundle-valued tensor $S$ on $M\times[0,T)$. It then follows \eqref{nabla0t} and Young's inequality that, when $l=0$,
$$
\pp{}{t}|\td  T|^2=2\lagl \td  T,\td\nabla_t\td  T+ah^2*\td T\ragl\leq |\td  T*(\nabla a*h^2+ah*\nabla h+ah^2*\td T)|\leq C_1(1+|\td  T|^2),
$$
giving that
$$\pp{}{t}\log(1+|\td  T|^2)\leq C_1,\quad\text{or}\quad\pp{}{t}(\log(1+|\td  T|^2)-C_1t)\leq 0.$$
It follows from the compactness of $M$ that
$$
\log(1+|\td  T|^2)-C_1t\leq (\log(1+|\td  T|^2)-C_1t)|_{t=0}=(\log(1+|\td  T|^2))|_{t=0}\leq C_2.
$$
So $|\td  T|^2<1+|\td  T|^2\leq e^{C_1t+C_2}\leq C(0)$ since the interval $[0,T)$ is bounded.

Suppose that $|\td T|^2$, $\cdots$, $|\td\nabla^{l-1}\td  T|^2$ have been shown bounded for $l\geq 1$, then by \eqref{tdnablat t}, \eqref{ad5.25} and Young's inequality,
\begin{align*}
\pp{}{t}|\td\nabla^l\td T|^2=&2\lagl\td\nabla^l\td T,\td\nabla_t\td\nabla^l\td T+ah^2*\td\nabla^l\td T\ragl\\
\leq&\left|\td\nabla^l\td T*\Big(\sum_{r_0+r_1+r_2=l+1} \nabla^{r_0}a*\nabla^{r_1}h*\nabla^{r_2}h\right.\nnm\\
&\left.+\sum_{p,q\geq1;2\leq p+q\leq l+1}(*^p_{l+1-p-q}\td  T)*\sum_{r_0+r_1+r_2=q} \nabla^{r_0}a*\nabla^{r_1}h*\nabla^{r_2}h+ah^2*\td\nabla^l\td T\Big)\right|\\
\leq& C_1(1+|\td\nabla^l\td T|^2).
\end{align*}
Thus, as in the case of $l=0$, there exists some $C(l)>0$ such that
$|\td\nabla^l\td T|^2\leq C(l)$.

Now the principle of induction finishes the proof.
\end{proof}

Combing Corollary \ref{cor5.8}, Lemma \ref{lem5.9}, Lemma \ref{lem5.10} and the comparability  \eqref{32} of $g$ with $\td g$ we have proved the following boundedness result:

\begin{prop}\label{prop5.11} If $|h|^2$ is bounded as $t\to T$, then for each $l\geq 0$, there exists some constant $C(l)>0$ such that
$|\td\nabla^l F|^2_{\td g}\leq \td C(l)|\td\nabla^l F|^2_g\leq C(l)$ where $\td C(l)>0$.
\end{prop}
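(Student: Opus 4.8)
The plan is to obtain this as a purely formal consequence of Corollary \ref{cor5.8}, Lemma \ref{lem5.9}, Lemma \ref{lem5.10} and the metric comparability \eqref{32}. First I would dispose of the two norms appearing in the statement. Since $\td\nabla^lF$ is a section of $(\mathcal H^*)^{\otimes l}\otimes F^*T\bbr^{m+p}$ and the Euclidean metric on the $F^*T\bbr^{m+p}$-factor is the same in both norms, the only discrepancy between $|\cdot|_{\td g}$ and $|\cdot|_g$ comes from the $l$ cotangent factors; the inequality \eqref{32}, $\fr1c\td g\leq g\leq c\td g$, passes to cotangent spaces and to their tensor powers with the constant $c^l$, so $|\td\nabla^lF|^2_{\td g}\leq c^l|\td\nabla^lF|^2_g$ and one may take $\td C(l)=c^l$. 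This reduces everything to bounding $|\td\nabla^lF|^2_g$ for each fixed $l$.

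Next I would treat the small cases by hand: $|\td\nabla^0F|^2=|F|^2$ is bounded by Lemma \ref{lem5.2}, and $|\td\nabla F|^2_g=|dF|^2_g=g^{ij}g_{ij}=m$. For $l\geq 2$ the main step is to feed Lemma \ref{lem5.9} into the already-established bounds: that lemma expresses $\td\nabla^lF$ as $\nabla^lF$ plus a finite sum of terms $(*^p_{l-p-q}\td T)*\nabla^qF$ with $1\leq q\leq l-1$ and $1\leq p\leq l-q$, in which $\nabla^qF$ has order $q\leq l$ and every $\td\nabla^r\td T$ entering $*^p_{l-p-q}\td T$ has order $r\leq l-p-q\leq l-2$. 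All of these are controlled: $|\nabla^qF|^2_g$ for $0\leq q\leq l$ by Corollary \ref{cor5.8} (available precisely because $|h|^2$ is assumed bounded, which simultaneously bounds $|\nabla^qh|^2$ and $|\nabla^qa|^2$), and $|\td\nabla^r\td T|^2_g$ for $0\leq r\leq l-2$ by Lemma \ref{lem5.10}. Applying the triangle inequality together with the elementary estimate $|S*T|_g\leq C|S|_g|T|_g$ for $g$-contractions then bounds $|\td\nabla^lF|^2_g$ on $M\times[0,T)$ by a constant depending only on $l$, $m$, $c$, $C^0_0$ and the ambient bounds of $a$; setting $C(l):=c^l\sup_{M\times[0,T)}|\td\nabla^lF|^2_g$ finishes the argument.

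I do not expect a genuine obstacle here, since the substantive content has all been assembled in the preceding lemmas. The one point that needs a moment's care is the index bookkeeping in Lemma \ref{lem5.9}: one must check that, because both $p\geq 1$ and $q\geq 1$, every derivative of $\td T$ that occurs is genuinely of order at most $l-2$ and every derivative of $F$ of order at most $l$, so that the bounds packaged in Corollary \ref{cor5.8} and Lemma \ref{lem5.10} really do cover all the factors. Once this is confirmed the estimate is immediate; alternatively one could work from the explicit expansion of $\td\nabla^lF$ recorded in the remark following Lemma \ref{lem5.9}, which exhibits the same orders directly.
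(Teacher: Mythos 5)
Your argument is correct and follows exactly the route the paper intends: it is the advertised combination of Corollary~\ref{cor5.8}, Lemma~\ref{lem5.9}, Lemma~\ref{lem5.10} and the comparability~\eqref{32}, with the paper itself offering no further detail. Your index bookkeeping is accurate---after shifting Lemma~\ref{lem5.9} to express $\td\nabla^{l}F$ one gets precisely the ranges $1\le q\le l-1$, $1\le p\le l-q$, so every $\td\nabla^r\td T$ has $r\le l-2$ and every $\nabla^qF$ has $q\le l$, all covered by the two preceding lemmas---and the factor $\td C(l)=c^{\,l}$ from~\eqref{32} is the right one for an $l$-covariant tensor with Euclidean fibre.
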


To prove Theorem \ref{thm5.1}, we also need the following formula:
\begin{lem} It holds that
\be\label{5.25}
\pp{}{t}\td\nabla^lF=\sum_{p+q=l}\left(\sum_{r+s+t=p}(*^r_s\td T)*\nabla^t a\right)*\left(\sum_{r+s+t=q+2,\,t\geq 1}(*^r_s\td T)*\td\nabla^t F\right),\quad l\geq 0.
\ee
\end{lem}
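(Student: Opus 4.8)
The plan is to prove \eqref{5.25} by induction on $l$, in the spirit of Lemma \ref{lem nabla a} and Lemma \ref{lem5.9}. The structural facts I would lean on are that the ambient $\bbr^n$ is flat — so the pull-back connection on $F^*T\bbr^n$ has vanishing curvature — and that the background metric $\td g$ is time-independent, trivially extended to $M\times[0,T)$ with $\td\Gamma^k_{it}=0$. Together these imply that the curvature of $\td\nabla$ in any pair of directions $(\partial_t,e_i)$ vanishes on every bundle $(\mathcal{H}^*)^{\otimes r}\otimes F^*T\bbr^n$ that occurs, so $\td\nabla_t$ commutes with $\td\nabla$ with no correction terms. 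Throughout, $\pp{}{t}$ of a bundle-valued tensor means $\td\nabla_t$, and $\td\nabla_t F=\partial_t F=aH$ since the pull-back connection on $F^*T\bbr^n$ is flat; in the $*$-notation this reads $\pp{}{t}F=a*\nabla^2 F$ because $H=g^{ij}(\nabla^2 F)_{ij}$.

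For the base case $l=0$, I would combine $\pp{}{t}F=a*\nabla^2 F$ with the $l=1$ instance of \eqref{tdnabla f1}, namely $\nabla^2 F=\td\nabla^2 F+\td T*\td\nabla F$ (using $\nabla F=dF=\td\nabla F$), to obtain $\pp{}{t}F=a*\big(\td\nabla^2 F+\td T*\td\nabla F\big)$. This is precisely the $l=0$ case of \eqref{5.25}: $p+q=0$ forces $p=q=0$, the first factor collapses to $a$, and the second to $\sum_{r+s+t=2,\,t\geq1}(*^r_s\td T)*\td\nabla^t F=\td\nabla^2 F+\td T*\td\nabla F$ up to the usual absorption of coefficients into $*$.

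For the inductive step, assume \eqref{5.25} at level $k$. I would write $\pp{}{t}\td\nabla^{k+1}F=\td\nabla_t\td\nabla\big(\td\nabla^kF\big)=\td\nabla\big(\td\nabla_t\td\nabla^kF\big)=\td\nabla\Big(\pp{}{t}\td\nabla^kF\Big)$ by the commutation remark, substitute the level-$k$ formula, and expand with $\td\nabla=\nabla+\td T$ and the Leibniz rule. The bookkeeping runs as follows. The operator $\nabla$ hitting a $\nabla^t a$ produces $\nabla^{t+1}a$, raising by one the weight $p$ of the ``$a$-block''; $\nabla$ hitting a factor $\td\nabla^{r_i}\td T$ equals $\td\nabla^{r_i+1}\td T-\td T*\td\nabla^{r_i}\td T$, so $\nabla(*^r_s\td T)$ is a $*$-combination of $*^r_{s+1}\td T$ and $(*^1_0\td T)*(*^r_s\td T)$, again raising by one the weight of whichever block it sits in; $\nabla$ hitting a $\td\nabla^t F$ gives $\td\nabla^{t+1}F+(*^1_0\td T)*\td\nabla^t F$, raising by one the weight $q$ of the ``$F$-block'' and leaving the constraint $t\geq1$ intact; and the residual $\td T*(\cdot)=(*^1_0\td T)*(\cdot)$ from $\td\nabla-\nabla$ simply multiplies one block by $*^1_0\td T$, once more raising its weight by one. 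In every case the total weight passes from $k$ to $k+1$ while the constraints $r+s+t=p$ (with $t\geq0$) and $r+s+t=q+2$ (with $t\geq1$) persist, so each term produced occupies a slot of the level-$(k+1)$ formula, completing the induction.

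The hard part will be exactly this combinatorial verification: checking that after one application of $\td\nabla$, every term produced by Leibniz is matched by an admissible index triple of \eqref{5.25} at level $k+1$, and in particular that the constraint $t\geq1$ in the $F$-block is never destroyed. This last point is safe because $\nabla(\td\nabla^t F)=\td\nabla^{t+1}F+(*^1_0\td T)*\td\nabla^t F$ always retains a factor $\td\nabla^{\geq1}F$, so the lowest piece $\td\nabla^1F=dF$ is never differentiated away; likewise there is always a genuine $a$-block ($t=0$ is allowed there) and a genuine $F$-block, so the bidegree decomposition is always legitimate. As in Lemma \ref{lem nabla a} and Lemma \ref{lem5.9}, all numerical coefficients and signs disappear into the $*$-notation, so no cancellation needs to be tracked, and the repeated commutation of $\td\nabla_t$ past $\td\nabla$ is genuinely cost-free, resting only on the flatness of $\bbr^n$ and the time-independence of $\td g$.
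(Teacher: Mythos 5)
Your proposal is correct, but it takes a genuinely different route from the paper. The paper's proof first commutes $\pp{}{t}$ all the way past $\td\nabla^l$ to write $\pp{}{t}\td\nabla^lF=\td\nabla^l(aH)$, applies the Leibniz rule once to split this as $\sum_{p+q=l}\td\nabla^pa*\td\nabla^qH$, and then proves \emph{two separate} clean inductive identities, one for $\td\nabla^pa$ in terms of $\nabla^t a$ and $*^r_s\td T$, and one for $\td\nabla^qH$ in terms of $\td\nabla^t F$ and $*^r_s\td T$. Your argument instead runs a \emph{single} induction on $l$ for the full bidegree formula \eqref{5.25}, commuting $\td\nabla_t$ past one $\td\nabla$ per step and carrying the two-block (``$a$-block'' and ``$F$-block'') structure through the whole Leibniz expansion. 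Both rest on the same cost-free commutation $\td\nabla_t\td\nabla=\td\nabla\td\nabla_t$ (which the paper uses silently; you justify it explicitly via the flatness of the pull-back connection on $F^*T\bbr^n$ and the time-independence of $\td g$). The paper's factorization buys a neater modular bookkeeping: each of \eqref{5.26} and \eqref{5.27} is a short standalone induction where only one kind of factor ($\nabla^t a$ or $\td\nabla^t F$) appears, so there is no question of ``which block'' absorbs the extra $\td T$. Your monolithic induction is equally valid once one trusts the $*$-notation to absorb the choice of block, and it has the minor advantage that the commutation is invoked one derivative at a time rather than $l$ times at once. One stylistic quibble: expanding $\td\nabla=\nabla+\td T$ before applying Leibniz, as you do, is slightly roundabout; it is cleaner to apply $\td\nabla$ directly to each factor (giving $\td\nabla^{r_i+1}\td T$, $\td\nabla^{t+1}F$, and $\nabla^{t+1}a+\td T*\nabla^t a$ respectively), and to note that $\td\nabla$ of a $g$-contraction in the $*$ contributes $\td\nabla g=\td T*g$, which also just raises a block's weight by one.
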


\begin{proof} Take $H$ as a section of the induced bundle $F^*T\bbr^{m+p}$. Since
$$
\pp{}{t}\td\nabla^lF=\td\nabla^l\pp{F}{t}=\td\nabla^l(aH)=\sum_{p+q=l}\td\nabla^pa*\td\nabla^qH, \quad l\geq 0,
$$
we only need to prove
\begin{align}
\td\nabla^pa=&\sum_{r+s+t=p}(*^r_s\td T)*\nabla^t a,\quad p\geq 0,\label{5.26}\\
\td\nabla^qH=&\sum_{r+s+t=q+2,\,t\geq1}(*^r_s\td T)*\td\nabla^t F,\quad q\geq 0.\label{5.27}
\end{align}
To do this, we shall use the method of induction as follows:

Firstly, it is easy to see that these two formulas are true for $p=q=0$.

Secondly, suppose that both \eqref{5.26} and \eqref{5.27} are true for $p\geq 0$ and $q\geq 0$, respectively. Then we find
\begin{align*}
\td\nabla^{p+1}a=&\td\nabla\left(\sum_{r+s+t=p}(*^r_s\td T)*\nabla^t a\right) =(\nabla+\td T)\left(\sum_{r+s+t=p}(*^r_s\td T)*\nabla^t a\right)\\
=&\sum_{r+s+t=p}\nabla(*^r_s\td T)*\nabla^t a+\sum_{r+s+t=p}(*^r_s\td T)*\nabla^{t+1} a +\sum_{r+s+t=p}\td T*(*^r_s\td T)*\nabla^t a\\
=&\sum_{r+s+t=p}(*^r_{s+1}\td T)*\nabla^t a+\sum_{r+s+t'=p+1,\,t'\geq1}(*^r_s\td T)*\nabla^{t'} a +\sum_{r+s+t=p}(*^{r+1}_s\td T)*\nabla^t a\\
=&\sum_{r+s'+t=p+1,\,s'\geq1}(*^r_{s'}\td T)*\nabla^t a+\sum_{r+s+t'=p+1,\,t'\geq1}(*^r_s\td T)*\nabla^{t'} a\\
& +\sum_{r'+s+t=p+1,\,r'\geq1}(*^{r'}_s\td T)*\nabla^t a\\
=&\sum_{r+s+t=p+1}(*^r_s\td T)*\nabla^t a,
\end{align*}
and
\begin{align*}
\td\nabla^{q+1}H=&\td\nabla\left(\sum_{r+s+t=q+2,\,t\geq1}(*^r_s\td T)*\td\nabla^t F\right)\\
=&(\nabla+\td T)\left(\sum_{r+s+t=q+2,\,t\geq1}(*^r_s\td T)*\td\nabla^t F\right)\\
=&\sum_{r+s+t=q+2,\,t\geq1}\nabla(*^r_s\td T)*\td\nabla^t F +\sum_{r+s+t=q+2,\,t\geq1}(*^r_s\td T)*\nabla\td\nabla^t F\\
&+\td T\sum_{r+s+t=q+2,\,t\geq1}(*^r_s\td T)*\td\nabla^t F\\
=&\sum_{r+s+t=q+2,\,t\geq1}(\td\nabla-\td T)(*^r_s\td T)*\td\nabla^t F +\sum_{r+s+t=q+2,\,t\geq1}(*^r_s\td T)*(\td\nabla-\td T)\td\nabla^t F\\
&+\sum_{r+s+t=q+2,\,t\geq1}(*^{r+1}_s\td T)*\td\nabla^t F\\
=&\sum_{r+s+t=q+2,\,t\geq1}\td\nabla(*^r_s\td T)*\td\nabla^t F -\sum_{r+s+t=q+2,\,t\geq1}\td T(*^r_s\td T)*\td\nabla^t F\\
&+\sum_{r+s+t=q+2,\,t\geq1}(*^r_s\td T)*\td\nabla\td\nabla^t F -\sum_{r+s+t=q+2,\,t\geq1}(*^r_s\td T)*\td T\td\nabla^t F\\
&+\sum_{r'+s+t=q+3,\,r'\geq 1,\,t\geq1}(*^{r'}_s\td T)*\td\nabla^t F\\
=&\sum_{r+s+t=q+2,\,t\geq1}(*^r_{s+1}\td T)*\td\nabla^t F +\sum_{r+s+t=q+2,\,t\geq1}(*^{r+1}_s\td T)*\td\nabla^t F\\
&+\sum_{r+s+t=q+2,\,t\geq1}(*^r_s\td T)*\td\nabla^{t+1} F+\sum_{r'+s+t=q+3,\,r'\geq 1,\,t\geq1}(*^{r'}_s\td T)*\td\nabla^t F\\
=&\sum_{r+s'+t=q+3,\,s'\geq1\,t\geq1}(*^r_{s'}\td T)*\td\nabla^t F +\sum_{r+s+t'=q+3,\,t'\geq2}(*^r_s\td T)*\td\nabla^{t'}F \\ &+\sum_{r'+s+t=q+3,\,r'\geq 1,\,t\geq1}(*^{r'}_s\td T)*\td\nabla^t F\\
=&\sum_{r+s+t=q+3,\,t\geq1}(*^r_s\td T)*\td\nabla^t F.
\end{align*}
Thus \eqref{5.26} and \eqref{5.27} are proved.
\end{proof}

{\em The proof of Theorem \ref{thm5.1}}.

From \eqref{5.19}, Lemma \ref{lem5.10}, Proposition \ref{prop5.11} and \eqref{5.25}, it is easily seen that, if Theorem \ref{thm5.1} were not true, then for any $l\geq 0$, $|\pp{}{t}\td\nabla^lF|$ would be uniformly bounded from above by a constant $C(l)>0$. It follows that
$$\left|\td\nabla^lF(x,t_1)-\td\nabla^lF(x,t_2)\right|\leq\left|\int^{t_2}_{t_1}\pp{}{t}\td\nabla^lFdt\right|\leq C(l)|t_1-t_2|,\quad$$
for all $x\in M^m$ and $t_1,t_2\in (0,T)$. So
$\td\nabla^lF$ would converge uniformly as $t\to T$, implying that $F(\cdot,t)$ would converge in $C^\infty$-topology to a limit immersion $F(\cdot,T):M^m\to\bbr^{m+p}$. By the existence theorem (Theorem \ref{exiuni}) of short time solution, $T$ could not be the maximal time of solution, which is a contradiction that proves Theorem \ref{thm5.1}.

\section{A direct proof of Theorem \ref{main2}}\label{s7}

In this section, we shall alternatively make use of those formulas derived in the appendix to provide a direct proof of Theorem \ref{main2}. Due to the discussions of Section \ref{s3}, we only need to find a direct proof for the following theorem (without using Theorem \ref{thmab}):

\begin{thm}\label{thm7.1} If specifically $a=a(t)$ depends only on the parameter $t$, then the maximal solution (deleting the hat) $F:M\times[0,T)\to\bbr^{m+p}$ of the conformal flow \eqref{4} must be convergent to a round point, provided that the initial submanifold $F_0$ satisfies the condition in Theorem \ref{thmab}.
\end{thm}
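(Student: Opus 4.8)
The guiding observation is that, since $a=a(t)$ depends only on time and, by Lemma \ref{lem5.2} (applied as in the proof of Theorem \ref{thm3.2}), satisfies $0<\underline a\le a(t)\le\overline a<+\infty$ on the finite interval $[0,T)$, the substitution $\hat t:=\int_0^t a(\tau)\,d\tau$ reduces the conformal flow \eqref{4} exactly to the ordinary mean curvature flow \eqref{mcf} on the finite interval $[0,\hat T)$, $\hat T=\int_0^T a(\tau)\,d\tau$; and ``converging to a round point'' is manifestly invariant under this reparametrization of time. Equivalently, since $a$ is spatially constant one has $\nabla^l a\equiv0$ for every $l\ge1$, so each evolution formula of Section \ref{s4} and of the appendix degenerates to its mean-curvature-flow counterpart multiplied by the single positive scalar $a(t)$: \eqref{ht2} becomes $\nabla_th=a\,\Delta h+a\,h^3$, and by Corollary \ref{evonhiderh} the functions $|h|^2$, $|H|^2$, $|\nabla h|^2$ and the Andrews--Baker pinching quantities obey precisely the reaction--diffusion inequalities valid under \eqref{mcf}, scaled throughout by $a(t)$. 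Since $\underline a\le a\le\overline a$ the equation is uniformly parabolic, so the parabolic maximum principle applies with the same conclusions. The plan is therefore to reproduce the Andrews--Baker program \cite{a-b}, drawing the needed (in)equalities from the appendix rather than invoking Theorem \ref{thmab}.

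The first step is preservation and improvement of the pinching. Writing $Q:=|h|^2-c|H|^2$, the appendix formulas give
\be
\pp{}{t}Q=a\,\Delta Q-2a\bigl(|\nabla h|^2-c|\nabla H|^2\bigr)+2a\bigl(R_1-cR_2\bigr);
\ee
Andrews--Baker's Kato-type inequality $|\nabla h|^2\ge\tfrac{3}{m+2}|\nabla H|^2$ and their algebraic estimate on $R_1-cR_2$ --- valid precisely because $c$ obeys \eqref{c}, and because $H\neq0$ is preserved (maximum principle applied to $|H|^2$, as in \cite{a-b}) --- force the right-hand side to be $\le a\,\Delta Q$ wherever $Q\ge0$, so $Q\le0$ is preserved. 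Running the same scheme on the auxiliary functions Andrews--Baker use to control the traceless part $|\mathring h|^2=|h|^2-\tfrac1m|H|^2$, together with the attendant $L^p$/Stampacchia iteration (again the only modification being the harmless factor $a(t)$), yields the improved pinching $|\mathring h|^2\le C|H|^{2-\sigma}$ for some $\sigma>0$.

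The second step is the gradient and higher-derivative estimates. From Corollary \ref{evonhiderh} with $\nabla a\equiv0$ we get, schematically, $\pp{}{t}|\nabla h|^2=a\Delta|\nabla h|^2-2a|\nabla^2h|^2+a\,h^2*(\nabla h)^2$, whence $|\nabla h|^2\le\eta|H|^4+C_\eta$ for every $\eta>0$ exactly as in \cite{a-b}, and likewise for all $|\nabla^l h|^2$, the bootstrap being underwritten by Corollary \ref{cor5.8}. Combining the preserved pinching $\tfrac1m|H|^2\le|h|^2\le c|H|^2$ with the blow-up theorem (Theorem \ref{thm5.1}), which supplies $\max_M|h|^2\to\infty$ as $t\to T<+\infty$, and with a lower bound for $\min_M|H|^2$ read off from the evolution of $|H|^2$, one deduces $\min_M|H|^2\to\infty$, that the extrinsic and intrinsic diameters of $F_t(M)$ tend to zero, and hence that $F_t(M)$ contracts to a single point $q\in\bbr^{m+p}$ as $t\to T$.

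The final step is convergence to a round point. Rescale $\widehat F:=\bigl(2m\int_t^T a(\tau)\,d\tau\bigr)^{-1/2}(F-q)$ and reparametrize time logarithmically by $s:=-\tfrac12\log\int_t^T a(\tau)\,d\tau$, which turns \eqref{4} into the normalized mean curvature flow; the uniform bounds of Corollary \ref{cor5.8} together with the improved pinching make the rescaled flow subconverge in $C^\infty$ to a compact limit with $\mathring h\equiv0$ and parallel normalized mean curvature, which --- as $m\ge2$ and $M$ is compact --- is necessarily a round $m$-sphere contained in an $(m+1)$-dimensional affine subspace of $\bbr^{m+p}$; a standard argument then promotes subconvergence to genuine convergence. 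The step I expect to be the main obstacle is exactly this last one: one must verify carefully that every rescaled geometric quantity stays bounded uniformly in $s$, that the improved pinching survives the rescaling, and that the codimension drops from $p$ to $1$ in the limit. The factor $a(t)$, however, causes no real difficulty here --- being a function of $t$ alone pinched between $\underline a$ and $\overline a$, it is absorbed by the time reparametrization just as in the proof of Theorem \ref{thm3.2} --- so what remains is precisely the Andrews--Baker analysis, now re-derived from the appendix formulas instead of quoted.
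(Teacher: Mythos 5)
Your plan is correct and, in substance, is what the paper does: since $\nabla a\equiv 0$, the evolution formulas of Section \ref{s4} and the appendix collapse to the Andrews--Baker reaction--diffusion inequalities multiplied by the bounded scalar $a(t)$, so one reruns the same program --- pinching preservation and improvement (Corollaries \ref{cor6.2} and \ref{cor6.11}, giving $|\sch|^2\leq C_0|H|^{2(1-\sigma)}$, note the exponent $2(1-\sigma)$ rather than your $2-\sigma$), the gradient estimate $|\nabla H|^2\leq\eta|H|^4+C_\eta$, higher-derivative bounds, and a normalized flow converging to a round sphere. Where you genuinely depart is the choice of normalization: you propose the blow-up rescaling $\widehat F=(2m\int_t^T a\,d\tau)^{-1/2}(F-q)$ with logarithmic time $s$ centered at the limit point $q$, whereas the paper (following Baker) uses the volume-preserving rescaling $\td F=\psi(t)F$, $\td t=\int_0^t\psi^2\,d\tau$, and then obtains, via Propositions \ref{prop7.7}--\ref{prop7.13} and Proposition \ref{prop7.12}, uniform two-sided bounds on $|\td H|$, exponential decay of $|\td\sch|^2$, $\td T=+\infty$, and uniform bounds on all $|\td\nabla^l\td h|$. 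Both normalizations work; the volume-preserving one avoids having to locate $q$ before rescaling and makes the exponential smallness of $|\td\sch|^2$ a clean single estimate. Two further remarks: your claim that $\min_M|H|^2\to\infty$ can be ``read off from the evolution of $|H|^2$'' needs the gradient estimate to force $|H|_{\max}/|H|_{\min}\to1$ (the paper cites Baker's Lemma \ref{lem7.4}); and your opening time-reparametrization observation, while correct, is exactly the argument by which the paper already deduced Theorem \ref{thm3.2} from Theorem \ref{thmab} --- so it cannot be invoked in proving Theorem \ref{thm7.1}, whose whole point is to avoid quoting Theorem \ref{thmab}. You do recognize this and pivot to the direct route, but the remark should be dropped or clearly flagged as motivational only.
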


To this end, it suffices to do the following:

(1) Prove that $F_t(M)$ is convergent to a point as $t\to T$.

\newcommand{\diam}{{\rm diam}}
For the proof of (1), it only needs to show that the diameter $\diam F_t(M)\to 0$ as $t\to T$. This is well done by C. Baker in \cite{b} (Section 4.6), using Corollary \ref{cor6.11} and the following two theorems:

\begin{thm}[O. Bonnet, \cite{p}]\label{bonnet} Let $M$ be a complete Riemannian manifold and suppose that $x\in M$ such that the sectional curvature $K$ of $M$ satisfies $K\geq K_{min}>0$ along all geodesics of length $\pi/\sqrt{K_{min}}$ from $x$. Then $M$ is compact and the diameter $\diam M\leq\pi/\sqrt{K_{min}}$.
\end{thm}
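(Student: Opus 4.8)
\emph{Proof proposal for Theorem \ref{bonnet}.} The plan is to run the classical second-variation argument of Bonnet and Myers, arranged so that it invokes only the curvature along geodesics of length $L_0:=\pi/\sqrt{K_{min}}$ issuing from $x$. First I would reduce the statement to the assertion that \emph{no unit-speed geodesic emanating from $x$ is minimizing on an interval of length strictly greater than $L_0$}. Granting this, every $z\in M$ satisfies $d(x,z)\le L_0$, since a minimizing geodesic from $x$ to $z$ has length $d(x,z)$; hence $M$ is bounded, and being complete it is compact by the Hopf--Rinow theorem, with the diameter estimate following as in Myers' theorem.

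For the italicized assertion I would argue by contradiction. Suppose a unit-speed geodesic $\gamma\colon[0,L]\to M$ with $\gamma(0)=x$ is minimizing for some $L>L_0$. Its restriction $\gamma|_{[0,L_0]}$ is a geodesic of length $L_0$ from $x$, so the hypothesis furnishes $K(\cdot,\gamma')\ge K_{min}$ on every plane tangent to $\gamma$ along it. Choosing a $\gamma'$-orthogonal parallel unit field $e$ along $\gamma|_{[0,L_0]}$ and the proper variation field $V(s)=\sin(\pi s/L_0)\,e(s)$, so that $V(0)=V(L_0)=0$, the second variation (index form) is
\[
I(V,V)=\int_0^{L_0}\Big(|V'|^2-\langle R(V,\gamma')\gamma',V\rangle\Big)\,ds=\int_0^{L_0}\Big(\tfrac{\pi^2}{L_0^2}\cos^2\tfrac{\pi s}{L_0}-K(e,\gamma')\sin^2\tfrac{\pi s}{L_0}\Big)\,ds,
\]
and since $K(e,\gamma')\ge K_{min}=\pi^2/L_0^2$ while $\int_0^{L_0}\cos\tfrac{2\pi s}{L_0}\,ds=0$, this gives $I(V,V)\le 0$. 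On the other hand $\gamma$ minimizes strictly beyond $s=L_0$, so $\gamma|_{[0,L_0]}$ has no point conjugate to $\gamma(0)$ in $[0,L_0]$, and hence its index form is positive definite on nonzero proper variation fields; thus $I(V,V)>0$, a contradiction. (Even the weak inequality $I(V,V)\le 0$ would suffice, since by the index lemma it already forces a point conjugate to $\gamma(0)$ in $(0,L_0]$, contradicting the minimality of $\gamma$ past $L_0$.)

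The argument is classical, so I do not anticipate a genuine obstacle; the only point needing care is that the curvature hypothesis is purely local --- available only along length-$L_0$ geodesics issuing from $x$ --- which is exactly why the test field $V$ must be supported on $[0,L_0]$ rather than on the full minimizing segment $[0,L]$. One also has to use the standard variational fact that a minimizing geodesic carries a nonnegative index form on proper variation fields (positive definite up to the first conjugate point).
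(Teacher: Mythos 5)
The paper does not actually prove this statement: it is quoted as a classical result from Petersen \cite{p} and then invoked (e.g.\ in the proof of Proposition~\ref{prop7.7}). So there is no in-paper proof to match against, and your argument has to stand on its own.

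Your second-variation argument is the standard one and is correct for what it directly establishes: no unit-speed geodesic issuing from $x$ can be minimizing past length $L_0=\pi/\sqrt{K_{min}}$, and therefore, by Hopf--Rinow, $d(x,z)\le L_0$ for every $z$, so $M=\overline{B(x,L_0)}$ is compact. The only loose point is the final clause ``with the diameter estimate following as in Myers' theorem,'' and it is loose precisely where the stated hypothesis bites. With the curvature bound assumed only along geodesics \emph{from the fixed point $x$}, what you actually obtain is the radial bound $\sup_z d(x,z)\le L_0$, hence $\diam M\le 2L_0$ by the triangle inequality; to push this down to $\diam M\le L_0$ one would have to rerun the index-form computation along a minimizing geodesic between two \emph{arbitrary} points $y,z$, and the hypothesis as written supplies no curvature lower bound along such a geodesic unless it happens to emanate from $x$. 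In other words, the diameter estimate $\diam M\le L_0$ requires the pointwise bound $K\ge K_{min}$ (so that the ``no geodesic minimizes past $L_0$'' reduction applies at every base point), whereas the ``from $x$'' hypothesis is enough for compactness and the radius bound. This does not harm the paper's application, since the curvature bound actually used there (the third inequality of \eqref{7.8}) holds globally on the evolving submanifold; but as a proof of the theorem exactly as stated you should either assume the global curvature bound when deriving the diameter estimate, or state the conclusion you really get under the restricted hypothesis, namely $\sup_z d(x,z)\le L_0$ (equivalently $\diam M\le 2L_0$).
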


\begin{thm}[B-Y Chen, \cite{c}] Let $M^m$ be a submanifold of $\bbr^{m+p}$ with $m\geq 2$. Then at each point $p\in M^m$, the smallest sectional curvature $K_{min}$ of $M^m$ satisfies
$$
K_{min}(p)\geq\fr12\left(\fr1{m-1}|H|^2(p)-|h|^2(p)\right).
$$
\end{thm}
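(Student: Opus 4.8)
The plan is to prove the inequality pointwise, deducing it from the Gauss equation \eqref{rijkl} by an elementary algebraic argument carried out one normal direction at a time. Fix $p\in M^m$. Since the Grassmannian of $2$-planes in $T_pM$ is compact and the sectional curvature is continuous, $K_{min}(p)$ is attained by some plane $\Pi$; I would choose an orthonormal basis $e_1,e_2$ of $\Pi$, extend it to an orthonormal basis $e_1,\dots,e_m$ of $T_pM$, and fix an orthonormal normal frame $e_{m+1},\dots,e_{m+p}$ at $p$. Writing $h^\alpha_{ij}=\lagl h(e_i,e_j),e_\alpha\ragl$, the Gauss equation \eqref{rijkl} in the flat ambient space gives
$$K_{min}(p)=R(e_1,e_2,e_2,e_1)=\sum_\alpha\big(h^\alpha_{11}h^\alpha_{22}-(h^\alpha_{12})^2\big),$$
while $|H|^2=\sum_\alpha(H^\alpha)^2$ with $H^\alpha=\sum_i h^\alpha_{ii}$ and $|h|^2=\sum_\alpha\sum_{i,j}(h^\alpha_{ij})^2$. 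Hence it suffices to prove, for each fixed $\alpha$, the single-matrix estimate
$$h^\alpha_{11}h^\alpha_{22}-(h^\alpha_{12})^2\ \geq\ \fr12\Big(\fr1{m-1}(H^\alpha)^2-\sum_{i,j}(h^\alpha_{ij})^2\Big)$$
and then sum over $\alpha$.

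To prove the single-matrix estimate I would drop the index $\alpha$ and clear the factor $\fr12$: the claim is equivalent to $2\big(h_{11}h_{22}-h_{12}^2\big)+\sum_{i,j}h_{ij}^2\geq\fr1{m-1}\big(\sum_i h_{ii}\big)^2$. Using $h_{ij}=h_{ji}$ one has $\sum_{i,j}h_{ij}^2=\sum_i h_{ii}^2+2\sum_{i<j}h_{ij}^2$, so after cancelling the two $h_{12}^2$ terms the left-hand side becomes
$$(h_{11}+h_{22})^2+\sum_{i=3}^{m}h_{ii}^2+2\!\!\sum_{i<j,\,(i,j)\neq(1,2)}\!\! h_{ij}^2\ \geq\ (h_{11}+h_{22})^2+\sum_{i=3}^{m}h_{ii}^2,$$
the inequality coming from discarding nonnegative off-diagonal squares. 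Now $h_{11}+h_{22},\,h_{33},\dots,h_{mm}$ are $m-1$ real numbers with sum $\sum_i h_{ii}$, so the Cauchy--Schwarz inequality (applied to these $m-1$ numbers and the all-ones vector) gives $(h_{11}+h_{22})^2+\sum_{i=3}^m h_{ii}^2\geq\fr1{m-1}\big(\sum_i h_{ii}\big)^2$, which is exactly what is needed. Summing over $\alpha$ and rearranging produces $K_{min}(p)\geq\fr12\big(\fr1{m-1}|H|^2-|h|^2\big)$.

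The one point that requires a little cleverness — and the place where both the hypothesis $m\geq2$ and the precise constant $\fr1{m-1}$ enter — is grouping $h_{11}+h_{22}$ into a single slot, so that Cauchy--Schwarz is applied to exactly $m-1$ quantities rather than $m$; everything else is bookkeeping with the structure equation \eqref{rijkl} and the discarding of nonnegative terms. I do not expect any genuine analytic obstacle here: the statement is purely pointwise and uses nothing beyond the Gauss equation already recorded in the excerpt.
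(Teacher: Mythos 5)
The paper does not actually prove this theorem; it simply cites it from Chen's 1993 paper \cite{c}, so there is no internal proof to compare against. That said, your argument is correct and self-contained, and it is essentially a streamlined rendition of Chen's original algebraic argument, so it is worth confirming the details here.

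Your reduction is exactly right: by choosing the orthonormal basis so that $e_1,e_2$ span the plane realizing $K_{\min}(p)$, the Gauss equation \eqref{rijkl} (with flat ambient curvature) gives $K_{\min}(p)=\sum_\alpha\bigl(h^\alpha_{11}h^\alpha_{22}-(h^\alpha_{12})^2\bigr)$, and the per-$\alpha$ inequality is the algebraic heart of the matter. Dropping $\alpha$, your rearrangement $2(h_{11}h_{22}-h_{12}^2)+\sum_{i,j}h_{ij}^2=(h_{11}+h_{22})^2+\sum_{i\geq3}h_{ii}^2+2\sum_{i<j,(i,j)\neq(1,2)}h_{ij}^2$ is correct: the $-2h_{12}^2$ cancels the $(1,2)$ off-diagonal contribution, and $2h_{11}h_{22}+h_{11}^2+h_{22}^2$ completes the square. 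Discarding the remaining nonnegative off-diagonal terms and applying Cauchy--Schwarz to the $m-1$ quantities $h_{11}+h_{22},h_{33},\dots,h_{mm}$ (whose sum is $H^\alpha$) gives precisely $\fr1{m-1}(H^\alpha)^2$. Summing over $\alpha$ recovers the stated bound. Your identification of the grouping $h_{11}+h_{22}$ as the key step is apt: it is exactly the content of what is usually called ``Chen's Lemma,'' and it is where $m\geq 2$ and the constant $\fr1{m-1}$ (rather than $\fr1m$) originate. No gaps.
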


Specifically, in the argument of \cite{b}, the key lemma is

\begin{lem}[\cite{b}, Lemma 4.24]\label{lem7.4} It holds that $\lim\limits_{t\to T}|H|^2_{max}=\infty$, and $\lim\limits_{t\to T}\fr{|H|_{max}}{|H|_{min}}=1$.
\end{lem}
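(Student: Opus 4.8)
The plan is to transcribe Huisken's roundness argument \cite{hui84}, in the higher-codimension form of Andrews--Baker \cite{a-b} and Baker \cite{b}, noting that here the factor $a$ enters only harmlessly. The key simplification is that, since $a=a(t)$ depends on $t$ alone, all spatial derivatives of $a\circ F_t$ vanish, so the evolution formulas of Section \ref{s4} collapse to exactly the mean-curvature-flow ones with the positive factor $a(t)$ inserted in front of every reaction and diffusion term; for instance $\nabla_th=a\Delta h+a*h^3$ and $\pp{}{t}|h|^2=a\Delta|h|^2-2a|\nabla h|^2+a*h^4$. By Lemma \ref{lem5.2} the image of $F$ stays in a bounded region, so $0<\ul a\le a(t)\le\ol a<\infty$ on $[0,T)$. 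Because multiplying the right-hand side of a reaction--diffusion inequality by a positive function does not change the sign of the evolving quantity, the tensor maximum-principle argument of \cite{a-b} showing that the pinching $H\ne0$, $|h|^2\le c|H|^2$ (with $c$ as in \eqref{c}) is preserved applies verbatim; in particular $H$ stays nowhere zero and $|H|_{max},|H|_{min}$ are positive throughout. The first assertion is then immediate: by Theorem \ref{thm5.1}, $\max_M|h|^2\to\infty$, whence by the preserved pinching $\max_M|H|^2\ge c^{-1}\max_M|h|^2\to\infty$.

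For the roundness assertion I would first prove the \emph{pinching-improvement} estimate: for every $\veps>0$ there is $C_\veps$ with $|\sch|^2=|h|^2-\tfrac1m|H|^2\le\veps|H|^2+C_\veps$ on $[0,T)$, where $\sch$ is the tracefree part of $h$. As in \cite{hui84} (Section 5) and \cite{a-b}, this comes from a Stampacchia (De Giorgi) iteration applied to the evolution inequality for $f_\sigma:=|\sch|^2|H|^{\sigma-2}$ recorded in the appendix; since every term carries the bounded factor $a(t)$ and $a$ produces no inhomogeneous contributions, the iteration constants change only through $\ul a,\ol a$. Together with $\max_M|H|^2\to\infty$ this forces $|\sch|^2/|H|^2\to0$ uniformly on $M$, hence $|h|^2\le(\tfrac1m+\veps)|H|^2+C_\veps$ for $t$ near $T$. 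The same machinery applied to the appendix evolution inequality for $|\nabla H|^2$ gives the gradient estimate $|\nabla H|^2\le\veps|H|^4+C_\veps$ for every $\veps>0$; using $\max_M|H|^2\to\infty$ once more, for $t$ close enough to $T$ we obtain $|\nabla H|\le\sqrt{2\veps}\,|H|^2$, i.e.\ $1/|H|$ is $\sqrt{2\veps}$-Lipschitz on $(M,g_t)$.

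Finally I would bring in the intrinsic geometry. By B.-Y. Chen's estimate together with the (improved) pinching, $K_{min}\ge\tfrac12(\tfrac1{m-1}-\tfrac1m-\veps)|H|^2-\tfrac12C_\veps$, so for $t$ near $T$ one gets $K_{min}\ge\mu^2|H|^2_{min}$ for some fixed $\mu>0$ (when $c<\tfrac1{m-1}$ strictly the improvement is unnecessary: Chen's estimate alone gives $K_{min}\ge\tfrac12(\tfrac1{m-1}-c)|H|^2_{min}$). Bonnet's Theorem \ref{bonnet} then yields $\diam(M,g_t)\le\pi/(\mu\,|H|_{min})$, so choosing $p,q\in M$ where $|H|$ attains its minimum and maximum,
\[
\frac1{|H|_{min}}-\frac1{|H|_{max}}=\frac1{|H|(p)}-\frac1{|H|(q)}\le\sqrt{2\veps}\,\diam(M,g_t)\le\frac{\sqrt{2\veps}\,\pi}{\mu\,|H|_{min}},
\]
whence $1-|H|_{min}/|H|_{max}\le\sqrt{2\veps}\,\pi/\mu$; since $\veps>0$ is arbitrary, $|H|_{max}/|H|_{min}\to1$.

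The main obstacle is carrying out the two Stampacchia estimates (pinching improvement and gradient estimate) on the appendix evolution inequalities --- this is the real content; since $a$ depends on $t$ only, it amounts to a careful transcription of \cite{hui84,a-b,b} with the bounded factor $a(t)$ carried along, rather than anything genuinely new. A secondary point appears only in the boundary cases $c=\tfrac1{m-1}$ (namely $m=4,\,c=\tfrac13$ and $m\ge5,\,c=\tfrac1{m-1}$), where Chen's estimate gives only $K_{min}\ge0$: there one must also show $|H|^2_{min}\to\infty$, which follows from the improved pinching by integrating the evolution of a suitable power of $|H|$ together with $\max_M|H|^2\to\infty$.
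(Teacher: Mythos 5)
Your first claim is fine: Theorem \ref{thm5.1} gives $\max_M|h|^2\to\infty$, and the preserved pinching (Corollary \ref{cor6.2}) gives $|H|^2_{max}\ge c^{-1}|h|^2_{max}\to\infty$. Note that the paper itself does not re-prove this lemma — it quotes it from Baker's thesis \cite{b} — so you are filling a stated citation; your ingredients (Corollary \ref{cor6.11}, Chen's curvature estimate, Bonnet's theorem, and the observation that $a=a(t)$ only carries a bounded factor through all evolution inequalities) are exactly the ones the paper collects for Baker's argument.

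There is, however, a genuine gap in the roundness part. From $|\nabla H|^2\le\veps|H|^4+C_\veps$ you conclude $|\nabla H|\le\sqrt{2\veps}\,|H|^2$ globally ``for $t$ close to $T$,'' attributing this to $\max_M|H|^2\to\infty$. That implication is false: the passage from $\sqrt{\veps|H|^4+C_\veps}$ to $\sqrt{2\veps}\,|H|^2$ needs $|H|^2$ to exceed $\sqrt{C_\veps/\veps}$ at the point in question, so you need $\min_M|H|^2\to\infty$, not $\max_M|H|^2\to\infty$. Tracking this through your final display gives
\[
1-\fr{|H|_{min}}{|H|_{max}}\le\fr{\pi}{\mu}\sqrt{\veps+\fr{C_\veps}{|H|^4_{min}}},
\]
and since $C_\veps$ blows up as $\veps\to 0$, the right side does not tend to $0$ unless $|H|^2_{min}\to\infty$. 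Thus the step you relegate to the ``boundary cases'' $c=\fr1{m-1}$ is in fact needed for every admissible $c$: one must first establish $|H|^2_{min}\to\infty$. In Huisken's and Baker's proofs this is done by a localized Bonnet/Myers argument around the maximizer of $|H|$: the gradient estimate shows $|H|\ge\fr12|H|_{max}$ on a geodesic ball of radius comparable to $\big(\sqrt{\veps}\,|H|_{max}\big)^{-1}$, Chen's estimate gives a positive curvature bound there, and (for $\veps$ fixed small depending only on $\mu$) Bonnet applied on that ball bounds $\diam(M,g_t)$ by a multiple of $|H|_{max}^{-1}$, forcing $|H|_{min}\ge\fr12|H|_{max}\to\infty$. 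Only after that does your ratio estimate close. You should insert this intermediate lemma in all cases rather than restricting it to $c=\fr1{m-1}$; once it is in place, the rest of your outline, including the transcription of the Stampacchia iterations (already supplied here as Corollary \ref{cor6.11}), is sound.
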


(2) Prove that the solution $\td F:=\psi(t)F$ of the corresponding re-scaled volume-preserving flow (i.e. the so-called normalized flow) must be convergent to a round sphere.

To this end, we only need to copy and carefully check the main argument given in \cite{b} (Section 4.7) being applied to the new conformal mean curvature flow. It turns out that, some of these can be done without any change, and others will be done by new arguments. So, presently, it will suffice for us to outline the former and make in detail the latter as follows:

\newcommand{\vol}{{\rm Vol}}
Rescale the solution $F=F(x,t)$ of \eqref{4} by a function $\psi=\psi(t)$: $\td F=\psi(t)F$ of which all the geometric quantities are denoted by adding a tilde, with the following
\be\label{7.2} \vol(\td F_t(M))=\int_MdV_{\td g(t)}\equiv \vol(\td F_0(M)).\ee
Then we have (cf. \cite{b}):
\begin{align}
&\td g=\psi^2g,\quad \td h=\psi h,\quad \td H=\psi^{-1}H,\quad |\td h|^2=\psi^{-2}|h|^2,\label{7.3}\\
&\td\nabla=\nabla,\quad\td\Delta=\psi^{-2}\Delta,\quad dV_{\td g(t)}=\psi^mdV_{g(t)},\label{7.4}\\
&\psi^{-1}\dd{\psi}{t}=\fr 1m\hbar :=\fr 1m\fr{a\int |H|^2dV_{g(t)}}{\int dV_{g(t)}}\label{7.5}
\end{align}
Change the time parameter $t$ to $\td t$ by
\be\label{7.6} \td t(t)=\int^t_0\psi^2(\tau)d\tau,\quad 0\leq t<T,\quad \td T=\td t(T).
\ee
So $\dd{\td t}{t}=\psi^2(t)$. Define
$$
\td a(\td t)=a(t(\td t)),\quad\td\hbar=\fr{\td a\int |\td H|^2dV_{\td g(\td t)}}{\int dV_{\td g(\td t)}}.
$$
Then $\td\hbar =\psi^{-2}\hbar$ and
\be\label{7.7} \pp{\td F}{\td t}=\td a\td H+\fr1m\td\hbar\td F,\quad \psi^{-1}\dd{\psi}{\td t}=\fr 1m\td \hbar\geq\fr1m\ul a|\td H|^2_{min},\quad 0\leq\td t<\td T.\ee

\begin{prop}[cf. \cite{b}, Proposition 4.26] The following estimates hold for the normalized flow \eqref{7.7}:
\be\label{7.8}
|\td h|^2\leq c|\td H|^2,\quad\fr{|\td H|^2_{min}}{|\td H|^2_{max}}\to 1\text{ as }\td t\to \td T,\quad\td K_{min}\geq\veps^2|\td H|^2,
\ee
where, with some small $t_0>0$ and $\td a_{t_0}>0$,
$$\veps^2:=\fr12\left(\fr1{m-1}-c+\td a_{t_0}\right)>0$$
for all $t\geq t_0$ (see Corollary \ref{cor6.2}).
\end{prop}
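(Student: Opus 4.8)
The plan is to deduce each of the three estimates in \eqref{7.8} from the corresponding statement for the unnormalized conformal mean curvature flow \eqref{4}, exploiting the scale invariance of the quantities involved. Since at each fixed time $\psi$ is a constant, the relations \eqref{7.3}--\eqref{7.4} yield
$$
\fr{|\td h|^2}{|\td H|^2}=\fr{|h|^2}{|H|^2},\qquad \fr{|\td H|^2_{min}}{|\td H|^2_{max}}=\fr{|H|^2_{min}}{|H|^2_{max}},\qquad \fr{\td K_{min}}{|\td H|^2}=\fr{K_{min}}{|H|^2},
$$
so the change of time parameter \eqref{7.6} plays no role here and it suffices to prove, for the flow \eqref{4} on $[0,T)$: (i) $H$ stays nowhere zero and $|h|^2\leq c|H|^2$ is preserved, improving after any small time $t_0>0$ to $|h|^2\leq(c-\td a_{t_0})|H|^2$ for some $\td a_{t_0}>0$; (ii) $|H|^2_{min}/|H|^2_{max}\to 1$ as $t\to T$; (iii) $K_{min}\geq\veps^2|H|^2$ for $t\geq t_0$.

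For (i) I would follow the scheme of Andrews--Baker \cite{a-b}. From the evolution equations for $|h|^2$ and $|H|^2$ in Section \ref{s4}, together with the sharper reaction-term estimates and the Michael--Simon Sobolev inequality collected in the appendix, one first checks by the maximum principle that the pinching set $\{|h|^2\leq c|H|^2,\ H\neq 0\}$ is preserved, and then runs a Stampacchia iteration on integral quantities built from the pinching defect $\big(|h|^2/|H|^2-c\big)_+$ to improve the constant by a definite amount after time $t_0$. This is exactly the content of Corollary \ref{cor6.2}, and I expect it to be the main obstacle: one must verify that the extra factor $a=a(t)$, which by Lemma \ref{lem5.2} satisfies $0<\ul a\leq a\leq\ol a<\infty$, enters the relevant inequalities only as harmless bounded coefficients and does not spoil the signs needed to close the iteration; in particular it is $a\geq\ul a>0$ that makes the gain $\td a_{t_0}>0$, depending only on $t_0$, $m$, $c$ and the bounds of $a$, available.

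Assertion (ii) is precisely Lemma \ref{lem7.4}. It follows by applying the same integral machinery to show that the scale-invariant ratios $|h|^2/|H|^2$ and $|\nabla H|^2/|H|^4$ tend, respectively, to $1/m$ and $0$; combined with the blow-up $\max_M|h|^2\to\infty$ from Theorem \ref{thm5.1} (which, via the pinching, forces $|H|^2_{max}\to\infty$), a gradient-estimate argument then gives $|H|^2_{min}/|H|^2_{max}\to 1$ as $t\to T$, exactly as in \cite{b}, Section~4.6.

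Finally, (iii) is obtained by combining the improved pinching of Corollary \ref{cor6.2} with the pointwise estimate of B.-Y. Chen \cite{c} recalled above: for $t\geq t_0$,
$$
K_{min}\geq\fr12\Big(\fr1{m-1}|H|^2-|h|^2\Big)\geq\fr12\Big(\fr1{m-1}-c+\td a_{t_0}\Big)|H|^2=\veps^2|H|^2,
$$
with $\veps^2>0$ since $c\leq\fr1{m-1}$ by \eqref{c} and $\td a_{t_0}>0$. Rescaling back through $\psi$ returns $\td K_{min}\geq\veps^2|\td H|^2$, which proves the proposition. The remaining formulas in \eqref{7.7}, namely $\pp{\td F}{\td t}=\td a\td H+\fr1m\td\hbar\td F$ and $\psi^{-1}\dd{\psi}{\td t}\geq\fr1m\ul a|\td H|^2_{min}$, are immediate consequences of \eqref{7.5}--\eqref{7.6} and the bound $a\geq\ul a>0$ from Lemma \ref{lem5.2}.
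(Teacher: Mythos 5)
Your overall strategy matches what the paper intends: the proposition is stated with only a ``cf.~\cite{b}, Proposition 4.26'' reference, and the intended proof is indeed (a) scale invariance of the three ratios under $\td g=\psi^2 g$, reducing the claim to the un-normalized flow, then (b) Corollary~\ref{cor6.2} for the pinching, (c) Lemma~\ref{lem7.4} for the ratio $|H|_{\min}/|H|_{\max}\to 1$, and (d) Chen's inequality for the sectional-curvature bound. Your reduction to scale-invariant quantities and your treatment of (ii) via Lemma~\ref{lem7.4} are exactly right, as is the observation that $a(t)$ enters the evolution only through bounded coefficients.

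There is, however, one concrete step that your writeup glosses over and that deserves to be flagged. You invoke Corollary~\ref{cor6.2} as giving a \emph{multiplicative} improvement $|h|^2\leq (c-\td a_{t_0})|H|^2$, which is what you feed into Chen's estimate to get
$K_{\min}\geq\fr12\bigl(\fr1{m-1}-c+\td a_{t_0}\bigr)|H|^2$.
But Corollary~\ref{cor6.2} (via the preservation of $Q=|h|^2+\td a-c|H|^2\leq 0$) supplies an \emph{additive} gap: $|h|^2+\td a_{t_0}\leq c|H|^2$. Plugging this into Chen only gives
$K_{\min}\geq\fr12\bigl((\fr1{m-1}-c)|H|^2+\td a_{t_0}\bigr)$,
and under the rescaling $\td K_{\min}=\psi^{-2}K_{\min}$, $|\td H|^2=\psi^{-2}|H|^2$, the additive term becomes $\psi^{-2}\td a_{t_0}$, which decays to zero as $\psi\to\infty$. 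Since $|H|^2_{\max}\to\infty$ (Theorem~\ref{thm5.1} plus pinching), the additive gap is not scale-invariant and does not by itself produce the stated $\veps^2|\td H|^2$ lower bound in the regime where $|h|^2/|H|^2$ sits near $\fr1{m-1}$ (possible for $m\geq 4$, where \eqref{c} allows $c=\fr1{m-1}$). To obtain a genuine multiplicative gap one should combine Corollary~\ref{cor6.2} with the roundness estimate $|\sch|^2\leq C_0|H|^{2(1-\sigma)}$ of Corollary~\ref{cor6.11}: for $|H|^2$ large this forces $|h|^2/|H|^2\leq\fr1m+C_0|H|^{-2\sigma}$, yielding $K_{\min}\geq\fr14\cdot\fr1{m(m-1)}|H|^2$ once $|H|^2$ exceeds a fixed threshold, while for $|H|^2$ bounded above the additive gap of Corollary~\ref{cor6.2} does convert to a multiplicative one. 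This two-regime argument is what actually delivers a uniform $\veps^2>0$. It is fair to note that the paper's stated form of $\veps^2$ carries the same imprecision, so your proposal is faithful to the paper's intent, but the dependence of your (iii) on Corollary~\ref{cor6.11} should be made explicit rather than attributed entirely to Corollary~\ref{cor6.2}.
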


\begin{lem} [cf. \cite{b}, Lemma 4.27]\label{lem7.6} Suppose that $P$ and $Q$ depend on $g$ and $h$, and that $P$ satisfies $\pp{P}{t}=a\Delta P+Q$. If $P$ has ``degree'' $\alpha$, that is, $\td P=\psi^\alpha P$, then $Q$ has degree $(\alpha-2)$ and $\td P$ satisfies the normalized evolution equation
\be
\pp{\td P}{\td t}=\td a\td\Delta \td P+\td Q+\fr\alpha m\td\hbar\td P.
\ee
\end{lem}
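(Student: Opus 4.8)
The plan is to reduce the statement to a bookkeeping of scaling weights followed by one substitution. Under the normalization $\td g=\psi^2 g$, $\td h=\psi h$ with $\psi=\psi(t)$ depending only on $t$, one has $\td\nabla=\nabla$ by \eqref{7.4}, so any quantity assembled from $g$, $g^{-1}$, $h$ and their covariant derivatives is weighted-homogeneous if we assign weight $2$ to $g$, weight $-2$ to $g^{-1}$, weight $1$ to $h$ and to every $\nabla^k h$, and weight $0$ to $a$ (a function on $\bbr^n$, hence $\td a=a$); the ``degree'' of such a quantity is then its total weight, and the hypothesis that $P$ has degree $\alpha$ means exactly that $P$ is weighted-homogeneous of weight $\alpha$, i.e. $\td P=\psi^\alpha P$.

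First I would establish that $Q$ has degree $\alpha-2$. From \eqref{evogij} the tensor $\pp{}{t}g$ has weight $0$ and $\pp{}{t}g^{-1}$ has weight $-4$, while from \eqref{ht2} and its higher-order analogues the tensors $\pp{}{t}\nabla^k h$ have weight $-1$. Writing $P=P(g,g^{-1},h,\nabla h,\dots)$ and differentiating along the flow by the chain rule, Euler's relation for weighted-homogeneous expressions gives that $\pp{P}{g}$, $\pp{P}{g^{-1}}$ and $\pp{P}{\nabla^k h}$ carry weights $\alpha-2$, $\alpha+2$ and $\alpha-1$ respectively; pairing each with its time derivative yields a contribution of weight $\alpha-2$ in every case, so $\pp{P}{t}$ has weight $\alpha-2$. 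Since $a\Delta P$ also has weight $\alpha-2$ ($a$ has weight $0$ and the $g$-Laplacian lowers weight by $2$), the identity $Q=\pp{P}{t}-a\Delta P$ forces $Q$ to have degree $\alpha-2$, i.e. $\td Q=\psi^{\alpha-2}Q$; alternatively this can simply be checked term by term in each of the concrete evolution equations used in Section \ref{s7}.

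Then I would carry out the substitution. Using $\dd{\td t}{t}=\psi^2$ from \eqref{7.6}, $\psi^{-1}\dd{\psi}{t}=\fr1m\hbar$ from \eqref{7.5}, and the hypothesis $\pp{P}{t}=a\Delta P+Q$,
\begin{align*}
\pp{\td P}{\td t}&=\psi^{-2}\pp{}{t}\big(\psi^\alpha P\big)=\psi^{-2}\Big(\alpha\psi^{\alpha-1}\dd{\psi}{t}\,P+\psi^\alpha\pp{P}{t}\Big)\\
&=\fr\alpha m\,\psi^{-2}\hbar\,\td P+\psi^{\alpha-2}\big(a\Delta P+Q\big).
\end{align*}
Here $\psi^{-2}\hbar=\td\hbar$ (immediate from the definitions of $\hbar$ and $\td\hbar$ together with $\td H=\psi^{-1}H$ and $dV_{\td g}=\psi^m dV_g$ from \eqref{7.3}--\eqref{7.4}), $\psi^{\alpha-2}a\Delta P=\td a\,\psi^{-2}\Delta(\psi^\alpha P)=\td a\,\td\Delta\td P$ (using $\td a=a$, $\td\Delta=\psi^{-2}\Delta$ and that $\psi$ is spatially constant), and $\psi^{\alpha-2}Q=\td Q$ by the previous step; substituting gives the asserted identity $\pp{\td P}{\td t}=\td a\td\Delta\td P+\td Q+\fr\alpha m\td\hbar\td P$. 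The only genuine subtlety is the degree count for $Q$ — once it is granted that the reaction terms occurring in our evolution equations are weighted-homogeneous of the expected degree, a structural feature of how they are built from $g$ and $h$, what remains is the one-line computation above.
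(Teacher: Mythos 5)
Your proof is correct. The paper itself supplies no argument for Lemma \ref{lem7.6}; it simply cites Baker's thesis (\cite{b}, Lemma 4.27), where the same statement is proved by exactly the one-line change-of-variable computation you give, together with the observation that the reaction term inherits degree $\alpha-2$. Your substitution step, $\pp{\td P}{\td t}=\psi^{-2}\big(\alpha\psi^{\alpha-1}\dd{\psi}{t}P+\psi^\alpha\pp{P}{t}\big)$, combined with $\psi^{-1}\dd{\psi}{t}=\fr1m\hbar$, $\td\hbar=\psi^{-2}\hbar$, $\td\Delta=\psi^{-2}\Delta$, $\td a=a$ and $\td Q=\psi^{\alpha-2}Q$, reproduces the normalized equation precisely. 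The degree count for $Q$ via Euler's relation is also sound: with $g,g^{-1},\nabla^k h$ carrying weights $2,-2,1$, and $\pp{}{t}g$, $\pp{}{t}g^{-1}$, $\pp{}{t}\nabla^k h$ carrying weights $0,-4,-1$ (reading these off \eqref{evogij} and \eqref{ht2} with $\nabla a\equiv 0$, which holds in the setting of Section \ref{s7} since $a=a(t)$), each paired contribution in the chain rule lands at weight $\alpha-2$, and so does $a\Delta P$; hence $Q=\pp{P}{t}-a\Delta P$ has degree $\alpha-2$. One small caveat worth keeping in mind: your opening claim that any expression built from $g,g^{-1},h$ and derivatives is weighted-homogeneous is not literally true (sums of mixed degrees are not), but this does no harm because the lemma's hypothesis is precisely that $P$ is homogeneous of degree $\alpha$, and you use only that. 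In short, your proposal fills in the details that the paper delegates to the citation, by the standard approach.
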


\begin{prop}[cf. \cite{b}, Propositions 4.31 and 4.32]\label{prop7.7} There are $C_{max}$ and $C_{min}$ such that
\be\label{7.11} 0<C_{min}\leq |\td H|_{min}\leq |\td H|_{max}\leq C_{max}<+\infty.\ee
\end{prop}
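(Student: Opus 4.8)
The plan is to follow the structure of \cite{b} (Propositions 4.31 and 4.32), the only structural difference from the unnormalized mean curvature flow being the variable coefficient $\td a=\td a(\td t)$ appearing in the normalized flow \eqref{7.7}. Since $\td a$ depends on $\td t$ alone and, by Lemma \ref{lem5.2}, satisfies $0<\ul a\le\td a\le\ol a<+\infty$, every estimate from Baker's argument carries over after replacing the constant $1$ by a factor pinched between $\ul a$ and $\ol a$. The geometric inputs I would use are the volume constraint \eqref{7.2}, the estimates collected in \eqref{7.8} (the preserved pinching $|\td h|^2\le c|\td H|^2$, the convergence $|\td H|^2_{min}/|\td H|^2_{max}\to 1$ as $\td t\to\td T$, and $\td K_{min}\ge\veps^2|\td H|^2$ for $\td t\ge t_0$), together with two classical facts for closed submanifolds of $\bbr^{m+p}$: the Bishop--Gromov volume comparison and the Michael--Simon Sobolev inequality.

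For the \emph{upper bound} I would argue by contradiction. If $|\td H|_{max}(\td t_k)\to+\infty$ along some $\td t_k\to\td T$, then $|\td H|^2_{min}/|\td H|^2_{max}\to 1$ forces $|\td H|_{min}(\td t_k)\to+\infty$ as well, whence, for $k$ large so that $\td t_k\ge t_0$, $\td K_{min}(\td t_k)\ge\veps^2|\td H|^2_{min}(\td t_k)\to+\infty$. Then $(M,\td g(\td t_k))$ is a closed Riemannian manifold with $\ric\ge(m-1)\td K_{min}(\td t_k)>0$, so Bonnet--Myers (Theorem \ref{bonnet}) together with the Bishop--Gromov inequality give $\vol(\td F_{\td t_k}(M))=\int_M dV_{\td g(\td t_k)}\le\vol(\bbs^m)\,\td K_{min}(\td t_k)^{-m/2}\to 0$, contradicting $\vol(\td F_{\td t_k}(M))\equiv\vol(\td F_0(M))>0$ from \eqref{7.2}. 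Hence $|\td H|_{max}$ stays bounded as $\td t\to\td T$; together with the smoothness of the flow on compact subintervals $[0,\td t_0]$ this yields a uniform $C_{max}<+\infty$.

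For the \emph{lower bound} I would apply the Michael--Simon Sobolev inequality to the function $f\equiv 1$ on the closed submanifold $\td F_{\td t}(M)\subset\bbr^{m+p}$ and then Hölder's inequality, obtaining a dimensional constant $c_0=c_0(m)>0$ with $\int_M|\td H|^m\,dV_{\td g}\ge c_0$ for every $\td t$. Combining this with $|\td H|^m_{max}\,\vol(\td F_{\td t}(M))\ge\int_M|\td H|^m\,dV_{\td g}$ and the fixed volume $V_0=\vol(\td F_0(M))$ gives $|\td H|_{max}\ge(c_0/V_0)^{1/m}>0$ for all $\td t$. Feeding this back into $|\td H|^2_{min}/|\td H|^2_{max}\to 1$ shows that $|\td H|_{min}$ is bounded below near $\td T$, and on $[0,\td t_0]$ the quantity $\inf_M|\td H|$ is a positive continuous function of $\td t$ (using that the pinching forces $\td H\neq 0$ everywhere, a property preserved along the flow). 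Together these give a uniform $C_{min}>0$, completing the proof.

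The step I expect to be the main obstacle is not the bound itself but making the adaptation from \cite{b} watertight: one must check that the variable --- but uniformly pinched --- factor $\td a$ disturbs neither the preserved pinching and the ratio estimate of \eqref{7.8} nor the normalized evolution machinery of Lemma \ref{lem7.6}, which is routine once $0<\ul a\le\td a\le\ol a$ is in hand. The genuinely geometric content is then confined to the two comparison/Sobolev inequalities above, used exactly as in Huisken's hypersurface case and in the higher-codimension treatment of Andrews--Baker.
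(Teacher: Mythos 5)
Your proof of the upper bound $|\td H|_{max}\leq C_{max}$ is essentially the paper's argument (Bonnet together with Bishop--Gromov and the fixed normalized volume, coupled with the ratio estimate $|\td H|_{min}/|\td H|_{max}\to 1$), merely phrased contrapositively. Your proof of the lower bound, however, is a genuinely different route from the one the paper takes. The paper (following Baker's Proposition 4.32) first deduces from $|\td h|^2\leq c|\td H|^2\leq C$ that the sectional curvature is bounded, and then combines the G\"unther volume comparison theorem with the Klingenberg injectivity-radius lemma to rule out $|\td H|_{min}\to 0$ under the fixed-volume constraint. You instead apply the Michael--Simon Sobolev inequality to the constant function $1$, which together with H\"older gives the scale-invariant bound $\int_M|\td H|^m\,dV_{\td g}\geq c_0(m)>0$ for every $\td t$, hence $|\td H|_{max}\geq(c_0/V_0)^{1/m}$, and then you feed this back into the ratio estimate. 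Both routes are correct; your version is arguably cleaner since it avoids the injectivity-radius analysis entirely and relies only on an integral-geometric inequality that holds for all closed immersed submanifolds of $\bbr^{m+p}$ (and which is already in the paper's bibliography as \cite{m-s}, \cite{h-s}). One small caveat: the pinching $|h|^2\leq c|H|^2$ alone does not force $H\neq 0$ (it is trivially true where $H=0$); what you actually use is that the hypotheses include $H\neq 0$ on $F_0$ and that this, together with the strict pinching, is preserved (Corollary~\ref{cor6.2}), which you do invoke implicitly --- just be explicit that this is an assumption on $F_0$ rather than a consequence of the pinching.
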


\begin{proof}
By using the third inequality in \eqref{7.8}, Bishop-Gromov volume comparision theorem (\cite{g-h-l}), Bonnet's theorem (Theorem \ref{bonnet}) and the volume-preserving property \eqref{7.2}, it can be shown that $|\td H|_{min}$ is bounded from above. Then the second inequality of \eqref{7.11} comes directly from the fact that $\lim\limits_{\td t\to \td T}\fr{|\td H|_{min}}{|\td H|_{max}}=1$.

On the other hand, the fact that $|\td h|^2\leq c|\td H|^2$ and the second inequality of \eqref{7.11} shows that the second fundamental form $\td h$ is bounded from above, that is,
\be\label{7.12} |\td h|^2\leq C\text{ for some }C>0.\ee

Now the first inequality of \eqref{7.11} comes from the second one, \eqref{7.12}, the Gr\"uther volume comparison theorem (\cite{g-h-l}) and the Klingenberg Lemma (\cite{p}). For the detail of the proof, see \cite{b}.
\end{proof}

\begin{cor}\label{cr}
There are $\td t_0, C,\delta>0$ such that $\psi\geq Ce^{\delta\td t}$ for all $\td t\geq\td t_0$.
\end{cor}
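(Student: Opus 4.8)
The plan is to deduce the estimate from the evolution identity for the rescaling factor $\psi$ recorded in \eqref{7.7}, combined with the uniform lower bound on $|\td H|$ already established. Concretely, the second relation in \eqref{7.7} reads
$$\dd{}{\td t}\log\psi(\td t)=\psi^{-1}\dd{\psi}{\td t}=\fr1m\td\hbar=\fr1m\,\fr{\td a\int|\td H|^2\,dV_{\td g(\td t)}}{\int dV_{\td g(\td t)}}\geq\fr1m\,\ul a\,|\td H|^2_{min},$$
where in the last step I use $\td a\geq\ul a>0$ (from $\ul a\leq a\leq\ol a$, Lemma \ref{lem5.2}) and that the $dV_{\td g(\td t)}$-average of $|\td H|^2$ is at least its minimum. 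Thus the whole matter reduces to bounding $|\td H|^2_{min}$ from below by a positive constant.

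But that is exactly the first inequality of \eqref{7.11} in Proposition \ref{prop7.7}: there is a constant $C_{min}>0$ with $|\td H|_{min}\geq C_{min}$ on the relevant range of $\td t$, say for $\td t\geq\td t_0$. Setting $\delta:=\fr1m\,\ul a\,C_{min}^2>0$, the displayed inequality becomes $\dd{}{\td t}\log\psi\geq\delta$ for $\td t\geq\td t_0$, and integrating from $\td t_0$ to $\td t$ gives $\log\psi(\td t)-\log\psi(\td t_0)\geq\delta(\td t-\td t_0)$, i.e.
$$\psi(\td t)\geq\bigl(\psi(\td t_0)\,e^{-\delta\td t_0}\bigr)e^{\delta\td t}.$$
Since $\psi>0$ throughout, $C:=\psi(\td t_0)\,e^{-\delta\td t_0}>0$ is the desired constant, and the corollary follows.

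The argument is essentially this one-line Gronwall-type integration, so there is no genuine obstacle: the only non-elementary input, the uniform positivity of $|\td H|_{min}$, was supplied by Proposition \ref{prop7.7} (via the Bishop--Gromov and Gr\"uther volume comparison theorems, Bonnet's theorem and the Klingenberg lemma). The reason for isolating this corollary is that the exponential growth of the conformal factor $\psi$ is precisely what is needed afterwards to transfer the long-time behaviour of the normalized flow \eqref{7.7} back to the original conformal mean curvature flow \eqref{4} and to conclude convergence to a round point in Theorem \ref{thm7.1}.
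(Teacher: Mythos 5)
Your argument is exactly the one in the paper: combine the bound $\psi^{-1}\,d\psi/d\td t=\tfrac1m\td\hbar\geq\tfrac1m\ul a|\td H|^2_{min}$ from \eqref{7.7} with the uniform lower bound $|\td H|_{min}\geq C_{min}$ from Proposition \ref{prop7.7} to get $\tfrac{d}{d\td t}\log\psi\geq\delta:=\tfrac1m\ul a C_{min}^2$, then integrate. You merely spell out the elementary integration step that the paper leaves implicit, so there is nothing to add.
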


\begin{proof} From \eqref{7.7} and \eqref{7.11} it follows that
$$
\dd{}{\td t}\log\psi=\psi^{-1}\dd{\psi}{\td t}\geq\fr1m\ul a C^2_{min}:=\delta>0
$$
which proves the corollary.
\end{proof}

\begin{prop}[cf. \cite{b}, Lemma 4.39]\label{prop7.13} There exist positive constants $C',\delta'$ such that
\be
|\td\sch|^2\leq C'e^{-\delta'\td t},\quad
\forall \td t\geq\td t_0
\ee
for some sufficient large $\td t_0>0$.
\end{prop}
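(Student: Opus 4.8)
The plan is to reduce Proposition \ref{prop7.13} to a Huisken-type \emph{pinching-improvement estimate} for the flow \eqref{4} and then exploit the scaling relations \eqref{7.3} together with the exponential growth of $\psi$ from Corollary \ref{cr}. Because $a$ depends only on $t$, the time change $s=\int_0^t a(\tau)\,d\tau$ turns \eqref{4} into the ordinary mean curvature flow on the finite interval $s\in[0,S)$, $S=\int_0^T a<+\infty$; hence the improved-pinching estimate of Huisken \cite{hui84}, in the higher-codimension form of Andrews--Baker \cite{a-b} (see also \cite{b}), applies and provides constants $\sigma\in(0,1)$ and $C_0<+\infty$ such that
\be\label{eqimpr}
|\sch|^2\leq C_0\,|H|^{2-\sigma}\qquad\text{on }M\times[0,T).
\ee
(Equivalently, \eqref{eqimpr} may be derived directly from the evolution formulas of Section \ref{s4}, where all $\nabla a$, $\nabla^2a$ terms drop out since $a=a(t)$.) It is essential here that the exponent gain $\sigma$ be strictly positive: the merely preserved pinching $|h|^2\leq c|H|^2$ would give only a bound, not decay, for $|\td\sch|^2$.

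Granting \eqref{eqimpr}, the proposition follows quickly. From \eqref{7.3} (together with $\td\sch=\psi\sch$) one has $|\td\sch|^2=\psi^{-2}|\sch|^2$ and $|H|^2=\psi^2|\td H|^2$, so, using \eqref{eqimpr}, then Proposition \ref{prop7.7} ($|\td H|\leq C_{max}$) and then Corollary \ref{cr} ($\psi\geq Ce^{\delta\td t}$ for $\td t\geq\td t_0$),
\be
|\td\sch|^2=\psi^{-2}|\sch|^2\leq C_0\,\psi^{-2}|H|^{2-\sigma}=C_0\,\psi^{-\sigma}|\td H|^{2-\sigma}\leq C_0C_{max}^{2-\sigma}\,\psi^{-\sigma}\leq C_0C_{max}^{2-\sigma}C^{-\sigma}\,e^{-\sigma\delta\td t}
\ee
for all $\td t\geq\td t_0$. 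This is the assertion with $C':=C_0C_{max}^{2-\sigma}C^{-\sigma}$ and $\delta':=\sigma\delta$.

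The substantive part, and the main obstacle, is \eqref{eqimpr} itself. Following Huisken and Andrews--Baker, one works with $f_\sigma:=|\sch|^2|H|^{-(2-\sigma)}$ (well defined because the pinching cone $|h|^2\leq c|H|^2$ is preserved, so $H$ stays nowhere zero), computes its evolution equation from those of $|\sch|^2$ and $|H|^2$ (using $R_2\leq|H|^2|h|^2$, cf.\ \eqref{4.13}); for $\sigma$ small the algebraic reaction term is rendered non-positive precisely by the pinching hypothesis \eqref{c}, but a gradient term of the wrong sign survives. This rules out a bare maximum-principle argument and forces one into an $L^p$ estimate for $f_\sigma^p$ -- integrating over $M$ and using $\ul a\leq a\leq\ol a$, Young's inequality (Lemma \ref{young}) and $T<+\infty$ -- followed by a Stampacchia iteration based on the Michael--Simon Sobolev inequality, to obtain $\sup_M f_\sigma\leq C_0$. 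One must only verify that the bounded factor $a(t)$ (equivalently, the time change $s=\int_0^t a$) does not damage these integral estimates, which it does not. All remaining ingredients for Proposition \ref{prop7.13} -- the scaling identities \eqref{7.3}, Proposition \ref{prop7.7}, and Corollary \ref{cr} -- are already available.
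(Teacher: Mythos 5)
Your proof is correct and the final computation is exactly the paper's (the paper writes the gain as $|H|^{2(1-\sigma)}$ rather than $|H|^{2-\sigma}$, but this is only a renaming of $\sigma$): one combines the scaling $|\td\sch|^2=\psi^{-2}|\sch|^2$, the pinching-improvement bound on the unnormalized flow, $|H|=\psi|\td H|$, Proposition~\ref{prop7.7}, and Corollary~\ref{cr}, yielding $|\td\sch|^2\le C'e^{-\delta'\td t}$. The one place you diverge is how you obtain the key input $|\sch|^2\le C_0|H|^{2(1-\sigma)}$: you propose (as the primary route) the time reparametrization $s=\int_0^t a$ to reduce to standard MCF and cite Andrews--Baker's estimate, whereas the paper instead proves it directly for the conformal flow as Corollary~\ref{cor6.11}, via Proposition~\ref{prop6.5} and a Stampacchia iteration using the appendix formulas -- and then in Proposition~\ref{prop7.13} simply invokes that corollary. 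Both routes are valid (you also acknowledge the direct one in a parenthetical), but the paper's choice is deliberate: Section~\ref{s7} is presented as a self-contained ``direct proof'' of Theorem~\ref{main2} that does not pass through the MCF time change already used in Section~\ref{s3}, so Corollary~\ref{cor6.11} is the intended citation rather than a reduction to the Euclidean MCF.
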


\begin{proof}
By \eqref{7.3} and Corollary \ref{cr}, we have for some $0<\sigma<1$
\begin{align*}
|\td\sch|^2=&|\td h|^2-\fr1m|\td H|^2=\psi^{-2}(|h|^2-\fr1m|H|^2)=\psi^{-2}|\sch|^2\\
\leq&C_0\psi^{-2}|H|^{2(1-\sigma)}=C_0\psi^{-2\sigma}|\td H|^{2(1-\sigma)} \leq C_0C^{-2\sigma}C^{2(1-\sigma)}_{max}e^{-2\sigma\delta\td t}.
\end{align*}
Take $C'=C_0C^{-2\sigma}C^{2(1-\sigma)}_{max}$ and $\delta'=2\sigma\delta$.
\end{proof}

\begin{prop}[cf. \cite{b}, Propositions 4.33 and 4.34] It holds that
\be\int_0^T|H|^2_{max}(t)dt=+\infty,\quad \td T=+\infty.\ee
\end{prop}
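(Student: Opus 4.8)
The plan is to first establish the divergence $\int_0^T|H|^2_{max}(t)\,dt=+\infty$ directly from the blow-up theorem (Theorem \ref{thm5.1}, i.e.\ Theorem \ref{thm7.1}), and then to transport this fact to the normalized flow by the time reparametrization \eqref{7.6}.

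For the first assertion, I would exploit that here $a=a(t)$ depends only on $t$, so the composed function $a\equiv a\circ F$ is constant on each slice $M\times\{t\}$ and all its spatial covariant derivatives vanish. Hence the evolution equation for $|h|^2$ derived in Section \ref{s4} collapses to $\pp{}{t}|h|^2=a\Delta|h|^2-2a|\nabla h|^2+2aR_1$, and since $R_1=h^4\ge0$ satisfies $2R_1\le C_0|h|^4$ for a universal constant $C_0$, we get $\pp{}{t}|h|^2\le a\Delta|h|^2+C_0\,a\,|h|^4$. By Lemma \ref{lem5.2} (together with the finiteness of $T$) one has $0<\ul a\le a\le\ol a<+\infty$ on $[0,T)$, and $|h|^2\ge|H|^2/m>0$, so $t\mapsto|h|^2_{max}(t)$ is a strictly positive, locally Lipschitz function. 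Applying the parabolic maximum principle to the evolving maximum (Hamilton's trick), one obtains for a.e.\ $t\in[0,T)$
$$\dd{}{t}|h|^2_{max}\le C_0\,\ol a\,(|h|^2_{max})^2,\qquad\text{i.e.}\qquad\dd{}{t}\log|h|^2_{max}\le C_0\,\ol a\,|h|^2_{max}.$$
If $\int_0^T|h|^2_{max}\,dt$ were finite, integrating this inequality would bound $\log|h|^2_{max}$, hence $|h|^2_{max}$, uniformly on $[0,T)$ — contradicting $\lim_{t\to T}\max_M|h|^2=+\infty$ from Theorem \ref{thm5.1}. Thus $\int_0^T|h|^2_{max}\,dt=+\infty$, and since the pinching $|h|^2\le c|H|^2$ is preserved along the flow (Corollary \ref{cor6.2}), this yields $\int_0^T|H|^2_{max}(t)\,dt=+\infty$.

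To obtain $\td T=+\infty$, I would change the time variable. By \eqref{7.6}, $t\mapsto\td t(t)=\int_0^t\psi^2(\tau)\,d\tau$ is a smooth increasing bijection of $[0,T)$ onto $[0,\td T)$ with $d\td t=\psi^2\,dt$, and by \eqref{7.3} one has $|\td H|^2=\psi^{-2}|H|^2$ pointwise, so $|\td H|^2_{max}\big(\td t(t)\big)=\psi^{-2}(t)\,|H|^2_{max}(t)$. Therefore
$$\int_0^{\td T}|\td H|^2_{max}(\td t)\,d\td t=\int_0^T|\td H|^2_{max}\big(\td t(t)\big)\,\psi^2(t)\,dt=\int_0^T|H|^2_{max}(t)\,dt=+\infty.$$
Since Proposition \ref{prop7.7} (see \eqref{7.11}) gives $|\td H|^2_{max}\le C_{max}^2$ for all $\td t$, we get $C_{max}^2\,\td T\ge\int_0^{\td T}|\td H|^2_{max}\,d\td t=+\infty$, forcing $\td T=+\infty$.

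The one point requiring care is the first step: one must observe that the hypothesis $a=a(t)$ annihilates the $\nabla a$- and $\nabla^2a$-terms in the evolution of $|h|^2$ (without this simplification the right-hand side would also carry $|\nabla^2a|\,|h|$ and $|\nabla a|\,|\nabla h|$ contributions, whose control up to $T$ is not immediate), and that $|h|^2_{max}$ is only locally Lipschitz, so it must be differentiated in the weak sense of Hamilton's trick. The remaining ingredients — the ODE comparison, the preserved pinching, and the change of variables — are routine.
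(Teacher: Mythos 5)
Your argument is correct and follows essentially the same strategy as the paper. For the first assertion, the paper likewise invokes Hamilton's ODE-comparison argument (Theorem 15.3 in \cite{ha82}), only applied directly to the evolution inequality $\pp{}{t}|H|^2\le a\Delta|H|^2+2ac|H|^2_{\max}|H|^2$ rather than to $|h|^2$ as you do; the two variants are interchangeable once one notes the preserved pinching, as you did. For the second assertion, the paper passes through the quantity $\hbar$: using $\td\hbar\,d\td t=\hbar\,dt$, the lower bound $\hbar\ge\ul a|H|^2_{\min}$, and Lemma \ref{lem7.4} ($|H|_{\min}/|H|_{\max}\to1$) to get $\int_0^{\td T}\td\hbar\,d\td t=+\infty$, then the upper bound $\td\hbar\le\ol a C^2_{\max}$ forces $\td T=+\infty$. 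Your computation pushes the already-established $\int_0^T|H|^2_{\max}dt=+\infty$ through the change of variables for $|\td H|^2_{\max}$ and concludes directly from $|\td H|^2_{\max}\le C^2_{\max}$ (Proposition \ref{prop7.7}); this is a slightly shorter route, but it relies on the same Proposition \ref{prop7.7} (whose proof itself uses Lemma \ref{lem7.4}), so there is no net saving of hypotheses. Both your simplifying observations — that $a=a(t)$ kills the $\nabla a$ and $\nabla^2a$ terms, and that $|h|^2_{\max}$ must be differentiated via Hamilton's trick — are accurately flagged and handled.
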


\begin{proof}  Following the proof of Theorem 15.3 in \cite{ha82} with $R_{max}$ replaced by $|H|^2_{max}$ and using
$$
\pp{}{t}|H|^2\leq a\Delta |H|^2+2ac|H|^2_{max}|H|^2,
$$
we can obtain the first equality.

Furthermore, by definition, $$\hbar =\fr{a\int_M|H|^2dV_{g(t)}}{\int_MdV_{g(t)}}\geq\ul a|H|^2_{min}.$$ This together with \eqref{7.5}, \eqref{7.6} and
$$\lim\limits_{t\to T}\fr{|H|_{min}}{|H|_{max}}=1,\quad\int_0^T|H|^2_{max}dt=+\infty$$ implies that
$$
\int^{\td T}_0\td\hbar(\td t)d\td t=\int^T_0\hbar dt\geq\ul a\int^T_0|H|^2_{min}dt=+\infty.
$$
However, since $\td\hbar\leq\ol a|\td H|^2_{max}\leq\ol aC^2_{max}<+\infty$, we have $\td T=+\infty$.
\end{proof}

Since we are now considering submanifolds of higher codimension, we need to extend the interpolation inequality of Hamilton for tensors into the following more general form:

\begin{lem}[Interpolation inequality for vector-valued tensors]\label{lem7.13} Let $E\to M^m$ be a Riemannian vector bundle on a compact Riemannian manifold $(M^m,g)$ with a metric connection $\nabla^E$ on $E$, and $r\geq 1$ be an integer. Suppose that $\fr1p+\fr1q=\fr1r$ and $T$ is a $E$-valued tensor on $M^m$. Then (\cite{ha82}, Theorem 12.1)
\be\label{ha82thm12.1}
\left(\int_M|\nabla T|^{2r}dV_g\right)^{\fr1r}\leq (2r-2+m)\left(\int_M|\nabla^2 T|^pdV_g\right)^{\fr1p}\left(\int_M|T|^qdV_g\right)^{\fr1q}.
\ee
Furthermore, for any $n\geq 1$,
there exists some constant $C$ depending only on $m,n$ such that (\cite{ha82}, Corollaries 12.6 and 12.7)
\begin{align}
\int_M|\nabla^iT|^{\fr{2n}{i}}dV_g\leq& C\max_{M^m}|T|^{2\left(\fr ni-1\right)}\int_M|\nabla^nT|^2dV_g,\quad 1\leq i\leq n-1;\label{ha82cor12.6}\\
\int_M|\nabla^iT|^2dV_g\leq& C\left(\int_M|\nabla^nT|^2dV_g\right)^{\fr in}\left(\int_M|T|^2dV_g\right)^{1-\fr in},\quad 0\leq i\leq n.\label{ha82cor12.7}
\end{align}
\end{lem}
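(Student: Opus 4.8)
The plan is to run Hamilton's arguments (\cite{ha82}, \S12) essentially verbatim, the only new observation being that every step there uses exclusively facts available for a tensor with values in a Riemannian vector bundle $(E,\nabla^E)$ over the closed manifold $M^m$: the divergence theorem on $M^m$, the Leibniz rule for $\nabla^E$ (which is metric, so inner products of $E$-valued quantities differentiate by the product rule), the pointwise Cauchy--Schwarz bound $\big|\nabla_a|\nabla T|^2\big|=2\big|\langle\nabla_a\nabla T,\nabla T\rangle\big|\le 2|\nabla^2 T|\,|\nabla T|$, and Hölder's inequality. In none of \eqref{ha82thm12.1}, \eqref{ha82cor12.6}, \eqref{ha82cor12.7} does one commute covariant derivatives, so no curvature of $M^m$ or of $E$ enters and all constants depend only on $m$ (and on $r,n,i$).

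First I would prove \eqref{ha82thm12.1}. Since $M^m$ is compact without boundary, integrating by parts along one of the two factors $\nabla_aT$ in $|\nabla T|^{2r}=|\nabla T|^{2(r-1)}\langle\nabla_aT,\nabla_aT\rangle$ gives
\be
\int_M|\nabla T|^{2r}\,dV_g=-(r-1)\int_M|\nabla T|^{2(r-2)}\big(\nabla_a|\nabla T|^2\big)\langle T,\nabla_aT\rangle\,dV_g-\int_M|\nabla T|^{2(r-1)}\langle T,\Delta T\rangle\,dV_g .
\ee
Both integrands on the right are bounded pointwise by $C(m,r)\,|T|\,|\nabla^2T|\,|\nabla T|^{2(r-1)}$, with $C(m,r)=2r-2+m$ after the bookkeeping of \cite{ha82}. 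Hölder's inequality with the exponents $p$, $q$ and $s:=\tfrac{r}{r-1}$, admissible because $\tfrac1p+\tfrac1q+\tfrac1s=\tfrac1r+\tfrac{r-1}{r}=1$ and $2(r-1)s=2r$, then yields
\be
\int_M|\nabla T|^{2r}\,dV_g\le (2r-2+m)\Big(\int_M|\nabla^2T|^p\,dV_g\Big)^{1/p}\Big(\int_M|T|^q\,dV_g\Big)^{1/q}\Big(\int_M|\nabla T|^{2r}\,dV_g\Big)^{(r-1)/r}.
\ee
Because $T$ is smooth and $M^m$ compact, $\int_M|\nabla T|^{2r}\,dV_g<\infty$, so dividing through by $\big(\int_M|\nabla T|^{2r}\,dV_g\big)^{(r-1)/r}$ produces \eqref{ha82thm12.1}; the inequality is trivial when that quantity vanishes.

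Then I would deduce \eqref{ha82cor12.6} and \eqref{ha82cor12.7} from \eqref{ha82thm12.1} by the same iteration as in \cite{ha82}: applying \eqref{ha82thm12.1} to the section $\nabla^{j-1}T$ controls $\int_M|\nabla^jT|^{2r}$ by an interpolation of $\int_M|\nabla^{j+1}T|^{p}$ against $\int_M|\nabla^{j-1}T|^{q}$, and composing these bounds along $j$ — keeping $p=q=2$ throughout for \eqref{ha82cor12.7}, and letting the exponent of the lowest-order factor run off to the sup norm for \eqref{ha82cor12.6} — gives the asserted power-law interpolation with a constant depending only on $m,n$ (and $i$). In every application of this lemma in the present paper, $E$ is one of the bundles ${\mathcal H}^*\otimes\cdots\otimes{\mathcal N}$ restricted to a time slice $M\times\{t\}\cong M^m$, $\nabla^E$ is the induced metric connection there, and $T$ is a section such as $\nabla^kh$, so the hypotheses above are met for each fixed $t$. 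I do not expect a genuine obstacle here; the single point needing care is the legitimacy of the first integration by parts, which rests precisely on $\nabla^E$ being compatible with the bundle metric and on $M^m$ being closed, after which the computation is formally identical to Hamilton's scalar-coefficient case.
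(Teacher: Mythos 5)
Your proposal is correct and takes essentially the same route as the paper, whose entire proof is the one-line remark that Hamilton's Section~12 argument applies ``line by line'' to the present setting. You simply make explicit the reason this is legitimate---namely that the argument uses only integration by parts on the closed manifold $M^m$, the Leibniz rule for the metric connection $\nabla^E$, pointwise Cauchy--Schwarz, and H\"older, and never commutes covariant derivatives, so no curvature of $M^m$ or of $E$ enters and all constants depend only on $m$ and the indices---a point the paper leaves implicit.
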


\begin{proof} The argument of Section 12 in \cite{ha82} still applies here line by line.\end{proof}

\begin{prop}\label{prop7.12} For $l\geq 1$, there exist positive constants $C_l$ such that
$|\td\nabla^l\td h|^2\leq C_l$.
\end{prop}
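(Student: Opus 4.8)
The plan is to prove the estimate by induction on $l$, essentially copying the scheme of \cite{b}, Section 4.7; the base case $l=0$ is the uniform bound $|\td h|^2\le C$ already recorded in \eqref{7.12}. The feature that makes the argument go through in the present generality is that $a=a(t)$ depends only on time, so all spatial derivatives $\nabla^{r_0}a$ with $r_0\ge 1$ vanish, and Corollary \ref{evonhiderh}, transported to the normalized flow by Lemma \ref{lem7.6}, takes exactly the same shape as for the pure mean curvature flow,
\[
\pp{}{\td t}|\td\nabla^l\td h|^2=\td a\td\Delta|\td\nabla^l\td h|^2-2\td a|\td\nabla^{l+1}\td h|^2+\td a\sum_{r_1+r_2+r_3=l}\td\nabla^{r_1}\td h*\td\nabla^{r_2}\td h*\td\nabla^{r_3}\td h*\td\nabla^l\td h+\fr{\alpha_l}{m}\,\td\hbar\,|\td\nabla^l\td h|^2 ,
\]
where $\alpha_l$ is the degree of $|\nabla^l h|^2$, which is negative, so that the last term is non-positive and may simply be discarded; moreover $\ul a\le\td a\le\ol a$ by Lemma \ref{lem5.2}.

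For the inductive step, assume the estimate for $1,\dots,l-1$, so that $|\td h|^2,\dots,|\td\nabla^{l-1}\td h|^2$ are uniformly bounded. In the cubic sum at most one index $r_i$ can equal $l$, in which case the remaining two vanish and that term is $\le C\,|\td h|^2|\td\nabla^l\td h|^2\le C'|\td\nabla^l\td h|^2$; every other term involves only derivatives of order $\le l-1$ and, after one use of Young's inequality (Lemma \ref{young}), contributes at most $C(1+|\td\nabla^l\td h|^2)$. Dropping $-2\td a|\td\nabla^{l+1}\td h|^2\le 0$, this gives $\pp{}{\td t}|\td\nabla^l\td h|^2\le\td a\td\Delta|\td\nabla^l\td h|^2+C_1(1+|\td\nabla^l\td h|^2)$, while the evolution of $|\td\nabla^{l-1}\td h|^2$, all of whose lower order terms are already bounded, gives $\pp{}{\td t}|\td\nabla^{l-1}\td h|^2\le\td a\td\Delta|\td\nabla^{l-1}\td h|^2-2\ul a|\td\nabla^l\td h|^2+C_2$. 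Then I would set $f:=|\td\nabla^l\td h|^2+N|\td\nabla^{l-1}\td h|^2$ with $N$ so large that $2N\ul a\ge C_1+1$; adding the two inequalities and using once more $|\td\nabla^{l-1}\td h|^2\le C$ to replace $-|\td\nabla^l\td h|^2$ by $-f+NC$, one arrives at
\[
\pp{}{\td t}f\le\td a\td\Delta f-f+C_4 .
\]
The maximum principle (comparison with the ordinary differential inequality $\phi'\le-\phi+C_4$) then bounds $\max_M f$ by $\max\{\max_M f|_{\td t=0},\,C_4\}$ for all $\td t\in[0,+\infty)$, where $f|_{\td t=0}$ is finite since $\td F_0$ is a smooth immersion of the compact manifold $M$; hence $|\td\nabla^l\td h|^2\le f\le C_l$, which closes the induction.

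The point requiring care is that the normalized flow runs for infinite time, $\td T=+\infty$, so a crude Gronwall estimate of the form $|\td\nabla^l\td h|^2\le Ce^{C\td t}$ is of no use; it is essential to manufacture a genuine negative reaction term $-f$ out of the combination of $|\td\nabla^l\td h|^2$ with the already bounded $|\td\nabla^{l-1}\td h|^2$, and this step ultimately rests on the uniform bounds for $\td a$ and $|\td h|^2$ coming from Lemma \ref{lem5.2}, \eqref{7.12} and, behind the latter, Proposition \ref{prop7.7}. Apart from this bookkeeping, the proof is the line-by-line repetition of the argument in \cite{b}, Section 4.7.
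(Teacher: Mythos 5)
Your proof is correct, and the overall strategy---using the inductive hypothesis on lower-order derivatives, forming a penalized quantity $f=|\td\nabla^l\td h|^2+N|\td\nabla^{l-1}\td h|^2$ so that the negative gradient term of the lower-order equation absorbs the linear reaction term of the top-order one, and then closing with a scalar maximum principle against $\phi'\le-\phi+C_4$---is a clean and sound way to get a uniform bound on the infinite time interval.

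It differs from the paper's \emph{main} proof of Proposition~\ref{prop7.12}, which is an $L^2$ argument: the paper first integrates Corollary~\ref{evonhiderh} following Huisken's Theorem~7.3, transfers to the normalized flow, invokes Hamilton's interpolation inequality \eqref{ha82cor12.7} to turn the reaction integral into a gradient term with a Young-inequality remainder, deduces $\int_M|\td\nabla^l\td h|^2\,dV\le C_l$, upgrades this to $L^p$ via Lemma~\ref{lem7.13}, and finally passes to a pointwise bound by a Sobolev inequality plus iteration. That route needs more machinery (interpolation, Sobolev/Moser iteration) but is insensitive to the fine structure of the reaction term; yours is more elementary and entirely local.

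On the other hand, your argument is essentially the \emph{alternative} proof the authors record in the Remark immediately after Proposition~\ref{prop7.12}, with two small simplifications. There the auxiliary function is $f:=\frac{\td t}{\td t+1}|\td\nabla^l\td h|^2+\frac{C'_l}{\ul a}|\td\nabla^{l-1}\td h|^2$; the time-weight $\frac{\td t}{\td t+1}$ is there to avoid relying on the initial value of $|\td\nabla^l\td h|^2$, but as you observe it is unnecessary since $\td F_0$ is a smooth immersion of a compact manifold, so your constant-coefficient $f$ works and removes a term from the computation. Also, the paper concludes via a somewhat awkward case analysis on the set $U=\{C'_l|\td\nabla^l\td h|^2-c'_l>0\}$ and a discussion of whether $f$ attains its supremum; your direct ODE-comparison $\phi'\le-\phi+C_4$ (giving $\phi\le\max\{\phi(0),C_4\}$) is tighter and avoids that discussion entirely. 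So: correct, and a mild streamlining of the paper's own alternative argument rather than of its main $L^2$/Sobolev proof.
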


\begin{proof} From \eqref{evohiderh} and the assumption we have for $l\geq 1$
\be\label{7.19}
\pp{}{t}|\nabla^lh|^2=a\big(\Delta|\nabla^lh|^2-2|\nabla^{l+1}h|^2 +\sum_{r_1+r_2+r_3=l}\nabla^{r_1}h*\nabla^{r_2}h*\nabla^{r_3}h*\nabla^lh\big).
\ee
Then by the proof of Theorem 7.3 in \cite{hui84}, using \eqref{ha82cor12.7}, we obtain
\begin{align*}
&\pp{}{t}\int_M|\nabla^lh|^2dV_{g(t)}+2\ul a\int_M|\nabla^{l+1}h|^2dV_{g(t)}\\ \leq&\pp{}{t}\int_M|\nabla^lh|^2dV_{g(t)}+2\ol a\int_M|\nabla^{l+1}h|^2dV_{g(t)}\\
\leq& C(l,m)\cdot\max|h|^2\int_M|\nabla^lh|^2dV_{g(t)},\quad l\geq 1.
\end{align*}
Making use of \eqref{7.3} and \eqref{7.4}, it follows that
\begin{align}
&\pp{}{\td t}\int_M|\td \nabla^l\td h|^2dV_{\td g(\td t)}+2\ul a\int_M|\td \nabla^{l+1}\td h|^2dV_{\td g(\td t)}\nnm\\
\leq&C(l,m)\cdot\max|\td h|^2\int_M|\td \nabla^l\td h|^2dV_{\td g(\td t)}\nnm\\
\leq&\td C\int_M|\td \nabla^l\td h|^2dV_{\td g(\td t)}\nnm\\
=&-\td C\int_M|\td \nabla^l\td h|^2dV_{\td g(\td t)} +2\td C\int_M|\td \nabla^l\td h|^2dV_{\td g(\td t)},\quad l\geq 1,\label{7.17}
\end{align}
since $\td h$ is bounded from above.

On the other hand, putting $n=l+1$, $i=l$ and $T=\td h$ in the interpolation \eqref{ha82cor12.7} we get by the Young inequality
\begin{align}
\int_M|\td \nabla^l\td h|^2dV_{\td g(\td t)}\leq& C\left(\int_M|\td \nabla^{l+1}\td h|^2dV_{\td g(\td t)}\right)^{\fr l{l+1}}\left(\int_M|\td h|^2dV_{\td g(\td t)}\right)^{1-\fr l{l+1}}\nnm\\
\leq& \fr l{l+1}\veps^{1+\fr1l}C_1\int_M|\td \nabla^{l+1}\td h|^2dV_{\td g(\td t)}+\fr1{(l+1)\veps^{l+1}}C_2,\quad l\geq 1.\label{7.18}
\end{align}
Choose $\veps$ small enough such that $$\fr{l}{l+1}\veps^{1+\fr1l}\td CC_1\leq \ul a.$$
Then \eqref{7.17} and \eqref{7.18} give that
$$
\pp{}{\td t}\int_M|\td \nabla^l\td h|^2dV_{\td g(\td t)}\leq -\td C\int_M|\td \nabla^l\td h|^2dV_{\td g(\td t)}+C_3
$$
or equivalently
$$
\pp{}{\td t}\left(\int_M|\td \nabla^l\td h|^2dV_{\td g(\td t)}-\fr{C_3}{\td C}\right)\leq -\td C \left(\int_M|\td \nabla^l\td h|^2dV_{\td g(\td t)}-\fr{C_3}{\td C}\right).
$$
There are here two cases that need to be considered:

Case (1) There exists some $\td t_0>0$, such that $\int_M|\td \nabla^l\td h|^2dV_{\td g(\td t)}\leq \fr{C_3}{\td C}$ for all $\td t>\td t_0$. In this case we can easily use the compactness of $[0,\td t_0]$ to conclude that
\be\label{l}
\int_M|\td \nabla^l\td h|^2dV_{\td g(\td t)}\leq C_l\quad\text{ with some } C_l>0;
\ee

Case (2) There exists an sequence
$$0<\td t_1<\td t'_1<\td t_2<\td t'_2<\cdots<\td t_\iota<\td t'_\iota\leq+\infty$$
that may be either infinite or finite, such that (choosing $C_3$ large enough)
\begin{align*}
&\int_M|\td \nabla^l\td h|^2dV_{\td g(\td t)}\leq \fr{C_3}{\td C}\quad\text{ on }I_1:=[0,\td t_1]\cup [\td t'_1,\td t_2]\cup\cdots\cup [\td t'_{\iota-1},\td t_\iota];\\
&\int_M|\td \nabla^l\td h|^2dV_{\td g(\td t)}>\fr{C_3}{\td C}\quad\text{ in }I_2:=(\td t_1,\td t'_1)\cup (\td t_2,\td t'_2)\cup\cdots\cup (\td t_\iota,\td t'_\iota).
\end{align*}
Therefore, we have
$$
\int_M|\td \nabla^l\td h|^2dV_{\td g(\td t)}-\fr{C_3}{\td C}\leq Ce^{-\td C\td t}\text{ for }\td t\in I_2.
$$
It then easily follows that there exists some $C_l>0$ such that \eqref{l} holds.

Now from Lemma \ref{lem7.13} we know that, for large $p$, the $L^p$-norm of $\td \nabla^l\td h$ is also uniformly bounded. Then by a suitable Sobolev inequality and the standard iteration, we can show that $|\td \nabla^l\td h|^2$ is uniformly bounded, proving the proposition.
\end{proof}

\begin{rmk}\rm We are to provide, in this remark, a new proof for Proposition \ref{prop7.12} without using neither Sobolev inequality nor the standard iteration, the detail of which is as follows:

First of all, we note that $|\td h|^2\leq C_0$ for some $C_0>0$. So we suppose that $|\td\nabla^i\td h|^2\leq C_i$, $C_i>0$, for $0\leq i\leq l-1$. Then, by \eqref{7.19}, Lemma \ref{7.6} and Proposition \ref{prop7.7}, we can directly write the evolution formula of $|\td\nabla^l\td h|^2$ as:
\begin{align}
\pp{}{\td t}|\td \nabla^l\td h|^2=&a\big(\td \Delta|\td \nabla^l\td h|^2-2|\td \nabla^{l+1}\td h|^2 +\sum_{r_1+r_2+r_3=l}\td \nabla^{r_1}\td h*\td \nabla^{r_2}\td h*\td \nabla^{r_3}\td h*\td \nabla^l\td h\big)\nnm\\
&-\fr{2(l+1)}{m}\td\hbar|\td\nabla^l\td h|^2\nnm\\
\leq& a \td \Delta|\td \nabla^l\td h|^2-2\ul a|\td \nabla^{l+1}\td h|^2+C'_l(1+|\td\nabla^l\td h|^2)\label{7.19'}
\end{align}
for some large $C'_l>0$. Let
$$f:=\fr{\td t}{\td t+1}|\td\nabla^l\td h|^2+\fr{C'_l}{\ul a}|\td\nabla^{l-1}\td h|^2.$$
Then we compute
\begin{align*}
\pp{f}{\td t}=&\left(\fr1{\td t+1}\right)^2|\td\nabla^l\td h|^2+\fr{\td t}{\td t+1}\pp{}{\td t}|\td\nabla^l\td h|^2+\fr{C'_l}{\ul a}\pp{}{\td t}|\td\nabla^{l-1}\td h|^2\\
\leq&\left(\fr1{\td t+1}\right)^2|\td\nabla^l\td h|^2+\fr{\td t}{\td t+1}
\left(a \td \Delta|\td \nabla^l\td h|^2-2\ul a|\td \nabla^{l+1}\td h|^2+C'_l(1+|\td\nabla^l\td h|^2)\right)\\
&+\fr{C'_l}{\ul a}\left(a \td \Delta|\td \nabla^{l-1}\td h|^2-2\ul a|\td \nabla^l\td h|^2+C''_l\right)\\
\leq&a \td \Delta f+\left(\left(\fr1{\td t+1}\right)^2+\fr{\td t}{\td t+1}C'_l-2C'_l\right)|\td \nabla^l\td h|^2+C'_l\left(\fr{\td t}{\td t+1}+\fr{C''_l}{\ul a}\right)\\
<&a \td \Delta f+\left(\fr1{\td t+1}\left(\fr1{\td t+1}-C'_l\right)-C'_l\right)|\td \nabla^l\td h|^2+C'_l\left(1+\fr{C''_l}{\ul a}\right)
\end{align*}
So when $C'_l\geq 1$,
\be\label{7.23}
\pp{f}{\td t}<a \td \Delta f-C'_l|\td \nabla^l\td h|^2+c'_l
\ee
where
$c'_l=C'_l\left(1+\fr{C''_l}{\ul a}\right)$.
Define
$$
U=\{(p,\td t)\in M\times[0,+\infty);\ C'_l|\td \nabla^l\td h|^2-c'_l>0\}.
$$

Case (1) $U$ is a empty set. Then we have $C'_l|\td \nabla^l\td h|^2-c'_l\leq 0$ everywhere and thus
$$|\td \nabla^l\td h|^2\leq C_l:=\fr{c'_l}{C'_l};$$

Case (2) $U$ is not empty. We claim that $f$ can not attain its maximal value on the closure $\ol U$ of $U$. In fact, if $f\leq f(p_0,\td t_0)$ for some $(p_0,\td t_0)\in\ol U$, then it must be that
$$
0=\pp{f}{\td t}(p_0,\td t_0)< -(C'_l|\td \nabla^l\td h|^2(p_0,\td t_0)-c'_l)\leq 0
$$
since $\td\Delta f(p_0,\td t_0)\leq 0$. This is of course not possible.

Now for each $\td t\in [0,+\infty)$, let $p_{\td t}\in M$ be the maximal value point of $f(\cdot,\td t)$. Then there is a $(p_0,\td t_0)\in M\times [0,+\infty]$ such that
$$
\lim\limits_{\td t\to \td t_0}f(p_{\td t},\td t)=\sup_{M\times[0,+\infty)} f.
$$
It is not hard to show that $\lim\limits_{\td t\to \td t_0}\pp{f}{\td t}(p_{\td t},\td t)\geq 0$. It follows from \eqref{7.23} that
$$\lim\limits_{\td t\to \td t_0}|\td \nabla^l\td h|^2(p_{\td t},\td t)\leq\fr{c'_l}{C'_l}.$$
Consequently, for all $\td t\geq 1$,
\begin{align*}
\fr12|\td\nabla^l\td h|^2\leq& f\leq\lim\limits_{\td t\to \td t_0}f(p_{\td t},\td t)
=\lim\limits_{\td t\to \td t_0}\left(\fr{\td t}{\td t+1}|\td\nabla^l\td h|^2(p_{\td t},\td t)\right)+\fr{C'_l}{\ul a}\lim\limits_{\td t\to \td t_0}|\td\nabla^{l-1}\td h|^2(p_{\td t},\td t)\\
\leq& \fr{c'_l}{C'_l}+\fr{C'_lC_{l-1}}{\ul a}.
\end{align*}
So, in Case (2), we also have the estimate:
$$|\td\nabla^l\td h|^2\leq C_l:=2\left(\fr{c'_l}{C'_l}+\fr{C'_lC_{l-1}}{\ul a}\right).$$
\end{rmk}

\begin{prop}[cf. \cite{b}, Proposition 4.40] The normalized submanifold $\td F_{\td t}(M)$ converges uniformly to a smooth limit submanifold $\td F_\infty(M)$ as $\td t\to +\infty$.
\end{prop}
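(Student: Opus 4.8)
The plan is to promote the two facts already established --- the exponential decay $|\td\sch|^2\le C'e^{-\delta'\td t}$ of Proposition \ref{prop7.13} and the uniform higher-order bounds $|\td\nabla^l\td h|^2\le C_l$ of Proposition \ref{prop7.12} --- into the assertion that the family of metrics $\td g(\td t)$ and, after a recentering, the family of immersions $\td F(\cdot,\td t)$ are $C^\infty$-Cauchy as $\td t\to+\infty$. First I would upgrade Proposition \ref{prop7.13} to all covariant derivatives: since $\td H=\tr_{\td g}\td h$ and $|\td H|$ is bounded by \eqref{7.11}, Proposition \ref{prop7.12} also bounds $|\td\nabla^l\td\sch|^2$; as the volume is fixed, the interpolation inequality \eqref{ha82cor12.7} of Lemma \ref{lem7.13} applied to $T=\td\sch$ gives $\int_M|\td\nabla^l\td\sch|^2\,dV_{\td g}\le Ce^{-\delta'(1-l/n)\td t}$ for $0\le l\le n$, and feeding these back into the pointwise maximum-principle argument used in the Remark after Proposition \ref{prop7.12} --- now carrying an extra exponentially small forcing term --- yields $|\td\nabla^l\td\sch|^2\le C_l'e^{-\delta_l'\td t}$ pointwise for every $l\ge0$.

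Next I would prove that the metric converges. By Codazzi, $\fr{m-1}{m}\td\nabla^\bot_i\td H=\td\nabla^{\bot j}\td\sch_{ij}$, so $|\td\nabla(|\td H|^2)|\le\fr{2m}{m-1}|\td H|\,|\td\nabla\td\sch|\le Ce^{-\delta_1'\td t}$ by \eqref{7.11} and the previous step; since $(M,\td g(\td t))$ has diameter bounded independently of $\td t$ (because $\td K_{min}\ge\veps^2|\td H|^2\ge\veps^2C_{min}^2>0$ by \eqref{7.8} and \eqref{7.11}, together with Theorem \ref{bonnet}), integrating along minimizing geodesics gives $\max_M|\td H|^2-\min_M|\td H|^2\le Ce^{-\delta_1'\td t}$, hence $\bigl||\td H|^2-\td\hbar/\td a\bigr|\le Ce^{-\delta_1'\td t}$ (the $\td g$-average $\td\hbar/\td a$ lies between the extrema). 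Substituting $\td h_{ij}=\fr1m\td H\,\td g_{ij}+\td\sch_{ij}$ into $\partial_{\td t}\td g_{ij}=-2\td a\lagl\td H,\td h_{ij}\ragl+\fr2m\td\hbar\,\td g_{ij}$ shows that the rate of change of $\td g$ relative to $\td g$ itself, namely $\fr{2\td a}{m}\bigl(\td\hbar/\td a-|\td H|^2\bigr)$ plus a term bounded by $|\td H|$ times the $\td g$-operator norm of $\td\sch$, is $O(e^{-\delta_2'\td t})$ uniformly in directions; hence $\log\bigl(\td g(\td t)(v,v)/\td g(0)(v,v)\bigr)$ has integrable derivative, so $\td g(\td t)$ stays uniformly comparable to $\td g(0)$ and is Cauchy, and with the bounds of Proposition \ref{prop7.12} this gives $\td g(\td t)\to\td g_\infty$ in $C^\infty(M)$.

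Finally I would prove that the immersion converges. Because $a=a(t)$ depends only on time, the flow \eqref{4}, hence \eqref{7.7}, is translation invariant, so I would pass to the recentered immersion $\check F(\cdot,\td t):=\td F(\cdot,\td t)-\vol^{-1}\int_M\td F\,dV_{\td g}$, which has center of mass at the origin for all $\td t$. Using Beltrami's formula $\td H=\td\Delta\td F$, so that $\int_M\td H\,dV_{\td g}=0$, together with the oscillation bound of the previous step, one gets $\partial_{\td t}\check F=\td a\td H+\fr1m\td\hbar\,\check F+O(e^{-\delta_2'\td t})$; the crucial point is that the $\bbr^{m+p}$-valued field $Y:=\td H+\fr1m(\td\hbar/\td a)\check F$ on $M$ decays exponentially. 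Indeed, differentiating $Y$ and using Codazzi for its normal part and $\lagl\td H,\td h_{ij}\ragl=\fr1m|\td H|^2\td g_{ij}+\lagl\td H,\td\sch_{ij}\ragl$ for its tangential part gives $|\td\nabla Y|\le Ce^{-\delta_2'\td t}$ (here Step~1, the oscillation bound, and the boundedness of $|\check F|$ --- valid since $\check F$ is centered and the extrinsic diameter is bounded --- all enter), while $\int_M Y\,dV_{\td g}=0$; integrating along minimizing geodesics then forces $|Y|\le Ce^{-\delta_2'\td t}$ on $M$, whence $|\partial_{\td t}\check F|\le Ce^{-\delta_2'\td t}$, and likewise $|\td\nabla^l\partial_{\td t}\check F|\le C_le^{-\delta_2'\td t}$ via the formulas of the type \eqref{5.25} and Proposition \ref{prop7.12}. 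Thus $\check F(\cdot,\td t)$ is $C^\infty$-Cauchy and converges to a limit immersion $\check F_\infty$, which by Step~1 is totally umbilic ($\td\sch_\infty=0$) and closed, hence a round sphere $\bbs^m_{r_\infty}$ in an affine $(m+1)$-plane, with $r_\infty$ fixed by the preserved volume; in particular $M\cong S^m$. Since $\td F_{\td t}$ and $\check F_{\td t}$ differ only by the translation $\vol^{-1}\int_M\td F\,dV_{\td g}$, the normalized submanifold $\td F_{\td t}(M)$ converges, up to this translation dictated by the translation-invariance of the flow, to the round sphere $\td F_\infty(M):=\check F_\infty(M)$. I expect the last step --- the velocity estimate $|\partial_{\td t}\check F|\le Ce^{-\delta\td t}$, i.e.\ showing the immersion is exponentially close to a genuine round sphere and not merely intrinsically spherical --- to be the main obstacle, Steps~1 and~2 being essentially bootstrapping on top of Propositions \ref{prop7.12} and \ref{prop7.13}.
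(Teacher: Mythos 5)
Your proposal is essentially correct and follows the same route as the paper (which in turn follows Baker), but you supply the details that the paper's written proof leaves implicit. The paper's proof only records the boundedness $|\td\nabla^{l+2}\td F|^2_{\td g}\le\td C_l$ coming from Proposition \ref{prop7.12} and the formulas \eqref{nabla f1}--\eqref{nabla f2}, then says the conclusion ``follows easily as in the un-normalized case.'' As you correctly sense, that is not quite enough: in Theorem \ref{thm5.1} the velocity bound $|\partial_t\td\nabla^lF|\le C(l)$ sufficed because $T<\infty$, whereas here $\td T=+\infty$ and mere boundedness of $\partial_{\td t}\td\nabla^l\td F$ does not force a limit. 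Your three supplementary steps --- (i) bootstrapping Proposition \ref{prop7.13} via the interpolation inequality \eqref{ha82cor12.7} to exponential decay of $|\td\nabla^l\td\sch|^2$, (ii) the Codazzi/oscillation estimate on $|\td H|^2$ giving convergence of $\td g$, and (iii) the recentering by the center of mass plus the Beltrami identity $\td H=\td\Delta\td F$ to make the velocity decay exponentially and hence integrable --- are precisely the ingredients of Baker's Proposition 4.40 and its surrounding lemmas, which the paper cites but does not reproduce. Step (iii) is indeed genuinely necessary and is suppressed in the paper's statement: if the round point to which $F_t(M)$ contracts is not the origin, then $\td F_{\td t}=\psi(t)F_t$ drifts to infinity under the normalized flow $\partial_{\td t}\td F=\td a\td H+\fr1m\td\hbar\td F$, so convergence can only hold ``up to translation,'' exactly as you conclude. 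Minor points worth noting: the pointwise upgrade in your Step~1 is more standard via \eqref{ha82cor12.6} and Sobolev embedding than via the maximum principle of the Remark (the forcing terms there are not obviously exponentially small without more work); and in Step~3 you should verify that $\partial_{\td t}\bar F$ itself is exponentially small, which follows because $\int_M\td H\,dV=0$ and $\partial_{\td t}dV_{\td g}=(\td\hbar-\td a|\td H|^2)dV_{\td g}$ makes $\vol^{-1}\int_M\check F\,\partial_{\td t}dV_{\td g}$ controlled by the oscillation of $|\td H|^2$. With those small clarifications your argument is a complete proof, more self-contained than the paper's.
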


\begin{proof} By using \eqref{7.4}, \eqref{nabla f1} and \eqref{nabla f2}, we find
\begin{align}
\td\nabla^{l+2}\td F=&\psi\Big(\nabla^l h+\sum_{\iota=0}^{k-1}(*^{2(k-\iota)}_{2\iota+1}h)^iF_*(e_i) +\sum_{\iota=0}^{k-1}(*^{2(k-\iota)+1}_{2\iota}h)^\alpha e_\alpha\Big),\quad \text{if }l=2k;\\
\td\nabla^{l+2}\td F=&\psi\Big(\nabla^l h+\sum_{\iota=0}^k(*^{2(k-\iota+1)}_{2\iota}h)^iF_*(e_i) +\sum_{\iota=0}^{k-1}(*^{2(k-\iota)+1}_{2\iota+1}h)^\alpha e_\alpha\Big),\quad \text{if }l=2k+1
\end{align}
where $k\geq 0$. It then follows from \eqref{7.3} and Proposition \ref{prop7.12} that, for $k\geq 0$ and $l=2k$,
\begin{align*}
|\td\nabla^{l+2}\td F|^2_{\td g}=&\psi^{-2(l+1)}|\nabla^{l+2}F|^2_g\\
=&\psi^{-2(l+1)}\Big(|\nabla^lh|^2_g +\sum_{\iota,\iota'=0}^{k-1}\lagl(*^{2(k-\iota)}_{2\iota+1}h), (*^{2(k-\iota')}_{2\iota'+1}h)\ragl_g\\ &+\sum_{\iota,\iota'=0}^{k-1}\lagl(*^{2(k-\iota)+1}_{2\iota}h), (*^{2(k-\iota')+1}_{2\iota'}h)\ragl_g +\sum_{\iota=0}^{k-1}\lagl\nabla^lh, (*^{2(k-\iota)+1}_{2\iota}h)\ragl_g\Big)\\
=&|\td\nabla^l\td h|^2_{\td g} +\sum_{\iota,\iota'=0}^{k-1}\lagl(*^{2(k-\iota)}_{2\iota+1}\td h), (*^{2(k-\iota')}_{2\iota'+1}\td h)\ragl_{\td g}\\ &+\sum_{\iota,\iota'=0}^{k-1}\lagl(*^{2(k-\iota)+1}_{2\iota}\td h), (*^{2(k-\iota')+1}_{2\iota'}\td h)\ragl_{\td g} +\sum_{\iota=0}^{k-1}\lagl\td\nabla^l\td h, (*^{2(k-\iota)+1}_{2\iota}\td h)\ragl_{\td g}\\
\leq&|\td\nabla^l\td h|^2_{\td g} +\sum_{\iota,\iota'=0}^{k-1}|*^{2(k-\iota)}_{2\iota+1}\td h|_{\td g} |*^{2(k-\iota')}_{2\iota'+1}\td h|_{\td g}\\ &+\sum_{\iota,\iota'=0}^{k-1}|*^{2(k-\iota)+1}_{2\iota}\td h|_{\td g} |*^{2(k-\iota')+1}_{2\iota'}\td h|_{\td g} +\sum_{\iota=0}^{k-1}|\td\nabla^l\td h|_{\td g} |*^{2(k-\iota)+1}_{2\iota}\td h|_{\td g}\\
\leq& \td C_l.
\end{align*}
Similarly, we have $|\td\nabla^{l+2}\td F|^2_{\td g}\leq \td C_l$ for $l=2k+1$, $k\geq 0$.

Now as did in the un-normalized case, we can replace the metric $\td g$ with an equivalent $\td t$-independent metric in the above estimate for the higher derivatives of $\td F$, from which the proposition follows easily.
\end{proof}

Finally, by Proposition \ref{prop7.13}, the limit submanifold $\td F_\infty(M)$ must be a compact and totally umbilic one in $\bbr^{m+p}$. Then an application of the Codazzi theorem (\cite{b}, \cite{sp}) leads to

\begin{prop}[cf. \cite{b}, Proposition 4.41] The limit submanifold $\td F_\infty(M)$ is an $m$-sphere lying in some $(m+1)$-dimensional subspace of $\bbr^{m+p}$.
\end{prop}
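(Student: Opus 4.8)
As observed just above, $\td F_\infty(M)$ is compact and totally umbilic in $\bbr^{m+p}$, so the plan is to recover, for this limit configuration, the classical rigidity of compact totally umbilic Euclidean submanifolds. To lighten notation I drop the subscript $\infty$ and the tilde, write $h$ for the second fundamental form and $H$ for the mean curvature vector, and set $\nu:=\fr1m H$, so that total umbilicity reads $h(X,Y)=\lagl X,Y\ragl\,\nu$ for all tangent $X,Y$; recall also that $M$ is connected.

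First I would show that $|\nu|$ is a positive constant. Feeding $h(Y,Z)=\lagl Y,Z\ragl\nu$ into the Codazzi equation $(\nabla^\bot_X h)(Y,Z)=(\nabla^\bot_Y h)(X,Z)$ and using the metric compatibility of the induced connection $\nabla$ gives $\lagl Y,Z\ragl\nabla^\bot_X\nu=\lagl X,Z\ragl\nabla^\bot_Y\nu$; since $m\geq 2$, choosing at a given point $Y=Z$ a unit vector orthogonal to $X$ forces $\nabla^\bot_X\nu=0$, so $\nu$ is parallel in the normal bundle and $|\nu|\equiv c$ is constant. If $c=0$ then $\td F_\infty(M)$ is totally geodesic in $\bbr^{m+p}$, hence an affine $m$-plane, contradicting compactness; therefore $c>0$.

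Next I would locate $\td F_\infty(M)$ first on a round hypersphere and then inside an $(m+1)$-plane. From $\lagl A_\nu X,Y\ragl=\lagl h(X,Y),\nu\ragl=c^2\lagl X,Y\ragl$ one gets $A_\nu=c^2\,\id$, so the Weingarten formula yields $\ol\nabla_X\nu=-A_\nu X+\nabla^\bot_X\nu=-c^2F_*X$, whence $\ol\nabla_X(F+c^{-2}\nu)=F_*X-F_*X=0$; thus $F+c^{-2}\nu\equiv y_0$ is a fixed point of $\bbr^{m+p}$ and $|F-y_0|\equiv 1/c$, i.e. $\td F_\infty(M)$ lies on the sphere $\{\,y:\ |y-y_0|=1/c\,\}$ of dimension $m+p-1$. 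Now consider the rank-$(m+1)$ orthogonal subbundle $E:=F_*(TM)\oplus\bbr\nu$ of the trivial bundle $M\times\bbr^{m+p}$: by total umbilicity $\ol\nabla_X(F_*e_i)$ has only tangential and $\nu$-components, and $\ol\nabla_X\nu=-c^2F_*X$, so $E$ is parallel for the flat connection $\ol\nabla$; since $M$ is connected, $E$ coincides with a fixed $(m+1)$-dimensional linear subspace $V\subset\bbr^{m+p}$. Integrating $F_*(TM)\subset V$ along paths shows $\td F_\infty(M)$ lies in an affine translate of $V$, and since $y_0=F+c^{-2}\nu$ also lies in that translate, it is precisely $y_0+V$. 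Hence $\td F_\infty(M)\subset(y_0+V)\cap\{\,|y-y_0|=1/c\,\}$, which is an $m$-sphere of radius $1/c$; finally, $\td F_\infty$ is a local diffeomorphism of the compact connected manifold $M$ into this $m$-sphere, so its image is open, closed and nonempty, i.e. it is the whole $m$-sphere, lying in the $(m+1)$-dimensional affine subspace $y_0+V$.

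I expect no serious obstacle: each step is a short computation once Proposition \ref{prop7.13} is available. The only points requiring a little care are disposing of the degenerate totally geodesic case $c=0$ and justifying that an $\ol\nabla$-parallel subbundle over a connected base is a genuinely fixed subspace (equivalently, that the osculating $(m+1)$-plane of $\td F_\infty$ does not move along $M$). Alternatively, this last step can be replaced by quoting the classification of totally geodesic submanifolds of round spheres as in \cite{sp} or \cite{b}, but the direct parallel-subbundle argument avoids it.
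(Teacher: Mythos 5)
Your proof is correct and takes essentially the same route as the paper: both start from the total umbilicity supplied by Proposition~\ref{prop7.13} and then rely on the classical rigidity of compact totally umbilic submanifolds of Euclidean space. The paper simply cites this ``Codazzi theorem'' from \cite{b}, \cite{sp}, whereas you spell out the standard argument in full — Codazzi forces $\nu=H/m$ to be normally parallel with $|\nu|\equiv c>0$, the Weingarten formula then makes $F+c^{-2}\nu$ a fixed center, and the $\ol\nabla$-parallel rank-$(m+1)$ subbundle $F_*(TM)\oplus\bbr\nu$ confines the image to an affine $(m+1)$-plane intersected with the hypersphere — which is a faithful rendition of the cited result, so there is nothing to correct.
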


This last proposition completes the proof of Theorem \ref{thm7.1}.

\appendix
\section{More evolution formulas}

In this appendix, we are to derive some more formulas that, in our new situation, evolve a few important quantities introduced in \cite{a-b}. Note that these quantities and their involution formulas have played key roles in \cite{a-b} in proving the main convergence theorem (Theorem \ref{thmab}). We expect these computations will be of certain use in further study of the conformal flow in the Euclidean space. In particular, as is seen we have used those formulas derived in this appendix to alternatively give a direct proof of the convergence theorem (Theorem \ref{main2}) which has been proved already in Section \ref{s3} as the application of Theorem \ref{thm3.1} and the theorem of Andrews and Baker (Theorem \ref{thmab}).

As before, we denote $\sch:=h-\fr1mgH$. For any positive numbers $\td a$ and $c$, define as in \cite{a-b}
$Q=|h|^2+\td a-c|H|^2$. Then, by direct computation using \eqref{4}, we find
\begin{align}
\pp{Q}{t}-a\Delta Q=&-2a(|\nabla h|^2-c|\nabla H|^2)+2a(R_1-cR_2)\nnm\\
&+2(H^\alpha h^\alpha_{ij}a_{,kl}g^{ik}g^{jl}-c|H|^2\Delta a) +4(a_ih^\alpha_{kj}H^\alpha_{,l}g^{ik}g^{jl}-cH^\alpha H^\alpha_{,i}a_jg^{ij}).\label{qta}
\end{align}

When $c\leq\fr3{m+2}$ one has (see \cite{a-b}, Proposition 6) $|\nabla h|^2-c|\nabla H|^2\geq 0$. Consequently, we have

\begin{lem}\label{lem6.2} For $c\leq\fr3{m+2}$, it holds that
\begin{align}
&\pp{Q}{t}-a\Delta Q\nnm\\
\leq&2a(R_1-cR_2)+2(H^\alpha h^\alpha_{ij}a_{,ij}-c|H|^2\Delta a) +4(a_ih^\alpha_{ij}H^\alpha_{,j}-cH^\alpha H^\alpha_{,i}a_i).\label{qt}
\end{align}
\end{lem}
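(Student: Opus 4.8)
The plan is to obtain \eqref{qt} from the exact evolution identity \eqref{qta} by discarding a single manifestly nonpositive term. Recall that \eqref{qta} is itself assembled from the evolution equations for $|h|^2$ and $|H|^2$ derived in Section \ref{s4} in their fully contracted (non-$*$) form, namely $\pp{}{t}|h|^2 = a\Delta|h|^2 - 2a|\nabla h|^2 + 2aR_1 + 2g^{ik}g^{jl}H^\alpha h^\alpha_{ij}a_{,kl} + 4g^{ik}g^{jl}h^\alpha_{ij}a_kH^\alpha_{,l}$ together with the analogous formula for $|H|^2$ carrying $R_2$ in place of $R_1$, by forming $Q = |h|^2 + \td a - c|H|^2$. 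The only term on the right of \eqref{qta} not already of the shape appearing in \eqref{qt} is the gradient term $-2a\big(|\nabla h|^2 - c|\nabla H|^2\big)$.

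First I would invoke the gradient pinching estimate of Andrews and Baker: by Proposition 6 of \cite{a-b}, whenever $c\leq\fr3{m+2}$ one has $|\nabla h|^2 - c|\nabla H|^2\geq 0$ pointwise on $M$. Since $a$ is a positive function --- this is a standing hypothesis, and in any case $a_t$ is by definition a fixed family of positive smooth functions --- the term $-2a\big(|\nabla h|^2 - c|\nabla H|^2\big)$ is $\leq 0$ everywhere on $M\times[0,T)$. Hence it may be deleted from the right-hand side of \eqref{qta} without disturbing the inequality.

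It then remains only to rewrite the surviving lower-order terms. Working in a local orthonormal frame, or equivalently adopting the convention that repeated indices are contracted with the induced metric $g$, the metric factors become implicit: $g^{ik}g^{jl}H^\alpha h^\alpha_{ij}a_{,kl} = H^\alpha h^\alpha_{ij}a_{,ij}$, $g^{ik}g^{jl}a_ih^\alpha_{kj}H^\alpha_{,l} = a_ih^\alpha_{ij}H^\alpha_{,j}$, and $g^{ij}H^\alpha H^\alpha_{,i}a_j = H^\alpha H^\alpha_{,i}a_i$. Substituting these into the truncated form of \eqref{qta} produces exactly \eqref{qt}.

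As for the main obstacle: the inequality is essentially immediate once \eqref{qta} is in hand, so the only real work is the derivation of \eqref{qta} itself, and even there the content is bookkeeping --- checking that the reaction terms of $\pp{}{t}|h|^2$ and $\pp{}{t}|H|^2$ organize into $2a(R_1 - cR_2)$ and that all the first- and second-derivative-of-$a$ contributions collect as displayed. The one genuinely substantive ingredient, the nonnegativity of $|\nabla h|^2 - c|\nabla H|^2$ for $c\le\fr3{m+2}$, is not reproved here but imported from \cite{a-b}.
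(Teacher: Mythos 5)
Your proof is correct and matches the paper's own argument exactly: the paper likewise passes from \eqref{qta} to \eqref{qt} by invoking Proposition 6 of \cite{a-b} to discard the nonpositive term $-2a\big(|\nabla h|^2 - c|\nabla H|^2\big)$ (using $a>0$), with the remaining difference between the two displays being purely notational (implicit versus explicit contraction with $g$).
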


On the other hand, the authors of \cite{a-b} have also shown that
$$
\text{\em when $c\leq\fr4{3m}$, $R_1-cR_2$ is strictly negative at any point $(x,t)$ where $Q=0$.}\eqno (*)
$$
But by a careful examination of the argument for (*) we obtain

\begin{lem}\label{lem6.1} Let $c$ be as in \eqref{c}. Then,
at any point $(x,t)$ where $|h|^2\leq c|H|^2$, it holds that $R_1-cR_2\leq 0$.
\end{lem}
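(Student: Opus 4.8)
The plan is to obtain the statement from the pointwise algebraic estimate that already underlies the proof of $(*)$ in \cite{a-b}, observing that that estimate in fact delivers the non-strict bound on the whole closed pinching region $\{|h|^2\le c|H|^2\}$, and not merely strict negativity on $\{Q=0\}$.

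First I would dispose of the locus $H=0$: there the hypothesis forces $|h|^2\le c|H|^2=0$, hence $h=0$, hence $R_1=R_2=0$, and there is nothing to prove. So assume $|H|>0$ at the point in question. Passing to the trace-free second fundamental form $\sch=h-\tfrac1m gH$, one has $|h|^2=|\sch|^2+\tfrac1m|H|^2$, so the hypothesis reads $|\sch|^2\le\delta|H|^2$ with $\delta:=c-\tfrac1m$; from \eqref{c} one checks $0<\delta\le\tfrac1m$ (consistently with $|h|^2\ge\tfrac1m|H|^2$), which is what makes the relevant coefficients have the right sign in what follows. Using a normal frame adapted to $H$ and the fact that the trace parts of the shape operators are scalar --- hence contribute nothing to the normal curvature --- one expands $R_1$ and $R_2=|\lagl H,h\ragl|^2$ into the corresponding contributions of $\sch$ and $H$; the sharp algebraic inequalities of \cite{a-b} bound these contributions in terms of $|H|^2$, $|\sch|^2$ and $|\lagl H,\sch\ragl|^2$ only, and thereby reduce the desired inequality, under the constraint $|\sch|^2\le\delta|H|^2$, to a single polynomial inequality $\Phi_m\le 0$ on a compact planar region $\Sigma$ whose interior corresponds to the strict pinching $|h|^2<c|H|^2$ (which is where $(*)$ applies) and whose boundary face $|h|^2=c|H|^2$ is what we must reach.

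What Andrews and Baker verify in the course of proving $(*)$ is precisely $\Phi_m\le 0$ on $\Sigma$, strictly on the interior; the cases $2\le m\le 4$ (using $c\le\tfrac4{3m}$) and $m\ge 5$ (using $c\le\tfrac1{m-1}$) are treated separately there because they govern the signs of the coefficients of $\Phi_m$. Since $\Phi_m$ is a polynomial and $\Sigma$ is the closure of its interior, the inequality $\Phi_m\le 0$ persists on all of $\Sigma$, in particular on the face $|h|^2=c|H|^2$; unwinding the substitutions then gives $R_1-cR_2\le 0$ at every point with $|h|^2\le c|H|^2$.

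So the real task is only to walk through the chain of algebraic estimates in \cite{a-b}, check that each inequality used there is non-strict, and confirm that the single place where the strict pinching (i.e.\ the positivity of $\td a$) was invoked survives the limit $\td a\to 0$. The delicate point --- and the main obstacle --- is the sharp bound for the codimension terms $\sum_{\alpha,\beta}\lagl\sch^\alpha,\sch^\beta\ragl^2+|\strl{\bot}{R}|^2$ together with the accompanying case distinction in $m$, since it is exactly there that the pinching constants in \eqref{c} are optimal and the behaviour of $\Phi_m$ on the face $|h|^2=c|H|^2$ has to be checked directly rather than inferred from strict negativity in the interior.
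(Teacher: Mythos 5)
Your proposal is in essence the route the paper takes: one observes that the Andrews--Baker estimate underlying $(*)$ does not actually require strict pinching, so that with the auxiliary constant $\td a$ set to zero it yields the \emph{non-strict} inequality $R_1-cR_2\le 0$ on the whole closed region $\{|h|^2\le c|H|^2\}$, not merely at points where $Q=0$. The paper implements this directly by re-running the chain of estimates with $\td a=0$, replacing the one place where $\td a>0$ was used by the non-strict bound $|H|^2\ge\frac1{c-1/m}|\sch|^2$ and checking that nothing else in the argument changes --- rather than appealing, as you do, to density of the interior of $\Sigma$ in $\Sigma$. The two are equivalent here since the relevant expression is a continuous (indeed polynomial) function of the decomposed data, but the direct verification is slightly cleaner and makes the needed non-strictness visible line by line.

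There is one genuine gap in what you wrote: you assert that \eqref{c} forces $\delta:=c-\frac1m>0$. It does not. \eqref{c} is only an upper bound on $c$; the lower bound $c\ge\frac1m$ comes from the hypothesis at a point with $H\ne 0$, and equality $c=\frac1m$ is permitted. Since the Andrews--Baker step you intend to invoke reads $|H|^2\ge\frac1{c-1/m}|\sch|^2$, you are dividing by zero when $\delta=0$. The fix is exactly the paper's separate Case (2): if $c=\frac1m$, then the hypothesis $|h|^2\le c|H|^2$ together with $|h|^2\ge\frac1m|H|^2$ forces $|\sch|^2=0$, hence $h^\alpha_{ij}=\frac1m H^\alpha g_{ij}$ (totally umbilic at that point), and a direct computation gives $R_1-\frac1m R_2=0$. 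With that extra case, and with the computation you outline in the last paragraph actually carried out, your proof is complete.
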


\begin{proof} It suffices to assume $H\neq 0$. Note that we always have the inequality $|h|^2\geq\fr1m|H|^2$. So $c\geq\fr1m$ since $H\neq 0$.

(1) If $c>\fr1m$, then a slight modification of the argument by \cite{a-b} in proving the statement (*) will be enough: just letting $\td a=0$ and using
$$-(c-\fr1m)|H|^2\leq-|\sch|^2 \text{ or }|H|^2\geq \fr1{c-\fr1m}|\sch|^2$$
instead will derive the following non-strict inequality
\begin{align*}
&2|\sch_1|^4-2(c-\fr2m)|\sch_1|^2|H|^2-\fr2m(c-\fr1m)|H|^4\\
\leq&2|\sch_1|^4-2|\sch_1|^2(|\sch_1|^2+|\sch_-|^2) -\fr2m|\sch|^2|H|^2+\fr2m|\sch_1|^2|H|^2\\
=&-2|\sch_1|\sch_-|^2-\fr2m|\sch_-|^2|H|^2\\
\leq&-\fr{2c}{c-\fr1m}|\sch_1|^2|\sch_-|^2 -\fr2{m(c-\fr1m)}|\sch_-|^4,
\end{align*}
and all other part of the argument does not need any change.

(2) If $c=\fr1m$, then we have $|h|^2-\fr1m|H|^2\equiv 0$, that is, $F_t(M)\subset\bbr^n$ is totally umbilic. So $h^\alpha_{ij}\equiv\fr1mH^\alpha g_{ij}$ for any $i,j,\alpha$. It then follows that $R_1-\fr1mR_2\equiv 0$.
\end{proof}

Then, by using Lemma \ref{lem6.2}, the argument in \cite{a-b} (see the proof of Theorem 2 and Proposition 7 there) presently applies, giving the following corollary:

\begin{cor}\label{cor6.2} Suppose that $F:M\times[0,T)\to \bbr^{m+p}$ is a solution of \eqref{4} and that $a\equiv a(t)$ depends only on the parameter $t$. If the initial submanifold $F_0$ is such that $|h|^2\leq c|H|^2\neq 0$ with a constant $c$ satisfying \eqref{c}, then for any small $t_0\in(0,T)$, there exist some $c\leq\fr4{3m}$ and a constant $\td a_{t_0}>0$ such that $|h|^2+\td a_{t_0}\leq c|H|^2$ on $[t_0,T)$. In particular, $|h|^2<c|H|^2\neq 0$ for all $t\in(0,T)$.
\end{cor}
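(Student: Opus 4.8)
The plan is to feed the hypothesis $a\equiv a(t)$ into the evolution inequality \eqref{qt} and then run the maximum-principle scheme of \cite{a-b} essentially unchanged. First I would observe that any $c$ satisfying \eqref{c} obeys $c\leq\fr4{3m}$ (directly for $2\leq m\leq 4$, and because $\fr1{m-1}<\fr4{3m}$ for $m\geq 5$), and that $\fr4{3m}\leq\fr3{m+2}$ whenever $m\geq 2$; hence Lemma \ref{lem6.2} is applicable. Since $a$ depends on $t$ only, all the spatial derivatives $a_i$, $a_{,ij}$ and $\Delta a$ appearing in \eqref{qt} vanish, so \eqref{qt} collapses to
\be
\pp{Q}{t}-a\Delta Q\leq 2a(R_1-cR_2),\qquad Q:=|h|^2+\td a-c|H|^2,
\ee
for any constant $\td a\geq 0$. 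This is structurally the pinching inequality of \cite{a-b}, the only new feature being the positive factor $a$ -- bounded between positive constants $\ul a$ and $\ol a$ on $[0,T)$ by Lemma \ref{lem5.2} -- multiplying both $\Delta Q$ and the reaction term, which changes nothing in the arguments below.

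Next I would invoke Lemma \ref{lem6.1}: at every point where $|h|^2\leq c|H|^2$ one has $R_1-cR_2\leq 0$, so the right-hand side above is $\leq 0$ there. Taking $\td a=0$ and $Q_0:=|h|^2-c|H|^2$, the (scalar) maximum principle -- exactly the comparison used in the proof of Theorem 2 of \cite{a-b} -- shows that $Q_0\leq 0$ is preserved, i.e. $|h|^2\leq c|H|^2$ on all of $[0,T)$. Observe also that, once a sharper estimate $|h|^2+\td a\leq c|H|^2$ with $\td a>0$ has been obtained, it forces $|H|^2\geq\td a/c>0$ automatically, so the non-vanishing of $H$ need not be propagated separately.

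The remaining and, I expect, principal point is the strict improvement. Here I would appeal to the strong maximum principle exactly as in the proof of Proposition 7 of \cite{a-b}: the reaction term $R_1-cR_2$ is controlled by a bounded multiple of $Q_0$ in a neighborhood of the set $\{Q_0=0\}$ (this is where the full algebraic analysis of $R_1$, $R_2$ under $c\leq\fr4{3m}$ enters), so $Q_0$ cannot attain a zero interior maximum unless it vanishes identically, which happens only when $F_0$ is totally umbilic and $c=\fr1m$; that degenerate possibility is removed by replacing $c$ with a slightly larger constant still satisfying \eqref{c} (allowed since $\fr1m<\fr4{3m}$ and $\fr1m<\fr1{m-1}$). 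One thereby concludes that for each $t_0\in(0,T)$ one has $\max_M(|h|^2-c|H|^2)(t_0)<0$. Setting $\td a_{t_0}:=-\max_M(|h|^2-c|H|^2)(t_0)>0$, the function $Q=|h|^2+\td a_{t_0}-c|H|^2$ satisfies $Q\leq 0$ at $t_0$; since $\td a_{t_0}>0$, wherever $Q\leq 0$ we in fact have $|h|^2<c|H|^2$, Lemma \ref{lem6.1} applies, and $\partial_t Q-a\Delta Q\leq 0$ there. The maximum principle started at $t_0$ then gives $Q\leq 0$, that is $|h|^2+\td a_{t_0}\leq c|H|^2$, throughout $[t_0,T)$; letting $t_0\to 0$ yields $|h|^2<c|H|^2$ and $|H|^2>0$ for every $t\in(0,T)$, which is the last assertion. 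The only genuine obstacle is to check that the delicate strong-maximum-principle step of \cite{a-b} is unaffected by the coefficient $a=a(t)$; as $a$ is spatially constant and satisfies $0<\ul a\leq a\leq\ol a<\infty$, this amounts to carrying the factor $a$ through their estimates, and I do not anticipate any substantive difficulty.
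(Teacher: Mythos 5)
Your proposal tracks the paper's proof closely: the paper disposes of this corollary in one sentence, saying that with Lemma~\ref{lem6.2} in hand (and $a=a(t)$ killing the $\nabla a$, $\nabla^2 a$ terms, so \eqref{qt} collapses to $\partial_t Q\le a\Delta Q+2a(R_1-cR_2)$), the maximum-principle argument of Andrews--Baker (their Theorem~2 and Proposition~7) runs unchanged apart from the harmless positive factor $a\in[\ul a,\ol a]$. You identify exactly this reduction, bring in Lemma~\ref{lem6.1} for the sign of the reaction term, and note the nice shortcut that $|h|^2+\td a_{t_0}\le c|H|^2$ with $\td a_{t_0}>0$ already propagates $|H|^2>0$. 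So the route is the same as the paper's.

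Two remarks on the reconstruction. First, in step~6 the reference to controlling $R_1-cR_2$ by a bounded multiple of $Q_0$ is superfluous for the strong maximum principle: once $Q_0\le 0$ everywhere, Lemma~\ref{lem6.1} already gives $\partial_t Q_0-a\Delta Q_0\le 0$, and the strong maximum principle for supersolutions applies without any zeroth-order control. Second, and more substantively, the assertion that $Q_0\equiv 0$ forces ``$F_0$ totally umbilic and $c=1/m$'' is not established; a priori one only gets that $|h|^2\equiv c|H|^2$ at $t=0$ (and hence, from $|\nabla h|^2\ge\fr{3}{m+2}|\nabla H|^2>c|\nabla H|^2$, that $\nabla h\equiv 0$), and ruling out such parallel, non-umbilic configurations under $c\le\fr4{3m}$ requires the full algebraic equality analysis of \cite{a-b}, which you gesture at but do not supply. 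In the cases $m\ge 5$, or $m\le 4$ with $c<\fr{4}{3m}$, this whole issue disappears because one can simply raise $c$ to $c'\in(c,\fr4{3m}]$, obtain $\td a>0$ from compactness already at $t=0$, and use the weak maximum principle with strict negativity (statement~$(*)$) as Andrews--Baker do; it is only the borderline $m\le 4$, $c=\fr4{3m}$ with equality somewhere initially that genuinely requires the strong maximum principle step you sketch. Since the paper itself simply defers to \cite{a-b} for this delicate point, your gap is the same gap the paper leaves implicit.
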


Now suppose that $|H|^2\neq 0$ and denote
$$c_t:=\min_{p\in M}\{d:|h_t|^2\leq d|H_t|^2\},\quad t\in [0,T).$$
For a fixed positive number $\sigma<1$ small enough, define $f_\sigma:=|\sch|^2|H|^{2(\sigma-1)}$. Then we have
\begin{lem}[cf. \cite{a-b}, Proposition 10]\label{lem6.3} If $c_t$ meets \eqref{c} then,
for any $\sigma\in(0,\fr12]$ and $t\in[0,T)$, it holds that
\begin{align}
\pp{}{t}f_\sigma\leq&a\Delta f_\sigma-\fr{4a(\sigma-1)}{|H|}\lagl\nabla |H|,\nabla f_\sigma\ragl-2a\veps_\nabla|H|^{2(\sigma-1)}|\nabla H|^2+2a\sigma|h|^2f_\sigma\nnm\\
&+2|H|^{2(\sigma-1)}\left(H^\alpha h^\alpha_{ij}a_{,ij} +2a_ih^\alpha_{ij}H^\alpha_{,j}\right)\nnm\\
&-\fr2m|H|^{2(\sigma-2)}(|H|^2\Delta a+\lagl\nabla|H|^2,\nabla a\ragl)\left(|H|^2+m(1-\sigma)|\sch|^2\right).\label{evfsig}
\end{align}
where $\veps_\nabla\equiv\veps_\nabla(t): =\fr3{m+2}-c_t$.
\end{lem}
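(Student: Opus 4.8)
The plan is to follow the computation of \cite{a-b}, Proposition~10, verbatim on the terms that involve only $h$, $H$, $\nabla h$, $\nabla H$ and the induced metric, and to carry along the extra terms produced by the derivatives of $a=a\circ F$. Write $f_\sigma=\phi\psi$ with $\phi:=|\sch|^2=|h|^2-\tfrac1m|H|^2$ and $\psi:=|H|^{2(\sigma-1)}$. First I would assemble the evolution of $\phi$ from the two formulas for $\pp{}{t}|h|^2$ and $\pp{}{t}|H|^2$ derived in Section~\ref{s4}, using $|\nabla\sch|^2=|\nabla h|^2-\tfrac1m|\nabla H|^2$, to get
\[
\pp{\phi}{t}-a\Delta\phi=-2a|\nabla\sch|^2+2a\big(R_1-\tfrac1mR_2\big)+\mathcal{A}_\phi,
\]
where $\mathcal{A}_\phi$ collects the $a$-derivative terms, namely $2H^\alpha h^\alpha_{ij}a_{,ij}+4a_ih^\alpha_{ij}H^\alpha_{,j}-\tfrac2m(|H|^2\Delta a+\lagl\nabla a,\nabla|H|^2\ragl)$ (all contractions with respect to $g$). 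Next I differentiate the weight $\psi=|H|^{2(\sigma-1)}$ by the chain rule, feeding in the $\pp{}{t}|H|^2$ formula of Section~\ref{s4}; this produces a reaction part ($-2a|\nabla H|^2+2aR_2$ multiplied by $(\sigma-1)|H|^{2(\sigma-2)}$), an $a$-derivative part, and the purely metric contributions coming from $-a\phi(\sigma-1)(\sigma-2)|H|^{2(\sigma-3)}|\nabla|H|^2|^2$.

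Then I would combine everything through the Leibniz rule $\pp{}{t}(\phi\psi)-a\Delta(\phi\psi)=\psi(\pp{\phi}{t}-a\Delta\phi)+\phi(\pp{\psi}{t}-a\Delta\psi)-2a\lagl\nabla\phi,\nabla\psi\ragl$. The cross term is rewritten using $\nabla f_\sigma=\psi\nabla\phi+\phi\nabla\psi$ and $\psi^{-1}\nabla\psi=\nabla\log\psi=\tfrac{2(\sigma-1)}{|H|}\nabla|H|$, which turns $-2a\lagl\nabla\phi,\nabla\psi\ragl$ into exactly $-\tfrac{4a(\sigma-1)}{|H|}\lagl\nabla|H|,\nabla f_\sigma\ragl$ plus $8a(\sigma-1)^2\tfrac{f_\sigma}{|H|^2}|\nabla|H||^2$; collecting the latter with the $-4a(\sigma-1)(\sigma-2)\tfrac{f_\sigma}{|H|^2}|\nabla|H||^2$ coming from $\Delta\psi$ leaves the single term $4a\sigma(\sigma-1)\tfrac{f_\sigma}{|H|^2}|\nabla|H||^2$, which is non-positive because $0<\sigma<1$ and may simply be discarded. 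The remaining reaction terms carry no derivatives of $a$ and are handled exactly as in \cite{a-b}: Lemma~\ref{lem6.1} supplies $R_1-c_tR_2\le0$, and the refined gradient inequality behind \cite{a-b}, Proposition~6 (which involves only $h,H,\nabla h,\nabla H$ and $g$, hence is unchanged in our setting) converts the combination of $-2a|\nabla\sch|^2$ with the unfavorable term $2a(1-\sigma)|H|^{-2}f_\sigma|\nabla H|^2$ into $-2a\veps_\nabla|H|^{2(\sigma-1)}|\nabla H|^2+2a\sigma|h|^2f_\sigma$, with $\veps_\nabla=\tfrac3{m+2}-c_t$ appearing because $c_t$ governs the pinching used in that inequality and the pointwise bound $|\sch|^2\le(c_t-\tfrac1m)|H|^2$.

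Finally I would regroup the $a$-derivative contributions. The piece $\psi\mathcal{A}_\phi$ gives $2|H|^{2(\sigma-1)}(H^\alpha h^\alpha_{ij}a_{,ij}+2a_ih^\alpha_{ij}H^\alpha_{,j})$ together with $-\tfrac2m|H|^{2(\sigma-1)}(|H|^2\Delta a+\lagl\nabla a,\nabla|H|^2\ragl)$, while the $a$-derivative part of $\phi(\pp{\psi}{t}-a\Delta\psi)$, using $\phi|H|^{2(\sigma-2)}=f_\sigma/|H|^2$, gives $-2(1-\sigma)\tfrac{f_\sigma}{|H|^2}(|H|^2\Delta a+\lagl\nabla a,\nabla|H|^2\ragl)$; adding these two and factoring out $-\tfrac2m|H|^{2(\sigma-2)}(|H|^2\Delta a+\lagl\nabla a,\nabla|H|^2\ragl)$ leaves precisely the weight $|H|^2+m(1-\sigma)|\sch|^2$ appearing in the statement. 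I expect the main obstacle to be purely organizational: getting the cross-gradient term to land exactly as $\lagl\nabla|H|,\nabla f_\sigma\ragl$ and keeping the various powers of $|H|$ straight so the $a$-dependent terms coalesce into the stated compact form. The analytic core — the gradient estimate and the pinching bound $R_1-c_tR_2\le0$ — is already available from \cite{a-b} and Lemma~\ref{lem6.1} and needs no modification, since neither sees the function $a$.
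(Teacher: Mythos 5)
Your proposal is correct and follows essentially the same route as the paper's own proof: the paper expands $\pp{}{t}f_\sigma$ directly and compares with an explicit computation of $\Delta f_\sigma$, whereas you organize the same computation via the Leibniz rule for $f_\sigma=\phi\psi$, but both lead to the identical intermediate identity and then apply Lemma~\ref{lem6.1}, the gradient pinching $|\nabla h|^2-\fr{|h|^2}{|H|^2}|\nabla H|^2\geq(\fr3{m+2}-c_t)|\nabla H|^2$, and $R_2\leq|h|^2|H|^2$. One small bookkeeping slip: the term $2a\sigma|h|^2f_\sigma$ arises from bounding the reaction term $\fr{2\sigma R_2 f_\sigma}{|H|^2}$ via $R_2\leq|h|^2|H|^2$, not from the gradient inequality, but since you invoke both estimates this does not affect the validity of the argument.
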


{\rmk\rm Since $c_t\geq\fr1m$, if define
$$
\veps_0: =\begin{cases}\fr{5m-8}{3m(m+2)},&2\leq m\leq 4,\\
\fr{2m-5)}{(m-1)(m+2)},&m\geq 4,
\end{cases}
$$
then
$$0<\veps_0\leq\veps_\nabla\leq\fr{2(m-1)}{m(m+2)}<\fr13.$$

\begin{proof} Following the proof of Proposition 10 in \cite{a-b}, we find that
\begin{align*}
\pp{}{t}f_\sigma=&\left(\pp{}{t}|h|^2 -\fr1m\pp{}{t}|H|^2\right)|H|^{2(\sigma-1)} +(\sigma-1)|\sch|^2 |H|^{2(\sigma-2)}\pp{}{t}|H|^2\\
=&\Big(a\Delta|h|^2-2a|\nabla h|^2+2aR_1+2H^\alpha h^\alpha_{ij}a_{,ij}+4a_ih^\alpha_{ij}H^\alpha_{,j}\\
&-\fr1m a\left(\Delta|H|^2-2|\nabla H|^2+2R_2\right) -\fr2m\left(|H|^2\Delta a+\lagl\nabla|H|^2,\nabla a\ragl\right)\Big)|H|^{2(\sigma-1)}\\
&+(\sigma-1)|H|^{2(\sigma-2)}|\sch|^2 \Big(a\left(\Delta|H|^2-2|\nabla H|^2+2R_2\right)\\
&+2\left(|H|^2\Delta a+\lagl\nabla|H|^2,\nabla a\ragl\right)\Big)\\
=&a|H|^{2(\sigma-1)}\Big(\Delta|h|^2-2|\nabla h|^2+2R_1-\fr1m\left(\Delta|H|^2-2|\nabla H|^2+2R_2\right)\\
&+(\sigma-1)|H|^{-2}|\sch|^2 \left(\Delta|H|^2-2|\nabla H|^2+2R_2\right)\Big)\\
&+2|H|^{2(\sigma-1)}\Big(H^\alpha h^\alpha_{ij}a_{,ij}+2a_ih^\alpha_{ij}H^\alpha_{,j}\Big)\\
& -\fr2m|H|^{2(\sigma-2)}\left(|H|^2\Delta a+\lagl\nabla|H|^2,\nabla a\ragl\right)\left(|H|^2
-m(\sigma-1)|\sch|^2\right).
\end{align*}
But it is known that (\cite{a-b})
\begin{align*}
\Delta f_\sigma=&\left(\Delta|h|^2 -\fr1m\Delta|H|^2\right)|H|^{2(\sigma-1)}\\
&+|\sch|^2\Big((\sigma-1)(\sigma-2)|H|^{2(\sigma-3)}|\nabla|H|^2|^2 +(\sigma-1)|H|^{2(\sigma-2)}\Delta|H|^2\Big)\\
&+2(\sigma-1)|H|^{2(\sigma-2)}\lagl\nabla|h|^2-\fr1m\nabla|H|^2,\nabla|H|^2\ragl\\
=&\left(\Delta|h|^2 -\fr1m\Delta|H|^2\right)|H|^{2(\sigma-1)}\\ &+|\sch|^2\Big((\sigma-1)(\sigma-2)|H|^{2(\sigma-3)}|\nabla|H|^2|^2 +(\sigma-1)|H|^{2(\sigma-2)}\Delta|H|^2\Big)\\
&+\fr{2(\sigma-1)}{|H|^2}\lagl\nabla f_\sigma,\nabla|H|^2\ragl -\fr{8(\sigma-1)^2}{|H|^2}f_\sigma|\nabla|H||^2.
\end{align*}
Comparing the above two equalities then gives
\begin{align*}
\pp{}{t}f_\sigma=&a\Big(\Delta f_\sigma+\fr{2(1-\sigma)}{|H|^2}\lagl\nabla f_\sigma,\nabla |H|^2\ragl-2|H|^{2(\sigma-1)}\Big(|\nabla h|^2-\fr{|h|^2}{|H|^2}|\nabla H|^2\Big)+\fr{2\sigma R_2f_\sigma}{|H|^2}\\
&+\fr{4\sigma(\sigma-1)}{|H|^2}f_\sigma|\nabla|H||^2-\fr{2\sigma|\sch|^2}{|H|^{2(2-\sigma)}}|\nabla H|^2+2|H|^{2(\sigma-1)}\Big(R_1-\fr{|h|^2}{|H|^2}R_2\Big)\Big)\\
&+2|H|^{2(\sigma-1)}\left(H^\alpha h^\alpha_{ij}a_{,ij} +2a_ih^\alpha_{ij}H^\alpha_{,j}\right)\nnm\\
& -\fr2m|H|^{2(\sigma-2)}\left(|H|^2\Delta a+\lagl\nabla|H|^2,\nabla a\ragl\right)\left(|H|^2
-m(\sigma-1)|\sch|^2\right)\\
\leq&a\Big(\Delta f_\sigma+\fr{2(1-\sigma)}{|H|^2}\lagl\nabla f_\sigma,\nabla |H|^2\ragl-2|H|^{2(\sigma-1)}\Big(|\nabla h|^2-\fr{|h|^2}{|H|^2}|\nabla H|^2\Big)+\fr{2\sigma R_2f_\sigma}{|H|^2}\Big)\\
&+2|H|^{2(\sigma-1)}\left(H^\alpha h^\alpha_{ij}a_{,ij} +2a_ih^\alpha_{ij}H^\alpha_{,j}\right)\nnm\\
& -\fr2m|H|^{2(\sigma-2)}\left(|H|^2\Delta a+\lagl\nabla|H|^2,\nabla a\ragl\right)\left(|H|^2
-m(\sigma-1)|\sch|^2\right),
\end{align*}
where in the last inequality we have used the fact that $R_1-\fr{|h|^2}{|H|^2}R_2\leq 0$ (see Lemma \ref{lem6.1}).
This together with the estimate
$$
-2|H|^{2(\sigma-1)}\left(|\nabla h|^2-\fr{|h|^2}{|H|^2}|\nabla H|^2\right)\leq -2|H|^{2(\sigma-1)}\left(\fr{3}{m+2}-c_t\right)|\nabla H|^2
$$
and \eqref{4.13} proves \eqref{evfsig}.
\end{proof}

\begin{lem}[cf. \cite{a-b}, Proposition 13]\label{lem6.4} Let $c_t$ be as in Lemma \ref{lem6.3}. Then it holds that,
for any $p\geq\max\{2,\fr{8\ol a^2}{\ul a^2\veps_\nabla}+1\}$,
\begin{align*}
\pp{}{t}\int_Mf^p_\sigma dV\leq&-\fr12\ul a p(p-1)\int_Mf^{p-2}_\sigma|\nabla f_\sigma|^2dV -\ul ap\veps_\nabla\int_Mf^{p-1}_\sigma|H|^{2(\sigma-1)}|\nabla H|^2dV\\
&+2\ol ap\sigma\int_M|H|^2f^p_\sigma dV -p\int_Mf^{p-1}_\sigma\lagl\nabla a,\nabla f_\sigma\ragl dV\\
&+2p\int_Mf^{p-1}_\sigma|H|^{2(\sigma-1)}\Big(H^\alpha h^\alpha_{ij}a_{,ij}+2a_ih^\alpha_{ij}H^\alpha_{,j}\Big)dV\\
&-\fr{2p}m\int_M\fr{f^{p-1}_\sigma}{|H|^{2(2-\sigma)}} \left(|H|^2\Delta a+\lagl\nabla|H|^2,\nabla a\ragl\right)\left(|H|^2
-m(\sigma-1)|\sch|^2\right)dV.
\end{align*}
\end{lem}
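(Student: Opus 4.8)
The plan is to push through the $L^p$-estimate of Andrews--Baker (\cite{a-b}, Proposition~13) in the present weighted setting, taking as input the pointwise inequality \eqref{evfsig} of Lemma~\ref{lem6.3}. First I would multiply \eqref{evfsig} by $pf_\sigma^{p-1}\geq 0$ and integrate over $M$ against the time-dependent volume element $dV\equiv dV_{g(t)}$; since $\pp{}{t}dV=-a|H|^2dV$ by \eqref{evogij}, this gives $\pp{}{t}\int_Mf_\sigma^p\,dV=p\int_Mf_\sigma^{p-1}\pp{f_\sigma}{t}\,dV-\int_Ma|H|^2f_\sigma^p\,dV$, and the last integral, being nonpositive, is simply dropped (so the conclusion has no $-\int a|H|^2f_\sigma^p$ term). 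The four terms of \eqref{evfsig} that carry $a_{,ij}$, $a_iH^\alpha_{,j}$, $\Delta a$ and $\lagl\nabla|H|^2,\nabla a\ragl$ become, after this multiplication and integration, precisely the four corresponding terms in the statement; the only things to notice there are the harmless identities $|H|^2+m(1-\sigma)|\sch|^2=|H|^2-m(\sigma-1)|\sch|^2$ and $|H|^{2(\sigma-2)}=|H|^{-2(2-\sigma)}$.

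Three further terms of \eqref{evfsig} require estimation. (i) Integration by parts on the closed manifold $M$ gives $p\int_Maf_\sigma^{p-1}\Delta f_\sigma\,dV=-p\int_Mf_\sigma^{p-1}\lagl\nabla a,\nabla f_\sigma\ragl\,dV-p(p-1)\int_Maf_\sigma^{p-2}|\nabla f_\sigma|^2\,dV$; the first piece is already one of the terms of the conclusion, and, using $a\geq\ul a$, I split the second into $-\fr12\ul ap(p-1)\int f_\sigma^{p-2}|\nabla f_\sigma|^2\,dV$ plus an equal reserved copy. (ii) The reaction term $2a\sigma|h|^2f_\sigma$ contributes $2p\sigma\int af_\sigma^p|h|^2\,dV\leq 2\ol ap\sigma\int|H|^2f_\sigma^p\,dV$, where we used $a\leq\ol a$ and $|h|^2\leq c_t|H|^2\leq|H|^2$ (valid since $c_t$ satisfies \eqref{c}, so $c_t\leq 1$). (iii) The negative term $-2a\veps_\nabla|H|^{2(\sigma-1)}|\nabla H|^2$ contributes at most $-2\ul ap\veps_\nabla\int f_\sigma^{p-1}|H|^{2(\sigma-1)}|\nabla H|^2\,dV$ (as $\veps_\nabla>0$ and it depends only on $t$), of which I keep one half, $-\ul ap\veps_\nabla\int f_\sigma^{p-1}|H|^{2(\sigma-1)}|\nabla H|^2\,dV$, and reserve the other.

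The crux is the cross term $-\fr{4a(\sigma-1)}{|H|}\lagl\nabla|H|,\nabla f_\sigma\ragl$. Since $\sigma<1$, its integral against $pf_\sigma^{p-1}$ is at most $4p(1-\sigma)\int af_\sigma^{p-1}|H|^{-1}|\nabla|H||\,|\nabla f_\sigma|\,dV$; applying Young's inequality with a parameter $\eta>0$, pairing $f_\sigma^{(p-2)/2}|\nabla f_\sigma|$ with $f_\sigma^{p/2}|H|^{-1}|\nabla|H||$ and using $1-\sigma\leq1$, $a\leq\ol a$, this is bounded by $2\ol ap\eta\int f_\sigma^{p-2}|\nabla f_\sigma|^2\,dV+\fr{2\ol ap}{\eta}\int f_\sigma^{p}|H|^{-2}|\nabla|H||^2\,dV$. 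In the second integral I use the Kato inequality $|\nabla|H||^2\leq|\nabla H|^2$ and the identity $f_\sigma^p|H|^{-2}=\fr{|\sch|^2}{|H|^2}\,f_\sigma^{p-1}|H|^{2(\sigma-1)}$ together with $\fr{|\sch|^2}{|H|^2}=\fr{|h|^2}{|H|^2}-\fr1m\leq c_t-\fr1m<1$, converting it into $\fr{2\ol ap}{\eta}\int f_\sigma^{p-1}|H|^{2(\sigma-1)}|\nabla H|^2\,dV$. These two pieces are absorbed into the reserved halves of the $-\fr12\ul ap(p-1)\int f_\sigma^{p-2}|\nabla f_\sigma|^2$ and the $-\ul ap\veps_\nabla\int f_\sigma^{p-1}|H|^{2(\sigma-1)}|\nabla H|^2$ terms, which requires $2\ol ap\eta\leq\fr12\ul ap(p-1)$ and $\fr{2\ol ap}{\eta}\leq\ul ap\veps_\nabla$ simultaneously; choosing $\eta=2\ol a/(\ul a\veps_\nabla)$ this holds iff $p-1\geq 8\ol a^2/(\ul a^2\veps_\nabla)$, which is exactly the hypothesis. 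Collecting the surviving terms yields the stated inequality.

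The step I expect to be the main obstacle is this final balancing: one must keep the Young constants sharp, in particular retaining only the crude bound $|\sch|^2/|H|^2<1$ (the pinching is already encoded in $\veps_\nabla=\fr3{m+2}-c_t>0$) and carefully tracking the two occurrences of $\ol a/\ul a$—one from replacing $a$ by $\ol a$ in the cross term, one from the reserved Laplacian mass being weighted by $\ul a$—so that the stated lower bound on $p$, and not a larger one, comes out. A minor issue is the differentiability of $f_\sigma^{p-1}$ at the zeros of $f_\sigma$, but $p\geq 2$ makes $f_\sigma^{p-1}$ a $C^1$ function and legitimizes the integration by parts exactly as in \cite{a-b}; the rest is routine bookkeeping.
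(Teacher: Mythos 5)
Your argument is correct and follows essentially the same route as the paper: differentiate under the integral, discard the nonpositive $-\int a|H|^2f_\sigma^p\,dV$ term, integrate the Laplacian by parts, split off a reserved half of the resulting good gradient term and of the $-|\nabla H|^2$ term, handle the cross term via Young's inequality, and balance constants with the lower bound on $p$. The only cosmetic difference is that the paper quotes the cross-term estimate directly from \cite{a-b} and chooses $\rho=4\ol a/(\ul a(p-1))$ (your $1/\eta$), whereas you re-derive it explicitly via Kato's inequality and the pinching bound $|\sch|^2/|H|^2<1$ and pick $\eta=2\ol a/(\ul a\veps_\nabla)$; both choices satisfy the same pair of constraints and yield the stated threshold on $p$.
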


\begin{proof} Differentiating under the integral sign and substituting in the evolution equations for $f_\sigma$ and for the measure $dV$ gives
\begin{align*}
\pp{}{t}\int_Mf^p_\sigma dV=&\int_M\big(pf^{p-1}_\sigma\pp{f_\sigma}{t}-a|H|^2f^p_\sigma\big)dV\\
\leq&\int_Mpf^{p-1}_\sigma\pp{f_\sigma}{t}dV\\
\leq&p\int_Maf^{p-1}_\sigma\Delta f_\sigma dV+4(1-\sigma)p\int_Ma\fr{f^{p-1}_\sigma}{|H|}\lagl\nabla|H|,\nabla f_\sigma\ragl dV\\
&-2p\veps_\nabla\int_Ma\fr{f^{p-1}_\sigma}{|H|^{2(1-\sigma)}}|\nabla H|^2dV+2p\sigma\int_Ma|h|^2f^{p}_\sigma dV\\
&+2p\int_Mf^{p-1}|H|^{2(\sigma-1)}\Big(H^\alpha h^\alpha_{ij}a_{,ij}+2a_ih^\alpha_{ij}H^\alpha_{,j}\Big)dV\\
&-\fr{2p}m\int_M\fr{f^{p-1}_\sigma}{|H|^{2(2-\sigma)}} \big(|H|^2\Delta a+\lagl\nabla|H|^2,\nabla a\ragl\big)\big(|H|^2
-m(\sigma-1)|\sch|^2\big)dV.
\end{align*}
By using
$$p\int_Maf^{p-1}_\sigma\Delta f_\sigma dV\leq-p\int_Mf^{p-1}_\sigma\lagl\nabla a,\nabla f_\sigma\ragl dV-\ul a p(p-1)\int_Mf^{p-2}_\sigma|\nabla f_\sigma|^2dV$$
and the inequality (see \cite{a-b})\begin{align*}&4(1-\sigma)p\int_M\fr{f^{p-1}_\sigma}{|H|}\lagl\nabla|H|,\nabla f_\sigma\ragl dV\\
&\qquad\qquad\leq\fr{2p}{\rho}\int_Mf^{p-2}_\sigma|\nabla f_\sigma|^2dV+2p\rho\int_M\fr{f^{p-1}_\sigma}{|H|^{2(1-\sigma)}}|\nabla H|^2dV\end{align*}
in which the number $\rho>0$ is to be determined, we obtain that
\begin{align*}
&\pp{}{t}\int_Mf^p_\sigma dV\\
\leq&-p(p-1)(\ul a-\fr{2\ol a}{\rho(p-1)})\int_Mf^{p-2}_\sigma|\nabla f_\sigma|^2dV\\
&-2p\veps_\nabla(\ul a-\fr{\ol a\rho}{\veps_\nabla})\int_Mf^{p-1}_\sigma|H|^{2(\sigma-1)}|\nabla H|^2dV\\
&+2\ol ap\sigma\int_M|H|^2f^p_\sigma dV -p\int_Mf^{p-1}_\sigma\lagl\nabla a,\nabla f_\sigma\ragl dV\\
&+2p\int_Mf^{p-1}_\sigma|H|^{2(\sigma-1)}\Big(H^\alpha h^\alpha_{ij}a_{,ij}+2a_ih^\alpha_{ij}H^\alpha_{,j}\Big)dV\\
&-\fr{2p}m\int_M\fr{f^{p-1}_\sigma}{|H|^{2(2-\sigma)}} \left(|H|^2\Delta a+\lagl\nabla|H|^2,\nabla a\ragl\right)\left(|H|^2
-m(\sigma-1)|\sch|^2\right)dV,
\end{align*}
where we have used the fact that $|h|^2\leq c_t|H|^2\leq |H|^2$.
Choose
$$\rho=\fr{4\ol a}{\ul a(p-1)}\quad\text{and}\quad p\geq\max\{2,\fr{8\ol a^2}{\ul a^2\veps_\nabla}+1\},$$
then one has $\ul a-\fr{2\ol a}{\rho(p-1)}\geq \fr12\ul a$ and $\ul a-\fr{\ol a\rho}{\veps_\nabla}\geq \fr12\ul a$. This gives the desired inequality.
\end{proof}

\begin{prop}\label{prop6.5} Under the condition of Lemma \ref{lem6.4}, there exists constant a constant $\sigma_+$ and $c_9$ depending only on $M$ such that, if $\sigma\leq \sigma_+$, then for all $t\in[0,T)$ we have the estimate
\begin{align*}
\pp{}{t}\int_Mf^p_\sigma dV\leq&-p\int_Mf^{p-1}_\sigma\lagl\nabla a,\nabla f_\sigma\ragl dV
+2p\int_Mf^{p-1}_\sigma|H|^{2(\sigma-1)}\Big(H^\alpha h^\alpha_{ij}a_{,ij}+2a_ih^\alpha_{ij}H^\alpha_{,j}\Big)dV\\
&-\fr{2p}m\int_M\fr{f^{p-1}_\sigma}{|H|^{2(2-\sigma)}} \left(|H|^2\Delta a+\lagl\nabla|H|^2,\nabla a\ragl\right)\left(|H|^2
-m(\sigma-1)|\sch|^2\right)dV.
\end{align*}
\end{prop}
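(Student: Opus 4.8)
The plan is to obtain the asserted estimate directly from Lemma~\ref{lem6.4}. Comparing the two right-hand sides, the only terms of Lemma~\ref{lem6.4} that are \emph{not} already present in the estimate to be proved are
\[
-\fr12\ul a\,p(p-1)\int_M f_\sigma^{p-2}|\nabla f_\sigma|^2\,dV
-\ul a\,p\veps_\nabla\int_M f_\sigma^{p-1}|H|^{2(\sigma-1)}|\nabla H|^2\,dV
+2\ol a\,p\sigma\int_M |H|^2 f_\sigma^p\,dV ,
\]
so it is enough to show that, for $\sigma$ sufficiently small, the positive (last) summand is dominated by the two negative (gradient) summands, i.e.
\[
2\ol a\,\sigma\int_M |H|^2 f_\sigma^p\,dV
\le \fr12\ul a\,(p-1)\int_M f_\sigma^{p-2}|\nabla f_\sigma|^2\,dV
+\ul a\,\veps_\nabla\int_M f_\sigma^{p-1}|H|^{2(\sigma-1)}|\nabla H|^2\,dV
\]
for every admissible $p$. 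Note that the constants $\ul a,\ol a$ here are harmless: they are fixed (Lemma~\ref{lem5.2}) and enter only as weights.

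The main tool will be the Michael--Simon Sobolev inequality for submanifolds of $\bbr^n$, used in the same manner as by Andrews and Baker (\cite{a-b}; cf. Huisken \cite{hui84}, Lemma~5.4, for the hypersurface prototype). Applying it to an appropriate power of $f_\sigma$, then invoking Hölder's inequality and the pinching bound $|h|^2\le c_t|H|^2\le|H|^2$, one is led to a Poincaré-type inequality of the form
\[
\int_M |H|^2 f_\sigma^p\,dV \le c_9\Big((p-1)\int_M f_\sigma^{p-2}|\nabla f_\sigma|^2\,dV+\int_M f_\sigma^{p-1}|H|^{2(\sigma-1)}|\nabla H|^2\,dV\Big),
\]
valid for every $p\ge\max\{2,\,\fr{8\ol a^2}{\ul a^2\veps_0}+1\}$, where $c_9$ depends only on $M$ and $\veps_0>0$ is the lower bound for $\veps_\nabla$ recorded in the remark following Lemma~\ref{lem6.3}. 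The essential point is that the first term on the right carries the weight $(p-1)$, which is precisely what makes $c_9$ independent of $p$ — a necessity, since in the later parts of Section~\ref{s7} one must let $p\to\infty$. Granting this inequality, the required absorption holds as soon as $2\ol a\,c_9\,\sigma\le\min\{\fr12\ul a,\ \ul a\veps_0\}$; hence it suffices to set $\sigma_+:=\min\{\fr12,\ \fr{\ul a\veps_0}{2\ol a c_9}\}$, the first entry ensuring the standing requirement $\sigma\le\fr12$ of Lemma~\ref{lem6.3}. For $\sigma\le\sigma_+$ the three terms above sum to a nonpositive quantity, and Lemma~\ref{lem6.4} then yields the claimed inequality for all $t\in[0,T)$; here the pinching $c_t<c$ with $c$ as in \eqref{c} is in force on the whole interval by Corollary~\ref{cor6.2} (together with $c_0\le c$ at $t=0$), so Lemmas~\ref{lem6.3} and~\ref{lem6.4} apply throughout.

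I expect the genuine obstacle to be the Sobolev-absorption step itself, namely verifying that the computation of \cite{a-b} — written there for the mean curvature flow, where $a\equiv 1$ — survives in the present setting. The point to check is that $a=a(F,t)$ enters Lemma~\ref{lem6.4} only through the coefficients $\ul a$ and $\ol a$ of, respectively, the gradient terms and the $|H|^2 f_\sigma^p$ term, so that the Michael--Simon/Hölder manipulation leading to the displayed Poincaré inequality is purely geometric and is not affected at all; the only bookkeeping is to propagate the ratio $\ul a/\ol a$ and the uniform lower bound $\veps_0$ into the final threshold $\sigma_+$. A secondary, routine matter is to ensure that the admissible range of $p$ in Lemma~\ref{lem6.4} can be taken $t$-independent by replacing $\veps_\nabla$ with $\veps_0$ throughout, which is legitimate since $\veps_\nabla\ge\veps_0>0$.
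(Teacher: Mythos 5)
The overall strategy you adopt — absorb the positive term $2\ol a p\sigma\int_M|H|^2 f_\sigma^p\,dV$ into the two negative gradient terms of Lemma~\ref{lem6.4} via a Poincar\'e-type inequality, then shrink $\sigma$ — is the same as the paper's. However, the specific inequality on which you hinge the argument is not correct as stated, and this is the genuine gap. The inequality you claim,
\[
\int_M |H|^2 f_\sigma^p\,dV \le c_9\Big((p-1)\int_M f_\sigma^{p-2}|\nabla f_\sigma|^2\,dV+\int_M f_\sigma^{p-1}|H|^{2(\sigma-1)}|\nabla H|^2\,dV\Big),
\]
with $c_9$ \emph{independent of $p$} and no $p$-factor on the $|\nabla H|^2$ term, cannot be obtained from Michael--Simon (nor from Proposition~12 of \cite{a-b}, which is what the paper actually uses). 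The available inequality reads, for any $\eta>0$,
\[
\int_M|H|^2 f_\sigma^p\,dV\le\fr{p\eta+4}{\veps}\int_M\fr{f_\sigma^{p-1}}{|H|^{2(1-\sigma)}}|\nabla H|^2\,dV+\fr{p-1}{\veps\eta}\int_M f_\sigma^{p-2}|\nabla f_\sigma|^2\,dV.
\]
To make the coefficient of the first integral $O(1)$ you need $\eta\lesssim 1/p$, which forces the coefficient of the second integral to grow like $p(p-1)$, not $(p-1)$; there is no choice of $\eta$ that delivers your claimed weight structure once $p$ is large. Consequently your absorption, which requires $2\ol a c_9\sigma(p-1)\le\fr12\ul a(p-1)$ and $2\ol a c_9\sigma\le\ul a\veps_\nabla$ with $c_9$ fixed, is not available.

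The paper resolves this by keeping $\eta$ free: it first picks $\eta=\fr{4\ol a\sigma}{\ul a\veps}$ so that the $|\nabla f_\sigma|^2$ term is exactly absorbed, then writes the remaining coefficient as a quadratic $f(\sigma)=8p(\ol a/\ul a)^2\sigma^2+8(\ol a/\ul a)\veps\sigma-\veps^2\veps_0$ and takes $\sigma\le\sigma_+$ to be its positive root. The resulting $\sigma_+$ is \emph{not} independent of $p$ — it decays roughly like $p^{-1/2}$ — and this is harmless: in the Andrews--Baker/Huisken scheme (and in Section~\ref{s7}), $\sigma$ and $p$ are sent to $0$ and $\infty$ coupled, and a $p^{-1/2}$-threshold is exactly what that iteration accommodates. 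Your premise that ``$c_9$ independent of $p$ is a necessity'' is therefore misplaced; it is that unfounded premise which led you to claim an unobtainable form of the Poincar\'e inequality. Everything else in your proposal — the identification of the three extra terms, the reduction to absorption, the role of $\veps_0$ as a $t$-uniform lower bound for $\veps_\nabla$, and the use of Corollary~\ref{cor6.2} to propagate the pinching — is in order.
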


\begin{proof}
Since by \cite{a-b} (Proposition 12), for small $\sigma$, any $\eta>0$ and some $\veps>0$,
$$
\int_Mf^p_\sigma|H|^2dV\leq\fr{(p\eta+4)}{\veps} \int_M\fr{f^{p-1}_\sigma}{|H|^{2(1-\sigma)}}|\nabla H|^2dV +\fr{p-1}{\veps\eta}\int_Mf^{p-2}_\sigma|\nabla f_\sigma|^2dV,
$$
we find using Lemma \ref{lem6.4} that
\begin{align*}
\pp{}{t}\int_Mf^p_\sigma dV\leq&-\fr{p(p-1)\ul a}2\left(1-\fr{4\ol a}{\ul a}\cdot\fr{\sigma}{\veps\eta}\right) \int_Mf^{p-2}_\sigma|\nabla f_\sigma|^2dV \\ &-p\ul a\veps_\nabla\left(1-\fr{2\ol a}{\ul a}\cdot\fr{\sigma(p\eta+4)}{\veps\veps_\nabla}\right)\int_M\fr{f^{p-1}_\sigma} {|H|^{2(1-\sigma)}}|\nabla H|^2dV\\
&-p\int_Mf^{p-1}_\sigma\lagl\nabla a,\nabla f_\sigma\ragl dV
+2p\int_Mf^{p-1}_\sigma|H|^{2(\sigma-1)}\Big(H^\alpha h^\alpha_{ij}a_{,ij}+2a_ih^\alpha_{ij}H^\alpha_{,j}\Big)dV\\
&-\fr{2p}m\int_M\fr{f^{p-1}_\sigma}{|H|^{2(2-\sigma)}} \left(|H|^2\Delta a+\lagl\nabla|H|^2,\nabla a\ragl\right)\left(|H|^2
-m(\sigma-1)|\sch|^2\right)dV.
\end{align*}

\newcommand{\bara}{\fr{\ol a}{\ul a}}
Put $\eta=4\bara\cdot\fr\sigma\veps$. Then
\be\label{6.3}
1-\fr{4\ol a}{\ul a}\cdot\fr{\sigma}{\veps\eta}=0
\ee
and
\begin{align*}
\fr{2\ol a}{\ul a}\cdot\fr{\sigma(p\eta+4)}{\veps\veps_\nabla}-1
=&\fr1{\veps\veps_\nabla}\left(2\bara\cdot\sigma \left(4p\bara\cdot\fr\sigma\veps+4\right) -\veps\veps_\nabla\right)\\
=&\fr1{\veps^2\veps_\nabla}\left(8p\left(\bara\right)^2 \sigma^2+8\bara\veps\sigma-\veps^2\veps_\nabla\right)\\
\leq&\fr1{\veps^2\veps_\nabla} \left(8p\left(\bara\right)^2\sigma^2+8\bara\veps\sigma -\veps^2\veps_0\right).
\end{align*}
Denote
$$
f(\sigma):=8p\left(\bara\right)^2\sigma^2+8\bara\veps\sigma -\veps^2\veps_0.
$$
Then as a quadratic polynomial of $\sigma$, $f(\sigma)$ has two roots as follows:
\begin{align*}
&\sigma_-=-\fr\veps{2\bara p}\left(1+\sqrt{1+\fr12p\veps_0}\right)<0,\\
&\sigma_+=\fr\veps{2\bara p}\left(-1+\sqrt{1+\fr12p\veps_0}\right)=\fr{\ul a\veps\veps_\nabla}{4\ol a(1+\sqrt{1+\fr12p\veps_0})}>0.
\end{align*}
Consequently, for any $0<\sigma\leq\sigma_+$, we have $f(\sigma)\leq 0$. This with \eqref{6.3} completes the proof of Proposition \ref{prop6.5}.
\end{proof}

\begin{lem} If $|h|^2\leq c|H|^2$, then there is a constant $A$ depending on $F_0$ and the bound of $a$ such that
\begin{align}
\pp{}{t}|\nabla H|^2\leq& a(\Delta|\nabla H|^2-2|\nabla^2H|^2)+A|H|^2|\nabla h|^2\nnm\\
&+\nabla a*(\nabla h*\nabla^2h+h^3*\nabla h)+\nabla^2a*(\nabla h)^2+\nabla^3a*h*\nabla h.\label{ptnabH2}
\end{align}
\end{lem}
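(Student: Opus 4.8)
The plan is to deduce the evolution of $|\nabla H|^2$ from the already-established evolution of $\nabla h$, namely the case $l=1$ of \eqref{evohiderh}. The key point is that $H=\tr_g h$, so $\nabla_k H=g^{ij}\nabla_k h_{ij}$ is a metric trace of $\nabla h$, and --- crucially --- the time-direction connection $\nabla_t$ of Section \ref{s4} satisfies $\nabla_t g=0$: by \eqref{gamat} the Christoffel corrections $\Gamma^k_{it}$ built into $\nabla_t$ cancel exactly the term $\partial_t g_{ij}$ of \eqref{evogij}, and the bundle metric on $\mathcal N$ is $\nabla_t$-parallel as well since $\nabla^\bot$ is a metric connection. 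Hence $\nabla_t$ commutes with the trace, and $\nabla_t\nabla_k H=g^{ij}\nabla_t\nabla_k h_{ij}$ is obtained by contracting two of the $h$-slots of the $l=1$ form of \eqref{evohiderh}; since $g^{ij}$ also commutes with the rough Laplacian and with the schematic $*$-contractions, this gives
\be\label{ntnh}
\nabla_t\nabla H=a\Delta\nabla H+\nabla a*\nabla^2 h+\nabla^2 a*\nabla h+\nabla^3 a*h+\nabla a*h^3+a*h^2*\nabla h .
\ee

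I would then differentiate $|\nabla H|^2$ in time. As $|\nabla H|^2$ is a scalar, $\pp{}{t}|\nabla H|^2=\nabla_t|\nabla H|^2$, so the Leibniz rule together with $\nabla_t g=0$ and the compatibility of $\nabla^\bot$ with the metric on $\mathcal N$ yields $\pp{}{t}|\nabla H|^2=2g^{kl}\langle\nabla_t\nabla_k H,\nabla_l H\rangle$. Substituting \eqref{ntnh}: the Laplacian term gives $2ag^{kl}\langle\Delta\nabla_k H,\nabla_l H\rangle=a\bigl(\Delta|\nabla H|^2-2|\nabla^2 H|^2\bigr)$ by the ordinary product rule for $\Delta$ applied to the scalar $|\nabla H|^2$; pairing $\nabla H$ (itself a trace of $\nabla h$) with the four $a$-derivative terms of \eqref{ntnh} produces exactly $\nabla a*(\nabla h*\nabla^2 h+h^3*\nabla h)+\nabla^2 a*(\nabla h)^2+\nabla^3 a*h*\nabla h$; and the remaining term yields $a*h^2*(\nabla h)^2$.

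It then remains only to absorb $a*h^2*(\nabla h)^2$ into $A|H|^2|\nabla h|^2$, and this is where the pinching hypothesis enters: by Lemma \ref{lem5.2} the image of the flow lies in a fixed bounded region where $a\le\ol a$ for a constant $\ol a$ depending on $F_0$ and the bound of $a$, whence $|a*h^2*(\nabla h)^2|\le C(m)\,\ol a\,|h|^2|\nabla h|^2\le C(m)\,\ol a\,c\,|H|^2|\nabla h|^2$ by $|h|^2\le c|H|^2$; setting $A:=C(m)\,\ol a\,c$ proves \eqref{ptnabH2}. The computation is essentially routine once \eqref{evohiderh} is available; the only step demanding a little care is the bookkeeping of the schematic $*$-terms --- one must check that, after tracing and pairing with $\nabla H$, every term coming from the $l=1$ case of \eqref{evohiderh} either falls into one of the $a$-derivative groups on the right of \eqref{ptnabH2} or into the single cubic term $a*h^2*(\nabla h)^2$ that the pinching controls, and in particular that nothing survives carrying three factors of $\nabla h$ or a power of $h$ that the pinching cannot tame. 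This verification is immediate from the explicit form of \eqref{evohiderh} for $l=1$ and is the (mild) main obstacle.
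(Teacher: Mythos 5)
Your proof is correct, and it takes a genuinely different (and somewhat more economical) route than the paper's. The paper proves \eqref{ptnabH2} ``from scratch'': it starts from the evolution of $H$ (not $\nabla h$), computes $\Delta|\nabla H|^2$ via the Bochner identity, commutes $\Delta$ past $\nabla_k$ using the spatial Ricci identity to pick up $\ric$ and $R^\bot$ terms, and separately invokes the time-like Ricci identity $\nabla_t\nabla_k H=\nabla_k\nabla_t H+R^\bot(\partial_t,e_k)H$ together with \eqref{rtiab2}. You instead trace the already-established $l=1$ case of \eqref{evohiderh} over the two $h$-indices, which is legitimate precisely because $\nabla_tg=0$ (verified directly from \eqref{gamat} and \eqref{evogij}, and stated by the paper as the connection being metric-compatible) so that $\nabla_t\nabla_kH=g^{ij}\nabla_t\nabla_kh_{ij}$ and the trace also passes through the rough Laplacian. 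This gives $\nabla_t\nabla H=a\Delta\nabla H+\nabla a*\nabla^2h+\nabla^2a*\nabla h+\nabla^3a*h+\nabla a*h^3+a*h^2*\nabla h$ immediately; the Ricci-identity corrections are already packaged inside the induction that produced \eqref{evohiderh}, so you never have to commute $\Delta$ and $\nabla_k$ by hand. The remaining steps --- the Bochner identity $\Delta|\nabla H|^2=2\lagl\Delta\nabla H,\nabla H\ragl+2|\nabla^2H|^2$ and the absorption $a*h^2*(\nabla h)^2\le C(m)\,\ol a\,c\,|H|^2|\nabla h|^2$ using Lemma \ref{lem5.2} and the pinching --- are the same in spirit as the paper's but you make the dependence of $A$ on $F_0$, $c$ and $\ol a$ explicit, which the paper leaves tacit. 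What the paper's approach buys is self-containedness (it does not presuppose \eqref{evohiderh}); what yours buys is that the commutator bookkeeping is done once, in one place, and then reused.
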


\begin{proof}
This lemma can be proved by a direct computation as follows:  By using the Ricci identity we find
\begin{align*}
&\Delta|\nabla H|^2=2\lagl\Delta\nabla_kH,\nabla_kH\ragl+2|\nabla^2H|^2\\
=&2\lagl\nabla_k\Delta H+\nabla_p(R^\bot(e_p,e_k)H)+R^\bot(e_p,e_k)\nabla_p H+\ric(e_p,e_k)\nabla_pH,\nabla_kH\ragl+2|\nabla^2H|^2\\
=&2\lagl\nabla_k\Delta H,\nabla_kH\ragl+2|\nabla^2H|^2+h^2*(\nabla h)^2.
\end{align*}
Thus
$$
2\lagl\nabla_k\Delta H,\nabla_kH\ragl=\Delta|\nabla H|^2-2|\nabla^2H|^2+h^2*(\nabla h)^2
$$
which implies that
\begin{align*}
2\lagl\nabla_k\nabla_tH,\nabla_k H\ragl=&2a\lagl\nabla_k\Delta H,\nabla_k H\ragl+a*h^2*(\nabla h)^2\\
&+\nabla a*(\nabla h*\nabla^2h+h^3*\nabla h)+\nabla^2a*(\nabla h)^2+\nabla^3a*h*\nabla h\\
=&a(\Delta|\nabla H|^2-2|\nabla^2H|^2)+a*h^2*(\nabla h)^2\\
&+\nabla a*(\nabla h*\nabla^2 h+h^3*\nabla h)+\nabla^2a*(\nabla h)^2+\nabla^3a*h*\nabla h.
\end{align*}
Therefore
\begin{align}
\pp{}{t}|\nabla H|^2=&2\lagl\nabla_t\nabla_k H,\nabla_k H\ragl=2\lagl \nabla_k\nabla_tH+R^\bot(\partial_t,e_k)H,\nabla_kH\ragl)\nnm\\
=&2\lagl\nabla_k\nabla_t H,\nabla_kH\ragl+2\lagl R^\bot(\partial t,e_k)H,\nabla_kH\ragl\nnm\\
=&a(\Delta|\nabla H|^2-2|\nabla^2H|^2+h^2*(\nabla h)^2)\nnm\\
&+\nabla a*(\nabla h*\nabla^2h+h^3*\nabla h)+\nabla^2a*(\nabla h)^2+\nabla^3a*h*\nabla h.\label{ptnabH}
\end{align}
Since $a$ is bounded, we have $a*h^2*(\nabla h)^2\leq A|H|^2|\nabla h|^2$ for some constant $A$. Inserting this into \eqref{ptnabH} gives \eqref{ptnabH2}
\end{proof}

\begin{lem} If $|h|^2\leq c|H|^2$ and there exist some positive constants $C_0$ and $\sigma<1$ such that $|\sch|^2\leq C_0|H|^{2(1-\sigma)}$, then for any constants $N_1, N_2>0$, it holds that
\begin{align}
&\pp{}{t}|H|^4\geq a(\Delta |H|^4-12|H|^2|\nabla H|^2+\fr4m|H|^6)+\nabla a*h^3*\nabla h+\nabla^2 a*h^4,\label{6.7}\\
&\pp{}{t}((N_1+N_2|H|^2)|\sch|^2)\leq a\Big(\Delta((N_1+N_2|H|^2)|\sch|^2)-\fr{4(m-1)}{3m}(N_2-1)|H|^2|\nabla h|^2\nnm\\
&\quad-\fr{4(m-1)}{3m}(N_1-c_1(N_2))|\nabla h|^2+c_2(N_1,N_2)|\sch|^2(|H|^4+1)\Big)\nnm\\
&\quad+\nabla a*(h^3*\nabla h+h*\nabla h)+\nabla^2a*(h^4+h^2)\label{6.8}
\end{align}
for some constants $c_1$ and $c_2$.
\end{lem}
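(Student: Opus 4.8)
The plan is to derive both inequalities by differentiating the indicated products, substituting the single-quantity evolution equations already established in Section~\ref{s4}, and then reducing the resulting reaction and gradient terms by means of Cauchy--Schwarz, Huisken's gradient inequality, and Young's inequality (Lemma~\ref{young}). As in \cite{a-b}, every term that carries a covariant derivative of the composed function $a\circ F$ is collected into the schematic expressions $\nabla a*\cdots$ and $\nabla^2a*\cdots$ on the right-hand sides; the only genuine difference from \cite{a-b} is that the ambient factor $a$ together with its first two derivatives must be carried through the computation.

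For \eqref{6.7}, I would write $|H|^4=(|H|^2)^2$, so that $\pp{}{t}|H|^4=2|H|^2\pp{}{t}|H|^2$ and $a\Delta|H|^4=2a|H|^2\Delta|H|^2+2a|\nabla|H|^2|^2$. Subtracting and inserting $\pp{}{t}|H|^2-a\Delta|H|^2=-2a|\nabla H|^2+2aR_2+2|H|^2\Delta a+2\lagl\nabla a,\nabla|H|^2\ragl$ from Section~\ref{s4}, I would then use the Cauchy--Schwarz bound $|\nabla|H|^2|^2\leq 4|H|^2|\nabla H|^2$ (which combines with $-4a|H|^2|\nabla H|^2$ to give the $-12a|H|^2|\nabla H|^2$ term) and the algebraic inequality $R_2=|\lagl H,h\ragl|^2\geq\fr1m|H|^4$, coming from Cauchy--Schwarz applied to the symmetric $2$-tensor $H^\alpha h^\alpha_{ij}$ whose $g$-trace is $|H|^2$; this yields $+\fr4m a|H|^6$ after discarding the nonnegative excess $4a|H|^2(R_2-\fr1m|H|^4)$. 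The remaining terms $4|H|^4\Delta a+4|H|^2\lagl\nabla a,\nabla|H|^2\ragl$ are precisely $\nabla^2a*h^4+\nabla a*h^3*\nabla h$ in the schematic notation.

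For \eqref{6.8}, I would set $P:=N_1|\sch|^2+N_2|H|^2|\sch|^2$ and differentiate each summand, using $(\pp{}{t}-a\Delta)|\sch|^2=-2a|\nabla\sch|^2+2a(R_1-\fr1m R_2)+(\nabla^2a*h^2+\nabla a*h*\nabla h)$ (from $|\sch|^2=|h|^2-\fr1m|H|^2$ and the identity $|\nabla h|^2-\fr1m|\nabla H|^2=|\nabla\sch|^2$) and $(\pp{}{t}-a\Delta)|H|^2=-2a|\nabla H|^2+2aR_2+(\nabla^2a*h^2+\nabla a*h*\nabla h)$, together with the Leibniz cross term $-2aN_2\lagl\nabla|H|^2,\nabla|\sch|^2\ragl$ arising from $(\pp{}{t}-a\Delta)(|H|^2|\sch|^2)$. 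The reaction terms would be handled by a pointwise bound of the form $R_1-\fr1m R_2\leq C_m|\sch|^2(|\sch|^2+|H|^2)$ (valid for $m\geq2$ and coming from the algebraic estimates underlying Lemma~\ref{lem6.1}, such as $R_1\leq\fr32|h|^4$) together with $R_2\leq|H|^2|h|^2$; combined with the hypothesis $|h|^2\leq c|H|^2$ and with $|H|^2\leq\fr12(1+|H|^4)$, the contributions $2a(N_1+N_2|H|^2)(R_1-\fr1m R_2)$ and $2aN_2|\sch|^2R_2$ are all bounded by $a\,c_2(N_1,N_2)|\sch|^2(|H|^4+1)$, while $-2aN_2|\sch|^2|\nabla H|^2\leq0$ is retained. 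For the gradient terms, Huisken's inequality $|\nabla h|^2\geq\fr3{m+2}|\nabla H|^2$ (available because \eqref{c} forces $c<\fr3{m+2}$) gives $|\nabla\sch|^2\geq\fr{2(m-1)}{3m}|\nabla h|^2$, so that $-2a(N_1+N_2|H|^2)|\nabla\sch|^2\leq-\fr{4a(m-1)}{3m}(N_1+N_2|H|^2)|\nabla h|^2$; the cross term, estimated by Cauchy--Schwarz as $|2aN_2\lagl\nabla|H|^2,\nabla|\sch|^2\ragl|\leq 8aN_2|H||\sch||\nabla H||\nabla\sch|$, is then split by Young's inequality, its bulk being absorbed into $-2a(N_1+N_2|H|^2)|\nabla\sch|^2$ and into $-2aN_2|\sch|^2|\nabla H|^2$ (which is what produces the shifts $N_2\mapsto N_2-1$ and $N_1\mapsto N_1-c_1(N_2)$), and its residue being absorbed into $a\,c_2|\sch|^2(|H|^4+1)$ with the help of $|\sch|^2\leq C_0|H|^{2(1-\sigma)}$ and a further use of Young's inequality. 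The surviving derivative-of-$a$ contributions assemble into $\nabla a*(h^3*\nabla h+h*\nabla h)+\nabla^2a*(h^4+h^2)$.

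The main obstacle is the disposal of the mixed gradient term $-2aN_2\lagl\nabla|H|^2,\nabla|\sch|^2\ragl$: a naive Peter--Paul split slightly overshoots what the two available negative gradient terms can absorb, so one must balance the Young parameters carefully against \emph{both} $-2a(N_1+N_2|H|^2)|\nabla\sch|^2$ and $-2aN_2|\sch|^2|\nabla H|^2$ --- which is exactly why the coefficients $N_1,N_2$ degrade to $N_1-c_1(N_2)$ and $N_2-1$ --- and then route the leftover into the $|\sch|^2(|H|^4+1)$ reaction term by exploiting both hypotheses $|h|^2\leq c|H|^2$ and $|\sch|^2\leq C_0|H|^{2(1-\sigma)}$. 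The remaining parts --- differentiating the products, substituting the formulas of Section~\ref{s4}, and bundling the derivatives of $a$ into the $*$-notation --- are routine but somewhat lengthy.
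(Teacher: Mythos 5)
Your overall strategy coincides with the paper's: differentiate the products, substitute the evolution equations of Section~\ref{s4}, and reduce the reaction/gradient terms by the algebraic estimates of the appendix (including $R_1-\fr1m R_2\leq c_0|\sch|^2|H|^2$, $R_2\leq|h|^2|H|^2$, $R_2\geq\fr1m|H|^4$, and the Huisken-type inequality $|\nabla h|^2-\fr1m|\nabla H|^2\geq\fr{2(m-1)}{3m}|\nabla h|^2$), with the $a$-derivative terms swept into the schematic $\nabla a*\cdots$, $\nabla^2a*\cdots$ expressions. For \eqref{6.7} your Cauchy--Schwarz bound $|\nabla|H|^2|^2\leq4|H|^2|\nabla H|^2$ is the same inequality the paper phrases via Kato's $|\nabla|H||^2\leq|\nabla H|^2$; both give $-12a|H|^2|\nabla H|^2$ after combining with the $-4a|H|^2|\nabla H|^2$ coming from $(\partial_t-a\Delta)|H|^2$.

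One point where your account drifts from what the paper actually does concerns the mixed gradient term in \eqref{6.8}. You propose to split $8aN_2|H||\sch||\nabla H||\nabla\sch|$ across \emph{both} negative terms $-2a(N_1+N_2|H|^2)|\nabla\sch|^2$ and $-2aN_2|\sch|^2|\nabla H|^2$, treating the second shift $N_1\mapsto N_1-c_1(N_2)$ as coming from the $-2aN_2|\sch|^2|\nabla H|^2$ budget. The paper instead simply discards $-2aN_2|\sch|^2|\nabla H|^2$ (it is $\leq 0$) and handles the cross term in one stroke: after bounding $|\nabla H|\leq\sqrt m|\nabla h|$, $|\nabla\sch|\leq|\nabla h|$ and inserting the hypothesis $|\sch|\leq\sqrt{C_0}|H|^{1-\sigma}$, the cross term becomes $8N_2\sqrt{mC_0}|H|^{2-\sigma}|\nabla h|^2$, and a single Young inequality on $|H|^{2-\sigma}$ (with exponents $2/(2-\sigma)$ and $2/\sigma$) yields $\fr{4(m-1)}{3m}|H|^2|\nabla h|^2+c_1(N_2)|\nabla h|^2$; the first summand gives $N_2\mapsto N_2-1$ and the constant summand gives $N_1\mapsto N_1-c_1(N_2)$, both against the same $-\fr{4(m-1)}{3m}(N_1+N_2|H|^2)|\nabla h|^2$ term. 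Your identification $|\nabla\sch|^2=|\nabla h|^2-\fr1m|\nabla H|^2$ is correct and is a tidy reformulation of the same Huisken step. Net assessment: the proposal is correct in substance and follows the paper's line of proof; the bookkeeping of how the two coefficient shifts arise is slightly off, but the correct mechanism (lower the power of $|H|$ via $|\sch|^2\leq C_0|H|^{2(1-\sigma)}$, then one Young absorption into the single $(N_1+N_2|H|^2)|\nabla h|^2$ term) is simpler than the two-way balance you describe.
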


\begin{proof} The evolution equation for $|H|^4$ is easily derived from that of $|H|^2$: First we find
\begin{align*}
\pp{}{t}|H|^4=&a(\Delta |H|^4-8|H|^2|\nabla |H||^2-4|H|^2|\nabla H|^2+4|H|^2R_2)\\
&+4|H|^2\lagl \nabla a ,\nabla|H|^2\ragl+4|H|^4\Delta a \\
=&a(\Delta |H|^4-8|H|^2|\nabla |H||^2-4|H|^2|\nabla H|^2+4|H|^2R_2)\\
&+\nabla a*h^3*\nabla h+\nabla^2 a*h^4.
\end{align*}
Then, from
\begin{align*}
R_2=&|\lagl H,h\ragl|^2=|\lagl H,\sch \ragl|^2+\fr1m|H|^4\geq\fr1m|H|^4
\end{align*}
and the classical Kato inequality $|\nabla |H||^2\leq |\nabla H|^2$, comes Equation \eqref{6.7} follows directly.

To prove \eqref{6.8}, we directly compute by the evolution equations for $|h|^2$ and $|H|^2$:
\begin{align*}
&\pp{}{t}((N_1+N_2|H|^2)|\sch|^2)\\
=&N_2\big(a(\Delta|H|^2-2|\nabla H|^2+2R_2)|\sch|^2+(\nabla a*h*\nabla h+\nabla^2a*h^2)|\sch|^2\big)\\
&+(N_1+N_2|H|^2)\Big(a\big(\Delta|\sch|^2-2(|\nabla h|^2-\fr1m|\nabla H|^2)+2(R_1-\fr1mR_2)\big)\\
&+\nabla a*h*\nabla h+\nabla^2a*h^2\Big).
\end{align*}
Since
\begin{align*}
&\Delta((N_1+N_2|H|^2)|\sch|^2)\\
=&N_2(\Delta|H|^2)|\sch|^2+(N_1+N_2|H|^2)\Delta|\sch|^2 +2N_2\lagl\nabla|H|^2,\nabla|\sch|^2\ragl,
\end{align*}
and (\cite{a-b})
\begin{align}
&|\nabla h|^2-\fr1m|\nabla H|^2\geq\fr{2(m-1)}{3m}|\nabla h|^2,\quad |\nabla h|^2\geq \fr1m|\nabla H|^2,\label{ad3}\\
&R_2\leq |h|^2|H|^2,\quad R_1-\fr1m R_2 \leq c_0|\sch|^2|H|^2,\label{ad4}
\end{align}
with $c_0$ being a constant dependent on $c$, we find
\begin{align}
&\pp{}{t}((N_1+N_2|H|^2)|\sch|^2)\nnm\\
=&a\Big(\Delta\big((N_1+N_2|H|^2)|\sch|^2\big) -2N_2\lagl\nabla|H|^2,\nabla|\sch|^2\ragl-2N_2|\nabla H|^2|\sch|^2+2N_2R_2|\sch|^2\nnm\\
&-2(N_1+N_2|H|^2)(|\nabla h|^2-\fr1m|\nabla H|^2)+2(N_1+N_2|H|^2)(R_1-\fr1mR_2)\Big)\nnm\\
&+\nabla a*(h^3*\nabla h+h*\nabla h)+\nabla^2a*(h^4+h^2)\label{ad1}\\
\leq&a\Big(\Delta\big((N_1+N_2|H|^2)|\sch|^2\big) +8N_2|\sch||H|\cdot|\nabla|H||\cdot|\nabla|\sch||\nnm\\
&-\fr{4(m-1)}{3m}(N_1+N_2|H|^2)|\nabla h|^2+2N_2|\sch|^2|h|^2|H|^2
+2c_0(N_1+N_2|H|^2)|\sch|^2|H|^2\Big)\nnm\\
&+\nabla a*(h^3*\nabla h+h*\nabla h)+\nabla^2a*(h^4+h^2)\nnm\\
\leq&a\Big(\Delta\big((N_1+N_2|H|^2)|\sch|^2\big)+8N_2|\sch||H|\cdot|\nabla H|\cdot|\nabla\sch|\nnm\\
&-\fr{4(m-1)}{3m}(N_1+N_2|H|^2)|\nabla h|^2+2N_2|\sch|^2|H|^2(|h|^2+c_0|H|^2)
+2c_0N_1|\sch|^2|H|^2\Big)\nnm\\
&+\nabla a*(h^3*\nabla h+h*\nabla h)+\nabla^2a*(h^4+h^2).\label{ad2}
\end{align}
The second term on the right hand side of \eqref{ad2} can be estimated by Young's inequality as follows:
\begin{align*}
8N_2|H||\sch||\nabla H||\nabla \sch|
\leq &8N_2|H|\sqrt{m}|\nabla h|^2\sqrt{C_0}|H|^{1-\sigma}\\
\leq &\fr{4(m-1)}{3m}|H|^2|\nabla h|^2+c_1(N_2)|\nabla h|^2
\end{align*}
since $|\sch|^2\leq C_0|H|^{2(1-\sigma)}$; while the last two terms are estimated as
$$2N_2|\sch|^2|H|^2(|h|^2+c_0|H|^2)
+2c_0N_1|\sch|^2|H|^2\leq c_2(N_1,N_2)|\sch|^2(|H|^4+1)$$
since $|h|^2\leq c|H|^2$. Then equation \eqref{6.8} now follows.
\end{proof}

\begin{rmk}\rm
The second inequality of \eqref{ad4} was used \cite{a-b} and \cite{b} with $c_0=2$ but without any proof. In what follows, we would like to provide a proof of it in detail for the convenience of readers. It turns out that, by our argument, $c_0$ can be always taken to be less than $1$ (see \eqref{c0} below) if the constant $c$ satisfies \eqref{c}.

The case that $H=0$ is trivial due to the assumption $|h|^2\leq c|H|^2$. So we only need to consider the case that $H\neq 0$. Then, for positive numbers $\lambda$ and $b<1$, it holds that (\cite{a-b})
\begin{align*}
R_1-\fr1mR_2\leq&|\sch_1|^4+\fr1m|\sch_1|^2|H|^2+4|\sch_1|^2|\sch_-|^2+\fr32|\sch_-|^4\\
=&(|\sch_1|^4+2|\sch_1|^2|\sch_-|^2+|\sch_-|^4)+2b|\sch_1|^2|\sch_-|^2 +\fr12|\sch_-|^4+\fr1m|\sch_1|^2|H|^2\\
&+2(1-b)|\sch_1|^2|\sch_-|^2\\
\leq&|\sch|^4+\lambda b|\sch_1|^4+\left(\fr1\lambda b+\fr12\right)|\sch_-|^4 +\fr1m|\sch_1|^2|H|^2 +2(1-b)|\sch_1|^2|\sch_-|^2,
\end{align*}
where we have used the equality $|\sch|^2=|\sch_1|^2+|\sch_-|^2$ and the Young's inequality $$2|\sch_1|^2|\sch_-|^2\leq\lambda|\sch_1|^4+\fr1\lambda|\sch_-|^4,\quad \text{for any }\lambda>0.$$
By taking
\be\label{lambda}\lambda=\fr1{4b}(1+\sqrt{1+16b^2}),\ee
we have $\lambda b=\fr1\lambda b+\fr12$. It then follows that
\begin{align*}
R_1-\fr1mR_2\leq&|\sch|^4+\lambda b(|\sch_1|^4+2|\sch_1|^2|\sch_-|^2+|\sch_-|^4) +\fr1m|\sch_1|^2|H|^2\\
& +2(1-b-\lambda b)|\sch_1|^2|\sch_-|^2\\
=&|\sch|^4+\lambda b(|\sch_1|^2+|\sch_-|^2)^2 +\fr1m|\sch_1|^2|H|^2 +2(1-b-\lambda b)|\sch_1|^2|\sch_-|^2\\
=&(1+\lambda b)|\sch|^4+\fr1m|\sch_1|^2|H|^2 +2(1-b-\lambda b)|\sch_1|^2|\sch_-|^2.
\end{align*}
Solving $1-b-\lambda b=0$ with \eqref{lambda}, we find that $b=\fr13$ and thus $\lambda b=\fr23$. So we obtain
\begin{align*}
R_1-\fr1mR_2\leq&\fr53|\sch|^4+\fr1m|\sch_1|^2|H|^2\\ \leq&|\sch|^2\left(\fr53|h|^2-\fr5{3m}|H|^2+\fr1m|H|^2\right)\\ =&|\sch|^2\left(\fr53|h|^2-\fr2{3m}|H|^2\right)\\
\leq&\left(\fr53c-\fr2{3m}\right)|\sch|^2|H|^2\\
:=&c_0|\sch|^2|H|^2
\end{align*}
using $|\sch_1|^2\leq |\sch|^2$ and $|h|^2\leq c|H|^2$.

In particular, when $c$ satisfies \eqref{c}, it holds that
\be\label{c0}c_0=\fr53c-\fr2{3m}\leq \begin{cases}\fr79,&m=2;\\
\fr{14}{27},&m=3;\\\fr7{18}&m=4\\
\fr{3m+2}{3m(m-1)}\leq\fr{17}{60}&m\geq5.\end{cases}
\ee
\end{rmk}

Now, for any $N_1,N_2>0$, we define a function
$$f=|\nabla H|^2+(N_1+N_2|H|^2)|\sch|^2.$$
Then, by combining \eqref{ptnabH2} and \eqref{6.8}, we easily obtain

\begin{lem} Suppose $|h|^2\leq c|H|^2$ and $|\sch|^2\leq C_0|H|^{2(1-\sigma)}$ for some $C_0>0$, $0<\sigma<1$. If $N_2$ is large enough, then it holds that
\begin{align}
\pp{}{t}f\leq&a\Big(\Delta f-\fr{4(m-1)}{3m}\left((N_2-1)|H|^2+(N_1-c_1(N_2))\right)|\nabla h|^2\nnm\\
&+c_2(N_1,N_2)|\sch|^2(|H|^4+1)\Big)\nnm\\
&+\nabla a*(h*\nabla h+\nabla h*\nabla^2h+h^3*\nabla h)\nnm\\
&+\nabla^2a*(h^4+h^2+(\nabla h)^2)+\nabla^3a*h*\nabla h.\label{6.9}
\end{align}
\end{lem}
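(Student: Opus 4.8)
The plan is to \emph{add} the two evolution inequalities \eqref{ptnabH2} and \eqref{6.8} and then absorb the single offending term. Since $f=|\nabla H|^2+(N_1+N_2|H|^2)|\sch|^2$ and both $\pp{}{t}$ and $\Delta$ act linearly, summing \eqref{ptnabH2} with \eqref{6.8} immediately yields
\[
\pp{}{t}f\le a\Big(\Delta f-2|\nabla^2H|^2-\fr{4(m-1)}{3m}(N_2-1)|H|^2|\nabla h|^2-\fr{4(m-1)}{3m}(N_1-c_1(N_2))|\nabla h|^2+c_2(N_1,N_2)|\sch|^2(|H|^4+1)\Big)+A|H|^2|\nabla h|^2+(\text{terms in }\nabla a,\nabla^2a,\nabla^3a).
\]
The term $-2a|\nabla^2H|^2$ is manifestly nonpositive and is simply discarded.

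Next I would organise the terms involving derivatives of $a$. The contributions $\nabla a*(\nabla h*\nabla^2h+h^3*\nabla h)+\nabla^2a*(\nabla h)^2+\nabla^3a*h*\nabla h$ coming from \eqref{ptnabH2} and $\nabla a*(h^3*\nabla h+h*\nabla h)+\nabla^2a*(h^4+h^2)$ coming from \eqref{6.8} combine, after merging the repeated $h^3*\nabla h$ contribution, into exactly $\nabla a*(h*\nabla h+\nabla h*\nabla^2h+h^3*\nabla h)+\nabla^2a*(h^4+h^2+(\nabla h)^2)+\nabla^3a*h*\nabla h$, which is the schematic form in \eqref{6.9}. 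No cancellation is needed here; one only uses that the class of $*$-expressions of this type is closed under addition.

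The only term left over that is not yet in the required form is $A|H|^2|\nabla h|^2$, coming from the curvature cross-term $R^\bot(\partial_t,\cdot)H$ in \eqref{ptnabH}. Here $A$ depends only on the initial data and the global bounds of $a$ established in Lemma \ref{lem5.2}; in particular $A$ is a fixed constant while $a\ge\ul a>0$. Hence, choosing $N_2$ so large that $\ul a\,\fr{4(m-1)}{3m}(N_2-1)\ge 2A$, one has $A|H|^2|\nabla h|^2\le\tfrac12\,a\,\fr{4(m-1)}{3m}(N_2-1)|H|^2|\nabla h|^2$ pointwise, so this term is absorbed by half of the negative $|H|^2|\nabla h|^2$ contribution; after relabelling $N_2$ (equivalently, keeping the retained half), \eqref{6.9} follows. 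Note that $c_1$ and $c_2$ in \eqref{6.8} are unaffected by this choice of $N_2$, since they are produced entirely inside the derivation of \eqref{6.8}.

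The only point of genuine (if modest) care is this absorption: it is exactly where the hypothesis ``$N_2$ large enough'' is consumed, and one must check that $A$ is truly independent of $N_2$ — which it is, because in \eqref{ptnabH2} the estimate $a*h^2*(\nabla h)^2\le A|H|^2|\nabla h|^2$ uses only $|h|^2\le c|H|^2$ and the sup-bound of $a$. Everything else is bookkeeping of the $*$-notation together with the linearity of $\pp{}{t}$ and $\Delta$, discarding the nonpositive $-2a|\nabla^2H|^2$. The hypothesis $|\sch|^2\le C_0|H|^{2(1-\sigma)}$ is not needed directly here (it was already used inside \eqref{6.8}), so it plays no further role in the combination.
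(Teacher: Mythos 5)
Your proposal is correct and is precisely the argument the paper intends (the paper simply says ``by combining \eqref{ptnabH2} and \eqref{6.8}, we easily obtain'' and leaves the rest to the reader). You add the two inequalities, discard the nonpositive $-2a|\nabla^2H|^2$, merge the $\nabla a,\nabla^2 a,\nabla^3 a$ terms in the $*$-calculus, and absorb the left-over $A|H|^2|\nabla h|^2$ into half of $-a\fr{4(m-1)}{3m}(N_2-1)|H|^2|\nabla h|^2$ using $a\ge\ul a>0$, which is exactly where ``$N_2$ large enough'' is used; the only nit is your phrase that $c_1,c_2$ are ``unaffected by this choice of $N_2$'' — they do depend on $N_2$, but since they are only required to be some constants determined by $N_1,N_2$ (and the fixed background data), the dependence is harmless and the relabelling goes through as you say.
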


For $\eta>0$, denote $g=f-\eta|H|^4$. Then from \eqref{6.7}, \eqref{6.9} and the fact that $(m+2)|\nabla h|^2\geq 3|\nabla H|^2$ (\cite{a-b}, Proposition 6) follows the next lemma:

\begin{lem}\label{lem6.9}
If $N_1,N_2$ are large enough, $|h|^2\leq c|H|^2$ and and $|\sch|^2\leq C_0|H|^{2(1-\sigma)}$ for some $C_0>0$, $0<\sigma<1$, then it holds that
\begin{align}
\pp{}{t}g\leq&a \Delta g+C_2(N_1,N_2)|\sch|^2(|H|^4+1)-\ul a\fr{4\eta}{m}|H|^6\nnm\\
&+\nabla a*(h*\nabla h+\nabla h*\nabla^2h+h^3*\nabla h)\nnm\\
&+\nabla^2a*(h^4+h^2+(\nabla h)^2)+\nabla^3a*h*\nabla h.\label{6.10}
\end{align}
\end{lem}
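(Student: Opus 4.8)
The plan is to obtain \eqref{6.10} directly by combining the two evolution estimates already at hand. Writing $g=f-\eta|H|^4$ and using that \eqref{6.7} is a \emph{lower} bound for $\pp{}{t}|H|^4$, so that $-\eta$ times it is an \emph{upper} bound for $-\eta\pp{}{t}|H|^4$, I would add this to \eqref{6.9} and use $\Delta f-\eta\Delta|H|^4=\Delta g$ to get
\begin{align*}
\pp{}{t}g\leq&\,a\Delta g-\fr{4(m-1)}{3m}a\big((N_2-1)|H|^2+N_1-c_1(N_2)\big)|\nabla h|^2+12\eta a|H|^2|\nabla H|^2\\
&+ac_2(N_1,N_2)|\sch|^2(|H|^4+1)-\fr{4\eta}{m}a|H|^6\\
&+\nabla a*(h*\nabla h+\nabla h*\nabla^2h+h^3*\nabla h)+\nabla^2a*(h^4+h^2+(\nabla h)^2)+\nabla^3a*h*\nabla h,
\end{align*}
where the $\nabla a*h^3*\nabla h$ and $\nabla^2a*h^4$ contributions coming from $-\eta$ times \eqref{6.7} are absorbed into the already present families of $\nabla a$- and $\nabla^2a$-terms, so the schematic form of the error is unchanged.

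The only term with the ``wrong'' sign is $12\eta a|H|^2|\nabla H|^2$, and the heart of the argument is to absorb it into the negative $|H|^2|\nabla h|^2$ term. For this I would invoke the pointwise gradient inequality $(m+2)|\nabla h|^2\geq 3|\nabla H|^2$ of Andrews--Baker (\cite{a-b}, Proposition 6), which gives $12\eta a|H|^2|\nabla H|^2\leq 4\eta(m+2)a|H|^2|\nabla h|^2$. Then it suffices to pick $N_2$ so large that $\fr{4(m-1)}{3m}(N_2-1)\geq 4\eta(m+2)$, i.e.\ $N_2\geq 1+\fr{3m(m+2)\eta}{m-1}$, which makes the sum of $-\fr{4(m-1)}{3m}a(N_2-1)|H|^2|\nabla h|^2$ and $12\eta a|H|^2|\nabla H|^2$ nonpositive; choosing in addition $N_1\geq c_1(N_2)$ makes $-\fr{4(m-1)}{3m}a(N_1-c_1(N_2))|\nabla h|^2$ nonpositive as well. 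Both terms are then discarded.

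It remains to normalize the coefficients. Since $\ul a\leq a\leq\ol a$ on $M\times[0,T)$, I would bound $ac_2(N_1,N_2)|\sch|^2(|H|^4+1)\leq C_2(N_1,N_2)|\sch|^2(|H|^4+1)$ with $C_2:=\ol a\,c_2$, and $-\fr{4\eta}{m}a|H|^6\leq-\ul a\fr{4\eta}{m}|H|^6$ since $|H|^6\geq0$. This yields exactly \eqref{6.10}. I do not anticipate any serious obstacle: the computation is essentially bookkeeping together with the single substitution via $(m+2)|\nabla h|^2\geq 3|\nabla H|^2$; the only point requiring a little attention is to check that the hypotheses $|h|^2\leq c|H|^2$ and $|\sch|^2\leq C_0|H|^{2(1-\sigma)}$ that justify applying \eqref{6.9} and \eqref{6.7} coincide with those assumed in the statement, which they do.
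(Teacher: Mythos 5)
Your proof is correct and fills in exactly the argument the paper indicates without writing out: combine \eqref{6.7} and \eqref{6.9} via $g=f-\eta|H|^4$, absorb the $12\eta a|H|^2|\nabla H|^2$ term using $(m+2)|\nabla h|^2\geq 3|\nabla H|^2$ by taking $N_2$ large, drop the remaining nonpositive gradient terms, and then use $\ul a\leq a\leq\ol a$ to normalize the coefficients. This is the same approach as the paper.
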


\begin{cor}[cf. \cite{a-b}, Theorem 5]\label{cor6.11} Suppose that the function $a=a(t)$, depending only on the parameter $t$. If the initial $F_0$ satisfies $|h|^2\leq c|H|^2$ with $c$ as in \eqref{c} then there exists a $C_0>0$ such that
\be\label{sch}
|\sch|^2\equiv|h|^2-\fr1m|H|^2\leq C_0|H|^{2(1-\sigma)}.
\ee
Furthermore, for each $\eta>0$, there exists a constant $C_\eta$ depending only on $\eta$ and $F_0$ such that
\be\label{6.11}
|\nabla H|^2\leq \eta|H|^4+C_{\eta}.
\ee
\end{cor}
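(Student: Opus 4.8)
The statement to be proved, Corollary \ref{cor6.11}, is the exact analogue of Theorem 5 of \cite{a-b}: first the decay estimate $|\sch|^2\leq C_0|H|^{2(1-\sigma)}$, then the gradient estimate $|\nabla H|^2\leq\eta|H|^4+C_\eta$. The guiding observation is that, since here $a=a(t)$ depends only on the time parameter, the function $a\circ F$ on $M$ is \emph{spatially constant}, so every occurrence of $\nabla a$, $a_{,ij}$, $\Delta a$, $\nabla^2a$, $\nabla^3a$ in the evolution inequalities of this appendix simply disappears; what is left is literally the Andrews--Baker situation, with $a(t)$ entering only as a harmless bounded positive factor $\ul a\leq a\leq\ol a$ (Lemma \ref{lem5.2}). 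So the plan is to feed the simplified inequalities into their machinery. Throughout, Corollary \ref{cor6.2} guarantees that $H\neq0$ on $(0,T)$ so that $f_\sigma:=|\sch|^2|H|^{2(\sigma-1)}$ is smooth there, and $T<+\infty$ by Theorem \ref{thm7.1}.

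For \eqref{sch} I would fix $\sigma\in(0,\sigma_+]$ and $p\geq\max\{2,8\ol a^2/(\ul a^2\veps_\nabla)+1\}$ as in Proposition \ref{prop6.5}. With $a=a(t)$ every term on the right-hand side of that proposition vanishes, giving $\pp{}{t}\int_Mf^p_\sigma\,dV\leq0$, so $\int_Mf^p_\sigma\,dV$ is non-increasing. Moreover the pinching hypothesis forces $|\sch_0|^2=|h_0|^2-\tfrac1m|H_0|^2\leq(c-\tfrac1m)|H_0|^2$, whence $f_{\sigma,0}\leq(c-\tfrac1m)(\max_M|H_0|^2)^\sigma<+\infty$ (so $f_{\sigma,0}$ extends continuously by zero across the zeros of $H_0$) and $\int_Mf_{\sigma,0}^p\,dV_0<+\infty$. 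Thus $\int_Mf_\sigma^p\,dV$ is bounded uniformly in $t\in[0,T)$ by its initial value. To upgrade this uniform $L^p$ bound to a uniform $L^\infty$ bound I would run the Stampacchia truncation / Moser iteration exactly as in \cite{a-b} (and \cite{hui84}), using the pointwise inequality of Lemma \ref{lem6.3}, which with $a=a(t)$ reads
\[
\pp{}{t}f_\sigma\leq a\Delta f_\sigma-\fr{4a(\sigma-1)}{|H|}\lagl\nabla|H|,\nabla f_\sigma\ragl-2a\veps_\nabla|H|^{2(\sigma-1)}|\nabla H|^2+2a\sigma|h|^2f_\sigma,
\]
together with the Michael--Simon Sobolev inequality on $F_t(M)$; the mean-curvature error term produced by that Sobolev inequality is absorbed with $|h|^2\leq c|H|^2$ and the negative gradient term, precisely as in \cite{a-b}. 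This yields $\sup_{M\times[0,T)}|\sch|^2|H|^{2(\sigma-1)}\leq C_0$ with $C_0$ depending only on $F_0$, i.e. \eqref{sch}.

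Granted \eqref{sch}, I would then fix $\eta>0$, choose $N_1,N_2$ large enough for Lemma \ref{lem6.9}, and set $g=|\nabla H|^2+(N_1+N_2|H|^2)|\sch|^2-\eta|H|^4$. With $a=a(t)$, \eqref{6.10} becomes $\pp{}{t}g\leq a\Delta g+C_2(N_1,N_2)|\sch|^2(|H|^4+1)-\ul a\tfrac{4\eta}{m}|H|^6$. Inserting \eqref{sch} and Young's inequality (the exponents $6-2\sigma$ and $2-2\sigma$ are both less than $6$) produces a constant $C_3=C_3(\eta,N_1,N_2,C_0,m,\ul a)$ with $C_2|\sch|^2(|H|^4+1)\leq\ul a\tfrac{2\eta}{m}|H|^6+C_3$, so $\pp{}{t}g\leq a\Delta g-\ul a\tfrac{2\eta}{m}|H|^6+C_3\leq a\Delta g+C_3$. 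By Hamilton's trick, at a spatial maximum of $g(\cdot,t)$ the Laplacian term is $\leq0$, so $\tfrac{d}{dt}\max_Mg(\cdot,t)\leq C_3$; since $T<+\infty$, integration gives $\max_Mg\leq\max_Mg(\cdot,0)+C_3T=:C_\eta$, and then $|\nabla H|^2=g+\eta|H|^4-(N_1+N_2|H|^2)|\sch|^2\leq\eta|H|^4+C_\eta$, which is \eqref{6.11}.

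The main obstacle is not the maximum-principle step of the last paragraph (routine once the $a$-terms are gone and $T<+\infty$ is available) but the $L^p\to L^\infty$ passage in \eqref{sch}: one must check that the Stampacchia iteration of \cite{a-b} survives the replacement of $a\equiv1$ by a time-dependent bounded $a(t)$, i.e. that the extra factors of $a$ contribute only harmless constants. Because $a$ is independent of the spatial variables it commutes past $\nabla$ and $\Delta$ in every integration by parts, so this verification is mechanical, but it is the one place where genuine care is needed.
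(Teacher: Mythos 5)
Your proof is correct and follows essentially the same path as the paper's own argument: vanishing of $\nabla a$ when $a=a(t)$ turns Proposition \ref{prop6.5} into the monotonicity statement $\pp{}{t}\int_M f^p_\sigma\,dV\leq 0$, the uniform $L^p$ bound is promoted to $L^\infty$ by the Stampacchia/Moser iteration as in \cite{a-b} and \cite{hui84}, and the gradient estimate then follows from Lemma \ref{lem6.9}, Young's inequality, the maximum principle, and $T<+\infty$. The only cosmetic difference is that the paper explicitly records the integral inequality of Lemma \ref{lem6.4} before invoking the standard iteration, whereas you start from the pointwise inequality of Lemma \ref{lem6.3} plus Michael--Simon; these are the same pipeline.
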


\begin{proof} If $a$ depends only on $t$, then $\nabla a\equiv 0$ along $F_t(M)$ for all $t$. So Corollary \ref{cor6.2} assures that $|h|^2\leq c|H|^2$ holds for any time $t\in [0,T)$, and Proposition \ref{prop6.5} reduces to $\pp{}{t}\int_Mf^p_\sigma dV\leq 0$ for any small $\sigma>0$ and large $p$. It follows that $\int_Mf^p_\sigma dV\leq C$ where $C$ is a constant depending only on $m,p,\ol a,\ul a$ and $F_0$. On the other hand, by Lemma \ref{lem6.4} we also have that
\be\label{ad5}
\pp{}{t}\int_Mf^p_\sigma dV\leq-\fr12\ul a p(p-1)\int_Mf^{p-2}_\sigma|\nabla f_\sigma|^2dV+2\ol ap\sigma\int_M|H|^2f^p_\sigma dV
\ee
which, with the fact that $\|f_\sigma\|_p$ is uniformly bounded, implies in a standard manner (see \cite{hui84} or more directly \cite{b}) that $|\sch|^2\leq C_0|H|^{2(1-\sigma)}$ for some $C_0>0$. From this and \eqref{6.10} we find the estimate
\be\label{6.12}
\pp{}{t}g\leq a\Delta g+C_2(N_1,N_2)(|H|^{6-2\sigma}+|H|^{2(1-\sigma)})-\ul a\fr{4\eta}{m}|H|^6\big).
\ee
Note that $\sigma<1$ is small, and $6-2\sigma$, $2-2\sigma$ are both strictly less than $6$. Therefore we can use the Young's inequality to find
\be\label{6.13}
C_2|H|^{6-2\sigma}\leq\fr{\veps^{p_1}_1}{p_1}|H|^6+\fr1{q_1\veps^{q_1}_1}C^{q_1}_2,\quad C_2|H|^{2(1-\sigma)}\leq\fr{\veps^{p_2}_2 }{p_2}|H|^6+\fr1{q_2\veps^{q_2}_2}C^{q_2}_2
\ee
for small $\veps_1,\veps_2>0$ where
$$
p_1=\fr6{6-2\sigma},\quad p_2=\fr6{2-2\sigma}\text{ and }\fr1{p_i}+\fr1{q_i}=1,\quad i=1,2.
$$
Putting \eqref{6.13} with $\veps_1,\veps_2$ small enough into \eqref{6.12}, we finally obtain the following estimate from:
\be
\pp{}{t}g\leq a\Delta g+c_3
\ee
where $c_3$ is dependent of $\eta$. Now the maximal principle together with the fact that $T<\infty$ concludes that $g\leq C_\eta$ which with the definitions of $f$ and $g$ implies the desired inequality \eqref{6.11}:
$$|\nabla H|^2\leq f=g+\eta|H|^4\leq \eta|H|^4+C_\eta.$$
\end{proof}


\begin{thebibliography}{99}
\bibitem{a--r} L. J. Al\'ias, J. H. de Lira and M. Rigoli, Mean curvature flow solitons in the presence of conformal vector fields, arXiv.org/math.DG/1707.07132v1.
\bibitem{andr} B. Andrews, Contraction of convex hypersurfaces in Riemannian spaces. J. Diff. Geom. v. 39 (1994) 407-431.
\bibitem{a-b} Ben Andrews and Charles Baker, Mean curvature flow of pinched submanifolds to spheres, Journal of Differential Geometry, 2010, 85(3):357-396.
\bibitem{b} Charles Baker, The mean curvature flow of submanifolds of high codimension, arXiv: 1104.4409v1 [math.DG], April 22, 2011.
\bibitem{b-r10} Aleksander Borisenko and Vladimir Rovenski, Gaussian Mean Curvature Flow, J. Evol. Equ. 10 (2010), 413-423; published online February 17, 2010; DOI: 10.1007/s00028-010-0054-2.
\bibitem{b-r14} Aleksander Borisenko and Vladimir Rovenski, Gaussian Mean Curvature Flow for submanifolds in space forms, Geometry and its Applications, Springer Proceedings in Mathematics \& Statistics 72, Springer International Publishing Switzerland 2014; DOI: 10.1007/978-3-319-04675-4-2.
\bibitem{bra} K. Brakke, The motion of a surface by its mean curvature. Prinston, New Jersey: Prinston University Press, 1978.
\bibitem{br} T. Branson, Kato constants in Riemannian geometry, Mathematical Research Letters 7, 245-261 (2000).
\bibitem{c-g-h} David M. J. Calderbank, Paul Gauduchon, Marc Herzlich, Refined Kato inequalities and conformal weights in Riemannian geometry[J]. J. Funct. Anal. 2000, 173(1): 214-255.
\bibitem{c} Bang-Yen Chen, Some pinching and classification theorem for minimal submanifolds, Arch. Math. (Basel), 60(1993),, no. 6, 568-578.
\bibitem{c--n} B. Chow, S. C. Chu, D. Glickenstein, C. Guenter, J. Isenberg, T. Ivey, D. Knopf, P. Lu, F. Luo and L. Ni, The Ricci Flow: Techniques and Applications. Part II: Analytic Aspects, A. M. S., SURV 144, Providence, 2008.
\bibitem{c-k} Bennett Chow and Dan Knopf, The Ricci Flow: An Introduction, Mathematical surveys and monographs, ISSN 0076-5376; v.110, 2004.
\bibitem{dtu} Dennis M. DeTurck, Deforming metrics in the direction of their Ricci
    tensors, Journal of Differential Geometry 18 (1983), no. 1, 157-162.
\bibitem{eck} K. Ecker, Regularity theory for mean curvature flow, Progress in Nonlinear Differential Equations and their Applications, 57. Birkh\"auser Boston, Inc., Boston, MA, 2004.
\bibitem{g-h-l} Sylvestre Gallot, Dominique Hulin and Jacques Lafontaine, Riemannian geometry, 3rd ed., Universitext, Springer-Verlag, Berlin, 2004.
\bibitem{gro} M. Gromov, Isoperimetry of waists and concentration of maps. Geom. Funct. Anal. 13 (2003) 178-215.
\bibitem{g-l-w} Shunzi Guo, Guanghan Li and Chuanxi Wu, Mean Curvature Flow with a Forcing Field in Direction of the Position Vector of Submanifolds, Acta Math. Sinica (Chinese Series), 2014, Vol. 57 (1): 139-150.
\bibitem{ha82} Richard S. Hamilton, Three manifolds with positive Ricci curvature, J. Differential Geometry, 17(1982) 255-306.
\bibitem{har} Philip Hartman, Systems of Total Differential Equations and Liouville's theorem on Conformal Mapping, American Journal of Mathematics 69(2): 1947, 329¨C332.
\bibitem{h-s} D. Hokman, J. Spruck. Sobolev and isoperimetric inequalities for Riemannian submanifolds, Comm. Pure Appl. Math. 1974, 27: 715-727.
\bibitem{hui84} Gerhard Huisken, Flow by mean curvature of convex surfaces into spheres, J. Differential Geometry, 20(1984), 237-266.
\bibitem{hui87} Gerhard Huisken, Deforming Hypersurfaces of the Sphere by Their Mean Curvature, Math. Z. 195(1987), 205-219.
\bibitem{hui93} G. Huisken, Local and Global Behaviour of Hypersurfaces Moving by Mean Curvature, Procedings of Symposia in Pure Mathematics Volume 54 (1993), Part I, 175-191.
\bibitem{j-x03} H. Y. Jian and X. W. Xu, The vortex dynamics of a Ginzburg-Landau system under pinning effect. Science in China Ser A, 46: 488-498 (2003).
\bibitem{j-l06} H. Y. Jian and Y. N. Liu, Ginzburg-Landau vortex and mean curvature flow with external force field. Acta Math Sin Engl Ser 22: 1831-1842 (2006)
\bibitem{k-s} David Kinderlehrer and Guido Stampacchia, An introduction to variational inequalities and their applications, Classics in Applied Mathematics, vol. 31, Society for Industrial and Applied Mathematics (SIAM), Philadelphia, PA, 2000. Reprint of the 1980 original.
\bibitem{l-s} G. H. Li and Isabel Salavessa, Forced convex mean curvature flow in Euclidean spaces, manuscripta mathematica, July 2008, 126, 3, 333-351.
\bibitem{lhz} H. Z. Li, Lecture Notes on Mean Curvature Flow, 2, 2016, Japan.
\bibitem{l-j07} Y. N. Liu and H.-Y. Jian, Evolution of hypersurfaces by the mean curvature minus an external force field, Science in China Series A: Mathematics, February 2007, 50, 2, 231?39
\bibitem{l-j08} Y. N. Liu and H.-Y. Jian, Long-time existence of mean curvature flow with external force fields, Pacific J. Math., 234, 2, 2008, 311-324.
\bibitem{l--z12} K. F. Liu, H. W. Xu and E. T. Zhao, Mean curvature flow of higher codimension in Riemannian manifolds, arXiv.org/math.DG/1204.0107v1.
\bibitem{l--z11} K. F. Liu, H. W. Xu, Fei Ye and E. T. Zhao, Mean curvature flow of higher codimension in hyperbolic spaces, arXiv.org/math.DG/1105.5686v1.
\bibitem{l--z} K. F. Liu, H. W. Xu, Fei Ye and E. T. Zhao, The extension and convergence of mean curvature flow in higher codimension, arXiv.org/math.DG/1104.0971.
\bibitem{m-s} J. H. Michael and L. M. Simon, Sobolev and mean value inequalities on generalized submanifolds of $R^n$, Comm. Pure Appl. Math. 26(1973), 316-379.
\bibitem{m} G. Monge, Application de l'analyse \`a la g\'eom\'etrie, Bachelier (1850), 609-616.
\bibitem{mntl} S. Montiel, Unicity of constant mean curvature hypersurfaces in some Riemannian manifolds, Indiana Univ. Math. J. 48 (1999), no. 2, 711-48.
\bibitem{mor} F. Morgan, Manifolds with density, Notices Am. Math. Soc. 52, (2005), 853-858
\bibitem{p} Peter Petersen, Riemannian Geometry, second edition, Graduate Texts in Mathematics, Vol 171, Springer, New York, 2006.
\bibitem{r--m} C. Rosales, A. Ca\~nete, V. Bayle, and F. Morgan, On the isoperimetric problem in Euclidean space with density, Calc. Var. 31-46
\bibitem{s-s} O. C. Schn\"urer and K. Smoczyk, Evolution of hypersurfaces in central force fields, J. Reine Angew. Math. 550 (2002), 77-95
\bibitem{smo01} K. Smoczyk, A relation between Mean Curvature Flow Solitons and Minimal Submanifolds, Math. Nachr., 229 (2001), 175-186.
\bibitem{smo11} K. Smoczyk, Mean curvature flow in higher codimension\,---\,Introduction and survey, arXiv: 1104.3222v2 [math.DG], April, 2011.
\bibitem{sp} Michael Spivak, A comprehensive introduction to differential geometry. Vol. IV, 2nd ed., Publish or Perish Inc., Wilmington, Del., 1979.
\bibitem{w} M.-T. Wang, Lectures on mean curvature flow in higher codimensions, Handbook of geometric analysis. No. 1, 525?43, Adv. Lect. Math. (ALM), 7, Int. Press, Somerville, MA, 2008 (see also arXiv.org/math.DG/1104.3354 v1).
\bibitem{x-g} H. W. Xu and J. R. Gu, An optimal differentiable sphere theorem for complete manifolds, Math. Res. Lett. 17 (2010), 1111-1124.
\bibitem{yan} K. Yano, Integral formulas in Riemannian Geometry, Marcel Dekker, New York, 1970.
\bibitem{zhu} Xi-Ping Zhu, Lectures on mean curvature flows, AMS/IP, Providence, 2002.
\end{thebibliography}
\end{document}